\newcommand{\cd}[1]{\begin{equation*}{\xymatrix{#1}}\end{equation*}}
\newcommand{\leftQ}[2]{\left.\raisebox{-.2em}{$#2$}\middle\backslash\raisebox{.2em}{$#1$}\right.}
\newcommand{\smallleftQ}[2]{\left.\raisebox{-0.2em}{\small{$#2$}}\middle\backslash\raisebox{0.1em}{\small{$#1$}}\right.}
\newcommand{\mc}{\mathcal}
\newcommand{\acts}{\curvearrowright}
\newcommand{\Ad}{\mathrm{Ad}}
\newcommand{\co}{\colon\thinspace}
\newcommand{\link}{\mathrm{lk}}
\newcommand{\scwol}{\operatorname{scwol}}
\newcommand{\identity}{\mathbf{1}}
\newtheorem {theorem}{Theorem}[section]
\newtheorem {lemma} [theorem] {Lemma}
\newtheorem {proposition} [theorem] {Proposition}
\newtheorem {example} [theorem] {Example}
\newtheorem {corollary} [theorem] {Corollary}
\newtheorem {claim}{Claim}[theorem]
\newtheorem* {claim*}{Claim}
\newtheorem {question} [theorem] {Question}
\newtheorem {notation} [theorem] {Notation}
\newtheorem {terminology} [theorem] {Terminology}
\newtheorem {observation} [theorem] {Observation}
\newtheorem {convention} [theorem] {Convention}
\newtheorem {assumption} [theorem] {Assumption}
\newtheorem{sa} [theorem] {Standing Assumption}
\DeclareMathOperator\dotnorm{\dot{\lhd}}
\DeclareMathOperator\dotsub{\dot{<}}
\def\Stab {\mathrm Stab}
\def\height {\mathrm{height}}
\def\down {\downarrow}
\def\up {\uparrow}
\def\N {\mathbb N}
\def\bZ {\mathbb Z}
\def\bH {\mathbb H}
\def\Stab {\mathrm{Stab}}
\def\lk {\mathrm{link}}
\def\onto {\twoheadrightarrow}
\def\o {\mathfrak{o}}
\def\Gr {\mathrm{Gr}}
\def\cgyt {\widetilde{CG\left(\mc{Y}\right)}}
\newcommand{\orbit}[1] {\llbracket {#1} \rrbracket}
\newcommand{\backwards}[1]{\overleftharpoon{#1}}
\newtheorem{thmA}{Theorem}
\newtheorem{corA}[thmA]{Corollary}
\theoremstyle{definition}
\newtheorem {definition} [theorem] {Definition}
\newtheorem {remark} [theorem] {Remark}
\begin{document}
\title[Hyperbolic groups acting improperly]{Hyperbolic groups acting improperly}

\author[D. Groves]{Daniel Groves}
\address{Department of Mathematics, Statistics, and Computer Science,
University of Illinois at Chicago,
322 Science and Engineering Offices (M/C 249),
851 S. Morgan St.,
Chicago, IL 60607-7045}
\email{groves@math.uic.edu}

\author[J.F. Manning]{Jason Fox Manning}
\address{Department of Mathematics, 310 Malott Hall, Cornell University, Ithaca, NY 14853}
\email{jfmanning@math.cornell.edu}

\begin{abstract}
 In this paper we study hyperbolic groups acting on CAT$(0)$ cube complexes.  The first main result (Theorem \ref{t:cell qc iff hyp}) is a structural result about the Sageev construction, in which we relate quasi-convexity of hyperplane stabilizers with quasi-convexity of cell stabilizers.
The second main result (Theorem \ref{omnibus}) generalizes both Agol's theorem on cubulated hyperbolic groups and Wise's Quasi-convex Hierarchy Theorem.
\end{abstract}
\thanks{The first author was partially supported by the Simons Foundation, \#342049 to Daniel Groves and the National Science Foundation, DMS-1507067 and DMS-1904913.  The second author was also partially supported by the Simons Foundation \#524176 to Jason Manning and the National Science Foundation, DMS-1462263.}

\maketitle
\setcounter{tocdepth}{1}
\tableofcontents
\section{Introduction}
In recent years, CAT$(0)$ cube complexes have played a central role in many spectacular advances, most notably in Agol's proof of the Virtual Haken and Virtual Fibering Theorems in \cite{VH}.   The main result of \cite{VH} is that a hyperbolic group which acts properly and cocompactly on a CAT$(0)$ cube complex is virtually special.  A key ingredient in Agol's proof was the work of Wise from \cite{WisePUP}, particularly Wise's Quasi-convex Hierarchy Theorem \cite[Theorem 13.3]{WisePUP}.  One of the two main results of the current paper is Theorem \ref{omnibus}, which provides a simultaneous generalization of Agol's theorem and Wise's theorem.
So far this generalization has been applied in \cite{EinsteinG} and \cite{Yen_thesis}.  At the end of the introduction in Subsection \ref{ss:New VH}, we explain how Theorem \ref{omnibus} (together with Theorem \ref{t:cell qc iff hyp}) simplifies the proof of the Virtual Haken and Virtual Fibering Theorems for hyperbolic $3$--manifolds, requiring only a single immersed quasi-Fuchsian surface instead of a ubiquitous family.

Cube complexes in group theory arise via the construction of Sageev \cite{Sageev} which takes as input a group $G$ and a collection of {\em codimension--$1$} subgroups of $G$ and produces a CAT$(0)$ cube complex $X$, equipped with an isometric $G$--action on $X$ with no global fixed point.  The other main result of the current paper is Theorem \ref{t:cell qc iff hyp}, which establishes some fundamental properties about the Sageev construction.

Sageev's construction works in great generality.  However, in order to get more information from the $G$--action on $X$, it is useful to add geometric hypotheses.  For example, if $G$ is a hyperbolic group and the codimension--$1$  subgroups are quasi-convex,
Sageev proved that the associated cube complex is $G$--cocompact \cite[Theorem 3.1]{Sageev-Codim1}.  Achieving a {\em proper} action is harder (see \cite{bergeronwise,hruskawise:finiteness} for conditions which ensure properness).

Even an improper action $G\acts X$ gives a description of $G$ as the fundamental group of a {\em complex of groups} in the sense of Bridson--Haefliger (see \cite[III.$\mc{C}$]{bridhaef:book} or Section \ref{s:complex of groups} below).  In this description, the underlying space is $\leftQ{X}{G}$ and the local groups can be identified with cell stabilizers for the action.

Our first main result links the geometry of the hyperplane stabilizers with that of the cell stabilizers.

\begin{thmA} \label{t:cell qc iff hyp}
  Let $G$ be hyperbolic.  The following conditions on a cocompact $G$--action on a CAT$(0)$ cube complex are equivalent:
  \begin{enumerate}
  \item\label{hyp QC} All hyperplane stabilizers are quasi-convex.
 \item\label{vertex QC} All vertex stabilizers are quasi-convex.
  \item\label{cell QC} All cell stabilizers are quasi-convex.
  \end{enumerate}
\end{thmA}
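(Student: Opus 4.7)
My plan is to prove the easy equivalence $(2)\Leftrightarrow(3)$ via Short's theorem, and then $(1)\Leftrightarrow(2)$ using a \v{S}varc--Milnor-type criterion. The central technical lemma I would isolate is: if a subgroup $H\le G$ acts cocompactly on a convex $H$-invariant subcomplex $Y\subseteq X$ with all vertex stabilizers $G_y$ ($y\in Y^{(0)}$) quasi-convex in $G$, then $H$ is itself quasi-convex in $G$.

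$(3)\Rightarrow(2)$ is immediate since vertices are $0$-cells. For $(2)\Rightarrow(3)$: the pointwise stabilizer $G_c^0=\bigcap_{v\in V(c)}G_v$ of a cube $c$ has finite index in the setwise stabilizer $G_c$ (the quotient injects into $\mathrm{Sym}(V(c))$), and Short's theorem on finite intersections of quasi-convex subgroups of a hyperbolic group gives $G_c^0$, and therefore $G_c$, quasi-convex.

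The combinatorial bridge between hyperplane data and cell data is the identity $G_v\cap G_H = G_v\cap G_{v'}$ whenever $v\in N(H)^{(0)}$ and $v'$ is the unique vertex across $H$ from $v$: both sides equal the stabilizer of the unique edge at $v$ dual to $H$ (the $\supseteq$ inclusion is clear, and the $\subseteq$ inclusion uses the CAT$(0)$ fact that $v$ has a unique edge dual to $H$). For $(2)\Rightarrow(1)$: the carrier $N(\hat H)$ of a hyperplane $\hat H$ is a convex $G_{\hat H}$-cocompact subcomplex of $X$ whose vertex stabilizers are quasi-convex by $(2)$, so the criterion with $Y=N(\hat H)$ gives $(1)$. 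For $(1)\Rightarrow(2)$: apply the criterion to $H=G_v$ with $Y$ a carefully chosen convex $G_v$-invariant subcomplex---for instance, constructed from the union or convex hull of carriers of hyperplanes adjacent to $v$---such that each vertex stabilizer in $Y$ is a quasi-convex intersection of hyperplane stabilizers (by the bridge identity combined with $(1)$ and Short).

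The principal obstacles are: (a) establishing the \v{S}varc--Milnor-type criterion in the improper-action setting, where vertex stabilizers of $X$ may be infinite and the restricted action $H\acts Y$ need not be metrically proper, so the classical \v{S}varc--Milnor argument fails; and (b) the specific construction of $Y$ for $(1)\Rightarrow(2)$, which must simultaneously ensure convexity, $G_v$-invariance, cocompactness, and quasi-convexity of its vertex stabilizers in $G$ without running into self-reference at $v$. Both are presumably handled in the paper's earlier sections through coarse-geometric analysis and the complex-of-groups framework mentioned in the introduction.
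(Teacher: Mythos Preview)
Your $(2)\Leftrightarrow(3)$ is fine and matches the paper. Your approach to $(2)\Rightarrow(1)$ is close in spirit to the paper's: the paper does prove a quasi-convexity criterion (Theorem~\ref{t:globally QC} in the appendix) for chains of uniformly quasi-convex sets and applies it with $Y(w)$ ranging over preimages of vertices in the carrier of $W$, each $Y(w)$ being essentially a cell-stabilizer coset. Your ``central technical lemma'' is a clean packaging of the same mechanism, and obstacle (a) is exactly what the appendix handles.

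The genuine gap is in $(1)\Rightarrow(2)$. Your proposal to reuse the same lemma with $H=G_v$ and a convex $G_v$--invariant subcomplex $Y$ is circular in a way that cannot be patched by excluding the single vertex $v$. Whatever $Y$ you choose---union of adjacent carriers, their convex hull, anything containing more than one vertex of $X$---the hypothesis of your lemma requires the stabilizers $G_w$ (in $G$) of \emph{all} vertices $w\in Y^{(0)}$ to be quasi-convex, and these are precisely the vertex stabilizers of $X$ whose quasi-convexity you are trying to establish. Your bridge identity $G_v\cap G_H = G_v\cap G_{v'}$ relates vertex-stabilizer \emph{intersections} to hyperplane stabilizers, but it does not express a single $G_w$ as an intersection of hyperplane stabilizers. (It cannot: a vertex stabilizer is typically much larger than any finite intersection of hyperplane stabilizers---think of the case where $X$ is a tree with a fixed vertex of infinite stabilizer.) Additionally, cocompactness of $G_v\acts Y$ fails for any $Y$ large enough to be useful.

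The paper's argument for this direction is asymmetric and does not use the criterion at all. Instead it fixes $v$, takes a quasi-geodesic $\gamma$ between two points of $\Psi^{-1}(v)$ in a suitable $G$--graph, and for an arbitrary point $y$ on $\gamma$ builds inductively a list $\mathfrak{C}_i=(W_1,\ldots,W_i)$ of mutually intersecting hyperplanes separating $v$ from (the image of) $y$, together with points $z_i$ close to $y$ lying in translates of the graphs $\Gamma_{\mathfrak{C}_i}$ associated to $\Stab(\mathfrak{C}_i)$. Each step uses a controlled-intersection result (Proposition~\ref{prop:intersections}) for strongly quasi-convex sets. The induction terminates by $i\le\dim X$, forcing $\Psi(z_i)=v$ for some $i$, which bounds $d(y,\Psi^{-1}(v))$. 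The key structural input is finite-dimensionality of $X$, not a convex subcomplex on which $G_v$ acts.
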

Intersections of quasi-convex subgroups are quasi-convex, and cell stabilizers are intersections of vertex stabilizers.  Therefore, the equivalence of \eqref{vertex QC} and \eqref{cell QC} is trivial.  We prove the equivalence of \eqref{hyp QC} and \eqref{vertex QC}. 

We remark that we actually prove the direction \eqref{hyp QC} $\implies$ \eqref{vertex QC} in the more general setting of arbitrary finitely generated groups where we assume the relevant subgroups are {\em strongly quasi-convex} in the sense of \cite{Tran}.  Note that in this more general setting, \eqref{vertex QC} and \eqref{cell QC} are still equivalent.  See Section \ref{s:QC proof} for more details.
In Subsection \ref{ss:Cor B} we explain how Theorem \ref{t:cell qc iff hyp} implies the following result.

\begin{restatable}{corA}{cubecomplexhyp}\label{cor:cube complex hyp}
Suppose that $G$ is a hyperbolic group acting cocompactly on a CAT$(0)$ cube complex $X$ with quasi-convex hyperplane stabilizers.  Then
\begin{enumerate}
\item $X$ is $\delta$--hyperbolic for some $\delta$;
\item\label{cond:fixed} there exists a $k \ge 0$ so that the fixed point set of any infinite subgroup of $G$ intersects at most $k$ distinct cells; and
\item\label{cond:acyl} the action of $G$ on $X$ is acylindrical (in the sense of Bowditch \cite[p. 284]{bowditch:tight}).
\end{enumerate}
\end{restatable}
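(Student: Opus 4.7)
For (1), I would first apply Theorem~\ref{t:cell qc iff hyp} to upgrade the hypothesis to quasi-convexity of every cell stabilizer of $G \acts X$. The main work is then to show $X$ is $G$-equivariantly quasi-isometric to a hyperbolic space. Fixing a vertex $v \in X$, I would build a quasi-isometry between $X$ and the Schreier coset graph of $G$ modulo $G_v$ (using any finite generating set of $G$). Coarse surjectivity follows from cocompactness of $G \acts X$. For the quasi-isometric embedding, cocompactness of $G_v$ on $\link(v)$ gives finitely many $G_v$-orbits of edges incident to $v$, which bounds $d_X(gv,hv)$ above in terms of the coset distance, and a path-lifting argument in $X$ supplies the reverse inequality. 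Hyperbolicity of the coset graph is a standard consequence of quasi-convexity of $G_v$ in the hyperbolic group $G$. I expect this step to be the main obstacle, since the action of $G$ on $X$ is not proper and the standard Milnor--\v{S}varc argument does not apply directly; one could alternatively invoke a combination theorem, using that $X/G$ is a finite complex of hyperbolic groups glued along quasi-convex subgroups.

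For (2), I would invoke the finite-height theorem of Gitik--Mitra--Rips--Sageev: a quasi-convex subgroup of a hyperbolic group has only finitely many essentially distinct conjugates whose common intersection is infinite. By cocompactness there are finitely many $G$-orbits of cells, so it suffices to bound, for each orbit representative $\sigma_0$, the number of translates $g\sigma_0$ whose stabilizer contains a given infinite subgroup $H$. If $H \le G_{g_i \sigma_0}$ for $g_1,\ldots,g_n$ in distinct cosets of $G_{\sigma_0}$, then the conjugates $g_i G_{\sigma_0} g_i^{-1}$ are essentially distinct with common intersection containing the infinite group $H$, and finite height of $G_{\sigma_0}$ then bounds $n$. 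Summing over the finitely many orbit representatives and finally over the cells inside each of the finitely many fixed vertices gives the required $k$.

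For (3), I would combine (1) and (2) with the convexity of fixed-point sets in CAT$(0)$ spaces. If an infinite subgroup $H$ fixes cells $\sigma$ and $\tau$, then the CAT$(0)$ geodesic from $\sigma$ to $\tau$ lies in $\mathrm{Fix}(H)$ and hence meets at most $k$ cells by (2). Cocompactness forces cells to have uniformly bounded diameter, so $d_X(\sigma,\tau)$ is bounded by some $D = D(k)$ whenever an infinite subgroup fixes both. Thus for cells $\sigma,\tau$ with $d_X(\sigma,\tau) > D$, the intersection $G_\sigma \cap G_\tau$ is finite, with order bounded by the maximum order $N$ of a finite subgroup of the hyperbolic group $G$. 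Bowditch's acylindricality for $X$ then follows by standard $\epsilon$-approximation arguments, choosing $R$ to be $D$ plus a small buffer depending on $\epsilon$ and noting that any $g$ with $d(x,gx),d(y,gy)\le\epsilon$ must carry the cells near $x$ and near $y$ within a controlled range, so that $g$ lies in a bounded union of cell-pair stabilizers of the finite type just described.
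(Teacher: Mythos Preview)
Your argument for (2) is essentially the paper's: invoke finite height of the family of quasi-convex cell stabilizers. However, there are genuine gaps in (1) and (3).

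For (1), the claimed quasi-isometry between $X$ and the Schreier graph $G/G_v$ fails in general. Take $G=\langle a\rangle * \langle b\rangle$ acting on its Bass--Serre tree $X$, with $v$ the $\langle a\rangle$--vertex. Then $d_X(v,b^kv)=2$ for all $k\neq 0$, while in the Schreier graph (with generating set $\{a^{\pm1},b^{\pm1}\}$) the cosets $G_v$ and $b^kG_v$ are at distance $|k|$. Your ``path-lifting argument'' cannot supply the reverse inequality: the combinatorial path from $v$ to $b^kv$ passes through the $\langle b\rangle$--vertex, which is not in the orbit $Gv$, and there is no bounded-distance substitute in that orbit. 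The paper instead cones off \emph{all} vertex stabilizers simultaneously: by a theorem of Charney--Crisp (Theorem~\ref{t:X QI to coned}), $X$ is quasi-isometric to the Cayley graph of $G$ with every coset of every vertex stabilizer coned to a point, and Bowditch's result then gives hyperbolicity of that coned space. Your combination-theorem alternative is plausible but would need to be developed.

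For (3), you correctly reach the statement that $\Stab(v)\cap\Stab(w)$ is finite of uniformly bounded order once $d(v,w)$ is large; this is exactly the condition the paper isolates. But the passage from this to Bowditch acylindricity is not a ``standard $\epsilon$-approximation argument'': since $X$ need not be locally finite (as the paper explicitly notes after the statement of the Corollary), there may be infinitely many cells within $\epsilon$ of a given point, and your ``bounded union of cell-pair stabilizers'' is then an infinite union. The paper handles this step by citing a theorem of Genevois which establishes that, for actions on hyperbolic CAT$(0)$ cube complexes, the bounded-joint-stabilizer condition is equivalent to acylindricity.
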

  Anthony Genevois explained to us how conclusion \eqref{cond:fixed} implies acylindricity for actions on hyperbolic CAT$(0)$ cube complexes (see Subsection \ref{ss:Cor B}).  The condition in \eqref{cond:fixed} is not implied by acylindricity since $X$ is not assumed to be locally compact.

Without the conclusion of $\delta$--hyperbolicity, a more general version of Corollary \ref{cor:cube complex hyp} holds just as for Theorem \ref{t:cell qc iff hyp}.  See Remark \ref{rem:middleconclusion} for more details.

In Sageev's construction, the stabilizers in $G$ of hyperplanes in the resulting cube complex are commensurable with the chosen codimension--$1$ subgroups of $G$.  Therefore, we have the following result.

\begin{corA} \label{c:Sageev cell QC}
Let $G$ be a hyperbolic group and let $\mc{H} = \{ H_1, \ldots, H_k \}$ be a collection of quasi-convex codimension--$1$ subgroups.  Let $X$ be a CAT$(0)$ cube complex obtained by applying the Sageev construction to $\mc{H}$. 

\begin{enumerate}
\item\label{stabFP} The stabilizers of cells in $X$ are quasi-convex in $G$.  In particular, they are finitely presented.
\item $X$ is $\delta$--hyperbolic for some $\delta$.
\item There exists a $k \ge 0$ so that the fixed set of any infinite subgroup of $G$ intersects at most $k$ distinct cells.
\item The action of $G$ on $X$ is acylindrical.
\end{enumerate}
\end{corA}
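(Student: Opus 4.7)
The plan is to reduce this to Theorem \ref{t:cell qc iff hyp} and Corollary \ref{cor:cube complex hyp}, which together do essentially all the work once we verify that their hypotheses are satisfied by Sageev's output.

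First I would invoke Sageev's cocompactness theorem \cite[Theorem 3.1]{Sageev-Codim1}: since $G$ is hyperbolic and the $H_i$ are quasi-convex (in particular relatively quasi-convex in the trivial sense), the action $G \acts X$ is cocompact. Next, I would record the fact (highlighted in the paragraph just before the corollary in the excerpt) that each hyperplane stabilizer in $X$ is commensurable with some $H_i$. Because quasi-convexity in a hyperbolic group is preserved under passage to and from finite-index subgroups, commensurability with a quasi-convex subgroup forces a subgroup to be quasi-convex. Thus the hyperplane stabilizers for the $G$-action on $X$ are all quasi-convex in $G$.

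At this point condition \eqref{hyp QC} of Theorem \ref{t:cell qc iff hyp} is satisfied, and the action is cocompact, so Theorem \ref{t:cell qc iff hyp} gives that all cell stabilizers are quasi-convex in $G$. Since finitely generated quasi-convex subgroups of hyperbolic groups are themselves hyperbolic and hence finitely presented, this yields conclusion \eqref{stabFP}. For the remaining three conclusions, I would apply Corollary \ref{cor:cube complex hyp} directly: its hypotheses (hyperbolic group, cocompact action on a CAT$(0)$ cube complex, quasi-convex hyperplane stabilizers) are exactly what we have just verified, and its three conclusions are precisely parts (2), (3), and (4) of the statement.

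There is essentially no obstacle here beyond the bookkeeping just described; the entire content of the corollary is packaged into Theorem \ref{t:cell qc iff hyp} and Corollary \ref{cor:cube complex hyp}. If there is a subtle point to double-check, it is the claim that hyperplane stabilizers are commensurable with (as opposed to merely containing finite-index subgroups inside) the original codimension--$1$ subgroups, but this is standard in Sageev's construction and does not disturb quasi-convexity in either direction.
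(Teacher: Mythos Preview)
Your proposal is correct and matches the paper's approach exactly: the paper presents this corollary as an immediate consequence of Theorem \ref{t:cell qc iff hyp} and Corollary \ref{cor:cube complex hyp}, with the only additional input being Sageev's cocompactness theorem and the commensurability of hyperplane stabilizers with the $H_i$ (the sentence just before the corollary statement). Your observation that quasi-convex subgroups of hyperbolic groups are hyperbolic and hence finitely presented is the right way to extract the ``in particular'' clause of part \eqref{stabFP}.
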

As far as we are aware, even the corollary of item \eqref{stabFP} that the cell stabilizers are finitely generated in the above result is new.  We remark that the fact that cell stabilizers are finitely presented implies that the description of $G$ as the fundamental group of the complex of groups associated to $\leftQ{X}{G}$ is a finite description.

Some of the most dramatic uses of CAT$(0)$ cube complexes have come from Haglund and Wise's theory of {\em special} cube complexes \cite{HW08}.   A cube complex is {\em special} if it admits a locally isometric immersion into the Salvetti complex of a right-angled Artin group.  A group $G$ is {\em virtually special} if there is a finite-index subgroup $G_0 \le G$ and a CAT$(0)$ cube complex $X$ so that $G_0$ acts freely and cubically on $X$ and $\leftQ{X}{G_0}$ is a compact special cube complex.  (For some authors the quotient is allowed to be non-compact but have finitely many hyperplanes.)

As shown in \cite{HW08}, virtually special hyperbolic groups have many remarkable properties, such as being residually finite, linear over $\mathbb Z$ and possessing very strong subgroup separability properties.

Agol \cite{VH} proved that if a hyperbolic group $G$ acts properly and cocompactly on a CAT$(0)$ cube complex then $G$ is virtually special.  It is this result that implies the Virtual Haken Conjecture, as well as the Virtual Fibering Conjecture (in the compact case), and many other results.

One of the key ingredients of the proof of Agol's Theorem, and another of the most important theorems in the  area is Wise's Quasi-convex Hierarchy Theorem \cite[Theorem 13.3]{WisePUP} (see also \cite[Theorem 10.2]{AGM_msqt}) which states that if a hyperbolic group $G$ can be expressed as   $A \ast_C$ (respectively $A \ast_C B$) where $C$ is quasi-convex in $G$ and $A$ is (respectively $A$ and $B$ are) virtually special then $G$ is virtually special.  This theorem can be rephrased as saying that if a hyperbolic group acts cocompactly on a {\em $1$-dimensional CAT$(0)$ cube complex} (otherwise known as a `tree') with virtually special and quasi-convex cell stabilizers, then $G$ is virtually special.

Our second main result is a common generalization of Agol's theorem and Wise's Quasi-convex Hierarchy Theorem.

\begin{thmA} \label{omnibus}
 Suppose that $G$ is a hyperbolic group acting cocompactly on a CAT(0) cube complex $X$ so that cell stabilizers are quasi-convex and virtually special.  Then $G$ is virtually special.
\end{thmA}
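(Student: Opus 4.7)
My plan is to induct on the dimension of $X$. The base case $\dim X = 0$ is immediate, since a finite-index subgroup of $G$ equals a vertex stabilizer and is therefore virtually special by hypothesis. The base case $\dim X = 1$ is precisely Wise's Quasi-convex Hierarchy Theorem: a tree gives a graph-of-groups decomposition of $G$ with virtually special quasi-convex vertex and edge groups. So assume $\dim X = n \geq 2$ and the theorem holds in strictly smaller dimensions.

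The first substantive step is to upgrade the hypothesis to say that hyperplane stabilizers, not merely cell stabilizers, are virtually special. By Theorem \ref{t:cell qc iff hyp} each hyperplane stabilizer $\mathrm{Stab}_G(Y)$ is quasi-convex in $G$, hence is itself a hyperbolic group. Moreover, $\mathrm{Stab}_G(Y)$ acts cocompactly on the CAT(0) cube complex $Y$, whose dimension is at most $n-1$, and each cell of $Y$ is dual to a cell of $X$ so that its stabilizer in $\mathrm{Stab}_G(Y)$ is a finite-index subgroup of a cell stabilizer in $G$. Such stabilizers are therefore quasi-convex in $\mathrm{Stab}_G(Y)$ (quasi-convexity is inherited) and virtually special by hypothesis. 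The inductive hypothesis applies and gives that each $\mathrm{Stab}_G(Y)$ is virtually special.

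With all hyperplane, vertex, and cell stabilizers quasi-convex and virtually special, I would next invoke relative hyperbolic Dehn filling together with Wise's Malnormal Special Quotient Theorem. After passing to a finite-index subgroup (using residual finiteness of cell stabilizers, which follows from their virtual specialness), one arranges a suitable almost malnormal collection of maximal cell stabilizers, chooses deep enough finite-index normal subgroups inside each, and passes to the corresponding quotient $\overline{G}$. The goal is to arrange that $\overline{G}$ is hyperbolic, acts cocompactly on a cube complex $\overline{X}$ obtained from $X$ (using the acylindricity and $\delta$-hyperbolicity provided by Corollary \ref{cor:cube complex hyp}), and has finite cell stabilizers in that action. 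Agol's theorem then applies to $\overline{G}$, giving virtual specialness of the quotient.

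The main obstacle is the final step: bootstrapping virtual specialness of $\overline{G}$ back to $G$. In the spirit of Agol's tower argument, and of the malnormal combination/gluing theorems in \cite{Wise}, one must build a tower of finite covers of $X/G$, at each stage using separability of the cell stabilizers (provided by their virtual specialness) and a canonical-completion-style construction to replace an orbit of non-trivial cell stabilizers with ``resolved'' cubical structure, while simultaneously preserving the no-self-intersection and no-direct-self-osculation conditions required of a special cube complex. Verifying that this resolution process terminates in a compact special quotient --- i.e., that all the Haglund--Wise hyperplane pathologies can be removed finitely through a combination of finite-index passage in $G$ and finite-index passage in the cell stabilizers --- is the deepest technical point and will consume the bulk of the proof.
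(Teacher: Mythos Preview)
Your overall architecture --- Dehn fill to a proper action, then apply Agol --- matches the paper's, and your inductive observation that hyperplane stabilizers are themselves virtually special is correct.  But that observation is unnecessary, and the place where you declare a genuine obstacle (``bootstrapping virtual specialness of $\overline{G}$ back to $G$'') is exactly where the paper has a short, clean argument that you are missing.

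Here is the point.  Once Agol's theorem produces a finite-index $\overline{G}_0 \le \overline{G}$ with $\leftQ{Z}{\overline{G}_0}$ special, let $G_0 \le G$ be its preimage.  The underlying cube complex of $\leftQ{X}{G_0}$ is \emph{identical} to that of $\leftQ{Z}{\overline{G}_0}$; only the complex-of-groups decoration differs.  In particular every hyperplane of $\leftQ{X}{G_0}$ is embedded and two-sided.  Now cut $\leftQ{X}{G_0}$ along its hyperplanes one at a time.  Each cut expresses the current group as an amalgam or HNN extension whose edge group is a hyperplane stabilizer --- quasi-convex in $G_0$ by Theorem~\ref{t:cell qc iff hyp} --- and after all hyperplanes have been cut, the remaining pieces carry finite-index subgroups of the vertex stabilizers of $G \acts X$, which are virtually special by hypothesis.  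This is a quasi-convex hierarchy terminating in virtually special groups, so Wise's Quasi-convex Hierarchy Theorem applies directly to $G_0$.  No tower construction and no new resolution procedure is needed: the entire ``bootstrap'' is the hierarchy induced by the hyperplanes, and the only fact about hyperplane stabilizers it uses is quasi-convexity, not virtual specialness.  Your proposed induction on $\dim X$ to prove the latter is thus a correct but idle detour.
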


By Corollary \ref{c:Sageev cell QC}, Theorem \ref{omnibus} has the following immediate consequence.

\begin{corA}
Suppose that $G$ is a hyperbolic group and that $\mc{H} = \{ H_1, \ldots, H_k \}$ is a collection of quasi-convex codimension--$1$ subgroups.  If the vertex stabilizers of the $G$--action on a cube complex obtained by applying the Sageev construction to $\mc{H}$ are virtually special, then $G$ is virtually special. 
\end{corA}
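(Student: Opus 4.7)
The plan is to verify the hypotheses of Theorem \ref{omnibus} for the Sageev cube complex $X$ associated to $\mc{H}$. Three conditions must be checked: (i) the $G$--action on $X$ is cocompact, (ii) cell stabilizers are quasi-convex in $G$, and (iii) cell stabilizers are virtually special. Conditions (i) and (ii) are essentially free: cocompactness is Sageev's theorem \cite[Theorem 3.1]{Sageev-Codim1} applied to the quasi-convex codimension--$1$ subgroups $H_1,\dots,H_k$, and quasi-convexity of cell stabilizers is precisely conclusion \eqref{stabFP} of Corollary \ref{c:Sageev cell QC}.

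The only real work is condition (iii). Every cell stabilizer $C$ is the intersection of the stabilizers $V_1,\dots,V_m$ of the vertices of some cell. By hypothesis each $V_i$ is virtually special, and as a vertex stabilizer $V_i$ is quasi-convex in $G$ (by Theorem \ref{t:cell qc iff hyp} or by Corollary \ref{c:Sageev cell QC}\eqref{stabFP}), hence $V_i$ is itself hyperbolic. The cell stabilizer $C$ is quasi-convex in $G$, and since $C \le V_i$, standard properties of quasi-convex subgroups imply that $C$ is quasi-convex in the hyperbolic group $V_i$. A quasi-convex subgroup of a virtually compact special hyperbolic group is itself virtually compact special (this is a standard consequence of the Haglund--Wise machinery, e.g.\ via canonical completion or via the fact that quasi-convex subgroups of virtually special hyperbolic groups inherit a virtually special structure). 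Applying this to $C \le V_1$ shows that $C$ is virtually special.

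With all three hypotheses verified, Theorem \ref{omnibus} applies directly and yields that $G$ is virtually special.

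The step requiring the most care is (iii), specifically the appeal to the fact that quasi-convex subgroups of virtually compact special hyperbolic groups are virtually compact special; this is where the work of Haglund--Wise \cite{HW08} enters, and it is the main conceptual (rather than novel) ingredient in the argument. Everything else is a direct translation from the hypotheses on $\mc{H}$ to the hypotheses of Theorem \ref{omnibus}, using Corollary \ref{c:Sageev cell QC} as the bridge.
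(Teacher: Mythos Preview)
Your proof is correct and follows the same route the paper implicitly takes: the paper states this corollary as an ``immediate consequence'' of Corollary~\ref{c:Sageev cell QC} and Theorem~\ref{omnibus} without giving any argument, and you have simply made explicit the three hypotheses of Theorem~\ref{omnibus} that must be checked. In particular, you correctly identify the one step that is not entirely tautological --- passing from virtually special \emph{vertex} stabilizers to virtually special \emph{cell} stabilizers --- and your justification via Haglund--Wise (a quasi-convex subgroup of a hyperbolic virtually compact special group is itself virtually compact special) is the standard and correct way to bridge this gap.
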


Since finding proper actions of hyperbolic groups on CAT$(0)$ cube complexes is much harder than finding cocompact actions, Theorem \ref{omnibus} is expected to be a powerful new tool for proving that hyperbolic groups are virtually special.  As mentioned above, already Yen Duong \cite{Yen_thesis} has used Theorem \ref{omnibus} to show that random groups in the square model at density $<1/3$ are virtually special.  Theorem \ref{omnibus} (as well as Corollary \ref{t:quotient CAT(0)} below) are also applied in the paper \cite{EinsteinG} to provide a characterization of relatively hyperbolic groups with $2$--sphere boundary in terms of actions on cube complexes.

Theorem \ref{t:cell qc iff hyp} is one of the key ingredients of the proof of Theorem \ref{omnibus}.  We now explain how Theorem \ref{omnibus} is a consequence of the above-mentioned results of Agol and Wise, along with Theorem \ref{t:cell qc iff hyp} and the following result (proved in Section \ref{s:Dehn filling}).

 \begin{restatable}{thmA}{fillproper}\label{t:fill to get proper action}
  Suppose that the hyperbolic group $G$ acts cocompactly on a CAT$(0)$ cube complex $X$ and that cell stabilizers are virtually special and quasi-convex.
  There exists a quotient $\overline{G} = G/K$ so that 
 \begin{enumerate}
  \item The quotient $\leftQ{X}{K}$ is a CAT$(0)$ cube complex;
  \item The group $\overline{G}$ is hyperbolic; and
  \item The action of $\overline{G}$ on $\leftQ{X}{K}$ is proper (and cocompact).
 \end{enumerate}
 \end{restatable}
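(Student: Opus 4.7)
The plan is to realize $\overline G$ as a Dehn filling of $G$ along (an almost malnormal collection containing) the infinite cell stabilizers, choosing the filling kernel $K$ deep enough that both the relatively hyperbolic Dehn filling theorem applies and the quotient $X/K$ remains a CAT$(0)$ cube complex.

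I would first set up the relatively hyperbolic structure. By Theorem \ref{t:cell qc iff hyp}, every cell stabilizer is quasi-convex. Cocompactness gives finitely many conjugacy classes of maximal infinite vertex stabilizers; let $P_1, \ldots, P_n$ be representatives, and let $Q_i = \mathrm{Comm}_G(P_i)$. In a hyperbolic group $P_i$ has finite index in $Q_i$, so each $Q_i$ is quasi-convex, almost malnormal in $G$, and (because virtual specialness combined with hyperbolicity is inherited by finite extensions) virtually special and hyperbolic. Consequently $G$ is hyperbolic relative to $\{Q_1, \ldots, Q_n\}$, and the cubical action $G \acts X$ is compatible with this relative structure.

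Next I would invoke the relatively hyperbolic Dehn filling theorem (see \cite{AGM_msqt}): for any prescribed finite ``bad set'' in each $Q_i$, there exist finite-index normal subgroups $N_i \triangleleft Q_i$ avoiding it such that, writing $K = \langle\langle N_1 \cup \cdots \cup N_n \rangle\rangle^G$, the quotient $\overline G = G/K$ is hyperbolic, $K \cap Q_i = N_i$, and the map $Q_i/N_i \hookrightarrow \overline G$ is injective. This yields item (2). Because each $N_i$ has finite index in $P_i$, every $P_i$ has finite image in $\overline G$, so once item (1) is established, cocompactness of $\overline G \acts X/K$ is inherited from $G \acts X$ and properness follows at once, giving item (3).

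The main obstacle is item (1). Here the virtual specialness of the $Q_i$ is essential: the deep separability properties of special cube complex groups allow me to shrink each $N_i$ further so that every element of $N_i$ fixes a prescribed combinatorial neighborhood of each cell it stabilizes and acts trivially on every hyperplane and half-space it meets within a controlled radius. These conditions ensure that $X/K$ is locally a non-positively curved cube complex (flag links, no cube folds onto itself), that hyperplanes of $X$ descend injectively to two-sided, non-self-osculating, non-inter-osculating hyperplanes in $X/K$, and that $X/K$ is simply connected via the filling picture. Pinning down all of these codimension--$1$ and codimension--$2$ local pathologies with separability estimates, and reconciling them with the group-theoretic output of the Dehn filling theorem, is the technical core of the argument.
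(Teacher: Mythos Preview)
Your proposed peripheral structure does not work. You take $\{Q_1,\ldots,Q_n\}$ to be the commensurators of maximal infinite vertex stabilizers and assert that $G$ is hyperbolic relative to this family. But this family is almost never almost malnormal: two adjacent vertices of $X$ with infinite stabilizers share an infinite edge stabilizer, so $Q_i\cap gQ_jg^{-1}$ is infinite for many distinct cosets $gQ_j$. Relative hyperbolicity with respect to $\{Q_i\}$ therefore fails, and the Dehn filling theorem cannot be applied as you describe. (The same obstruction prevents a single filling from producing a proper action: even if $K\cap Q_i$ were finite index in each $Q_i$, a vertex stabilizer lying in $Q_i\cap gQ_jg^{-1}$ need not map to a finite group.)

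The paper resolves this by taking a different peripheral structure: the \emph{induced} structure $\mc{P}$ (Definition~\ref{d:induced peripheral}) consists of commensurators of \emph{minimal} infinite intersections of conjugates of cell stabilizers. This family \emph{is} almost malnormal, so $(G,\mc{P})$ is relatively hyperbolic. A sufficiently long $\mc{Q}$--filling along $\mc{P}$ then yields a hyperbolic $\overline{G}$ acting on a CAT$(0)$ cube complex $\leftQ{X}{K}$ (via Theorem~\ref{t:meta} and Corollary~\ref{t:quotient CAT(0)}, which encode precisely the ``codimension--$1$ and codimension--$2$ local pathologies'' you allude to), with cell stabilizers again quasi-convex and virtually special, but of \emph{strictly smaller height} (Theorem~\ref{t:fill to get CAT(0)}). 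One then iterates: each step reduces height, so after finitely many fillings the cell stabilizers become finite and the action is proper. Your outline misses both the correct choice of peripheral structure and the need for this inductive height-reduction; a single filling does not suffice.
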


\begin{proof}[Proof of Theorem \ref{omnibus}]
Consider the hyperbolic group $G$, acting on a CAT$(0)$ cube complex $X$ as in the statement of Theorem \ref{omnibus}.  
By Theorem \ref{t:fill to get proper action} there exists a hyperbolic quotient $\overline{G} = G/K$ of $G$ so that
$\leftQ{X}{K}$ is a CAT$(0)$ cube complex, and the $\overline{G}$--action on $\leftQ{X}{K}$ is proper and cocompact.  Let $Z = \leftQ{X}{K}$.  

By Agol's Theorem \cite[Theorem 1.1]{VH}, there is a finite-index subgroup $\overline{G}_0$ of $\overline{G}$ so that $\leftQ{Z}{\overline{G}_0}$ is special.  Let $G_0$ be the pre-image in $G$ of $\overline{G}_0$.  Clearly, the underlying space of $\leftQ{X}{G_0}$ is the same as that of $\leftQ{Z}{\overline{G}_0}$, and in particular all of the hyperplanes are two-sided and embedded.

We cut successively along these hyperplanes, applying the complex of groups version of the Seifert-van Kampen Theorem \cite[Example III.$\mc{C}$.3.11.(5) and Exercise III.$\mc{C}$.3.12]{bridhaef:book}.  In this way, we obtain a hierarchy of $G_0$ with the following properties:

\begin{enumerate}
\item The edge groups are quasi-convex (since they are stabilizers of hyperplanes, which are quasi-convex by Theorem \ref{t:cell qc iff hyp}); and
\item The terminal groups are virtually special (since they are finite-index subgroups of the vertex stabilizers in $G$).\end{enumerate}

Therefore, $G_0$ admits a quasi-convex hierarchy terminating in virtually special groups, so $G_0$ is virtually special by Wise's Quasi-convex Hierarchy Theorem \cite[Theorem 13.3]{WisePUP} (see \cite[Theorem 10.3]{AGM_msqt} for a somewhat different account).  Since $G_0$ is finite-index in $G$, the group $G$ is virtually special, as required.  This completes the proof of Theorem \ref{omnibus}.
\end{proof}
\begin{remark}
We thank one of the referees for pointing out that one can replace the use of the complex of groups Seifert-van Kampen Theorem in the previous proof with the following argument:  Once all of the hyperplanes in $\leftQ{X}{G_0}$ are two-sided and embedded, lift to $X$ and consider the trees dual to the hyperplanes.  This gives a collection of $G_0$--trees.  Order them in some way.  If $V$ stabilizes a vertex in the first tree, consider the $V$--action on the second tree.  The stabilizers in $V$ for this second action act on the third tree, and so on.  In this way, a quasi-convex hierarchy for $G_0$ is obtained, and the proof finishes as above.
\end{remark}

We now briefly outline the contents of this paper.  In Section \ref{s:complex of groups} we recall those parts of the theory of complexes of groups from \cite{bridhaef:book} which we need.  
  In Section \ref{s:QC proof}, we prove Theorem \ref{t:cell qc iff hyp} and Corollary \ref{cor:cube complex hyp}.  The proof of Theorem \ref{t:cell qc iff hyp} depends on a quasi-convexity criterion (Theorem \ref{t:globally QC}) which is proved separately in Appendix \ref{app:QC}.  We separate out Theorem \ref{t:globally QC} and its proof both because it may be of independent interest and because the methods, unlike in the rest of the paper, are pure $\delta$--hyperbolic geometry.
 In Section \ref{s:X mod K CAT(0)} we investigate conditions on a group $G$ acting on a CAT$(0)$ cube complex $X$ and a normal subgroup $K \unlhd G$ so that the quotient $\leftQ{X}{K}$ is a CAT$(0)$ cube complex.  In Section \ref{s:algebra} we translate these conditions into group-theoretic statements.  In Section \ref{s:Dehn filling} we prove various results about Dehn filling (in particular, Theorem \ref{t:meta} and Corollary \ref{t:quotient CAT(0)} which may be of independent interest) to see that the conditions from Section \ref{s:algebra} are satisfied for certain subgroups $K$ which arise as kernels of long Dehn filling maps.  We use this to deduce Theorem \ref{t:fill to get proper action}.  

\subsection{Notation and conventions}
The notation $A\dotsub B$ indicates that $A$ is a finite index subgroup of $B$; similarly, $A\dotnorm B$ indicates $A$ is a finite index normal subgroup.  We write conjugation as $a^x = xax^{-1}$, or sometimes as $\Ad(x)(a)$.  For $p$ an element of a $G$--set, we denote the $G$--orbit by $\orbit{p}$.

\subsection{Virtual Haken and Fibering with a single surface}  \label{ss:New VH}

Let $M$ be a closed hyperbolic $3$--manifold, and let $\Gamma = \pi_1(M)$.
Agol's proof that $M$ is virtually Haken and virtually fibered in \cite{VH} proceeds via proving that $\Gamma$ is virtually special.  This relies on Bergeron--Wise's theorem that $\Gamma$ acts properly and cocompactly on a CAT$(0)$ cube complex \cite{bergeronwise}.  In turn, Bergeron--Wise rely on work of Kahn--Markovic \cite{KM12}, which provides a ``ubiquitous''\footnote{This terminology is from Cooper and Futer \cite{CooperFuter}.} family of immersed quasi-Fuchsian surfaces in $M$.  That there is such an abundance of surfaces follows from the proofs in \cite{KM12},   
but is not explicitly stated there.

Here we point out that the results in this paper show that the fact that $\Gamma$ is virtually special follows from the existence of a {\em single} immersed quasi-Fuchsian surface in $M$.    It is explained in \cite{WisePUP} how Virtual Haken and Virtual Fibering follow.

\begin{theorem}
Suppose that $M$ is a closed hyperbolic $3$--manifold and that $M$ contains an immersed quasi-Fuchsian surface.  Then $\pi_1(M)$ is virtually special.
\end{theorem}
\begin{proof}
  If $M$ is non-orientable, we replace it by its orientation double cover.
  Let $\Gamma\cong \pi_1M$ be a lattice in $\operatorname{Isom}^+(\bH^3)$, so that $M \cong \leftQ{\bH^3}{\Gamma}$.  We note that in this setting a subgroup $W<\Gamma$ is geometrically finite as a Kleinian group if and only if it is quasi-convex in $\Gamma$ (see \cite[Theorem 2]{Kapovich-Short} or \cite[Theorem 1.1 and Proposition 1.3]{Swarup}).

  Let $H<\Gamma$ be the subgroup corresponding to the immersed quasi-Fuchsian surface.  Since $H$ is quasi-convex and codimension--$1$ in $\Gamma$, we can apply the Sageev construction to obtain a cocompact action of $\Gamma$ on a CAT$(0)$ cube complex $X$ with no global fixed point, and with hyperplane stabilizers conjugate to $H$.  Theorem \ref{t:cell qc iff hyp} implies that the vertex stabilizers for this action are quasi-convex in $\Gamma$.  To apply Theorem \ref{omnibus}, we will show that the vertex stabilizers admit quasi-convex hierarchies and hence are virtually special.

  Let $V<\Gamma$ be a vertex stabilizer. 
  Since $V$ is quasi-convex in $\Gamma$ it is a geometrically finite subgroup of $\operatorname{Isom}^+(\bH^3)$.  As $V$ has infinite index in $\Gamma$, it acts with infinite covolume on $\bH^3$.  An argument of Thurston shows that every finitely generated subgroup of $V$ is also geometrically finite \cite[Proposition 7.1]{MorganUniformization}.

  Since $\Gamma$ contains no parabolics, neither does $V$.  Thus a small closed neighborhood $N$ of the convex core of $\leftQ{\bH^3}{H}$ is a compact $3$--manifold  with nonempty boundary, and hence is irreducible in the sense that every embedded $2$--sphere bounds a ball \cite[Propositions 2.36, 3.1]{mt:hmkg}.  A compact irreducible $3$--manifold with nonempty boundary is Haken (see \cite[Chapter 6]{Hempel}, \cite[Chapter III]{jaco}).  In particular it has a Haken hierarchy \cite[IV.12]{jaco}.  This topological hierarchy of $N$ gives a group-theoretic hierarchy of $V$.  The edge groups in the hierarchy are finitely generated.  The previously mentioned argument of Thurston then implies that the edge groups are geometrically finite and hence quasi-convex in $\Gamma$.  In particular, this is a quasi-convex hierarchy, and we may apply Wise's Quasi-convex Hierarchy Theorem to conclude that $V$ is virtually special.

  Since all vertex stabilizers of the action $\Gamma\acts X$ are quasi-convex and virtually special, we may apply Theorem \ref{omnibus} to conclude that $\Gamma$ is itself virtually special.
\end{proof}

\subsection{Acknowledgments}  
We thank Richard Webb for suggesting that the direction \eqref{hyp QC} $\implies$ \eqref{vertex QC} of Theorem \ref{t:cell qc iff hyp} might hold in a more general setting than that of hyperbolic groups.

  Thanks to Anthony Genevois for pointing out that his work allows us to deduce acylindricity (Corollary \ref{cor:cube complex hyp}.\eqref{cond:acyl}) from Corollary \ref{cor:cube complex hyp}.\eqref{cond:fixed}.

  We also thank Alessandro Sisto for pointing out the applicability of a ``Greendlinger Lemma'' type result in our joint work (see \cite[Lemma 2.26]{GMS}).  This lemma inspired Theorem~\ref{t:greendlinger} in the current paper, which simplified the proof of Theorem~\ref{t:meta}.

Finally, we thank the anonymous referees whose careful readings and comments have led to substantial improvements in the exposition of this paper.

\section{Complexes of groups} \label{s:complex of groups}
In this section we give a brief account of those parts of the theory of complexes of groups which we need.  Much more detail can be found in Bridson--Haefliger \cite[III.$\mc{C}$]{bridhaef:book}.

\subsection{Paths and homotopies in a category} 
The definitions here are mainly taken from \cite[III.$\mc{C}$.A]{bridhaef:book}, though our notation is slightly different.

Let $\mc{C}$ be a category.
For an arrow $a$ of $\mc{C}$, we denote its source by $i(a)$ and its target by $t(a)$.
An \emph{oriented edge} of $\mc{C}$ is a symbol $a^+$ or $a^-$ where $a$ is an arrow of $\mc{C}$.  The source and target of an oriented edge are defined by
\[ i(a^-) = i(a),\quad t(a^-) = t(a),\quad\mbox{and}\quad i(a^+) = t(a),\quad  t(a^+) = i(a).\]
(We caution readers that this may be the opposite of what they expect.  The signs are chosen so that concatenation of $+$ edges is homotopic to composition of the corresponding arrows (see Definition \ref{def:elem homotopy}).)

We now define $\mc{C}$--paths.
A \emph{$\mc{C}$--path $p$ of length $0$} is an object $v$ of $\mc{C}$, with $i(p) = t(p) = v$.  For $j > 0$, a \emph{$\mc{C}$--path of length $j$} is a list $p = e_1\cdot e_2 \cdots  e_j$ where for each $i$ we have $t(e_i) = i(e_{i+1})$.  We have $i(p) = i(e_1)$ and $t(p) = t(e_j)$.

If $p$ is a $\mc{C}$--path of length $j$, $q$ is a $\mc{C}$--path of length $k$, and $t(p) = i(q)$, then the concatenation $p \cdot q$ is a $\mc{C}$--path of length $j+k$ with $i(p \cdot q) = i(p)$ and $t(p \cdot q) = t(q)$.\footnote{For purposes of concatenation, a $\mc{C}$--path of length $0$ is regarded as an empty list.}

The category $\mc{C}$ is {\em connected} if for any two objects $v_0, v_1$ in $\mc{C}$ there is a $\mc{C}$--path $p$ with $i(p) = v_0$ and $t(p) = v_1$.

  If $p$ is a $\mc{C}$--path, then $p$ is \emph{non-backtracking} if it contains no subpath of the form $a^+ \cdot a^-$ or $a^-\cdot a^+$.

\begin{definition} \label{def:elem homotopy}
  \emph{Homotopies} of $\mc{C}$--paths (see  \cite[III.$\mc{C}$.A.11]{bridhaef:book}) are generated by the following \emph{elementary homotopies}, valid whenever both sides are paths:
 \begin{enumerate}
 \item $p\cdot a^+\cdot a^- \cdot q\simeq p\cdot q$ or $p\cdot a^-\cdot a^+\cdot q\simeq p\cdot q$;
 \item $p\cdot a^+\cdot b^+\cdot q \simeq p\cdot (ab)^+\cdot q$ or $p\cdot b^-\cdot a^-\cdot q\simeq p\cdot (ab)^-\cdot q$ (here and below we write $ab$ for the composition $a\circ b$); and
 \item $p\cdot \identity_v^\pm \cdot q\simeq p\cdot q$ (where $\identity_v$ is an identity arrow).
 \end{enumerate}
 \end{definition}

 Any category has a \emph{nerve} which is a simplicial complex whose $0$--cells are the objects of $\mc{C}$, with $1$--cells corresponding to arrows, $2$--cells to composable pairs of arrows, and so on.  The $\mc{C}$--paths we have just defined give edge-paths and the elementary homotopies correspond to simplicial homotopies in this complex.

\subsection{Small categories without loops (scwols)}
By a \emph{scwol} (small category without loops) we mean a small category in which for every object $v$, the set of arrows from $v$ to itself contains only the unit $\identity_v$, and this unit $\identity_v$ cannot be written as a composition of other arrows.
  An arrow is \emph{trivial} if it is equal to $\identity_v$ for some object $v$.
  We sometimes conflate $v$ and $\identity_v$.  A \emph{(non-degenerate) morphism} of scwols $f\co \mc{A}\to \mc{B}$ is a functor which induces, for each object $v$ of $\mc{A}$, a bijection between the arrows $\{a\mid i(a) = v\}$ and the arrows $\{a\mid i(a)=f(v)\}$.

  \begin{definition}[Simple scwol, scwolification]
    \label{def:simplescwol}
    A scwol in which there is at most one arrow with a given source and target will be called a \emph{simple scwol}.  Any small category $\mc{C}$ has a canonical quotient category $\scwol(\mc{C})$ which is a simple scwol.  The objects of  $\scwol(\mc{C})$ are equivalence classes of objects of $\mc{C}$, where $v\sim w$ if there are arrows $a$ and $b$ so that $i(a)=t(b)=v$ and $i(b)=t(a)=w$.  Similarly, the arrows of $\scwol(\mc{C})$ are equivalence classes of arrows of $\mc{C}$, where $a\sim b$ whenever $i(a)\sim i(b)$ and $t(a)\sim t(b)$.  We may refer to $\scwol(\mc{C})$ as the \emph{scwolification of $\mc{C}$.}  The map $\mc{C}\to \scwol(\mc{C})$ taking each object and arrow to its equivalence class will be called the \emph{scwolification functor}.
  \end{definition}

\begin{remark}\label{rem:scwolify is natural}
  The procedure of scwolification is natural.  In particular, a group action on a small category $\mc{C}$ descends to an action on $\scwol(\mc{C})$.
\end{remark}
  
A key example of a scwol is the (opposite) poset of cells of a simplicial or cubical complex, with arrows from each cell to all its faces.  If two faces of some cell are glued together one obtains a non-simple scwol.

\begin{definition}\cite[III.$\mc{C}$.1.3]{bridhaef:book}
  A scwol $\mc{A}$ has a \emph{(geometric) realization} which is a simplicial complex whose $0$--cells are the objects of $\mc{A}$, with $1$--cells corresponding to nontrivial arrows, $2$--cells to composable pairs of such arrows, and so on.  
\end{definition}
The realization of $\mc{A}$ is naturally a sub-complex of the nerve of $\mc{A}$.  
Although the nerve is necessarily infinite dimensional, the realization of a scwol has dimension equal to the length of the longest chain of nontrivial composable arrows.  Every scwol which appears in the current paper has finite dimensional realization.

 \begin{definition}
   If $A$ is the realization of a scwol $\mc{A}$, then there is a canonical correspondence between combinatorial paths in the $1$--skeleton of $A$ and $\mc{A}$--paths without trivial arrows.  If $p$ is a combinatorial path in $A^{(1)}$, and $q$ the corresponding $\mc{A}$--path, we say that $p$ is the \emph{realization} of $q$, and $q$ is the \emph{idealization} of $p$.
\end{definition}

\subsection{Complexes of groups}
\begin{definition} \cite[III.$\mc{C}$.2.1]{bridhaef:book}
  Let $\mc{A}$ be a scwol.  A \emph{complex of groups} $H(\mc{A})$ consists of the following data:
  \begin{enumerate}
  \item For each object $\sigma$ of $\mc{A}$, a \emph{local group} (also called a \emph{cell group}) $H_\sigma$;
  \item For each arrow $a$ of $\mc{A}$, an injective group homomorphism $\psi_a\co H_{i(a)}\to H_{t(a)}$ (if $a$ is a trivial arrow, we require $\psi_a$ to be the identity map); and
  \item For each pair of composable arrows $a,b$ with composition $a b$, a \emph{twisting element} $z(a,b)\in H_{t(a)}$.  (If either $a$ or $b$ is trivial, $z(a,b)=1$.)\footnote{In \cite{bridhaef:book} the notation $g_{a,b}$ is used instead of $z(a,b)$.}
  \end{enumerate}
  These data satisfy the following conditions (continuing to write $ab$ for $a\circ b$) whenever all written compositions of arrows are defined:
\begin{enumerate}
  \item (compatibility) $\Ad(z(a,b))\psi_{ab}=\psi_a\psi_b$;\footnote{Recall $\Ad(z)(x)=zxz^{-1}$.} and
  \item (cocycle) $\psi_a(z(b,c))z(a,bc)=z(a,b)z(ab,c)$.
  \end{enumerate}
\end{definition}
The cocycle condition above applies to any arrangement of arrows of the following form:
 \begin{tikzcd}[row sep = small]
   & \cdot \arrow[drr, "ab" description]\arrow[ddr,"b" description] & & \\
   \cdot \arrow[ur,"c" description] \arrow[drr,"bc" description] & & & \cdot \\
   & & \cdot \arrow[ur, "a" description] & 
 \end{tikzcd}

\begin{definition}[The complex of groups coming from an action]\label{def:actionchoices}
  Suppose $G$ acts on a scwol $\mc{X}$ so that any $g\in G$ fixing an object fixes every arrow from that object.  Suppose further that $\mc{Y} = \leftQ{\mc{X}}{G}$ is a scwol.  We obtain a complex of groups $G(\mc{Y})$ once we have made the following choices \cite[III.$\mc{C}$.2.9]{bridhaef:book}:
  \begin{enumerate}
  \item For each object $v$ of $\mc{Y}$, a lift $\widetilde{v}$ to $\mc{X}$; this lift also determines lifts $\widetilde{a}$ of all arrows $a$ with $i(a) = v$.
  \item For each arrow $a$, a choice of $h_a\in G$ so that $t(h_a(\widetilde{a}))=\widetilde{t(a)}$. (When $a$ is a trivial arrow, we always take $h_a = 1$.)
  \end{enumerate}

  Given these choices, one defines:
  \begin{enumerate}
  \item $G_v$ is the stabilizer of $\widetilde{v}$;
  \item $\psi_a = \Ad(h_a)|_{G_{i(a)}}$;
  \item $z(a,b) = h_ah_bh_{ab}^{-1}$.
  \end{enumerate}
\end{definition}
The complex of groups $G(\mc{Y})$ can be used to recover the group $G$.  There are two different ways of doing this.  The first is explained in \cite[III.$\mc{C}$.3.7]{bridhaef:book}, and involves {\em $G(\mc{Y})$-paths}.  The second way is from \cite[III.$\mc{C}$.A]{bridhaef:book}, and is the way that we proceed.
The advantage to this second way, which uses categories and coverings of categories, is that lifting paths to covers is a canonical procedure (as with usual covering theory).

\subsection{Fundamental groups and coverings of categories}\label{ss:review}

In Definition \ref{def:elem homotopy} we defined homotopy of $\mc{C}$--paths, where $\mc{C}$ is a category.
\begin{definition}
 Given a category $\mc{C}$ and an object $v_0$ of $\mc{C}$, the \emph{fundamental group of $\mc{C}$ based at $v_0$}, denoted $\pi_1(\mc{C},v_0)$, is the set of homotopy classes of $\mc{C}$--loops based at $v_0$, with operation induced by concatenation of $\mc{C}$--paths.
\end{definition}

\begin{definition} \cite[III.$\mc{C}$.A.15]{bridhaef:book}. 
Let $\mc{C}$ be a connected category.  A functor $f \co \mc{C}' \to \mc{C}$ is a \emph{covering} if for each object $\sigma'$ of $\mc{C}'$ the restriction of $f$ to the collection of arrows that have $\sigma'$ as their initial (respectively, terminal) object is a bijection onto the set of arrows which have $f(\sigma')$ as their initial (resp., terminal) object.  
\end{definition}

The \emph{universal cover} $\widetilde{\mc{C}}$ of a connected category $\mc{C}$ is described in \cite[III.$\mc{C}$.A.19]{bridhaef:book}: Fix a base vertex $v_0$ of $\mc{C}$, and define $\mathrm{Obj}(\widetilde{\mc{C}})$ to be the set of homotopy classes of $\mc{C}$--paths starting at $v_0$.  If $[c]$ is a homotopy class of path, and $\alpha$ is an arrow from $t(c)$, then there is an arrow $\widetilde{\alpha}$ of $\widetilde{\mc{C}}$ from $[c]$ to $[c\cdot \alpha^-]$.  The projection $\pi \co \widetilde{\mc{C}} \to \mc{C}$ sets $\pi\left( [p] \right) = t(p)$ and if $\widetilde{\alpha}$ is the arrow described above then $\pi(\widetilde{\alpha}) = \alpha$.  The fundamental group $\pi_1(\mc{C},v_0)$ acts on $\widetilde{\mc{C}}$ by pre-concatenation.  

The theory of coverings of categories is entirely analogous to ordinary covering theory.  In fact it is a special case, as the coverings of a connected category $\mc{C}$ correspond bijectively to the covering spaces of its nerve.

We record the following observation.
\begin{lemma}\label{l:lift paths and homotopies}
  Let $\phi\co \widetilde{\mc{C}}\to \mc{C}$ be a covering of categories, and suppose $\phi(\widetilde{v})=v$, for objects $v$ of $\mc{C}$ and $\widetilde{v}$ of $\widetilde{\mc{C}}$.  Any $\mc{C}$--path $p$ with $i(p) = v$ has a unique lift to a $\widetilde{\mc{C}}$--path $\widetilde{p}$ with $i(\widetilde{p})=\widetilde{v}$.  Moreover any elementary homotopy from $p$ to a path $p'$ gives a unique elementary homotopy of $\widetilde{p}$ to a lift $\widetilde{p}'$ of $p'$ with the same endpoints as $\widetilde{p}$.
\end{lemma}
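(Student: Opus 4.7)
The plan is to prove this by the standard path/homotopy-lifting argument, but carried out directly in the categorical setting rather than by passing to the realization. Path lifting will be an induction on length using the defining bijections of a covering on the arrows incident to each object; homotopy lifting will be a case analysis on the three types of elementary homotopy from Definition \ref{def:elem homotopy}.

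For the path lifting, write $p = (e_1,\ldots,e_k)$ and induct on $k$. For $k=0$ the path is the object $v$ and its only possible lift is $\widetilde{v}$. For $k>0$, suppose inductively there is a unique lift $(\widetilde{e}_1,\ldots,\widetilde{e}_{k-1})$ starting at $\widetilde{v}$ and ending at an object $\widetilde{w}$ with $\phi(\widetilde{w}) = t(e_{k-1}) = i(e_k)$. The edge $e_k$ equals $a^-$ or $a^+$ for a unique arrow $a$ of $\mc{C}$. If $e_k = a^-$ then $i(a) = \phi(\widetilde{w})$, so the covering property applied to arrows with initial object $\widetilde{w}$ produces a unique $\widetilde{a}$ of $\widetilde{\mc{C}}$ with $i(\widetilde{a}) = \widetilde{w}$ and $\phi(\widetilde{a}) = a$, forcing $\widetilde{e}_k = \widetilde{a}^-$. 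Symmetrically, if $e_k = a^+$ then $t(a) = \phi(\widetilde{w})$, and the covering bijection on arrows with terminal object $\widetilde{w}$ produces a unique $\widetilde{a}$ with $t(\widetilde{a}) = \widetilde{w}$ and $\phi(\widetilde{a}) = a$, forcing $\widetilde{e}_k = \widetilde{a}^+$. Forcing at each step gives the global uniqueness.

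For the homotopy statement, I would handle each of the three elementary moves in turn. In case (1), uniqueness of arrow-lifts shows that $a^+\cdot a^-$ (resp.\ $a^-\cdot a^+$) lifts to $\widetilde{a}^+\cdot \widetilde{a}^-$ (resp.\ $\widetilde{a}^-\cdot \widetilde{a}^+$), so the same cancellation applies upstairs. In case (3), since $\phi$ is a functor and arrow-lifts are unique, the identity $1_v$ at $v=\phi(\widetilde{w})$ lifts to $1_{\widetilde{w}}$, which can be deleted. The substantive case is (2): lift $p$ to some $\widetilde{p}$ ending at $\widetilde{w}_0$, lift $a^+$ using the covering bijection on arrows with terminal object $\widetilde{w}_0$ to get $\widetilde{a}$, and then lift $b^+$ using the covering bijection on arrows with terminal object $i(\widetilde{a})$ to get $\widetilde{b}$. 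By construction $t(\widetilde{b})=i(\widetilde{a})$, so $\widetilde{a}\circ\widetilde{b}$ is defined in $\widetilde{\mc{C}}$; since $\phi$ is a functor, $\phi(\widetilde{a}\circ\widetilde{b})=ab$, and $t(\widetilde{a}\circ\widetilde{b})=\widetilde{w}_0$, so by uniqueness this composite is the lift of $ab$ at $\widetilde{w}_0$. Hence the elementary move $\widetilde{a}^+\cdot\widetilde{b}^+\simeq (\widetilde{a}\widetilde{b})^+$ applies to $\widetilde{p}'$; the symmetric move $\widetilde{b}^-\cdot \widetilde{a}^-\simeq (\widetilde{a}\widetilde{b})^-$ handles the other variant. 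Endpoints are visibly preserved in all three cases, which gives the last clause.

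The main obstacle---really a point of bookkeeping rather than a genuine difficulty---is the directional matching in case (2): one must confirm that the composite of the individually-lifted arrows $\widetilde{a}$ and $\widetilde{b}$ coincides with the arrow obtained by directly lifting $ab$. This is where functoriality of $\phi$ is essential, as distinct from the mere bijectivity statements of the covering condition; uniqueness of arrow-lifting then identifies the two candidates.
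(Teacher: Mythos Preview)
Your proof is correct. The paper does not actually give a proof of this lemma at all; it is simply stated as an observation (``We record the following observation'') and left to the reader, presumably because it mirrors the classical covering-space argument. Your write-up supplies precisely the details one would expect: induction on path length using the covering bijections for path lifting, and a case analysis over the three elementary homotopy types for the second claim, with functoriality of $\phi$ doing the work in case~(2). One small point you might make explicit for completeness: the elementary homotopies are symmetric relations, so each move can also be applied in reverse (e.g.\ replacing $(ab)^+$ by $a^+\cdot b^+$, or inserting a backtrack $a^+\cdot a^-$). Your argument already contains what is needed for these inverse moves---the same uniqueness-of-lifts reasoning runs backwards---but saying so in a sentence would close off any doubt.
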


\subsection{The category associated to a complex of groups} \label{ss:category}
Any complex of groups $G(\mc{Y})$ has an associated category $CG(\mc{Y})$.

\begin{definition} \cite[III.$\mc{C}$.2.8]{bridhaef:book} \label{def:CG(Y)}
The objects of $CG(\mc{Y})$ are the objects of the scwol $\mc{Y}$.  Arrows of $CG(\mc{Y})$ are pairs $(g, a)$ so that $a$ is an arrow of $\mc{Y}$ and $g\in G_{t(a)}$.   Composition is defined by $(g, a)\circ(h, b)=(g\psi_a(h)z(a,b), ab)$.  
 \end{definition}
Recall that if $a$ is a trivial arrow then $\psi_a$ is the identity homomorphism and $z(a,x)$, $z(x,a)$ are always trivial.

\begin{remark}\label{remark:section}
The map $CG(\mc{Y}) \to \mc{Y}$ given by $(g,a) \to a$ is the scwolification functor (see Definition~\ref{def:simplescwol}), and is a bijection on objects.  This functor has an obvious section $a \mapsto (1,a)$.  If there are nontrivial twisting elements this is not a functor, but it does allow $\mc{Y}$--paths to be ``unscwolified'' to $CG(\mc{Y})$--paths.  In Definition \ref{def:scwolify}, we explain how to go back and forth between paths in covers of $CG(\mc{Y})$ and their associated scwols.
\end{remark}

\begin{theorem} \cite[III.$\mc{C}$.3.15 and text before III.$\mc{C}$.A.13]{bridhaef:book} \label{th:G is pi_1 of cat}
  Suppose that the group $G$ acts on the simply connected complex $X$, giving rise to an action of $G$ on the scwol $\mc{X}$ as in Definition~\ref{def:actionchoices}, and that $v_0$ is an object in $\mc{Y} = \leftQ{\mc{X}}{G}$.  Let $CG(\mc{Y})$ be the category associated to $G(\mc{Y})$.  Then there is an isomorphism from $\pi_1(CG(\mc{Y}),v_0)$ to $G$ taking any loop of the form $(g_1,a_1)^{\epsilon_1}\cdots (g_n,a_1)^{\epsilon_1}$ to the product $(g_1h_{a_1})^{\epsilon_1}\cdots (g_nh_{a_1})^{\epsilon_n}$.
\end{theorem}
The exponents $\epsilon_i$ in the statement are taken from the set $\{+,-\}$.  We use the mild abuse of notation that if $g$ is a group element then $g^+ = g$ and $g^- = g^{-1}$.
\begin{definition}\label{def:group and scwol arrows}
  Let $a$ be a nontrivial arrow of $\mc{Y}$.  The arrow $(1,a)$ of $CG(\mc{Y})$ is called a \emph{scwol arrow}.  Let $g\in G_v$ where $v$ is a vertex of the scwol $\mc{Y}$.  The arrow $(g,\identity_v)$ is called a \emph{group arrow at $v$}, or just a \emph{group arrow} if $v$ is unimportant.
  
In later sections we abuse notation and refer to the edge $(g,\identity_v)^+$ (for a group arrow $(g,\identity_v)$) as ``$(g,v)$'' or even just ``$g$''.  We also blur the difference between the scwol arrow $(1,a)$ and the $\mc{Y}$--arrow $a$, and often refer to the scwol arrow by ``$a$''. We also blur the distinction between the $CG(\mc{Y})$--edge $(1,a)^\pm$ and the $\mc{Y}$--edge $a^\pm$.
\end{definition}

\begin{definition}\label{d:label}
  Let $\mc{C}\to CG(\mc{Y})$ be a covering of categories.    We say that an arrow is \emph{labeled by $(g,a)$} if its image in $CG(\mc{Y})$ is $(g,a)$.
  An arrow of $\mc{C}$ is said to be a \emph{scwol (resp. group) arrow} if its label is a scwol (resp. group) arrow of $CG(\mc{Y})$.
\end{definition}

\begin{lemma}\label{lem:nice_paths}
  If $\mc{C}\to CG(\mc{Y})$ is any cover, then every $\mc{C}$--path is homotopic to a concatenation of group and scwol arrows.  
\end{lemma}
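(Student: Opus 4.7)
The plan is to reduce the statement to Lemma \ref{l:group and scwol suffice}, which we already know for paths in $CG(\mc{Y})$ itself, by using the path- and homotopy-lifting property of coverings of categories recorded in Lemma \ref{l:lift paths and homotopies}.

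First, I would fix a $\mc{C}$--path $p$ and let $\phi\co\mc{C}\to CG(\mc{Y})$ be the covering. Applying $\phi$ to each letter of $p$ produces a $CG(\mc{Y})$--path $\phi(p)$ with $i(\phi(p))=\phi(i(p))$. By Lemma \ref{l:group and scwol suffice}, $\phi(p)$ is homotopic to a $CG(\mc{Y})$--path $q$ that is a concatenation of group and scwol arrows. Because homotopies are generated by the elementary homotopies of Definition \ref{def:elem homotopy}, this homotopy is a finite sequence of elementary homotopies, each of which, by Lemma \ref{l:lift paths and homotopies}, lifts uniquely to an elementary homotopy in $\mc{C}$ starting from the current lift and preserving endpoints.

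Starting from $p$ itself, I would apply these lifts one after another, producing at the end a $\mc{C}$--path $\widetilde{q}$ that is homotopic to $p$ and satisfies $\phi(\widetilde{q})=q$. Since every letter of $\widetilde{q}$ maps under $\phi$ to either a group arrow or a scwol arrow of $CG(\mc{Y})$, Definition \ref{d:label} says each letter of $\widetilde{q}$ is itself a group or scwol arrow of $\mc{C}$. Thus $\widetilde{q}$ is the desired concatenation of group and scwol arrows, and $p\simeq\widetilde{q}$.

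There is essentially no obstacle here beyond bookkeeping: the content is entirely that coverings of categories lift paths and elementary homotopies, which has already been stated. The only mild point to verify is that the notions of ``group arrow'' and ``scwol arrow'' in $\mc{C}$ (as defined by labels in Definition \ref{d:label}) are compatible with the base case, so that the lift of a concatenation of group and scwol arrows is automatically a concatenation of the same sort; this is immediate from the definitions.
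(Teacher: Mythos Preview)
Your proposal is correct and follows exactly the paper's approach: project to $CG(\mc{Y})$, apply Lemma~\ref{l:group and scwol suffice} there, and then lift the homotopy via Lemma~\ref{l:lift paths and homotopies}. The paper's proof is just a terser version of what you wrote.
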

\begin{proof}
  Observe that any $CG(\mc{Y})$--arrow $(g,a)$ is a composition of a group arrow and a scwol arrow: $(g,a) = (g,t(a))\circ (1,a)$.  This gives a homotopy in $CG(\mc{Y})$ to a path of the desired form.
  Lemma \ref{l:lift paths and homotopies} says that the homotopy lifts.
\end{proof}

As described at the end of the last subsection, a choice of base vertex $v_0$ determines a universal covering map
$ \cgyt \to CG(\mc{Y}) $
sending a homotopy class of path $[p]$ to its terminal vertex $t(p)$, and the arrow from $[c]$ to $[c\cdot (g,a)^-]$ to the arrow $(g,a)$ of $CG(\mc{Y})$.  

The group $\pi_1(CG(\mc{Y}),v_0) \cong G$ acts on the universal cover $\cgyt$ by pre-concatenation of paths.  The quotient by this action is $CG(\mc{Y})$.
If $H<\pi_1(CG(\mc{Y}),v_0)$ is any subgroup, then $\smallleftQ{\cgyt}{H}$ is an intermediate cover of categories.  Every connected cover of $CG(\mc{Y})$ is of this form.  Indeed, covers of $CG(\mc{Y})$ correspond to coverings of the nerve $N$ of $CG(\mc{Y})$.  Since $\pi_1(CG(\mc{Y},v_0))$ is canonically isomorphic to $\pi_1(N,v_0)$, connected covers of $CG(\mc{Y})$ are all of the form $\smallleftQ{\cgyt}{H}$ for $H < \pi_1(CG(\mc{Y}),v_0)$.  The isomorphism $\pi_1(CG(\mc{Y}),v_0)\cong G$ from Theorem~\ref{th:G is pi_1 of cat} allows us to identify such $H$ as subgroups of $G$.

\begin{proposition}\label{prop:recoverX}
  Suppose that $CG(\mc{Y})$ arises from an action of $G$ on a simply connected scwol $\mc{X}$ via Definitions~\ref{def:actionchoices} and~\ref{def:CG(Y)}.  Let $\phi\co \pi_1(CG(\mc{Y}),v_0)\to G$ be the isomorphism from Theorem~\ref{th:G is pi_1 of cat}.  There is a $\phi$--equivariant functor $\Theta\co \cgyt\to \mc{X}$ which factors through an isomorphism of categories $\hat\Theta\co \scwol(\cgyt)\to \mc{X}$.
\end{proposition}
\begin{proof}[Proof sketch]
  Consider an arrow $x$ labeled by $(g,a)$ from $[\sigma_1]$ to $[\sigma_2]$ in $\cgyt$.  We may suppose
  \[ \sigma_1 = (g_1,a_1)^{\epsilon_1}\cdots (g_n,a_n)^{\epsilon_n}\quad \mbox{and}\quad  \sigma_2 = (g_1,a_1)^{\epsilon_1}\cdots (g_n,a_n)^{\epsilon_n}\cdot (g,a)^-.\]
  We define
  \begin{equation*}
    \Theta(x) = \prod_{i = 1}^n (g_i h_{a_i})^{\epsilon_i} \tilde{a}
  \end{equation*}
  Examining the elementary homotopies, it is not hard to see that $\Theta(x)$ is well-defined.
  
  The map $\phi$ sends the homotopy class of the loop $(h_1,b_1)^{\delta_1}\cdots (h_k,b_k)^{\delta_k}$ to the group element
  \[(h_1h_{b_1})^{\delta_1}\cdots (h_kh_{b_k})^{\delta_k}.\]   Since $\pi_1(CG(\mc{Y}),v_0)$ acts by pre-concatenation of paths, $\Theta$ is clearly $\phi$--equivariant.
   
  To see $\Theta$ is a functor, suppose that $x = y z$ in $\cgyt$, where $x,y,z$ are labeled by $(g,a), (h,b), (k,c)$, respectively, and $i(x)=i(z) = [(g_1,a_1)^{\epsilon_1}\cdots(g_n,a_n)^{\epsilon_n}]$.  Letting $p = \prod_{i=1}^n(g_i h_{a_i})^{\epsilon_i}$, we have $\Theta(x) = p \tilde{a}$, $\Theta(y) = p h_c^{-1}k^{-1}\tilde{b}$ and $\Theta(z) = p \tilde{c}$, and it is easily checked that $\Theta(y)\Theta(z)= \Theta(x)$.

  Any two objects of $\cgyt$ identified under the scwolification map are separated by a group arrow.
  If $x$ is a group arrow then $\Theta(x)$ is a trivial arrow,
  so $\Theta$ factors through a functor 
\[ \hat{\Theta}\co \scwol(\cgyt)\to \mc{X}. \]  
It remains to show that $\hat{\Theta}$ is an isomorphism of scwols.  The fact that the homomorphism from Theorem~\ref{th:G is pi_1 of cat} is surjective for any $v_0\in \mc{Y}$ implies that $\hat{\Theta}$ is also surjective, so the only difficult point is to see injectivity.

Assuming that $\Theta([\sigma]) = \Theta([\tau])$, we consider the images $\hat\sigma$ and $\hat\tau$ of the \emph{paths} $\sigma$ and $\tau$, respectively.  Since $\mc{X}$ is assumed to be simply connected, there is a sequence of elementary homotopies in $\mc{X}$ taking $\hat\sigma$ to $\hat\tau$.  It can be shown that these homotopies all lift (non-uniquely) to homotopies in $\cgyt$, so we may assume that $\hat\sigma = \hat\tau$.  If $\hat\sigma = \hat\tau$ is a degenerate path at $\tilde{v}_0$, then it is clear that $\sigma$ and $\tau$ are separated by a group arrow.  Otherwise, after a further homotopy in $\cgyt$, we can assume that
\begin{align*}
  \sigma & = (g_1,a_1)^{\epsilon_1}\cdots (g_n,a_n)^{\epsilon_n}\\
  \tau & = (h_1,a_1)^{\epsilon_1}\cdots (h_n,a_n)^{\epsilon_n}
\end{align*}
where the signs of the $\epsilon_i$ alternate and each arrow $a_i$ is non-trivial.  If $k$ is the smallest index for which $g_k \ne h_k$ and $k<n$, one can find a short sequence of elementary homotopies taking $\sigma$ to another path $\sigma'$ which agrees with $\tau$ for the first $k$ edges.  If $k = n$, then there is a slightly shorter sequence of elementary homotopies taking $\sigma$ to $\tau\cdot x$ for some group arrow $x$.

To sum up, if $\Theta([\sigma]) = \Theta([\tau])$, then there is a group arrow from $[\sigma]$ to $[\tau]$.  It follows that the map $\hat\Theta\co \scwol(\cgyt)\to \mc{X}$ is injective, and hence an isomorphism.
\end{proof}

The map $\Theta$ also passes to quotients by subgroups of $G$:
\begin{corollary}\label{cor:scwolify quotient}
  Suppose that $CG(\mc{Y})$ arises from an action of $G$ on a simply connected scwol $\mc{X}$ via Definitions~\ref{def:actionchoices} and~\ref{def:CG(Y)}, and that $\mc{Y} = \leftQ{\mc{X}}{G}$ is simple.
    If $H < G$, the scwolification of $\smallleftQ{\cgyt}{H}$ is canonically isomorphic to $\leftQ{\mc{X}}{H}$.
\end{corollary}
\begin{proof}
  For any small category $\mc{C}$, acted on by a group $H$, there is a canonical surjective functor $\smallleftQ{\scwol{\mc{C}}}{H}\to \scwol(\smallleftQ{\mc{C}}{H})$.  This is an isomorphism if and only if $\smallleftQ{\scwol{\mc{C}}}{H}$ is already a simple scwol.  Since $\mc{Y} = \smallleftQ{\mc{X}}{G}$ is a simple scwol, the intermediate quotient $\smallleftQ{\mc{X}}{H}$ is also a simple scwol.
  
  The naturality of scwolification (Remark~\ref{rem:scwolify is natural}) and the equivariance of $\Theta$ together mean that the map $\hat{\Theta}$ from Proposition~\ref{prop:recoverX} is also equivariant, and so we get an isomorphism
  \[ \hat\Theta_H\co \smallleftQ{\scwol(\cgyt)}{H}\to \smallleftQ{\mc{X}}{H} .\]

  Thus $\scwol\left(\smallleftQ{\cgyt}{H}\right)  =  \smallleftQ{\scwol(\cgyt)}{H}$   is isomorphic to $\smallleftQ{\mc{X}}{H}$.
\end{proof}

\begin{remark}
  Although Corollary~\ref{cor:scwolify quotient} does not explicitly appear in Bridson--Haefliger, it can be derived from results there as we outline briefly in this remark.

Suppose  $\mc{C'} = \smallleftQ{\cgyt}{H} \to CG(\mc{Y})$ is a cover.  According to \cite[Proposition III.$\mc{C}$.A.24]{bridhaef:book} there is an associated (category of a) complex of groups $CG(\mc{Y'})$, which is a sub-category of $\mc{C'}$ so that the inclusion is an equivalence $CG(\mc{Y'}) \to \mc{C'}$.  The construction of $CG(\mc{Y'})$ from $\mc{C'}$ is as in \cite[Proposition~III.$\mc{C}$.A.4]{bridhaef:book}, which uses the same equivalence relation as the definition of the scwolification functor as in Definition~\ref{def:simplescwol}.
From the construction of $CG(\mc{Y'})$, and the assumption that $\mc{Y}$ is a simple scwol, it follows that $\mc{Y'}$ is isomorphic to $\scwol\left( \mc{C'} \right)$.  Further, from the correspondence between coverings and subgroups (both for categories and also for complexes of groups), and the identification of $G$ with $\pi_1(CG(\mc{Y}))$ as in Theorem~\ref{th:G is pi_1 of cat}, it follows that $\mc{Y}'$ is isomorphic to $\leftQ{\mc{X}}{H}$, as required.  

Rather than fully develop this approach, we chose to sketch a more direct approach using Proposition~\ref{prop:recoverX} and the naturality of the scwolification functor.
\end{remark}

\begin{definition}\label{def:Theta}
  We denote the scwolification functor from $\smallleftQ{\cgyt}{H}$ to $\leftQ{\mc{X}}{H}$ by $\Theta_H$.  If $H = \{1\}$, we just write $\Theta$, as in Proposition~\ref{prop:recoverX}.
\end{definition}
We observe the following.
\begin{lemma}\label{lem:TransitiveStabilizer}
    Suppose that $CG(\mc{Y})$ arises from an action of $G$ on a simply connected scwol $\mc{X}$ via Definitions~\ref{def:actionchoices} and~\ref{def:CG(Y)}, and that $\mc{Y} = \leftQ{\mc{X}}{G}$ is simple.

    Let $\o$ be an object of $\mc{X}$.  Then $\Stab_G(\o)$ acts freely and transitively on $\Theta^{-1}(\o)$.
\end{lemma}

\begin{definition}\label{def:scwolify}
Let $K < G \cong \pi_1(CG(\mc{Y}),v_0)$, let $\mc{C}_K = \smallleftQ{\cgyt}{K}$ and let $\mc{Z} = \leftQ{\mc{X}}{K}$.  Since $\Theta_K \co \mc{C}_K \to \scwol(\mc{C}_K)= \mc{Z}$ is a functor, it gives a way to turn a $\mc{C}_K$--path $p$ into a $\mc{Z}$--path $p'$.  Deleting all the trivial arrows from $p'$ produces a $\mc{Z}$--path which we call the \emph{scwolification} of $p$.  Abusing the notation slightly, we denote the scwolification of $p$ by $\Theta_K(p)$.
  
  Conversely, if $\sigma$ is a $\mc{Z}$--path, then any $\mc{C}_K$--path $\widehat{\sigma}$ so that $\Theta_K(\widehat{\sigma}) = \sigma$ is called an \emph{unscwolification} of $\sigma$.  The unscwolification is highly non-unique, but always exists.
\end{definition}
The following can be deduced by examining the elementary homotopies.
\begin{lemma}
  Scwolifications of homotopic paths are homotopic.
\end{lemma}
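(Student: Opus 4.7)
The plan is to reduce everything to functoriality of $\Theta_K$, together with the fact that deleting a trivial arrow from a path is itself an elementary homotopy. It suffices to verify the statement for a single elementary homotopy $p\simeq p'$ in $\mc{C}_K$, since an arbitrary homotopy is a concatenation of such.

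First I would introduce the \emph{raw image} of a $\mc{C}_K$--path $(e_1,\ldots,e_k)$ under $\Theta_K$, namely the $\mc{Z}$--path $(\Theta_K(e_1),\ldots,\Theta_K(e_k))$, which is well defined since $\Theta_K$ is a functor and so respects source and target of oriented edges. The scwolification $\Theta_K(p)$ is obtained from the raw image by deleting trivial arrows, and each such deletion is an elementary homotopy of type (3). Hence the raw image of $p$ is homotopic to $\Theta_K(p)$, and similarly for $p'$. It therefore remains to show that the raw images of $p$ and $p'$ are homotopic in $\mc{Z}$.

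Now I would check this by case analysis on the three types of elementary homotopies from Definition \ref{def:elem homotopy}. For type (1), where $p=r\cdot a^+\cdot a^-\cdot s$ and $p'=r\cdot s$, the raw image contains $\Theta_K(a)^+\cdot \Theta_K(a)^-$, which is either again a type (1) cancellation or, if $\Theta_K(a)$ happens to be a trivial arrow $1_v$, can be removed by two type (3) moves. For type (2), where $p=r\cdot a^+\cdot b^+\cdot s$ and $p'=r\cdot(ab)^+\cdot s$, functoriality gives $\Theta_K(ab)=\Theta_K(a)\circ\Theta_K(b)$, so the raw images differ by a single type (2) move (again supplemented by type (3) moves if any of $\Theta_K(a),\Theta_K(b),\Theta_K(ab)$ is trivial, using the convention $1_v^\pm\cdot q\simeq q$ to handle any trivial factor). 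The $-$ variants are symmetric. For type (3), $\Theta_K(1_v)=1_{\Theta_K(v)}$, so the raw images are again related by a type (3) move.

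Combining the case analysis with the observation about trivial arrows above, the scwolifications $\Theta_K(p)$ and $\Theta_K(p')$ are homotopic, as required. The only mildly subtle point is the bookkeeping for arrows that become trivial under $\Theta_K$, but this is a routine consequence of the type (3) relation.
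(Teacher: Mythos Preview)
Your proof is correct and follows exactly the approach the paper indicates: the paper simply remarks that the lemma ``can be deduced by examining the elementary homotopies'' and gives no further details, and your case analysis carries this out explicitly. The only point worth noting is that in your type~(2) case you implicitly use that $\mc{Z}$ is a scwol, so a composition of two nontrivial arrows is again nontrivial; this makes the bookkeeping with trivial arrows as routine as you claim.
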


Given a $CG(\mc{Y})$--path $p$ we can lift it to a $\mc{C}_K$--path $\widehat{p}$, and then scwolify $\widehat{p}$ to the $\mc{Z}$--path $\Theta_K(\widehat{p})$.

\begin{lemma} Let $p$ be a $CG(\mc{Y})$--loop at $v$ and let $\widehat{p}$ be a lift to $\mc{C}_K$.  If $\Theta_K(\widehat{p})$ is a loop, then there is a group arrow labeled by an element of $G_v$ joining the endpoints of $\widehat{p}$.\end{lemma}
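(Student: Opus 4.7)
The plan is to unpack the construction of $\mc{Z}$ as the quotient category $\overline{\mc{C}}_K$ from Lemma \ref{lem:scwolify}. Two objects of $\mc{C}_K$ are identified in $\overline{\mc{C}}_K$ precisely when they are connected by an invertible arrow, so the hypothesis that $\Theta_K(\widehat{p})$ is a loop in $\mc{Z}$ translates directly into a statement about invertible arrows between $i(\widehat{p})$ and $t(\widehat{p})$ in $\mc{C}_K$.

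First I would identify the invertible arrows of $\mc{C}_K$ explicitly. Using Definition \ref{def:CG(Y)}, in $CG(\mc{Y})$ an arrow $(g,a)$ projects to the scwol arrow $a$ of $\mc{Y}$; if $a$ is nontrivial, then $i(a)\neq t(a)$ in the scwol and no inverse exists, while each group arrow $(g,1_w)$ is manifestly invertible with inverse $(g^{-1},1_w)$. Since functors preserve and reflect invertibility, the invertible arrows of the cover $\mc{C}_K$ are exactly its group arrows in the sense of Definition \ref{d:label}.

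Next, since $p$ is a loop at $v$, both endpoints $i(\widehat{p})$ and $t(\widehat{p})$ lie over $v$. By hypothesis they are equivalent under the relation defining $\overline{\mc{C}}_K$; because composites of invertible arrows are invertible, this equivalence is in fact witnessed by a single invertible arrow $\alpha\co i(\widehat{p})\to t(\widehat{p})$. By the previous paragraph, $\alpha$ is a group arrow, labeled by some $(g,1_w)$ of $CG(\mc{Y})$. Since $i(\alpha)$ and $t(\alpha)$ both lie over $v$, we must have $w=v$ and $g\in G_v$, which is the required conclusion.

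The only step requiring any thought is the collapse of a zigzag of invertible arrows to a single one; this rests on the fact that $\mathrm{Mor}(u,u)$ is a group in any cover of $CG(\mc{Y})$ (which was already used to establish Lemma \ref{lem:scwolify}) together with the closure of invertible arrows under composition. The remainder of the argument is simply a matter of unwinding the definitions of the scwolification $\Theta_K$, invertibility in $CG(\mc{Y})$, and the labelling convention for arrows in a cover.
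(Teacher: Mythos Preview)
Your proof is correct. The paper states this lemma without proof, presumably regarding it as a routine consequence of the definitions, and your argument is exactly the sort of unwinding of Lemma \ref{lem:scwolify} and Definition \ref{def:CG(Y)} that is intended.

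One small point of phrasing: you write ``functors preserve and reflect invertibility''. Arbitrary functors do not reflect invertibility; what you are really using is that a \emph{covering} functor does. Indeed, if $\alpha$ is an arrow of $\mc{C}_K$ whose label $(g,1_v)$ is invertible in $CG(\mc{Y})$, the covering property lets you lift $(g^{-1},1_v)$ uniquely to an arrow with source $t(\alpha)$, and the usual argument with unique lifts of identities shows this lift is a two-sided inverse for $\alpha$. This is implicit in your last paragraph, but it would be clearer to say ``covering functors'' rather than ``functors'' at that step. With that adjustment the argument is complete.
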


\subsection{The complex of groups coming from an action on a cube complex}
Let $X$ be a CAT$(0)$ cube complex, and suppose that $G$ acts on $X$ by cubical automorphisms.  The quotient $\leftQ{X}{G}$ may or may not be a cube complex, depending on whether the groups $G_\sigma=\{g\mid g\sigma=\sigma\}$ and $\{g\mid gx=x, \forall x\in \sigma\}$ agree for all cells $\sigma$.

Another way to phrase this issue is to note that, if $\mc{X}_0$ is the scwol of cells of $X$, then $G$ acts by morphisms on $\mc{X}_0$, but the quotient map  $\mc{X}_0\to \leftQ{\mc{X}_0}{G}$ may not be a non-degenerate morphism of scwols, since some isometry of $X$ may fix the center of some cube, but permute faces of that cube.
In order to obtain a complex of groups structure on $G$ from the action $G\acts X$, we need a scwol quotient, so we replace $\mc{X}_0$ with $\mc{X}$, the scwol of cells of the first barycentric subdivision of $X$:

\begin{definition}\label{def:cube idealization}
  If $W$ is a cube complex, the \emph{idealization of $W$} is the scwol of cells of the first barycentric subdivision of $W$.
\end{definition}
If every cube of the cube complex $W$ embeds in $W$, there is another way of thinking of the idealization $\mc{W}$.  Namely, the objects of $\mc{W}$ are in one-to-one correspondence with non-empty nested chains of cubes of $W$ and there is at most one arrow in $\mc{W}$ between two objects: If $c_1$ contains $c_2$ as a sub-chain, there is an arrow from $c_1$ to $c_2$.

For example, if $X$ is a single $1$--dimensional cube $e$ with endpoints $a$ and $b$, the nontrivial arrows of the idealization $\mc{X}$ are as follows:
\begin{equation}\label{eq:edgescwol}
\begin{tikzcd}[row sep = small]
  (a) & (a\subset e) \arrow[r]\arrow[l] & (e) & (b\subset e) \arrow[r]\arrow[l] & (b)  
\end{tikzcd}
\end{equation}
Already a square $\tau$ with $e$ as a face is much more complicated.  The idealization is shown on the left of Figure~\ref{fig:idealsquare} as a graph, with detail shown on the right for the highlighted portion.
\begin{figure}[htbp]
\[
\xy
(30,0)*{\xy \xymatrix{
  (a) & & (a\subset e)\ar[r]\ar[ll] & (e) \\
  & & (a\subset e \subset \tau)\ar[d]\ar[ull]\ar[u]\ar[r]\ar[ur]\ar[ddr] & (e\subset \tau)\ar[u]\ar[dd]\\
  & & (a\subset \tau)\ar[uull]\ar[dr] & \\
  & & & (\tau)
  }
  \endxy};
(-30,0)*{
  \xy
  \xyimport(2,2){\includegraphics{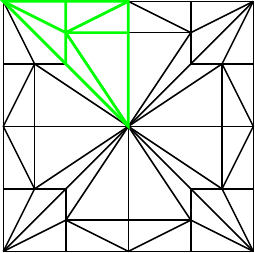}}
  \endxy};

\endxy
\]
  \caption{Idealization of a square.}
  \label{fig:idealsquare}
\end{figure}

Let $X$ be a CAT$(0)$ cube complex, and let $\mc{X}$ be its idealization.
 Any cubical automorphism of $X$ gives a non-degenerate automorphism of $\mc{X}$.  
 Moreover if such an automorphism maps a chain of cubes to itself, then it also preserves all subchains.  In terms of the scwol structure this means that the stabilizer of an object also stabilizes every arrow from that object.
 It follows that the quotient $\mc{Y} = \leftQ{\mc{X}}{G}$ is a simple scwol, and that the quotient map $\mc{X} \to \mc{Y}$ is non-degenerate morphism of scwols.  Similarly, if $K<G$ is any subgroup, then $\leftQ{\mc{X}}{K}$ is a simple scwol and the maps $\mc{X}\to \leftQ{\mc{X}}{K} \to \mc{Y}$ are non-degenerate morphisms of scwols.  In particular Corollary~\ref{cor:scwolify quotient} applies to the covers of $CG(\mc{Y})$.

 \begin{example}\label{ex:CGY}
   Let $X$ be the regular $3$--valent tree dual to the Farey tessellation, and let \[G = \operatorname{PSL}(2,\bZ) \cong \langle x\mid x^2\rangle * \langle y \mid y^3\rangle\] act on $X$ in the standard way, with $y$ rotating around a vertex $v$, and $x$ rotating around the center of an adjacent edge $e$.  Then $CG(\mc{Y})$ has the following form, omitting identity arrows and most arrow labels.
   \begin{equation}\label{eq:psl2Z}
     \begin{tikzcd}[row sep = small]
       (e)\arrow[loop left,"x"] &
       (v\subset e) \arrow[l]\arrow[l,bend right] \arrow[r]\arrow[r,bend left]\arrow[r,bend right]&
       (v) \arrow[loop above,"y"]\arrow[loop right,"y^2"]
     \end{tikzcd}
   \end{equation}
The two-fold cover corresponding to the subgroup generated by $\{y,xyx\}$ is of the following form, omitting identity arrows and all labels. 
\begin{equation}\label{eq:2foldcover}
  \begin{tikzcd}[row sep = small]
    \circ\arrow[loop below]\arrow[loop left] &
    \circ\arrow[l]\arrow[r]\arrow[dr] &
    \circ\arrow[d,bend right] & & & \\
    & &
    \circ\arrow[u,bend right] &
    \circ\arrow[ul]\arrow[l]\arrow[r] &
    \circ\arrow[loop above]\arrow[loop right]
  \end{tikzcd}
\end{equation}
The scwolification of this cover is isomorphic to the scwol shown in~\eqref{eq:edgescwol}.  In both~\eqref{eq:psl2Z} and~\eqref{eq:2foldcover}, the scwol arrows are exactly the horizontal ones.

This example is one-dimensional, so there are no nontrivial compositions of scwol arrows.  The reader is invited to explore a simple two-dimensional example, for example the category of the complex of groups associated to a dihedral group acting on a square with quotient scwol equal to the right-hand part of Figure~\ref{fig:idealsquare}.
 \end{example}

  \begin{remark}\label{rem:idealizing paths}
   Suppose that $\mc{C}$ is the idealization of a cube complex $C$, so that the realization of $\mc{C}$ is the second barycentric subdivision of $C$.  In later sections, we make use of the fact that the following types of paths have canonical idealizations in $\mc{C}$:
   \begin{enumerate}
   \item Combinatorial paths in the $1$--skeleton of the first \emph{cubical} subdivision $C^b$ of $C$ (Section \ref{s:QC proof}).
   \item Combinatorial paths in links of cells of $C$ (Section \ref{s:X mod K CAT(0)}).
   \end{enumerate}
   In both cases, this follows from the fact that subdivisions of these graphs embed naturally in the $1$--skeleton of the second barycentric subdivision.
 \end{remark}

\section{Quasi-convexity in the Sageev construction} \label{s:QC proof}

In this section, we prove Theorem \ref{t:cell qc iff hyp}.  Recall that we have a hyperbolic group $G$ acting cocompactly on a CAT$(0)$ cube complex $X$, and we are required to prove that the vertex stabilizers are quasi-convex if and only if the hyperplane stabilizers are quasi-convex.  

To prepare for this proof it may be useful to think about the case that $X$ is a tree.  In that case, hyperplanes are midpoints of edges, and so the statement is that edge stabilizers are quasi-convex if and only if vertex stabilizers are.  Edge stabilizers are intersections of vertex stabilizers, and intersections of quasi-convex subgroups are quasi-convex, so one direction is clear.  The other direction is not much harder:  Consider a geodesic joining two vertices of a vertex stabilizer.  The vertex stabilizer is coarsely separated from the rest of the Cayley graph by appropriate cosets of edge stabilizers.  The quasi-convexity of these cosets ``traps'' the geodesic close to the vertex stabilizer.

Now remove the assumption that $X$ is a tree, and suppose that vertex stabilizers are quasi-convex.  It still follows that edge stabilizers are quasi-convex, but a hyperplane stabilizer is much bigger than an edge stabilizer.  We will express a hyperplane stabilizer as a union of cosets of edge stabilizers, intersecting in a controlled way, and use a quasi-convexity criterion proved in Appendix \ref{app:QC} to conclude that the hyperplane stabilizer is quasi-convex.

If on the other hand we assume that hyperplane stabilizers are quasi-convex, we will use them as in the tree case to control geodesics joining points in a vertex stabilizer.  We inductively use more and more hyperplanes to corral points on a geodesic in an argument which terminates because of the finite dimensionality of the cube complex.

Throughout this section we suppose that $X$ is a CAT$(0)$ cube complex and that $\mc{X}$ is its idealization (see Definition \ref{def:cube idealization}).  We suppose further that $G$ is a group acting cocompactly on this cube complex.  The quotient $\leftQ{\mc{X}}{G}$ is a scwol $\mc{Y}$.
Making choices as in Definition \ref{def:actionchoices}, we obtain a complex of groups $G(\mc{Y})$, with associated category $CG(\mc{Y})$.  Choosing a vertex $v_0\in \mc{Y}$, Theorem~\ref{th:G is pi_1 of cat} gives an identification of $G$ with $\pi_1(CG(\mc{Y}),v_0)$.  It is helpful to assume (as we may do without loss of generality) that $v_0$ is the orbit of some $0$--cube of $X$.

In Section \ref{ss:review}, we defined $\cgyt$ to be the universal covering of the category $CG(\mc{Y})$.  Recall that the objects of $\cgyt$ are homotopy classes of $CG(\mc{Y})$--paths, starting at the basepoint $v_0\in \mc{Y}$, and arrows are labeled by arrows of $CG(\mc{Y})$ (Definition \ref{d:label}).    The basepoint of $\cgyt$ is $\widetilde{v}_0$, the homotopy class of the constant path at $v_0$.  As in Section~\ref{ss:review} we denote the universal covering map by $\phi \co \cgyt \to CG(\mc{Y})$.  Recall from Proposition~\ref{prop:recoverX} and Definition~\ref{def:Theta} that $\Theta \co \cgyt \to \mc{X}$ is the scwolification functor.

We briefly describe the contents of the remainder of this section.  In Subsection \ref{ss:graphcategory} we explain how we consider subsets of small categories as graphs.  In Subsection \ref{ss:cubical paths} we identify certain subsets of $\cgyt$ which are tuned to the cubical geometry of $X^b$ and associate graphs with these subsets.  In Subsection \ref{ss:(2) implies (1)} we prove the direction \eqref{hyp QC} $\implies$ \eqref{vertex QC}  of Theorem \ref{t:cell qc iff hyp}.  In fact, we prove the more general Theorem \ref{t:strong QC (2) implies (1)}.  In Subsection \ref{ss:(1) implies (2)} we prove the direction \eqref{vertex QC} $\implies$ \eqref{hyp QC} of Theorem \ref{t:cell qc iff hyp}.  In Subsection \ref{ss:generalizations} we consider various possible generalizations of Theorem \ref{t:cell qc iff hyp}.  Finally, in Subsection \ref{ss:Cor B} we prove Corollary \ref{cor:cube complex hyp}.

\subsection{Graphs from subsets of small categories}\label{ss:graphcategory}

  Let $\mc{C}$ be a (small) category, and let $S$ be a subset of the set of arrows of $\mc{C}$.  There is an associated graph (really a $1$--complex), which we denote $\Gr(S)$, with vertex set the set of objects which are either the source or target of some arrow in $S$, and with edges in correspondence with the arrows $S$.

\begin{example}\label{ex:grC}
    Let $S$ be the set of \emph{all} arrows in $\mc{C}$.  Then $\Gr(\mc{C}):=\Gr(S)$ is the $1$--skeleton of the nerve of $\mc{C}$.
\end{example}
\begin{example}    Suppose $\mc{C}$ is a group (i.e. $\mc{C}$ has a single object and each arrow of $\mc{C}$ is invertible), and $S_0\subset\mc{C}$ is a generating set.
    Let $S$ be the set of arrows in the universal cover $\widetilde{\mc{C}}$ with label in $S_0$.  Then $\Gr(S)$ is the Cayley graph of $G$ with respect to $S_0$.
  \end{example}

\subsection{Cubical paths} \label{ss:cubical paths}
The group $\pi_1(CG(\mc{Y}),v_0)$ acts on $\cgyt$, with quotient the category $CG(\mc{Y})$.  As in Theorem \ref{th:G is pi_1 of cat}, we can identify $G$ with $\pi_1(CG(\mc{Y}),v_0)$.  The proof of each direction of Theorem \ref{t:cell qc iff hyp} begins with choosing a certain connected $G$--cocompact subgraph $\Gamma$ of $\Gr(\cgyt)$ (see Lemma~\ref{lem:GammaA}). 
The graph $\Gamma$ admits a $G$--equivariant map to the $1$--skeleton of the cubical subdivision of $X$, which we now recall.

\begin{definition}
Suppose that $X$ is a cube complex.  The {\em (first) cubical subdivision} of $X$, denoted $X^b$, is the cube complex obtained by replacing each $n$--cube in $X$ by $2^n$ $n$--cubes, found by subdividing each coordinate interval into two equal halves, and then gluing in the obvious way induced from the structure of $X$.
\end{definition}
Of course, $X^b$ is canonically homothetic to $X$, and $X^b$ is NPC (respectively, CAT$(0)$) if and only if $X$ is.  We suppose that $X$ is CAT$(0)$, and therefore $X^b$ is also.  




\begin{observation} \label{obs:X^b}
The vertices of $X^b$ are in bijection with the cubes of $X$.

The cells of $X^b$ are in bijection with pairs $(\widetilde{\sigma}_1,\widetilde{\sigma}_2)$ of cubes in $X$ so that $\widetilde{\sigma}_1 \subseteq \widetilde{\sigma}_2$.  The dimension of the cube corresponding to $(\widetilde{\sigma}_1,\widetilde{\sigma}_2)$ is $\dim(\widetilde{\sigma}_2) - \dim(\widetilde{\sigma}_1)$.

Thus, a $1$--cell in $X^b$ corresponds to a pair of cubes $(\widetilde{\sigma}_1, \widetilde{\sigma}_2)$ where $\widetilde{\sigma}_1$ is a codimension one face of $\widetilde{\sigma}_2$.  Moreover, each cell of $X^b$ can be naturally identified with an object of $\mc{X}$.
\end{observation}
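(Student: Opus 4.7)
The statement is essentially a direct unpacking of the definition of the cubical subdivision, so the plan is a local-to-global verification rather than anything subtle. The plan is to first fix a model $n$-cube $\widetilde{\sigma}_2 \cong [0,1]^n$ of $X$, establish the bijection inside $\widetilde{\sigma}_2^b$, and then observe that the correspondence is compatible with the face-gluing of $X$, so the local bijections assemble into a global one.

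Inside $\widetilde{\sigma}_2 = [0,1]^n$, I would note that every subcube $C$ of the cubical subdivision is a product, where each coordinate takes one of five shapes: $\{0\}$, $\{1\}$, $\{\tfrac12\}$, $[0,\tfrac12]$, or $[\tfrac12,1]$. Call the first two \emph{boundary} coordinates, the third \emph{center} coordinates, and the last two \emph{edge} coordinates; let $a, b, c$ be their respective counts, so $a+b+c=n$ and $\dim C = c$. From $C$ I would read off the pair $(\widetilde{\tau}_1, \widetilde{\tau}_2)$ as follows: $\widetilde{\tau}_2$ is the face of $\widetilde{\sigma}_2$ obtained by fixing precisely the boundary coordinates at their prescribed values in $\{0,1\}$ (so $\dim \widetilde{\tau}_2 = b+c$), and $\widetilde{\tau}_1$ is the further face obtained by additionally fixing each edge coordinate at whichever of $\{0,1\}$ is the outer endpoint of its interval in $C$ (so $\dim \widetilde{\tau}_1 = b$). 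Clearly $\widetilde{\tau}_1 \subseteq \widetilde{\tau}_2$ and $\dim \widetilde{\tau}_2 - \dim \widetilde{\tau}_1 = c = \dim C$.

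The inverse map is just as explicit: given any chain $\widetilde{\sigma}_1 \subseteq \widetilde{\sigma}_2$ of faces of $\widetilde{\sigma}_2 = [0,1]^n$, declare each coordinate fixed in $\widetilde{\sigma}_2$ to be a boundary coordinate of $C$ (taking the prescribed value in $\{0,1\}$), each coordinate that varies in $\widetilde{\sigma}_1$ to be a center coordinate of $C$ (value $\tfrac12$), and each coordinate fixed in $\widetilde{\sigma}_1$ but not in $\widetilde{\sigma}_2$ to be an edge coordinate, taking the interval $[0,\tfrac12]$ or $[\tfrac12,1]$ whose outer endpoint agrees with the value fixed in $\widetilde{\sigma}_1$. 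A quick check against the previous construction shows these two assignments are mutually inverse, so we have the claimed bijection in each cube. Taking $C$ of dimension $0$ forces no edge coordinates and hence $\widetilde{\sigma}_1 = \widetilde{\sigma}_2$, yielding the first assertion; taking $C$ of dimension $1$ forces exactly one edge coordinate and hence $\widetilde{\sigma}_1$ to be a codimension-$1$ face of $\widetilde{\sigma}_2$, yielding the edge assertion.

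Globally, the only thing to check is well-definedness: the subcube $C$ lies in the intersection of the subdivisions of all cubes of $X$ that contain it, and the assignment $C \mapsto (\widetilde{\sigma}_1, \widetilde{\sigma}_2)$ clearly depends only on $C$ (not on the ambient cube chosen), because the smallest face of $X$ containing $C$ is well-defined and so is the face determined by the ``outer endpoints'' of the edge coordinates. So the cube-by-cube bijection descends to a bijection between cells of $X^b$ and nested pairs of cubes of $X$. Finally, the identification with objects of $\mc{X}$ is immediate from Definition \ref{def:cube idealization}: a single cube $\widetilde{\sigma}$ is a length-one nested chain (corresponding to a vertex of $X^b$ via $\widetilde{\sigma}_1 = \widetilde{\sigma}_2 = \widetilde{\sigma}$), while a strict nesting $\widetilde{\sigma}_1 \subsetneq \widetilde{\sigma}_2$ is a length-two chain (corresponding to a positive-dimensional cell of $X^b$). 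No step is genuinely an obstacle; the only subtlety is keeping the bookkeeping of boundary/center/edge coordinates consistent so that the dimension formula $\dim C = \dim \widetilde{\sigma}_2 - \dim \widetilde{\sigma}_1$ falls out.
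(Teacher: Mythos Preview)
Your argument is correct. The paper states this as an Observation without proof, treating it as an immediate consequence of the definition of the cubical subdivision; your coordinate bookkeeping (boundary/center/edge) is exactly the right way to make that explicit, and your check that the local bijection is independent of the ambient cube is the only point that needs care.
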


As noted in Remark \ref{rem:idealizing paths} any path in the $1$--skeleton of $X^b$ has a canonical idealization in $\mc{X}$.  Each $1$--cell $e$ of $X^b$ corresponds to some pair of cells $(\widetilde{\sigma}_1 \subseteq \widetilde{\sigma}_2)$ with $\widetilde{\sigma}_1$ of codimension one in $\widetilde{\sigma}_2$.  If the path $p$ traverses the $1$--cell $e$, its idealization $\widehat{p}$ contains consecutive arrows labeled  $(\widetilde{\sigma}_1 \subseteq \widetilde{\sigma}_2) \to \widetilde{\sigma}_1$
and
$(\widetilde{\sigma}_1 \subseteq \widetilde{\sigma}_2) \to \widetilde{\sigma}_2$,
and every arrow of $\widehat{p}$ has such a label.
By Lemma~\ref{lem:nice_paths}, every $\cgyt$--path is homotopic to a concatenation of group arrows and scwol arrows.  The graph that we use to prove Theorem~\ref{t:cell qc iff hyp} uses only scwol arrows that occur in pairs corresponding to the above description.  Thus we make the following definition.

\begin{definition}
A \emph{pair of opposable scwol arrows in $CG(\mc{Y})$} is a pair of scwol arrows $\gamma^\down, \gamma^\up$ so that 
\begin{enumerate}
\item $c = i(\gamma^\down) = i(\gamma^\up)$ is an orbit of chain $(\widetilde{\sigma}_1 \subset \widetilde{\sigma}_2)$ where $\widetilde{\sigma}_1$ has codimension one in $\widetilde{\sigma}_2$;
\item  $\gamma^\down = (1,a)$ where $a$ is the arrow in $\mc{Y}$ corresponding to the $G$--orbit of the arrow $(\widetilde{\sigma}_1 \subset \widetilde{\sigma}_2) \to \widetilde{\sigma}_1$ in $\mc{X}$; and
\item $\gamma^\up = (1,b)$, where $b$ is the arrow in $\mc{Y}$ corresponding to the $G$--orbit of the arrow $(\widetilde{\sigma}_1 \subset \widetilde{\sigma}_2) \to \widetilde{\sigma}_2$ in $\mc{X}$.
\end{enumerate}
Suppose $c =i(\gamma^\down) = i(\gamma^\up)$ for a pair of opposable scwol arrows $(\gamma^\down,\gamma^\up)$.
If $\tilde{c}$ is a lift of $c$ to $\cgyt$, there are unique lifts $\tilde{\gamma}^\down,\tilde{\gamma}^\up$ with source $\tilde{c}$.  The pair $(\tilde{\gamma}^\down,\tilde{\gamma}^\up)$ is a \emph{pair of opposable scwol arrows in $\cgyt$}.
\end{definition}
We remark that the image under $\Theta$ of a pair of opposable scwol arrows is quite restricted.  In the right-hand part of Figure~\ref{fig:idealsquare}, for example, the possible images are the horizontal pair of arrows at the top of the diagram or the vertical pair at the right.

\begin{definition}
An object in $CG(\mc{Y})$ (equivalently, in $\mc{Y}$, since the objects of these two categories are the same) is {\em cubical} if it is an orbit of cubes in $X$ (rather than an orbit of chains of cubes of length greater than $1$).  An object in $\cgyt$ is {\em cubical} if its projection to $CG(\mc{Y})$ is cubical.

A path $p$ in $CG(\mc{Y})$ is {\em cubical} if
\begin{enumerate}
\item The initial and terminal objects of $p$ are cubical;
\item $p$ is a concatenation of group arrows and scwol arrows; and
\item The scwol arrows occur in consecutive pairs, as pairs of opposable scwol arrows.
\end{enumerate}

A path in $\cgyt$ is {\em cubical} if its projection to $CG(\mc{Y})$ is cubical.
\end{definition}
It follows from the definition that all group arrows for a cubical path occur at cubical objects.

\begin{proposition}\label{cubicalpaths}
Suppose that $v$ and $w$ are cubical vertices of ${CG}(\mc{Y})$ and $\sigma$ is a  $CG(\mc{Y}$)--path between $v$ and $w$.  Then $\sigma$ is homotopic to a cubical path.
 
In particular, every $g\in G=\pi_1(CG(\mc{Y}),v_0)$ is represented by a cubical $CG(\mc{Y})$--loop starting and ending at $v_0$.
\end{proposition}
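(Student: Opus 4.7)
The plan is to apply Lemma~\ref{l:group and scwol suffice} to reduce $\sigma$ to a concatenation of group and scwol arrows, and then use elementary homotopies to bring both kinds of arrows into cubical form.

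After the initial reduction, I first apply the composition elementary homotopy from Definition~\ref{def:elem homotopy} in reverse to factor every scwol arrow whose endpoints differ in chain-length by more than one into a sequence of ``unit'' arrows (each changing chain-length by exactly one); each such factorization may introduce an extra group arrow coming from the twisting cocycle $z(\cdot,\cdot)$, which I carry along. The main step is then to reduce the maximum chain-length $K$ visited to at most $2$. If $K \ge 3$ and $c$ is a summit, that is, a visit to a chain of length $K$, then since $K$ is maximal the two adjacent scwol arrows must have the form $a^+ b^-$ with $i(a) = i(b) = c$, because any arrow involving a chain longer than $c$ would exceed $K$. If $t(a) = t(b)$ then $a = b$ and $a^+ b^- \simeq 1$; otherwise $\tau := t(a) \cap t(b)$ is a sub-chain of $c$ of length $K-2$, and there are unique arrows $c_a \co t(a) \to \tau$, $c_b \co t(b) \to \tau$, and $e \co c \to \tau$ satisfying $c_a \circ a = c_b \circ b = e$. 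Applying the composition homotopy to each composition (and separating out twisting group arrows at $\tau$) yields $a^- c_a^- \simeq e^-$ and $b^- c_b^- \simeq e^-$; then $a^+ \simeq c_a^- e^+$ and $b^- \simeq e^- c_b^+$ follow by inverse-pair cancellation, and the rerouting identity
\[ a^+ b^- \;\simeq\; c_a^- e^+ e^- c_b^+ \;\simeq\; c_a^- c_b^+ \]
follows (modulo a group arrow at $\tau$). This replaces a visit to $c$ (length $K$) by a visit to $\tau$ (length $K-2$), so iterating over summits strictly decreases the number of length-$K$ visits, and then the maximum chain-length visited. After finitely many iterations only lengths $1$ and $2$ remain, and with unit arrows the path must alternate between cubical and length-$2$ vertices, so the scwol arrows automatically group into opposable pairs.

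Finally, I use the commutation $(1,a)\circ(h,1_{i(a)}) = (\psi_a(h),1_{t(a)})\circ(1,a)$ (and the analog obtained by inverting, which pushes across $^+$-arrows) to relocate each remaining group arrow. In the post-reduction path, a group arrow at a length-$2$ vertex $v$ is flanked by scwol arrows $a^+$ and $b^-$ whose opposite endpoints $t(a)$ and $t(b)$ are cubical, so a single push past $b^-$ relocates the arrow to the cubical vertex $t(b)$. The main obstacle I foresee is the careful bookkeeping of the extra group arrows introduced by twisting elements when scwol arrows are combined or factored in $CG(\mc{Y})$: these must be shown not to obstruct the summit reduction, and must all end up at cubical vertices after the group-arrow pushes; once established, termination of the entire procedure is immediate from the strict descent of the maximum chain-length visited.
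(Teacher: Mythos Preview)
Your overall strategy is sound and is exactly the sort of argument the paper has in mind (the paper only says the result is ``straightforward to prove, starting with an arbitrary $CG(\mc{Y})$--path and applying relations until it is of the desired form'').  However, there is a genuine gap in your final step.

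After your summit reduction you correctly arrive at a path visiting only chains of length $1$ and $2$, with all scwol arrows unit arrows.  You then assert that ``the scwol arrows automatically group into opposable pairs.''  They group into pairs $a^+b^-$ with common center a length--$2$ chain $(\sigma_1\subset\sigma_2)$, but the definition of \emph{opposable} requires $\sigma_1$ to be codimension one in $\sigma_2$, and nothing in your procedure guarantees this.  For instance, the original $CG(\mc{Y})$--path may already pass through a length--$2$ object $(\sigma_1\subset\sigma_3)$ with $\sigma_1$ of codimension $d>1$ in $\sigma_3$; summit reduction never touches it.  Even starting from a length--$3$ summit $(\sigma_1\subset\sigma_2\subset\sigma_3)$, the intersection $t(a)\cap t(b)$ you form depends on the choice of unit factorization; choosing $t(a)=(\sigma_1\subset\sigma_3)$ leaves a non-codimension-one pair in the output.

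One fix: add a \emph{codimension reduction} step after summit reduction.  At a length--$2$ center $c=(\sigma_1\subset\sigma_2)$ with $\sigma_1$ of codimension $d>1$, choose $\tau$ with $\sigma_1\subset\tau\subset\sigma_2$ and $\dim\tau=\dim\sigma_1+1$.  Temporarily push the path up to the length--$3$ chain $c'=(\sigma_1\subset\tau\subset\sigma_2)$ (using $d^+d^-$ for the arrow $d\co c'\to c$), refactor through the unit arrows $c'\to(\sigma_1\subset\tau)$ and $c'\to(\tau\subset\sigma_2)$, and apply your summit reduction at $c'$.  The result routes through $(\sigma_1\subset\tau)$ (codimension one) and $(\tau\subset\sigma_2)$ (codimension $d-1$), so induction on $d$ finishes.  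The bookkeeping of twisting elements is exactly as in your summit step, and all group arrows can still be pushed to cubical vertices via your commutation relation.
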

\begin{proof}
By path lifting, it suffices to prove the analogous statement for $\cgyt$--paths.
We already observed in the proof of Proposition~\ref{prop:recoverX} that homotopies in $\mc{X}$ can be lifted to homotopies of $\cgyt$--paths, thus a given path can be homotoped to a path whose image under $\Theta$ stays in the idealization of $\left(X^b\right)^{(1)}$.  Lemma~\ref{lem:nice_paths} turns this into a concatenation of group arrows and scwol arrows (without changing its image under $\Theta$).  Any sub-path $(1,a)^+ \cdot (g,\identity_{i(a)})^{\pm}$ is homotopic to $(\psi(a)(g),\identity_{t(a)})^{\pm} \cdot (1,a)^+$.  In particular, after a homotopy, we may assume our path contains no such sub-path, and whenever a scwol arrow occurs it occurs in a pair of opposable scwol arrows.
\end{proof}

\begin{definition} \label{def:Gamma A}	
  Suppose that for each cubical object $\o$ of $\mc{Y}$ we choose a set $\mathbb{A}_\o \subset G_\o$.  These determine a subset $S(\mathbb{A})$ of the arrows of $\cgyt$ which is the union of the following two sets:
  \begin{enumerate}
  \item the set of (group) arrows with label $(g,\identity_\o)$ for some $\o$ and some $g\in \mathbb{A}_\o$; and
  \item the set of scwol arrows occurring in some pair of opposable scwol arrows.
\end{enumerate}
  As discussed in Section \ref{ss:graphcategory}, there is an associated graph $\Gr(S(\mathbb{A}))$ which we denote by $\Gamma(\mathbb{A})$.  A vertex of this graph is called \emph{cubical} if it comes from a cubical object, and otherwise it is called \emph{central}.
\end{definition}
Note that any central vertex of $\Gamma(\mathbb{A})$ only meets opposable scwol arrows and thus has valence exactly two, and each of its neighbors is a cubical vertex of $\Gamma(\mathbb{A})$.  The valence of a cubical vertex coming from the object $\o$ is equal to the number of opposable scwol arrows with terminus $\o$ plus twice the cardinality of $\mathbb A_\o$.  Since $\mc{Y}$ is finite, the graph $\Gamma(\mathbb A)$ is locally finite if and only if every $A_\o$ is finite.

\begin{lemma}\label{lem:GammaA}
  Suppose that $G=\pi_1(CG(\mc{Y}),v_0)$ is finitely generated.  Then there is a choice of $\mathbb A$ so that $\Gamma(\mathbb A)$ is locally finite, connected, and $G$--cocompact.
\end{lemma}
\begin{proof}
  Let $S$ be a finite generating set for $G$.  For each $s\in S$ choose a cubical loop $p(s)$ in $CG(\mc{Y})$ based at $v_0$ representing $s$.  For a cubical object $\o$ of $\mc{Y}$, let $\mathbb A_\o$ consist of those group arrows at $\o$ which occur in some $p(s)$.  There are only finitely many such, so the graph $\Gamma(\mathbb A)$ defined in Definition~\ref{def:Gamma A} is locally finite.

  To see $\Gamma(\mathbb A)$ is connected, let $w$ be any vertex of $\Gamma(\mathbb A)$.  There is a path composed of opposable scwol arrows joining $w$ to a vertex $v$ in the $G$--orbit of $\tilde{v}_0$.  Since $S$ generates $G$, there is a concatenation of the cubical loops $p(s)$ which lifts to a path in $\Gamma(\mathbb A)$ joining $\tilde{v}_0$ to $v$.
  
  The set of $G$--orbits of cubical vertices of $\Gamma(\mathbb A)$ injects into the set of objects of $\mc{Y}$, so it is finite and $\Gamma(\mathbb A)$ is $G$--cocompact.
\end{proof}

\begin{convention}
  For the rest of this section, we fix $\Gamma = \Gamma(\mathbb A)$ as in the conclusion of Lemma~\ref{lem:GammaA}.  
\end{convention}
The functor $\Theta \co \cgyt \to \mc{X}$ induces a map
\[	\Psi \co \Gamma \to \left(X^b \right)^{(1)}	.	\]
This map is simplicial after barycentrically subdividing the target.  Each pair of edges of $\Gamma$ coming from a pair of opposable scwol arrows maps to a single $1$--cell of $\left(X^b\right)^{(1)}$.  Any central vertex of $\Gamma$ maps under $\Psi$ to an intersection of an edge of $X^b$ with a hyperplane of $X^b$.

Note that the map $\Psi$ is continuous, $G$--equivariant and Lipschitz.

\begin{definition}[Cubical neighborhood]
  Let $v$ be a vertex of $X$.  The \emph{cubical neighborhood of $v$} is the union of those cubes of $X^b$ which contain $v$.  It will be denoted below by $N(v)$.
\end{definition}
\begin{proposition}\label{prop:N(v)} 
  Let $v$ be a vertex of $X$.  Then $\Psi^{-1}(N(v))$ is finite Hausdorff distance from $\Psi^{-1}(v)$ in $\Gamma$.
\end{proposition}
\begin{proof}
  Since $G$ acts cocompactly on $X$, there are finitely many $\Stab(v)$--orbits of pairs of cubes $(\sigma \subset \tau)$ so $\sigma$ contains $v$ and is a codimension one face of $\tau$.

  Let $(\sigma \subset \tau)$ be one such pair of cubes.
  By Lemma~\ref{lem:TransitiveStabilizer}, $\Stab(\tau)$ acts freely and transitively on $\Theta^{-1}((\tau))$.  The subgroup $\Stab(\sigma)\cap \Stab(\tau) = \Stab((\sigma\subset \tau))$ preserves the collection of pairs of opposable scwol arrows joining $\Theta^{-1}((\tau))$ to $\Theta^{-1}((\sigma))$.  Moreover $\Stab((\sigma \subset \tau))$ is finite index in $\Stab(\sigma)$.  It follows that there is some $c(\tau,\sigma)>0$ so that every vertex of $\Psi^{-1}((\tau))$ is distance at most $c(\tau,\sigma)$ from a vertex of $\Psi^{-1}((\sigma))$.  As there are only finitely many $\Stab(v)$--orbits of pairs $(\sigma \subset \tau)$ of such faces, there is some $c>0$ which works for every such pair.

  If $x\in \Psi^{-1}(N(v))$ is a cubical vertex, it is therefore distance at most $c \cdot \dim(X)$ from a vertex of $\Psi^{-1}(v)$.  If $x\in \Psi^{-1}(N(v))$ is central, then it is distance $1$ from a cubical vertex of $\Psi^{-1}(N(v))$.  We have shown that $\Psi^{-1}(N(v))$ is contained in the $(c \cdot \dim(X) + 1)$--neighborhood of $\Psi^{-1}(v)$.  Since $\Psi^{-1}(v)\subset \Psi^{-1}(N(v))$, we are finished.
\end{proof}
Part of our reason for working in $X^b$ is that Proposition~\ref{prop:N(v)} would fail if we defined $N(v)$ to be the union of those cubes of $X$ meeting $v$.

In the following statements we use the convention that the empty intersection of hyperplanes of $X$ is $X^b$ and the empty intersection of subgroups is $G$.
\begin{lemma} \label{lem:StabIinStabTau}
Suppose that $W_1, \ldots, W_k$ are  hyperplanes in $X$ and that $I = \bigcap\limits_{i=1}^k W_i$ is non-empty.  For any cell $\tau$ intersecting $I$ the subgroup $\Stab(I) \cap \Stab(\tau)$ is finite-index in $\Stab(\tau)$.
\end{lemma}
\begin{proof}
Finitely many hyperplanes intersect $\tau$ and these are permuted by any element of $\Stab(\tau)$.  Thus, a finite-index subgroup of $\Stab(\tau)$ fixes all the hyperplanes in $I$.
\end{proof}

We observe the following consequence of the cocompactness of $G\acts X$:
\begin{lemma} \label{lem:Int_Stab_Stab_Int}
  There are finitely many $G$--orbits of finite sets $$\{ W_1, \ldots , W_k \}$$ of distinct hyperplanes of $X$ with non-empty intersection $\bigcap\limits_{i=1}^k W_i$.  For each such set, the intersection $\bigcap\limits_{i=1}^k\Stab(W_i)$ is finite index in $\Stab\left(\bigcap\limits_{i=1}^k W_i\right)$.
\end{lemma}

\begin{definition}
  Let $D>0$.  An action of a group on a metric space is \emph{$D$--cobounded} if there is a set of diameter $D$ which meets every orbit.
\end{definition}

\begin{proposition} \label{prop:Stab(W)_on_Psi-1(W)}
  There is a constant $D>0$ so that for any non-empty intersection $I$ of hyperplanes of $X$, the subgroup $\Stab(I)$ acts $D$--coboundedly on $\Psi^{-1}(I)$.
\end{proposition}
\begin{proof}
  Since there are finitely many orbits of non-empty intersections of hyperplanes, it suffices to consider a single such intersection.
  
  Since the action of $G$ on $X$ is cocompact, so is the action of $\Stab(I)$ on $I$.  In particular, there are finitely many $\Stab(I)$ orbits of cubical or central vertices in the idealization of $I$.  Let $\o$ be the object of $\mc{X}$ corresponding to one of these vertices.  By Lemma~\ref{lem:TransitiveStabilizer}, $\Stab_G(\o)$ acts transitively on $\Theta^{-1}(\o)$.  By Lemma~\ref{lem:StabIinStabTau}, there is a finite index subgroup of $\Stab_G(\o)$ in $\Stab(I)$.  Thus we see that $\Psi^{-1}(I)$ contains finitely many $\Stab(I)$--orbits of vertices.  
\end{proof}

\subsection{If hyperplane stabilizers are QC then cell stabilizers are QC} \label{ss:(2) implies (1)}

In this section we prove the direction \eqref{hyp QC} $\implies$ \eqref{vertex QC} of Theorem \ref{t:cell qc iff hyp}. As mentioned in the introduction, we prove this in greater generality than that of a hyperbolic group acting cocompactly on a CAT$(0)$ cube complex with quasi-convex hyperplane stabilizers.  The right general setting for this proof is that of {\em strongly quasi-convex subgroups} of finitely generated groups, as defined by Tran in \cite{Tran}.  (Such subgroups were also studied by Genevois \cite{genevois:hyperbolicities} under the name {\em Morse subgroups}.)

\begin{definition} \cite[Definition 1.1]{Tran}
Let $X$ be a geodesic metric space.  A subset $Q\subseteq X$ is 
{\em strongly quasi-convex} if for every $K \ge 1$, $C \ge 0$ there is some $M = M(K,C)$ so that every $(K,C)$--quasi-geodesic in $X$ with endpoints in $Q$ is contained in the $M$--neighborhood of $Q$.  The function $M(K,C)$ is called a \emph{Morse gauge}.
\end{definition}

Strong quasi-convexity persists under quasi-isometries of pairs.
This is presumably known to the experts, and is closely related to \cite[Proposition 4.2]{Tran}, but we do not see it in the literature so we provide a proof sketch.

\begin{theorem} \label{t:QI of SQC}
Suppose $X$ and $Y$ are geodesic metric spaces, that $A \subset X$ is strongly quasi-convex, that $\phi \co X \to Y$ is a quasi-isometry and that $B \subset Y$ is finite Hausdorff distance from $\phi(A)$.  Then $B$ is a strongly quasi-convex subset of $Y$.
\end{theorem}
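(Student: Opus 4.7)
The plan is to reduce the theorem to two applications of Tran's Proposition 4.2. The proposition in question says, in essence, that strong quasi-convexity is preserved under quasi-isometries of pairs, with the resulting Morse gauge depending only on the original Morse gauge and the quasi-isometry constants.

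First I would apply Proposition 4.2 to the quasi-isometry $\phi \co X \to Y$ and the strongly quasi-convex subset $A \subset X$, obtaining that $\phi(A)$ is strongly quasi-convex in $Y$. Second, I would apply Proposition 4.2 again, this time to the identity map on $Y$ viewed as a quasi-isometry of pairs $(Y,\phi(A)) \to (Y,B)$, where the finite Hausdorff distance between $\phi(A)$ and $B$ supplies the needed bounded equivalence. This second application yields that $B$ is strongly quasi-convex in $Y$.

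The only real obstacle is verifying that the second invocation is indeed a legitimate application of Proposition 4.2, i.e.\ that finite Hausdorff distance between two subsets of a common space is enough to count as a quasi-isometry of pairs in the sense Tran requires. If this is not immediate from the statement of Proposition 4.2, a direct unpacking of the definition suffices: let $D$ be the Hausdorff distance between $\phi(A)$ and $B$, and let $\sigma$ be a $(K,C)$-quasi-geodesic in $Y$ with endpoints in $B$. Choose points of $\phi(A)$ within distance $D$ of each endpoint of $\sigma$ and prepend/append geodesics from these points; the resulting path $\sigma'$ is a $(K',C')$-quasi-geodesic with endpoints in $\phi(A)$, where $K',C'$ depend only on $K,C,D$. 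By strong quasi-convexity of $\phi(A)$, $\sigma'$ lies in the $M(K',C')$-neighborhood of $\phi(A)$, hence $\sigma$ lies in the $(M(K',C')+D)$-neighborhood of $B$, establishing the desired Morse gauge for $B$.
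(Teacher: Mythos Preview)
Your proposal is correct and matches the paper's approach exactly: the paper does not give a detailed proof but simply remarks that two applications of \cite[Proposition 4.2]{Tran} yield the result, which is precisely what you do. Your additional direct verification of the second step is a reasonable elaboration but not required.
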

\begin{proof}[Proof sketch]
  This is proved essentially in the same way as the corresponding fact about quasi-convex subsets of hyperbolic spaces.  The difference is that instead of a single constant of quasi-convexity, we must produce a Morse gauge.

  Suppose that $\phi\co X\to Y$ and $\psi\co Y\to X$ are $(\lambda,\epsilon)$--quasi-isometries which are $\epsilon$--quasi-inverses, and that $d_{\mathrm{Haus}}(B,\phi(A))\leq \epsilon$.
  
    Any quasi-geodesic $\gamma$ joining points in $B$ can be extended by a pair of geodesic segments of length $\le\epsilon$ to make a quasi-geodesic $\gamma'$ joining points in $\phi(A)$.  The image of $\gamma'$ under $\psi$ can likewise be extended to a quasi-geodesic $\gamma''$ between points of $A$.  If $\gamma$ was a $(K,C)$--quasi-geodesic, then $\gamma''$ is a $(K',C')$--quasi-geodesic where $K',C'$ depend only on $K$, $C$, $\lambda$, and $\epsilon$.  If $M$ is the Morse gauge for $A$ in $X$, then let $M_1 = M(K',C')$.  For any point $p$ on $\gamma$, the point $\psi(p)$ is on $\gamma''$ so it is within $M_1$ of some point in $A$.  Using $\phi$ to move back to $X$ we see that $p$ is within $\lambda M_1 + 3\epsilon$ of some point of $B$.  We can therefore define a Morse gauge $M'$ for $B$ in $Y$ by $M'(K,C)=\lambda M(K',C')+3\epsilon$.
\end{proof}

In particular, the notion of strong quasi-convexity makes sense for subgroups of finitely generated groups.

In this subsection, we prove the following theorem.

\begin{theorem} \label{t:strong QC (2) implies (1)}
Suppose that a finitely generated group $G$ acts cocompactly on a CAT$(0)$ cube complex $X$ and that the hyperplane stabilizers are strongly quasi-convex.  Then the cell stabilizers are strongly quasi-convex.
\end{theorem}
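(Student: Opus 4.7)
My plan is to combine the intersection property of strongly quasi-convex subgroups with the cubical graph machinery of Subsection \ref{ss:build graphs}, organized by induction on $|\C|$ for tuples $\C \in \mathfrak{W}$. By the intersection property (\cite[Theorem 1.2]{Tran}), cell stabilizers are strongly quasi-convex as soon as vertex (0--cube) stabilizers are, since the former are finite intersections of the latter. Using Lemma \ref{l:two hyp stabs} together with Theorem \ref{t:QI of SQC}, I may replace $X$ by $X^b$ and assume hyperplane stabilizers of the $G$--action on $X^b$ are strongly quasi-convex.

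The first substantive step is an induction on $|\C|$ showing that $\Stab(\C)$ is strongly quasi-convex for every $\C \in \mathfrak{W}$. The base case $|\C|=1$ is the hypothesis, and the inductive step uses $\Stab(\C) = \Stab(\C^-) \cap \Stab(W_k)$ together with the intersection property. This gives strongly quasi-convex stabilizers for every tuple in $\mathfrak{W}$.

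The main step is to deduce that $\Stab(v_0)$ is strongly quasi-convex for an arbitrary $0$--cube $v_0$ of $X$. Fix a finite generating set $S$ of $G$, constants $K \ge 1$, $C \ge 0$, and a $(K,C)$--quasi-geodesic $\gamma$ in $\mathrm{Cay}(G,S)$ with endpoints $g_0, g_1 \in \Stab(v_0)$. Using Proposition \ref{cubicalpaths}, I would replace the element $g_0^{-1}g_1$ by a representing cubical $CG(\mc{Y})$--path of length comparable to $d_S(g_0,g_1)$, and lift it to a cubical path $\widetilde\gamma$ in $\widetilde{CG(\mc{Y})}$ from $g_0\widetilde{v_0}$ to $g_1\widetilde{v_0}$. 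Each pair of opposable scwol arrows in $\widetilde\gamma$ encodes a hyperplane crossing in $X^b$. For each hyperplane orbit crossed, the Morse gauge of $\Stab(W)$ allows the relevant subpath of $\widetilde\gamma$ to be corralled close to a coset of $\Stab(W)$, following the tree-case heuristic from the introduction. Iterating across orbits of crossed hyperplanes and organizing them into a tuple $\C \in \mathfrak{W}$ of increasing length, $\widetilde\gamma$ is funneled successively into the $\Stab(\C)$--invariant subgraph $\widehat\Gamma(\C)$, whose geometry is controlled by Proposition \ref{prop:stabC} and the inductively established strong quasi-convexity of $\Stab(\C)$. Because $\dim(X)$ is finite and each stage extends $\C$ by one hyperplane, the induction terminates after at most $\dim(X)$ stages, by which point $\widetilde\gamma$ lies in a bounded neighborhood of $\Stab(v_0)$.

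The main obstacle will be the bookkeeping in this last step: each iteration's Morse gauge depends on those of all previous stages, so one must verify that they compose in a controlled fashion and that the resulting bound $M$ depends only on $K$, $C$, and fixed data about $G \acts X$. The other delicate geometric point is ensuring that each corralling operation preserves enough of the cubical structure of $\widetilde\gamma$ (the pairs of opposable scwol arrows must remain intact) so that one can iterate. The finiteness of $\dim(X)$ is what makes this bookkeeping terminate and keeps the Morse gauge uniform.
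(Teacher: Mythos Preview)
Your proposal has a genuine gap: you conflate two different paths. The quasi-geodesic $\gamma$ in $\mathrm{Cay}(G,S)$ is the object whose proximity to $\Stab(v_0)$ must be established, while the cubical path $\widetilde\gamma$ is merely one representative of the group element $g_0^{-1}g_1$. The Morse property only constrains quasi-geodesics, so it must be applied to $\gamma$, not to $\widetilde\gamma$. Your claim that $\widetilde\gamma$ can be taken of length comparable to $d_S(g_0,g_1)$ is unjustified --- Proposition~\ref{cubicalpaths} gives no length control whatsoever --- and even if one had such a bound, the sub-paths of $\widetilde\gamma$ between hyperplane crossings have no reason to be quasi-geodesics. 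Relatedly, ``funneling $\widetilde\gamma$ into $\widehat\Gamma(\C)$'' is not a well-defined operation: $\widetilde\gamma$ is a fixed path that either lies in $\widehat\Gamma(\C)$ or does not. There is also a hidden obstacle: to compare $\gamma$ with any cubical structure you need a $G$--cocompact connected graph sitting inside $\Gr(\widetilde{CG(\mc{Y})})$, but vertex stabilizers are not yet known to be finitely generated, so you cannot simply build such a graph from generating sets of the local groups.

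The paper's argument avoids these issues by working pointwise on the quasi-geodesic rather than manipulating a cubical representative. First it constructs a $G$--cocompact connected graph $\Gamma$ using finite generating sets of the $\Stab(\C)$ (which \emph{are} finitely generated, being strongly quasi-convex) rather than of the local groups; this graph comes equipped with an equivariant Lipschitz map $\Psi_\Gamma\co\Gamma\to (X^b)^{(1)}$. For an arbitrary point $y$ on the $(K,C)$--quasi-geodesic $\gamma\subset\Gamma$, the image $\Psi_\Gamma(\gamma)$ is a loop based at $v$, so any hyperplane $W_1$ separating $v$ from $\Psi_\Gamma(y)$ is crossed by $\Psi_\Gamma(\gamma)$ on both sides of $y$; hence a genuine sub-quasi-geodesic of $\gamma$ through $y$ has endpoints in $\Psi_\Gamma^{-1}(W_1)$, and strong quasi-convexity of that preimage bounds $d(y,\Psi_\Gamma^{-1}(W_1))$. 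The iteration to longer tuples $\C_i$ is then driven not by corralling the path but by Proposition~\ref{prop:intersections} (close to $A$ and close to $B$ implies close to $A\cap B$, for strongly quasi-convex $A,B$ in a cocompact setting), producing points $z_i$ near $y$ lying in translates of $\Gamma_{\C_i}$ with $v\in g_iI_{\C_i}$, and terminating after at most $\dim X$ steps with $\Psi_\Gamma(z_i)=v$.
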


Since quasi-convexity is equivalent to strong quasi-convexity for subgroups of hyperbolic groups, Theorem \ref{t:strong QC (2) implies (1)} immediately implies the direction \eqref{hyp QC} $\implies$ \eqref{vertex QC} of Theorem \ref{t:cell qc iff hyp}.

Note that each cell stabilizer is a finite intersection of vertex stabilizers.  Tran shows that a finite intersection of strongly quasi-convex subgroups is strongly quasi-convex (\cite[Theorem 1.2.(2)]{Tran}) so we only need to show that vertex stabilizers are strongly quasi-convex whenever hyperplane stabilizers are.

We will use the following general statement about intersections of strongly quasi-convex sets, analogous to \cite[III.$\Gamma$.4.13]{bridhaef:book}.

\begin{proposition} \label{prop:intersections}
  For any Morse gauge $M$ and any $D$, $N$, there is a function $R\co [0,\infty)\to[0,\infty)$ so that the following holds.
  Let $X$ be a graph of valence at most $N$ with a free $G$--action, let $A,B<G$ be subgroups acting $D$--coboundedly on $M$--strongly quasi-convex subgraphs $Y_A,Y_B$, respectively.  If $Y_C = Y_A\cap Y_B$ is non-empty then for any $p\in X$,
  \[ d(p,Y_A\cap Y_B) \le R\left( \max\left\{d(p,Y_A),d(p,Y_B)\right\} \right).\]
\end{proposition}
\begin{proof}
  Let $r>0$.  We must describe $R(r)$.
  
  Note that a concatenation of a geodesic of length $r$ with a geodesic of any length is a $(1,2r)$--quasi-geodesic.  Let $M_0=M(1,2r)$.  Let $R(r)$ be a bound for the number of pointed oriented simplicial paths in $X$ of length $\leq 2(M_0+D)$, up to the $G$--action.  (A bound can be chosen depending only on $N$, $M$, $D$, and $r$.)

  Let $p\in X$ be chosen so that $\max\left\{d(p,Y_A),d(p,Y_B)\right\} \le r$.  Let $q$ be a closest point in $Y_C$ to $p$.  Suppose $d(p,q) > R(r)$, and let $\gamma$ be a geodesic from $p$ to $q$. 
  Every vertex on $\gamma$ lies within $M_0$ of both $Y_A$ and $Y_B$.  It follows from $D$--coboundedness that every vertex $v$ on $\gamma$ lies within $M_0 + D$ of some $aq$ and some $b q$ for $a\in A, b\in B$.  By our choice of $R(r)$, there must be a pair of distinct vertices $v_1$, $v_2$ on $\gamma$ and paths $\sigma_i$ joining $v_i$ to $a_i q$, and $\tau_i$ joining $v_i$ to $b_i q$ of length at most $M_0+D$, and an element $h\in G$, so that $hv_1=v_2$, $h\sigma_1=\sigma_2$ and $h\tau_1=\tau_2$.  We may assume that $v_1$ is closer to $q$ than $v_2$ is.

  Note that we have $h Aq \cap Aq \ne \emptyset$.  Since the action of $G$ on $X$ is free, this implies that $h\in A$.  By the same argument $h\in B$, so $h\in C$, and thus $hq \in Y_C$.  But $hq$ is closer to $p$ than $q$ is, contradicting our choice of $q$.
\end{proof}
\begin{remark}
It is straightforward to see, using the above proof, that the set $Y_C$ is $R\circ M$--strongly quasi-convex, and also that $C$ acts coboundedly on $Y_C$ (with constants depending only on $D$, $M$ and $R$).
\end{remark}

Towards proving Theorem \ref{t:strong QC (2) implies (1)}, suppose that $G$ is a finitely generated group acting cocompactly on a CAT$(0)$ cube complex $X$, and suppose that hyperplane stabilizers are strongly quasi-convex in $G$.  Recall the graph $\Gamma$ from Lemma~\ref{lem:GammaA}, and the continuous, $G$--equivariant, Lipschitz map $\Psi \co \Gamma \to \left( X^b \right)^{(1)}$.

\begin{lemma} \label{lem:Stab(I) SQC}
Suppose that $W_1, \ldots, W_k$ are  hyperplanes in $X$ and that $I = \bigcap\limits_{i=1}^k W_i$ is non-empty.  Then $\Stab(I)$ is strongly quasi-convex in $G$.
\end{lemma}
\begin{proof}
By Lemma~\ref{lem:Int_Stab_Stab_Int}, the stabilizer of $I$ is a finite index supergroup of the intersection $\bigcap\limits_{i=1}^k\Stab(W_i)$.  The intersection of strongly quasi-convex subgroups is strongly quasi-convex by \cite[Theorem 1.2.(2)]{Tran}.
\end{proof}

\begin{proposition}\label{prop:Psi-1I SQC}
  There exists a Morse gauge $M$ so that for any collection of hyperplanes in $X$ with non-empty intersection $I$, the set
  $\Psi^{-1}(I)$ is strongly quasi-convex in $\Gamma$ with Morse gauge $M$.
\end{proposition}
\begin{proof}
The $G$--action on $X$ is cocompact, so there are finitely many $G$--orbits of sets $\{ W_1, \ldots , W_k \}$ of hyperplanes of $X$ so that $\bigcap\limits_{i=1}^k W_i \ne \emptyset$.  If $I$ is such a set and $g \in G$ then $\Psi^{-1}(g \cdot I) = g \cdot \Psi^{-1}(I)$.  Therefore, it suffices to consider a (finite) collection of representatives of $G$--orbits of intersections and prove that each is individually strongly quasi-convex, and then take a maximum over the finitely many Morse gauges for these subsets.

By Proposition~\ref{prop:Stab(W)_on_Psi-1(W)} $\Stab(I)$ acts cocompactly on $\Psi^{-1}(I)$ and by Lemma~\ref{lem:Stab(I) SQC} $\Stab(I)$ is strongly quasi-convex in $G$.  Therefore, by considering an orbit map $G \to \Gamma$ and applying Theorem \ref{t:QI of SQC}, we see that each $\Psi^{-1}(I)$ is a strongly quasi-convex subset of $\Gamma$.
\end{proof}

We now give the main part of the argument of the proof of Theorem \ref{t:strong QC (2) implies (1)}, namely that if hyperplane stabilizers are strongly quasi-convex, vertex stabilizers are also strongly quasi-convex.
We therefore fix a vertex $v$ of $X$.

Note that $\Psi^{-1}(v)$ is a non-empty and $\Stab(v)$--invariant set of vertices of $\Gamma$ consisting of finitely many $\Stab(v)$--orbits.
Thus in order to show $\Stab(v)$ is strongly quasi-convex in $G$, it suffices (by Theorem \ref{t:QI of SQC}) to show that the pre-image $\Psi^{-1}(v)$ is a strongly quasi-convex subset of $\Gamma$. 

We fix constants $K \ge 1$ and $C \ge 0$, suppose that $a$ and $b$ are vertices in $\Psi^{-1}(v)$ and let $\gamma$ be a $(K,C)$--quasi-geodesic in $\Gamma$ between $a$ and $b$.   Let $y$ be an arbitrary vertex on $\gamma$.  
We have to show $d(y,\Psi^{-1}(v))$ is bounded independent of $a$ and $b$.  By Proposition~\ref{prop:N(v)} $\Psi^{-1}(v)$ is finite Hausdorff distance from $\Psi^{-1}(N(v))$ (recall $N(v)$ is the cubical neighborhood of $v$), so it is enough to show $d(y,\Psi^{-1}(N(v)))$ is bounded independent of $a$ and $b$.

Here is a description of our bound:  Let $N$ be a bound for the valence of $\Gamma$ and $D$ the bound from Proposition~\ref{prop:Stab(W)_on_Psi-1(W)}.  Let $R_0 = M(K,C)$.  Assuming $R_i$ has been defined, we let $R_{i+1}$ be the maximum of $R_i$ and the number $R(R_i)$, where $R$ is the function from the conclusion of Proposition \ref{prop:intersections}, with the Morse gauge $M$ and the above $D,N$.  We will prove that $d(y,\Psi^{-1}(v)) \le R_{\dim X}$.

We build a sequence of points $y = z_0, z_1,\ldots,z_t$ in $\Gamma$ and hyperplanes $W_1, \ldots , W_t$ in $X$ for some $t\leq \dim X$ so that for each $i$, the following conditions are satisfied:
\begin{equation} \tag{$\ast_i$}
\left\{
\begin{gathered}
     d(y,z_i)\leq R_i;\\
     \Psi(z_i) \in \bigcap\limits_{s=1}^i W_s \mbox{ and }\\
     \bigcap\limits_{s=1}^i W_s \cap N(v) \ne \emptyset
\end{gathered}
\right.
\end{equation}
%
%
%
%

Notice that $(\ast_0)$ holds, since $d(y,z_0) = 0 \le R_0$ and the empty intersection of hyperplanes is $X^b$.

\begin{proposition}\label{prop:find z i plus one}
  Suppose $i \ge 0$ and that $z_0, \ldots z_i$ and $W_1, \ldots W_i$ have been defined and satisfy $(\ast_i)$.  Then either $\Psi(z_i) \in N(v)$ or there is some $z_{i+1}$ and $W_{i+1}$ so that $z_{i+1}$ and $W_1, \ldots , W_{i+1}$ satisfy $(\ast_{i+1})$.
\end{proposition}
\begin{proof}
  Suppose that $\Psi(z_i) \not\in N(v)$.  We are given hyperplanes $W_1, \ldots , W_i$ so that $I = \bigcap\limits_{s=1}^i W_s$ is nonempty and $I \cap N(v) \ne \emptyset$.  Notice that $I$ is a combinatorially convex sub-complex of $X^b$.  Choose a geodesic $p$ in the $1$-skeleton of $I$ from $I \cap N(v)$ to $\Psi(z_i)$.  The first edge of $p$ joins a vertex $u_1$ in $I \cap N(v)$ to a vertex $u_2$ which is not in $I \cap N(v)$.  Thus, the vertex $u_1$ corresponds to a cube $\tau$ of $X$ which contains $v$, and $u_2$ corresponds to a cube $\sigma$ which is a codimension one face of $\tau$ so that $\sigma$ does not contain $v$.  Let $W_{i+1}$ be the hyperplane of $X$ meeting $\tau$ in a mid-cube parallel to $\sigma$.  Since $u_1\in I \cap N(v)$ we see that $I \cap W_{i+1} \cap N(v) \ne \emptyset$, since it contains the vertex $u_1$ of $X^b$.
Also, because $p$ is a geodesic in $(X^b)^{(1)}$, $W_{i+1}$ separates $N(v)$ from $\Psi_{\Gamma}(z_i)$.  It is clear that 
\[ \bigcap\limits_{s=1}^{i+1} W_s \cap N(v) = W_{i+1} \cap I \cap N(v) \ne \emptyset	,	\]
so it remains to find $z_{i+1}$ satisfying the first two conditions of $(\ast_{i+1})$.

\begin{claim*}
$$d(y,\Psi^{-1}(W_{i+1}))\le  R_i.$$
\end{claim*}
\begin{proof}
If $\Psi(y)\in W_{i+1}$ then $d(y,\Psi^{-1}(W_{i+1}))=0$, so we suppose $\Psi(y)\not\in W_{i+1}$.  There are two cases.

 Suppose first $W_{i+1}$ separates $v$ from $\Psi(y)$.  In this case,  we know that $\gamma$ must cross $\Psi^{-1}(W_{i+1})$ between $a$ and $y$.  However, $\Psi_{\Gamma}(\gamma)$ is a loop, so $\gamma$ must also cross $\Psi^{-1}(W_{i+1})$ in the segment of $\gamma$ between $y$ and $b$.  Thus, there is a (quasi-geodesic) subsegment $\gamma_1$ of $\gamma$ which contains $y$ and which starts and finishes on $\Psi^{-1}(W_{i+1})$.  Since $M$ is a Morse gauge for $\Psi^{-1}(W_{i+1})$, and $M(K,C) \le R_i$, the claim follows in this case.

 Now suppose $W_{i+1}$ does not separate $v$ from $\Psi(y)$.  In this case, since $W_{i+1}$ \emph{does} separate $v$ from $\Psi(z_i)$ we know that $W_{i+1}$ must separate 
$\Psi(y)$ from $\Psi(z_i)$.  (In particular $i>0$ in this case.)  Since $d(y,z_i)\leq R_i$, and any path from $y$ to $z_i$ must intersect $\Psi^{-1}(W_{i+1})$, we must have
$d(y,\Psi^{-1}(W_{i+1}))\le R_i$ as required.  The claim is proved.
\end{proof}
Let $I = \bigcap\limits_{s=1}^i W_s$.
It follows immediately from the first two conditions of $(\ast_i)$ that $d(y,\Psi^{-1}(I)) \le R_i$.  Moreover, by the claim $d(y,\Psi_{\Gamma}^{-1}(W_{i+1}) \le R_i$.  Furthermore, we have 
\[	\Psi^{-1}\left(I \cap W_{i+1} \right) = \Psi^{-1}(I) \cap \Psi^{-1}(W_{i+1})	.\]
By Proposition~\ref{prop:Stab(W)_on_Psi-1(W)} each of the sets $\Psi^{-1}(I),\Psi^{-1}(W_{i+1})$ and $\Psi^{-1}(I\cap W_{i+1})$ is $D$--cobounded under the action of its stabilizer.  Proposition~\ref{prop:intersections} thus gives
\[ d\left(y,\Psi^{-1}\left(I \cap W_{i+1} \right) \right) \le R(R_i) \le R_{i+1}.\]
We choose $z_{i+1}$ to be any point of $\Psi^{-1}\left(I \cap W_{i+1} \right)$ which is closest to $y$.  The point $z_{i+1}$ satisfies the first two conditions of $(\ast_{i+1})$ so the proof is complete.
\end{proof}

For $j>\dim X$, there cannot exist a point $z_j$ satisfying $(\ast_j)$, since there are no $j$--tuples of hyperplanes with nonempty intersection.  Therefore, Proposition \ref{prop:find z i plus one} asserts that for some $i\leq \dim X$, $\Psi(z_i)=v$.
We conclude that $d(y,\Psi^{-1}(v))\leq R_i\leq R_{\dim X}$, as desired. 

This completes the proof of Theorem \ref{t:strong QC (2) implies (1)}.

\subsection{If cell stabilizers are QC then hyperplane stabilizers are QC} \label{ss:(1) implies (2)}
In this section we prove the direction \eqref{vertex QC} $\implies$ \eqref{hyp QC} of Theorem \ref{t:cell qc iff hyp}.  Therefore, suppose that $G$ is a hyperbolic group acting cocompactly on a CAT$(0)$ cube complex $X$, and suppose that the vertex stabilizers are quasi-convex in $G$.  

Let $W$ be a hyperplane in $X$.  Then as we have noted $W$ is a sub-complex of $X^b$.  Given $w$ in $W\cap (X^b)^{(0)}$, let $Y(w)$ be the (closed) $1$-neighborhood in $\Psi^{-1}(W)$ of $\Psi^{-1}(w)$.

\begin{lemma}\label{lem:uniform epsilon}
The sets $Y(w)$ are quasi-convex subsets of $\Gamma$ with constants which do not depend on $w$.
\end{lemma}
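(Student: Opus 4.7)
The plan is to identify $\Psi_W^{-1}(w)$ as a single orbit of a quasi-convex subgroup of $G$, from which quasi-convexity of the $1$--neighborhood $Y(w)$ follows.

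First I claim $\Psi_\Gamma^{-1}(w)$ is a single $\Stab(w)$--orbit in $\Gamma$. Since $\Gamma$ is built as a subgraph of $\Gr(\widetilde{CG(\mc{Y})})$, the $G$--orbits of cubical vertices of $\Gamma$ correspond bijectively to the cubical objects of $\mc{Y}$, which in turn correspond to the $G$--orbits of cubes of $X$. The cubical vertices of $\Gamma$ projecting to the specific cube $w$ under $\Psi_\Gamma$ thus form a single $\Stab(w)$--orbit. Since central vertices of $\Gamma$ project to midpoints of edges of $X^b$ rather than to vertices, $\Psi_\Gamma^{-1}(w)=\Stab(w)\cdot v_0$ for any chosen $v_0\in\Psi_\Gamma^{-1}(w)$.

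Now set $H_w := \Stab(W)\cap\Stab(w)$ and fix $v_0\in\Psi_W^{-1}(w)\subseteq\Gamma_W$. By the second assertion of Proposition \ref{prop:stabC}, any $g\in G$ with $gv_0\in\Gamma_W$ satisfies $g\in\Stab(W)$. Therefore
\[
\Psi_W^{-1}(w) \;=\; \Psi_\Gamma^{-1}(w)\cap\Gamma_W \;=\; \Stab(w)\cdot v_0\cap\Gamma_W \;=\; H_w\cdot v_0,
\]
a single $H_w$--orbit. By Lemma \ref{l:stab local group in Gamma_C}, $H_w$ is isomorphic to $H_{v_0,W}$, which by Lemma \ref{lem:G_o in cell group} is a finite-index subgroup of a cell stabilizer of $X$. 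Under the standing hypothesis that cell stabilizers are quasi-convex, and using that finite-index subgroups of quasi-convex subgroups of a hyperbolic group are quasi-convex, $H_w$ is quasi-convex in $G$, with constants depending only on the $G$--orbit of the pair $(W,w)$.

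Because $G$ acts freely and cocompactly on the hyperbolic graph $\Gamma$ and $H_w$ is quasi-convex in $G$, the orbit $\Psi_W^{-1}(w) = H_w\cdot v_0$ is a quasi-convex subset of $\Gamma$ with constants controlled by those of $H_w$. Since every edge of $\Gamma_W$ is also an edge of $\Gamma$, every vertex of $Y(w)$ lies at $\Gamma$--distance at most $1$ from $\Psi_W^{-1}(w)$, so $Y(w)$ is also quasi-convex in $\Gamma$. Cocompactness of $G\acts X^b$ leaves only finitely many $G$--orbits of pairs $(W,w)$ with $w$ a vertex of $W^\down$, so the quasi-convexity constants are uniform in $w$. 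The main subtlety is establishing the quasi-convexity of $H_w = \Stab(W)\cap\Stab(w)$, since $\Stab(W)$ is not a priori quasi-convex; this is enabled by Lemma \ref{lem:G_o in cell group}, which realizes $H_w$ as a finite-index subgroup of a cell stabilizer.
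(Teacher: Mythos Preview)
Your argument is correct and follows essentially the same route as the paper's proof: identify the group acting on $\Psi_W^{-1}(w)$ as $H_w=\Stab(W)\cap\Stab(w)$, invoke Lemma~\ref{lem:G_o in cell group} to realize it as a finite-index subgroup of a cell stabilizer (hence quasi-convex by hypothesis), and deduce quasi-convexity of $Y(w)$ from the proper cocompact action. You spell out explicitly why $\Psi_W^{-1}(w)$ is a single $H_w$--orbit (using Proposition~\ref{prop:stabC}), whereas the paper simply asserts that $H_{w,W}$ acts properly and cocompactly on $Y(w)$; and for uniformity you pass to $G$--orbits of pairs $(W,w)$ rather than $\Stab(W)$--orbits of $w$, but these are cosmetic differences.
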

\begin{proof}
Since $\Stab(W)$ acts cocompactly on $W$, there are finitely many $\Stab(W)$--orbits of sets $Y(w)$, so the uniformity of constants will follow immediately if we can prove each $Y(w)$ is a quasi-convex subset of $\Gamma$.  We therefore fix such a $w$.

The stabilizer $\Stab(w)$ is equal to the stabilizer of some cube $\sigma$ of $X$.  Thus $\Stab(w)$ is virtually the intersection of the stabilizers of the vertices of $\sigma$.  The vertex stabilizers are assumed to be quasi-convex, so $\Stab(w)$ is also quasi-convex.

Since $G$ acts freely and cocompactly on $\Gamma$ and $\Psi$ is equivariant, $\Stab(w)$ acts freely and cocompactly on $\Psi^{-1}(w)$.
By Lemma~\ref{lem:StabIinStabTau}, $\Stab(W) \cap \Stab(w)$ is a finite-index subgroup of $\Stab(w)$, so it is also quasi-convex and acts freely and cocompactly on $\Psi^{-1}(w)$.  It moreover acts cocompactly on $Y(w)$.

  The result follows (for example, by Theorem \ref{t:QI of SQC}).
\end{proof}

We are ready to prove the direction \eqref{vertex QC} $\implies$ \eqref{hyp QC} of Theorem \ref{t:cell qc iff hyp}, which is the content of the following theorem.  For this result, we assume Theorem \ref{t:globally QC}, which is proved in Appendix \ref{app:QC}.

\begin{theorem}
Suppose that the hyperbolic group $G$ acts cocompactly on the cube complex $X$, and that for every vertex $v$ of $X$, the stabilizer $\Stab(v)$ is quasi-convex.  Then, for every hyperplane $W \subset X$, the stabilizer $\Stab(W)$ is a quasi-convex subgroup of $G$.
\end{theorem}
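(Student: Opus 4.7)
The plan is to prove this by showing that $\Gamma_W$ is a quasi-convex subset of $\Gamma$, and then invoking Proposition \ref{prop:buildGamma} together with the fact that $\Stab(W)$ acts properly and cocompactly on $\Gamma_W$ (and properly and cocompactly on $\Gamma$ via the $G$--action). Since $G$ is hyperbolic, $\Gamma$ is quasi-isometric to a Cayley graph of $G$, so quasi-convexity of $\Gamma_W$ in $\Gamma$ is equivalent to quasi-convexity of $\Stab(W)$ in $G$.

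The strategy for proving $\Gamma_W$ is quasi-convex in $\Gamma$ is to express $\Gamma_W$ as a union of the uniformly quasi-convex pieces
\[
\Gamma_W \;=\; \bigcup_{w \in W^\down{}^{(0)}} Y(w),
\]
and then apply the quasi-convexity criterion, Theorem \ref{t:globally QC}, from the appendix. First I would check this covering equality: any vertex of $\Gamma_W$ either maps under $\Psi_W$ to a vertex of $W^\down$ (and so lies in some $Y(w)$) or is central and hence adjacent to cubical vertices that map into $W^\down$. By Lemma \ref{lem:uniform epsilon} each $Y(w)$ is quasi-convex in $\Gamma$ with constants independent of $w$.

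Next I would verify the ``combinatorial'' compatibility needed for Theorem \ref{t:globally QC}: the pieces $\{Y(w)\}$ are indexed by the vertex set of the connected complex $W^\down$, and whenever $w,w'$ are endpoints of an edge $e$ of $W^\down$, the pieces $Y(w)$ and $Y(w')$ share a vertex. This is because the edge $e$ is dual to some hyperplane in $X^b$ meeting $W$; lifting this to $\widetilde{CG(\mc Y)}$ produces scwol edges in $\arrows(\widehat\Gamma(W))$ connecting lifts of $w$ and $w'$. Thus the ``nerve'' of the covering is quasi-isometric to the $1$--skeleton of $W^\down$, which is quasi-isometric to $\Stab(W)$ acting on $\Gamma_W$. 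The hypothesis of Theorem \ref{t:globally QC} — essentially that one can pass between adjacent quasi-convex pieces at bounded cost — should follow because adjacent $Y(w), Y(w')$ intersect in a set on which a finite-index subgroup of a cell group acts.

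The main obstacle I anticipate is bookkeeping around the precise hypotheses of Theorem \ref{t:globally QC}: one must confirm that the combinatorial structure of the pieces $Y(w)$ inside $\Gamma_W$ matches the set-up of the criterion (appropriate bounds on intersections of neighboring pieces, uniformity over the $\Stab(W)$--orbits of the indexing complex $W^\down$, and the hyperbolicity of $\Gamma$, which follows from hyperbolicity of $G$ via cocompactness). Once these checks are carried out — all of which are $\Stab(W)$--equivariant and involve only finitely many orbits of configurations by cocompactness — Theorem \ref{t:globally QC} yields that $\Gamma_W$ is quasi-convex in $\Gamma$, completing the proof.
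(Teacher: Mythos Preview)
Your overall plan---reduce to showing $\Gamma_W$ is quasi-convex in $\Gamma$, cover $\Gamma_W$ by the uniformly quasi-convex pieces $Y(w)$, and invoke Theorem~\ref{t:globally QC}---is exactly the paper's strategy, but you have misread the hypotheses of Theorem~\ref{t:globally QC}, and this creates a real gap.

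Theorem~\ref{t:globally QC} applies to a \emph{linearly indexed} sequence $\{X_i\}_{i=1}^\Lambda$, not to a family indexed by the vertices of an arbitrary graph such as $W^{\down}$. The crucial hypothesis is condition~\eqref{i:reg progress}: for $x\in X_i$, $y\in X_j$ one has $d(x,y)\ge m(|i-j|-c)$. This is a lower bound on distance growing linearly in the index gap, not merely ``bounded cost to pass between adjacent pieces'' as you describe it. For the family $\{Y(w)\}_{w\in (W^\down)^{(0)}}$ there is no linear order, and no analogue of~\eqref{i:reg progress} is available.

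The fix, which is what the paper does, is not to apply Theorem~\ref{t:globally QC} once to all of $\Gamma_W$, but to apply it pairwise. Given $p,q\in\Gamma_W$, take a combinatorial geodesic $\gamma$ in $(X^b)^{(1)}$ from $\Psi(p)$ to $\Psi(q)$; by combinatorial convexity of $W^\down$ this geodesic lies in $W^\down$ and passes through vertices $w_1,\dots,w_n$. Now apply Theorem~\ref{t:globally QC} to the \emph{finite linear} chain $Y(w_1),\dots,Y(w_n)$. Consecutive pieces intersect, and condition~\eqref{i:reg progress} holds (with $m=2$, $c=1$) because $\Psi$ is Lipschitz and the $w_i$ lie along a geodesic, so $d_\Gamma(x,y)$ for $x\in Y(w_i)$, $y\in Y(w_j)$ is bounded below in terms of $|i-j|$. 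The resulting quasi-convexity constant $\epsilon'$ depends only on $\delta,\epsilon,m,c$, not on $n$, so a $\Gamma$--geodesic from $p$ to $q$ stays $\epsilon'$--close to $\bigcup_i Y(w_i)\subset\Gamma_W$. This is the missing ingredient: the combinatorial convexity of $W^\down$ in $X^b$ is what produces the linear indexing needed to invoke the criterion.
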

\begin{proof}
As we have already remarked, quasi-convexity of vertex stabilizers implies quasi-convexity of all cell stabilizers.

Let $\Gamma$, $\Psi^{-1}(W)$ and the $Y(w)$ be as discussed above.  Since $G$ acts freely and cocompactly on $\Gamma$, we know that $\Gamma$ is $\delta$-hyperbolic for some $\delta$.  Let $\epsilon$ be a constant so that $Y(w)$ is $\epsilon$--quasi-convex for every $w$ (Lemma \ref{lem:uniform epsilon}).

Since $\Stab(W)$ acts freely and cocompactly on $\Psi^{-1}(W)$, in order to prove the theorem it suffices to prove that $\Psi^{-1}(W)$ is quasi-convex in $\Gamma$, so let $p, q \in \Psi^{-1}(W)$.  

Consider a geodesic $\gamma$ in $(X^b)^{(1)}$ between $\Psi(p)$ and $\Psi(q)$.  Both $\Psi(p)$ and $\Psi(q)$ lie in $W$.
Since $W$ is combinatorially convex in $X^b$, the geodesic $\gamma$ is entirely contained in the $1$-skeleton of $W$ (considered as a sub-complex of $X^b$).  The vertices $w_1,\ldots,w_n$ on $\gamma$ correspond to cells of $X$ contained in $W$.
The sets $Y(w_i)$ corresponding to these cells satisfy the hypotheses of Theorem \ref{t:globally QC} with $m = 2$, $c = 1$, and $\epsilon$ the quasi-convexity constant chosen above.  Theorem \ref{t:globally QC} then implies that $Y(w_1)\cup\cdots\cup Y(w_n)$ is $\epsilon'$--quasi-convex, for a constant $\epsilon'$ depending only on $\epsilon$ and $\delta$.

In particular, a $\Gamma$--geodesic between $p$ and $q$ lies within $\epsilon'$ of $Y(w_1)\cup\cdots \cup Y(w_n)$.
  Since each of these $Y(w_i)$ is contained in $\Psi^{-1}(W)$, the $\Gamma$--geodesic between $p$ and $q$ stays uniformly close to $\Psi^{-1}(W)$, as required.
\end{proof}
Together with Theorem \ref{t:strong QC (2) implies (1)}, this completes the proof of Theorem \ref{t:cell qc iff hyp}.

\subsection{On generalizations of Theorem \ref{t:cell qc iff hyp}}  \label{ss:generalizations}
For a subgroup $H$ of a hyperbolic group $G$, the following three conditions are equivalent:
\begin{enumerate}[label=(\alph*)]
\item\label{Hsqc} $H$ is strongly quasi-convex in $G$.
\item\label{Hqc} $H$ is quasi-convex in $G$.
\item\label{Hqie} $H$ is undistorted in $G$.
\end{enumerate}
Dropping the condition that $G$ is hyperbolic, condition \ref{Hqc} ceases to be a useful notion, but the conditions \ref{Hsqc} and \ref{Hqie} still make sense.

One can ask for versions of Theorem \ref{t:cell qc iff hyp} where the hypothesis of hyperbolicity is removed and condition \ref{Hqc} is replaced by either condition \ref{Hsqc} or \ref{Hqie}.

\subsubsection{Strong quasi-convexity}
Replacing quasi-convexity with strong quasi-convexity we can ask about the following conditions for a finitely generated group $G$ acting cocompactly on a CAT$(0)$ cube complex:
\begin{enumerate}[label=(\arabic*S)]
\item \label{hyp SQC} Hyperplane stabilizers are strongly quasi-convex.
\item \label{vertex SQC} Vertex stabilizers are strongly quasi-convex.
\item \label{cell SQC} All cell stabilizers are strongly quasi-convex.
\end{enumerate}
As remarked earlier \ref{vertex SQC}$\Longleftrightarrow$\ref{cell SQC} follows from \cite[Theorem 1.2.(2)]{Tran}.  
Theorem \ref{t:strong QC (2) implies (1)} states that \ref{hyp SQC}$\implies$\ref{vertex SQC}

The remaining implication \ref{cell SQC}$\implies$\ref{hyp SQC} is \emph{false}, as shown for example by $\mathbb{Z}^2$ acting freely on a cubulated $\mathbb{R}^2$.

\subsubsection{Undistortedness}
The situation when replacing quasi-convexity with quasi-isometric embeddedness is murkier.  We consider the following conditions, for a finitely generated group $G$ acting cocompactly on a CAT$(0)$ cube complex $X$:
\begin{enumerate}[label=(\arabic*U)]
\item \label{hyp U} Hyperplane stabilizers are undistorted.
\item \label{vertex U} Vertex stabilizers are undistorted.
\item \label{cell U} All cell stabilizers are undistorted.
\end{enumerate}
If $X$ is a tree, \ref{hyp U} and \ref{cell U} each implies \ref{vertex U}, but not conversely.  For example, the double of a finitely generated group over a distorted group acts on a tree with undistorted vertex stabilizers but distorted edge/hyperplane stabilizers.

We do not know the relationship between \ref{hyp U} and \ref{cell U} in general, so we ask the question.
\begin{question}
  For finitely generated groups acting cocompactly on CAT$(0)$ cube complexes does \ref{hyp U}$\implies$\ref{cell U}?  Does \ref{cell U}$\implies$\ref{hyp U}?
\end{question}

\subsection{Height of families and the proof of Corollary \ref{cor:cube complex hyp}} \label{ss:Cor B}
The {\em height} of a subgroup was introduced in \cite{gmrs}.  We need a generalization of this notion to families of subgroups.

\begin{definition}\label{d:height} [Height of a family]
 Suppose that $G$ is a group and $\mc{H}$ is a collection of subgroups.  The {\em height} of $\mc{H}$ is the minimum number $n$ so that for every tuple of distinct
 cosets $(g_0H_0, g_1H_1, \ldots , g_nH_n)$ with $H_i \in \mc{H}$ (and $g_i \in G$), the intersection
 $\bigcap\limits_{i=0}^n H_i^{g_i}$ is finite.  If there is no such $n$ then we say the height of $\mc{H}$ is infinite.
\end{definition}
In case $\mc{H} = \{ H \}$ is a single subgroup, we recover the familiar notion of the height of a subgroup from \cite{gmrs}.

The following result for a single subgroup is part of \cite[Main Theorem]{gmrs}.  The proof of that result from \cite{VH} (Corollary A.40 in that paper) can be adapted in the obvious way to prove the result for finite families.  This result was proved in the more general setting of strongly quasi-convex subgroups by Tran \cite[Theorem 1.2.(3)]{Tran}.
\begin{proposition}\label{p:finite height}
 Let $G$ be a hyperbolic group and $\mc{H}$ a finite collection of quasi-convex subgroups of $G$.  Then the height of $\mc{H}$ is finite.
\end{proposition}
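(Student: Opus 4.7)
The plan is to reduce the family version to the single-subgroup case of \cite[Main Theorem]{gmrs}, which I will use as a black box. Write $\mc{H} = \{H_1, \ldots, H_k\}$. By the classical single-subgroup result, each $H_i$ has finite height $n_i$ in $G$; set $N = \max_i n_i$. So for every $i$ and every tuple of $N+2$ distinct cosets $g_0 H_i, \ldots, g_{N+1} H_i$, the intersection $\bigcap_{\ell=0}^{N+1} H_i^{g_\ell}$ is finite.

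Next I would argue by contradiction via pigeonhole. Suppose the family $\mc{H}$ has infinite height, and set $n := k(N+1)$. By assumption there exists a tuple of distinct cosets $(g_0 H_{j_0}, g_1 H_{j_1}, \ldots, g_n H_{j_n})$ with each $H_{j_i} \in \mc{H}$ such that
\[
K := \bigcap_{i=0}^{n} H_{j_i}^{g_i}
\]
is infinite. The $n+1 = k(N+1)+1$ indices $j_0, \ldots, j_n$ take values in the $k$-element set $\{1, \ldots, k\}$, so by the pigeonhole principle there exist an index $j^\ast$ and a subset $S \subseteq \{0, 1, \ldots, n\}$ of size $N+2$ with $H_{j_i} = H_{j^\ast}$ for all $i \in S$.

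To conclude, observe that the cosets $\{g_i H_{j^\ast}\}_{i \in S}$ are pairwise distinct (they are a subset of the original distinct tuple), and
\[
\bigcap_{i \in S} H_{j^\ast}^{g_i} \supseteq K
\]
is infinite. This exhibits $N+2$ distinct cosets of the single subgroup $H_{j^\ast}$ whose conjugates have infinite intersection, contradicting the fact that the height of $H_{j^\ast}$ is at most $N$. Thus the family height is at most $n = k(N+1)$, as required. The only nontrivial ingredient is the single-subgroup result of \cite{gmrs}; the family version follows by a short counting argument, which is presumably what the authors have in mind by saying the adaptation from \cite{VH} is ``obvious.'' One could alternatively prove the result directly along the lines of \cite[Theorem 1.2.(3)]{Tran}, replacing the family $\mc{H}$ by the finite collection of its conjugates up to commensuration and invoking strong quasi-convexity.
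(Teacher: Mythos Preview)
Your proof is correct. The pigeonhole reduction to the single-subgroup theorem of \cite{gmrs} works exactly as you describe: given $k(N+1)+1$ distinct cosets drawn from $k$ subgroups, some $N+2$ of them are cosets of the same $H_{j^\ast}$, and these remain distinct as cosets of $H_{j^\ast}$ (cosets of distinct subgroups cannot coincide as sets), so the single-subgroup height bound yields the contradiction.

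This is, however, not quite what the paper has in mind. The paper does not reduce to the single-subgroup case; rather it points to the \emph{proof} of the single-subgroup result given in \cite[Corollary~A.40]{VH} and asserts that the same geometric argument (which counts configurations of quasi-convex cosets coming close to a fixed geodesic) goes through verbatim when the cosets are allowed to come from a finite list of quasi-convex subgroups rather than a single one. Your route is more elementary and yields an explicit bound $k(N+1)$ in terms of the individual heights, at the cost of invoking \cite{gmrs} as a black box; the paper's suggested route repeats the geometric argument once for the whole family and is what one would want if one also cared about, say, the uniformity of constants or the strongly quasi-convex generalisation in \cite{Tran}. Either approach suffices here. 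Your closing guess that the pigeonhole reduction is ``presumably what the authors have in mind'' is the only inaccuracy.
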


We also use the following special case of a theorem of Charney--Crisp \cite[Theorem 5.1]{CharneyCrisp}.
\begin{theorem} \label{t:X QI to coned}
Suppose that $G$ acts cocompactly on a cube complex $X$.  Then $X$ is quasi-isometric to the space obtained from the Cayley graph of $G$ by coning cosets of stabilizers of vertices to points.
\end{theorem}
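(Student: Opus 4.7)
The plan is to construct an explicit $G$--equivariant quasi-isometry $\phi\colon \hat{C}\to X$, where $\hat{C}$ denotes the Cayley graph of $G$ (with respect to a fixed finite generating set $S$) with a cone point added for each coset of each vertex stabilizer. Fix a finite set $V = \{v_1, \ldots, v_k\}$ of $G$--orbit representatives of vertices of $X$, take $x_0 = v_1$ as a basepoint, and define $\phi$ on the vertex set of $\hat{C}$ by $\phi(g) = gx_0$ for $g \in G$ and $\phi(c_{gG_{v_i}}) = gv_i$ on cone points; extend affinely over edges.

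Each generator edge $(g,gs)$ has image of diameter $d_X(x_0,sx_0) \leq \max_{s'\in S} d_X(x_0,s'x_0) < \infty$, and each cone edge has image of diameter at most $\max_i d_X(x_0,v_i) < \infty$, so $\phi$ is coarsely Lipschitz. Since every vertex of $X$ lies in the orbit of some $v_i$ and hence is in the image of the associated cone point, $\phi$ is coarsely surjective.

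For the quasi-isometric embedding property, let $p,q$ be vertices of $\hat{C}$ and fix a combinatorial path $y_0, y_1, \ldots, y_n$ in $X^{(1)}$ from $\phi(p)$ to $\phi(q)$. Pick representatives of the finitely many $G$--orbits of oriented edges of $X$, each arranged so its tail lies in $V$, and write each edge $(y_{j-1},y_j)$ of the path as $h_j\tilde{e}_j$ for such a representative $\tilde{e}_j$. Two consecutive edges share the vertex $y_j$, which forces $h_j$ and $h_{j+1}$ (up to a uniformly bounded offset coming from the finite list of edge representatives) to lie in a common coset of the stabilizer of the $V$--representative of the orbit of $y_j$. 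In $\hat{C}$ the elements $h_j$ and $h_{j+1}$ are therefore joined by a path of uniformly bounded length through the corresponding cone point. Concatenating these segments, together with bounded connections from $p$ to $h_1$ and from $h_n$ to $q$, yields a path in $\hat{C}$ from $p$ to $q$ of length $O(n)$.

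The main obstacle is the bookkeeping in this lower bound. Because vertex stabilizers may be infinite (the action is not assumed proper), the orbit map $G \to X$ is not in general a quasi-isometry, and one must verify that coning off precisely the cosets of vertex stabilizers supplies exactly the right amount of collapse. The key geometric fact is that consecutive edges in a path in $X$ share a vertex, forcing the associated group elements into a common coset of its stabilizer, which is contracted to bounded diameter in $\hat{C}$ by construction; in the proper case the argument degenerates to the usual Švarc--Milnor lemma.
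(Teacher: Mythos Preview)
Your argument is correct. The only small omissions are that you should take the combinatorial path $y_0,\ldots,y_n$ to be a geodesic in $X^{(1)}$ (so that $n$ is comparable to $d_X(\phi(p),\phi(q))$), and that finite generation of $G$ is being assumed (it is not automatic from a cocompact but non-proper action, though it holds throughout the paper since $G$ is hyperbolic).

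As for comparison: the paper does not prove this statement at all---it simply invokes it as a special case of a theorem of Charney--Crisp \cite[Theorem~5.1]{CharneyCrisp}. Your direct construction is therefore a genuine alternative. It is the natural adaptation of the \v{S}varc--Milnor argument to the non-proper setting, with the cone points supplying exactly the collapse needed to compensate for infinite vertex stabilizers; as you note, when the action is proper the cone points become redundant and one recovers the classical lemma. The Charney--Crisp result is stated in somewhat greater generality (for cocompact isometric actions on arbitrary geodesic spaces), but for the cube-complex case at hand your hands-on proof is entirely adequate and arguably more transparent.
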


We now prove Corollary \ref{cor:cube complex hyp}.  For convenience, we recall the statement.

\cubecomplexhyp*

\begin{proof}
  If $G$ is a hyperbolic group acting cocompactly on a CAT(0) cube complex, and if the stabilizers in $G$ of vertices in $X$ are quasi-convex, then \cite[Theorem 7.11]{bowditch:relhyp}, due to Bowditch, implies that this coned graph is $\delta$--hyperbolic for some $\delta$.  Theorem \ref{t:X QI to coned} then implies that the cube complex $X$ is $\delta$--hyperbolic for some (possibly different) $\delta$.  Thus, we have the first statement from Corollary \ref{cor:cube complex hyp}.

  Now we prove the statement about fixed point sets of infinite subgroups.  Let $I$ be a collection of orbit representatives of cells in $X$.  For $i\in I$, let $Q_i = \{g\in G\mid gi = i\}$, and let $\mc{Q} = \{Q_i\}_{i\in I}$.  Then $\mc{Q}$ is a finite collection of quasi-convex subgroups of $G$, so it has some finite height $k$ by Proposition \ref{p:finite height}.  If $H<G$ is infinite with nonempty fixed set and $\sigma$ is a cell meeting the fixed point set of $H$, then $H<Q_i^g$ where $\sigma = gi$.  Since the height of $\mc{Q}$ is $k$, at most $k$ such cells appear.

  In \cite{genevois:coning}, Genevois studies actions of groups on hyperbolic CAT$(0)$ cube complexes and shows in Theorem 8.33 that, in this setting, acylindricity is equivalent to the condition:
\begin{equation}\tag{G} \exists L, R, \forall x,y\in X^{(0)},\ d(x,y)\geq L \implies \#(\Stab(x)\cap\Stab(y))\leq R 
\end{equation}
  We take $R$ to be the maximum size of a finite subgroup and $L=k$.  Suppose $d(x,y) \geq L$.  Then the union of the combinatorial geodesics joining $x$ to $y$ contains finitely many (but at least $k+1$) vertices.  There is a finite index subgroup of $\Stab(x)\cap \Stab(y)$ which fixes all of these vertices.  This finite index subgroup fixes more than $k$ cells, so it is finite.  This implies $\Stab(x)\cap \Stab(y)$ is finite, as desired.
\end{proof}

\begin{remark}\label{rem:middleconclusion}
  In the context where $G$ is a finitely generated group acting cocompactly on a cube complex $X$ with strongly quasi-convex hyperplane stabilizers, the same proof of conclusion \eqref{cond:fixed} works as written, replacing the reference to Proposition \ref{p:finite height} with a reference to \cite[Theorem 1.2.(3)]{Tran}.
\end{remark}

\section{Conditions for quotients to be CAT$(0)$} \label{s:X mod K CAT(0)}

As noted in the introduction, Theorem \ref{omnibus} follows quickly from Theorem \ref{t:cell qc iff hyp}, Theorem \ref{t:fill to get proper action}, Agol's Theorem \cite[Theorem 1.1]{VH} and Wise's Quasi-convex Hierarchy Theorem \cite[Theorem 13.3]{WisePUP}.  Thus, other than Theorem \ref{t:globally QC} in Appendix \ref{app:QC} (which is independent of everything else in this paper), it remains to prove Theorem \ref{t:fill to get proper action}.  Therefore, we are interested in conditions on a group $G$ acting on a CAT$(0)$ cube complex $X$ and a normal subgroup $K \unlhd G$ which ensure that the quotient $\leftQ{X}{K}$ is a CAT$(0)$ cube complex.  In this section we develop criteria in terms of complexes of groups to ensure this.  In the next section, we translate these conditions into algebraic conditions on $K\unlhd G$.

Three conditions need to be ensured in order for the complex $Z = \leftQ{X}{K}$ to be a CAT$(0)$ cube complex:

\begin{enumerate}
\item $Z$ must be simply-connected;
\item $Z$ must be a cube complex (rather than a complex made out of cells which are quotients of cubes); and
\item $Z$ must be non-positively curved.
\end{enumerate}

We investigate these three properties in turn.

\subsection{Ensuring the quotient is simply-connected}
First, we give a sufficient condition for $\leftQ{X}{K}$ to be simply-connected.

Since $X$ is a finite dimensional cube complex, it has finitely many shapes, and we can use the following application of a theorem of Armstrong:
\begin{theorem}\label{thm:armstrong}
  Let $X$ be a simply connected metric polyhedral complex with finitely many shapes, and let $K$ be a group of isometries of $X$ respecting the polyhedral structure, generated by elements with fixed points.  Then $\leftQ{X}{K}$ is simply connected.
\end{theorem}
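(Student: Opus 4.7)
The plan is to apply Armstrong's classical argument for the fundamental group of an orbit space, adapted to the polyhedral setting.  Fix a basepoint $x_0 \in X$ and write $\pi \co X \to \leftQ{X}{K}$ for the quotient map.  Define
\[
  N \;=\; \bigl\{\,k \in K \;\big|\; \text{some path in }X\text{ from }x_0\text{ to }kx_0\text{ projects to a null-homotopic loop in }\leftQ{X}{K}\,\bigr\}.
\]
The strategy is to show $N = K$; combined with path-lifting for $\pi$, this yields that every loop in $\leftQ{X}{K}$ is null-homotopic, since any such loop lifts to a path from $x_0$ to $kx_0$ for some $k \in K = N$.

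First I would verify that the defining condition for $N$ is independent of the chosen path: two paths from $x_0$ to $kx_0$ differ by a loop in $X$, which is null-homotopic by the simple-connectivity of $X$, so their projections are homotopic in $\leftQ{X}{K}$.  Next I would check that $N$ is a subgroup of $K$: given witnessing paths $\tilde\alpha$ from $x_0$ to $kx_0$ and $\tilde\alpha'$ from $x_0$ to $k'x_0$ for $k,k' \in N$, the concatenation $\tilde\alpha \cdot (k\tilde\alpha')$ runs from $x_0$ to $kk'x_0$ and projects (via $\pi \circ k = \pi$) to the concatenation of the two given null-homotopic loops.  Closure under inverses is analogous.

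The crucial step is that every $k \in K$ with a fixed point lies in $N$.  If $k$ fixes $p \in X$, pick any path $\beta$ in $X$ from $x_0$ to $p$; then $\beta \cdot \overline{k\beta}$ runs from $x_0$ to $kx_0$ and projects to $\pi(\beta) \cdot \overline{\pi(\beta)}$, which is a backtrack and hence null-homotopic in $\leftQ{X}{K}$.  Since such elements generate $K$ by hypothesis, $N = K$, and the simple-connectivity argument above goes through.

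The main technical point is establishing the path-lifting for $\pi$ used at the end, since the $K$-action is not assumed to be properly discontinuous or free.  This is precisely where the finite-shapes hypothesis enters: it guarantees that $X$ is a complete geodesic metric space (Bridson--Haefliger) carrying a locally finite polyhedral structure on which $K$ acts cellularly, so that after passing to a barycentric subdivision if necessary to eliminate cell inversions, paths in $\leftQ{X}{K}$ can be lifted cell by cell.  Combined with the subgroup argument, this completes the proof.
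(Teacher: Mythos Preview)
Your subgroup argument (defining $N$, checking it is a subgroup, and showing every element with a fixed point lies in $N$) is correct and is essentially Armstrong's original strategy.  The paper in fact just cites Armstrong's theorem rather than reproving it.  But the place where you invoke the finite-shapes hypothesis contains two genuine problems.

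First, the assertion that finitely many shapes forces a \emph{locally finite} polyhedral structure is false: an infinite-valence simplicial tree has a single shape but is locally infinite.  So you cannot appeal to local finiteness to justify cell-by-cell lifting.

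Second, and more importantly, you never separate the CW (weak) topology on $\leftQ{X}{K}$ from the metric (quotient) topology, and these need not agree when $X$ is not locally compact.  Armstrong's argument --- and your version of it --- establishes simple connectedness only for the CW topology: combinatorial paths lift and combinatorial homotopies can be pushed through.  The theorem, however, is about the metric topology.  The paper's example immediately after the theorem (an infinite wedge of Euclidean cones of shrinking radii) is precisely a case where the Armstrong/CW argument applies but the metric quotient is \emph{not} simply connected, because there is a rectifiable loop that cannot be homotoped into any finite subcomplex.

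The paper's proof therefore proceeds in two steps: it first cites Armstrong to get $\pi_1=1$ in the CW topology, and then uses the finite-shapes hypothesis for a different purpose than you do --- namely, to produce a uniform $\epsilon>0$ so that any metric loop lies in the $\epsilon$--neighborhood of, and hence can be retracted onto, a finite subcomplex, where the CW result then applies.  Your proposal uses finite shapes only to (incorrectly) claim local finiteness, and so misses this bridging step entirely.
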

\begin{proof} (Sketch)
  A theorem of Armstrong, \cite[Theorem 3]{Armstrong65}, shows that $\leftQ{X}{K}$ is simply connected with the CW topology.  We have to show it is still simply connected with the metric topology.
  
  Because $X$ has finitely many shapes there is an equivariant triangulation $\mc{T}$ and an $\epsilon>0$ so that for every finite sub-complex $Y$, the $\epsilon$--neighborhood of $Y$ deformation retracts to $Y$.  If $f\co S^1\to X$ is any loop, then a compactness argument shows it lies in an $\epsilon$--neighborhood of some such finite complex.  We can then homotope $f$ to have image in $Y$ and apply the simple connectedness of $\leftQ{X}{K}$ with the CW topology.
\end{proof}

We remark that the hypothesis of finitely many shapes is necessary even when $X$ is CAT$(0)$ as the following example shows:
\begin{example}
  For $n\in \{2,3,\ldots\}$, let $D_n$ be the Euclidean cone of radius $1$ on a loop $\sigma_n$ of length $\frac{2\pi}{n^2}$.  For each $n$ mark a point on $\sigma_n$.  Let $Y$ be obtained from $\bigcup D_n$ by identifying the marked points.  Unwrapping all the cones to Euclidean discs gives a tree of Euclidean discs of radius $1$.  We call this CAT$(0)$ space $\widetilde{Y}$.  There is a discrete group of isometries $\Gamma=\langle \gamma_2,\gamma_3,\ldots\rangle$ acting on $\widetilde{Y}$ with quotient $Y$, so that each $\gamma_n$ fixes the center of some disc and rotates it by an angle of $\frac{2\pi}{n^2}$.  Nonetheless $Y$ is not simply connected, as the infinite concatenation of the loops $\sigma_n$ has finite length, but cannot be contracted to a point.
\end{example}

\subsection{Ensuring the quotient is a cube complex}\label{ss:cube}
We now turn to the question of when $\leftQ{X}{K}$ is a cube complex.

In order that the quotient $Z = \leftQ{X}{K}$ be a cube complex, there needs to be no element of $K$ which fixes a cell of $X$ set-wise but not point-wise.

Suppose that $\sigma$ is a cube of $X$.  The stabilizer $G_\sigma$ has a finite-index subgroup $Q_\sigma$ consisting of those elements which fix $\sigma$ pointwise.  Let $\{ \sigma_1, \ldots , \sigma_k \}$ be a set of representatives of $G$--orbits of cubes in $X$.  The following result is straightforward.

\begin{proposition} \label{p:quotient cubical}
Suppose that $G$ acts cocompactly on the cube complex $X$ and that $K$ is a normal subgroup of $G$ so that for each $i$ we have $G_{\sigma_i} \cap K \le Q_{\sigma_i}$.  Then the quotient $\leftQ{X}{K}$ is a cube complex and the links of vertices in $\leftQ{X}{K}$ inherit a cellular structure from the simplicial structure of cells in $X$.
\end{proposition}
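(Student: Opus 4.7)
The plan is to first upgrade the hypothesis from the chosen orbit representatives to all cubes using normality of $K$, and then to verify each of the two conclusions by invoking the general principle that when a group acts cellularly on a CW complex so that every cell stabilized setwise is fixed pointwise, the quotient inherits a cellular structure.

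First I would note that normality of $K$ propagates the hypothesis across $G$--orbits. Any cube $\tau$ of $X$ has the form $g\sigma_i$ for some $g \in G$ and some $i$, so $G_\tau = gG_{\sigma_i}g^{-1}$ and $Q_\tau = gQ_{\sigma_i}g^{-1}$. Using $K \unlhd G$ we obtain
\[
K \cap G_\tau \;=\; g(K \cap G_{\sigma_i})g^{-1} \;\le\; gQ_{\sigma_i}g^{-1} \;=\; Q_\tau,
\]
so every $k \in K$ that stabilizes any cube of $X$ setwise also fixes that cube pointwise.

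For the first conclusion, let $\pi \co X \to \leftQ{X}{K}$ be the quotient map. I would show that $\pi$ restricts to a homeomorphism on the interior of each cube: if $x$ and $kx$ both lie in the interior of a single cube $\sigma$ for some $k \in K$, then (since distinct open cubes are disjoint) $k\sigma = \sigma$, so by the previous paragraph $k$ fixes $\sigma$ pointwise and $kx = x$. Thus $\pi(\mathrm{int}(\sigma))$ is an open cube, and $\pi$ restricted to each closed cube of $X$ is a combinatorial cubical map onto its image. The face relations in $\leftQ{X}{K}$ are the $\pi$--images of those in $X$, so the resulting CW structure on $\leftQ{X}{K}$ is a cube complex structure with closed cells in bijection with $K$--orbits of cubes of $X$.

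For the second conclusion, fix a vertex $v$ of $X$, let $\bar v$ be its image in $\leftQ{X}{K}$, and set $K_v := K \cap G_v$. Cubes of $\leftQ{X}{K}$ incident to $\bar v$ correspond to $K_v$--orbits of cubes of $X$ incident to $v$, so the link of $\bar v$ may be identified with $\lk(v)/K_v$. The action of $K_v$ on $\lk(v)$ is simplicial, and by the first paragraph whenever an element of $K_v$ stabilizes a simplex (corresponding to some cube of $X$ containing $v$) it fixes that simplex pointwise. Therefore $\lk(v)/K_v$ inherits a genuine simplicial structure, giving the required cellular structure on the link of $\bar v$. The real content is confined to the first step, which is what forces the hypothesis; both remaining steps are formal consequences of the general cellular quotient principle, so no serious obstacle is anticipated.
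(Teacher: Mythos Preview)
Your argument is correct and is exactly the straightforward verification the paper has in mind (the paper omits the proof entirely, simply declaring the result ``straightforward'').  The propagation step via normality and the ``setwise-stabilized implies pointwise-fixed'' principle are precisely the intended content.

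One overstatement to fix: from the fact that every element of $K_v$ stabilizing a simplex of $\lk(v)$ fixes it pointwise, you conclude that $\lk(v)/K_v$ ``inherits a genuine simplicial structure.''  That inference is false in general.  For instance, let $K_v=\mathbb{Z}/2$ act on a $4$--cycle by rotation through two edges: no simplex is setwise stabilized, yet the quotient is a bigon, which is not simplicial.  What your premise \emph{does} give is that each closed simplex maps homeomorphically to its image, so the quotient is a $\Delta$--complex (cells are simplices with well-defined face maps).  This is exactly the ``cellular structure'' the proposition asserts, and it is all that is needed; indeed, the remainder of Section~\ref{s:X mod K CAT(0)} is devoted to imposing further conditions on $K$ precisely to force these links to be simplicial (Lemmas~\ref{l:loop of length 1} and~\ref{l:no loop of length 2}).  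So replace ``genuine simplicial structure'' with ``$\Delta$--complex structure'' and your proof is complete.
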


\subsection{Ensuring the quotient is nonpositively curved}
The most complicated condition to ensure is that $\leftQ{X}{K}$ is nonpositively curved.

Throughout this subsection we suppose that $X$ is a CAT$(0)$ cube complex and that $\mc{X}$ is its idealization (see Definition \ref{def:cube idealization}).  We suppose further that $G$ is a group acting cocompactly on this cube complex.  The induced action of $G$ on $\mc{X}$ has quotient a scwol $\mc{Y}$.
Making choices as in Definition \ref{def:actionchoices}, we obtain a complex of groups $G(\mc{Y})$, with associated category $CG(\mc{Y})$.  Choosing a vertex $v_0\in \mc{Y}$, there is then an identification of $G$ with $\pi_1(CG(\mc{Y}),v_0)$.  Moreover, we choose a normal subgroup $K \unlhd G$ so that $\leftQ{X}{K}$ is a cube complex.  Throughout this section we denote $\mc{Z} = \leftQ{\mc{X}}{K}$.
  \footnote{We remark that we do not make any further assumptions than these about $G$ and $X$ in this subsection.  This may be an important observation for future applications.}

In Subsection \ref{ss:cube} we discussed how to find subgroups $K$ so that $\leftQ{X}{K}$ is a cube complex, but for this section we just assume that this is the case.

Let $\mc{C}_K$ be the cover of the category $CG(\mc{Y})$ corresponding to the subgroup $K$.  
Observe that $CG(\mc{Y})$--loops lift to $\mc{C}_K$ if and only if they represent elements of $K$.  (Basepoints are mostly omitted in this section, since we deal with a normal subgroup $K$.)

\begin{sa} \label{sa:abuse}
  Lemma~\ref{lem:nice_paths} tells us that any $CG(\mc{Y})$--path is homotopic to a concatenation of group and scwol arrows. 
  Throughout this section we assume all $CG(\mc{Y})$--paths have been homotoped to this form, though we do not assume that the arrows \emph{alternate} between group and scwol arrows.  We blur the distinction between the scwol arrow $(1,a_i)$ in $CG(\mc{Y})$ and the $\mc{Y}$-arrow $a_i$, and also between the group arrow $(g,\identity_v)$ and the element $g\in G_v$.
  
  Thus we may write a $CG(\mc{Y})$ path for example as a list $g_1 \cdot e_1 \cdot  e_2 \cdot  g_2 \cdot ...$, where each $g_i$ is an element of a local group $G_v$ and represents the edge $(g,\identity_v)^+$, corresponding to the group arrow $(g,\identity_v)$, and each $e_i$ is equal to some $a_i^{\pm}$ for a scwol arrow $(1,a_i)$.
   We implicitly assume that each concatenation we write defines a path, which  forces the group arrows labeled $g_i$ to be elements of particular local groups.  Whenever we consider a $CG(\mc{Y})$--path of length $1$ consisting of a single group arrow we are either explicit about the local group or else it is clear from the context.

In case we have an edge of the form $(1,\identity_v)^\pm$, we often implicitly (or explicitly) omit this arrow from our path.  Again this only changes the path by an elementary homotopy.  
\end{sa}

\begin{definition}[Link of a cube]
Given an $n$--cube $\sigma$ in a cube complex $Z$, let $b_\sigma$ be the barycenter.
For sufficiently small $\epsilon>0$, the sphere $\{x\in Z\mid d_Z(b_\sigma,x) = \epsilon\}$ is the join of an $(n-1)$--sphere with some simplicial complex (here we take the join of a $(-1)$--sphere with $K$ to be equal to $K$).  This simplicial complex is what we refer to as the \emph{link} of $\sigma$, or $\lk(\sigma)$.  It is naturally triangulated by simplices corresponding to inclusions of $\sigma$ as a face of some higher-dimensional cube.  In particular $\link(\sigma)$ is a \emph{$\Delta$--complex} \cite[Chapter 2.1]{hatcher:book} (though it may not be simplicial).  
\end{definition}
Though the link of a cube naturally has the structure of a piecewise spherical complex, we will think of it as just a combinatorial object.  When we refer to paths or loops in a link, these will always be concatenations of $1$--cells.  The \emph{length} of such a path is the number of $1$--cells traversed.

\begin{theorem} \label{t:link condition} (Gromov's Cubical Link condition, \cite[Theorem II.5.20]{bridhaef:book})
The cube complex $Z$ is a non-positively curved cube complex if and only if the link of each vertex in $Z$ is flag.
\end{theorem}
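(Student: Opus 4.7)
\bigskip

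The plan is to reduce the non-positive curvature of $Z$ to a purely combinatorial condition on each vertex link, in two stages: first from $Z$ to a piecewise spherical CAT$(1)$ problem on the links, and second from CAT$(1)$ on all-right spherical complexes to the flag condition.

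First, I would invoke the general link criterion for $M_\kappa$-polyhedral complexes: a metric polyhedral complex with only finitely many isometry classes of cells is locally CAT$(0)$ if and only if, for every vertex $v$, the link $\mathrm{Lk}(v,Z)$ with its induced piecewise spherical metric is CAT$(1)$. For cube complexes this needs only be checked at vertices, since at interior points of higher-dimensional cubes the link is already an honest spherical simplicial sphere, and since a neighborhood of $v$ in $Z$ is isometric to a neighborhood of the cone point in the Euclidean cone on $\mathrm{Lk}(v,Z)$; by Berestovskii's theorem, that Euclidean cone is CAT$(0)$ if and only if its base is CAT$(1)$. Because each cube corner at $v$ is a standard right-angled Euclidean orthant, the induced piecewise spherical structure on $\mathrm{Lk}(v,Z)$ is \emph{all-right}: every edge has length $\pi/2$ and every simplex is a right-angled spherical simplex.

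The heart of the argument is then to prove the combinatorial equivalence: an all-right spherical simplicial complex $L$ is CAT$(1)$ if and only if $L$ is flag. For the easy direction, suppose $L$ is not flag; choose a minimal collection of pairwise adjacent vertices $v_0,\ldots,v_k$ spanning no $k$-simplex, and note that for $k=2$ the boundary of the would-be triangle is a locally geodesic loop of length $3\pi/2<2\pi$ that cannot be shortened (there is no $2$-simplex to cut across), violating the CAT$(1)$ length criterion; the general $k$ case is reduced to this by passing to an iterated link where the empty simplex becomes an empty triangle. For the other direction, I would argue by induction on $\dim L$, using the criterion that a piecewise spherical complex is CAT$(1)$ iff all vertex links are CAT$(1)$ and $L$ contains no closed geodesic of length less than $2\pi$. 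The link of a vertex $w$ in an all-right flag complex is again an all-right flag complex of strictly smaller dimension, so the inductive hypothesis handles local CAT$(1)$; the base case of graphs is direct, since an all-right metric graph is CAT$(1)$ exactly when its girth is at least $4$ edges, which is precisely the flag condition in dimension one.

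It remains to rule out short closed geodesics in an all-right flag complex $L$. Given a putative closed geodesic $\gamma$ of length $<2\pi$, I would examine its combinatorial type: $\gamma$ passes through a cyclic sequence of open simplices of $L$, and at each vertex it crosses the vertex link along a geodesic of length equal to the exterior angle. The all-right condition forces every such crossing to have length exactly $\pi/2$ within the vertex link, and then the flag condition, together with the inductive CAT$(1)$ property of the vertex links, forces the vertices of $L$ met by $\gamma$ to lie in a common simplex; but then $\gamma$ can be strictly shortened inside that simplex, contradicting the assumption that $\gamma$ is a geodesic. The main obstacle in the whole argument is precisely this last step, controlling how flagness propagates from links to the absence of short closed geodesics in $L$ itself; everything else is either standard metric CAT geometry or a direct combinatorial check.
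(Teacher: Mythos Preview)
The paper does not prove this theorem; it simply quotes it as Theorem~II.5.20 of Bridson--Haefliger and uses it as a black box. So there is no proof in the paper to compare against---your proposal is a sketch of the standard argument that the reference itself gives.

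Your first two reductions are correct and standard: the Berestovskii cone argument reduces local CAT$(0)$ to CAT$(1)$ on vertex links, and the links in a cube complex are indeed all-right spherical. The inductive scheme (links of vertices in flag all-right complexes are again flag all-right, of smaller dimension) is also right.

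The final paragraph, however, has a real gap. Your claim that ``the all-right condition forces every such crossing to have length exactly $\pi/2$ within the vertex link'' is wrong: for a local geodesic to pass \emph{through} a vertex $v$, the incoming and outgoing directions must be at distance at least $\pi$ in $\mathrm{Lk}(v)$, not $\pi/2$; and in any case a short closed geodesic need not pass through any vertex at all. The assertion that ``the vertices of $L$ met by $\gamma$ lie in a common simplex'' is the crux of the matter and is not established by what you wrote. The actual argument (as in Bridson--Haefliger) shows, using the inductive CAT$(1)$ hypothesis on $\mathrm{Lk}(v)$, that the closed star $\mathrm{St}(v)$ coincides with the closed $\pi/2$--ball about $v$ and is convex for geodesics of length $<\pi$; from this one deduces that a closed local geodesic of length $<2\pi$ is trapped in a single star, which is a spherical cone on a CAT$(1)$ space and hence itself CAT$(1)$, giving a contradiction. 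Your sketch gestures toward this mechanism but does not supply it.
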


In this section, we provide a set of conditions on the subgroup $K$ which imply the link condition for $Z=\leftQ{X}{K}$.

We record two elementary observations:

\begin{lemma} \label{l:L simplicial}
  Let $v$ be a vertex of a cube complex, and let $L = \link(v)$.  Then $L$ is simplicial if and only if for every cell $\sigma$ containing $v$, the $1$--skeleton of $\link(\sigma)$ contains no immersed loop of length $1$ or $2$.
\end{lemma}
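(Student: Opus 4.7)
The plan is to translate both failure modes of simpliciality for the $\Delta$-complex $L = \link(v)$ into the stated link conditions. A $\Delta$-complex fails to be simplicial in exactly two ways: either some simplex has a repeated vertex (is degenerate), or two distinct simplices have the same vertex set. First I would identify, for any cube $\sigma$ containing $v$, the link $\link(\sigma)$ with $\link_L(\tau_\sigma)$, where $\tau_\sigma$ is the simplex of $L$ dual to $\sigma$. Under this identification, vertices of $\link(\sigma)$ are simplices of $L$ containing $\tau_\sigma$ of dimension $\dim \tau_\sigma + 1$, and edges are simplices of dimension $\dim \tau_\sigma + 2$ containing $\tau_\sigma$. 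A loop of length $1$ at a vertex $\rho$ of $\link(\sigma)^{(1)}$ then corresponds to a simplex of $L$ of dimension $\dim \tau_\sigma + 2$ whose two extra vertices beyond $\tau_\sigma$ coincide, i.e.\ a degenerate simplex, while a loop of length $2$ corresponds to two distinct simplices of $L$ of dimension $\dim \tau_\sigma + 2$ containing $\tau_\sigma$ and sharing a vertex set.

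For the forward direction, if $L$ is simplicial, neither sort of configuration occurs, so no short loop exists in any $\link(\sigma)^{(1)}$.

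For the converse, suppose every $\link(\sigma)^{(1)}$ is free of such short loops, and suppose for contradiction that $L$ fails to be simplicial. First I handle the case where some simplex of $L$ has a repeated vertex by taking such a simplex of minimal dimension. A minimal example of dimension $1$ is itself a loop of length $1$ in $\link(v)^{(1)}$, forbidden by hypothesis. If the minimal example has dimension at least $2$, its underlying cube $C$ at $v$ has two coordinate edges at $v$ identified; the $2$-face of $C$ spanned by these two coordinates is a square in $X$ whose associated $1$-simplex in $L$ is itself a loop of length $1$, contradicting minimality.

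Now suppose every simplex of $L$ has distinct vertices but two distinct simplices $\rho_1, \rho_2$ of $L$ share a vertex set, and take such a pair of minimal dimension $k$. When $k = 1$ we have a loop of length $2$ in $\link(v)^{(1)}$, again forbidden. When $k \geq 2$, minimality forces any matching pair of codimension-one faces of $\rho_1$ and $\rho_2$ (a common $k$-vertex subset) to coincide as simplices of $L$, so $\rho_1$ and $\rho_2$ share all codimension-one faces. Choose any codimension-two face $\tau$ of $\rho_1$; the cube $\sigma$ dual to $\tau$ is shared by the cubes $C_1, C_2$ dual to $\rho_1, \rho_2$, and in $\link_L(\tau) = \link(\sigma)$ the simplices $\rho_1, \rho_2$ give two distinct edges between the same pair of vertices (namely the two shared codimension-one faces of $\rho_1$ through $\tau$), producing a forbidden loop of length $2$. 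The main obstacle is keeping straight the dictionary between cubes at $v$ and simplices of $L$, and ensuring the minimality arguments really do reduce high-dimensional failures to loops of length at most two in the link of some intermediate cube; once that bookkeeping is carefully in place, both reductions are routine.
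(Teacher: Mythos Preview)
Your argument is correct. Both you and the paper prove the nontrivial (backward) direction by contraposition, locating a short loop in the link of some intermediate cube, but the reductions differ. The paper characterizes failure of simpliciality as either a non-embedded simplex or a pair of simplices whose intersection is not a common face; in the latter case it takes two distinct maximal faces $F_1,F_2$ of the intersection, sets $f=F_1\cap F_2$, and directly produces a length-$2$ loop in $\link(f)$ without any minimality argument. You instead use the (equivalent) characterization that a $\Delta$-complex is simplicial iff every simplex has distinct vertices and is determined by its vertex set, and then run a clean induction on the minimal dimension of a counterexample. Your route is a bit more systematic and makes the bookkeeping explicit; the paper's is shorter but leaves more to the reader (for instance, why the two simplices $\{v_1\}\cup f$ and $\{v_2\}\cup f$ give distinct edges in $\link(f)$). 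Either approach yields the same witness in the link of a cube of appropriate codimension.
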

\begin{proof}
If $L$ fails to be simplicial, there is either a non-embedded simplex, or a pair of simplices which intersect in a set which is not a face of both.  If a simplex is non-embedded, we obtain a loop of length $1$ in $L$.  If two embedded simplices $\tau_1$ and $\tau_2$ of $L$ intersect in a set which is not a single face, let $F_1$ and $F_2$ be different maximal faces in the intersection, and let $f=F_1\cap F_2$.  For $i \in\{ 1,2\}$, let $v_i$ be a vertex in $F_i\minus f$.  Then the simplices spanned by $v_1\cup f$ and $v_2\cup f$ correspond to points in $\link(f)$ which lie on an immersed loop of length $2$.  But $f$ corresponds to some cube containing $\sigma$, and  $\link(f)\subset L$ is isomorphic to the link of that cube.
\end{proof}

\begin{lemma} \label{l:L flag}
  Let $v$ be a vertex of a cube complex, and suppose $L = \link(v)$ is simplicial.  Then $L$ is a flag complex if and only if for every cell $\sigma$ containing $v$, every loop of length $3$ in $\link(\sigma)$ is filled by a $2$--cell.
\end{lemma}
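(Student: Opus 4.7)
The plan is to prove both directions by exploiting the standard correspondence between simplices of $L = \link(v)$ and cubes of $X$ containing $v$: a $k$--simplex $\tau$ of $L$ corresponds to a $(k+1)$--cube $\sigma \supset v$, and under this correspondence $\link(\tau, L)$ is naturally isomorphic to $\link(\sigma)$. The empty simplex corresponds to $\sigma = v$ itself, recovering $\link(v) = L$.

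For the forward direction, I would use the standard fact that the link of any simplex in a flag simplicial complex is again flag (a one-line chase of the flag condition). Applied to the simplex $\tau \subset L$ corresponding to a cube $\sigma \supset v$, this gives that $\link(\sigma) \cong \link(\tau, L)$ is flag, and in particular every $3$--cycle in its $1$--skeleton bounds a $2$--simplex.

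For the backward direction, I would argue by induction on $n \geq 2$ that any $n$ pairwise--adjacent vertices of $L$ span an $(n-1)$--simplex. The case $n=2$ is the definition of an edge, and $n=3$ is the hypothesis applied to $\sigma = v$ (since $\link(v) = L$). For $n \geq 4$, given pairwise--adjacent $u_1, \ldots, u_n$, the first $n - 3$ span a simplex $\tau$ by the inductive hypothesis, corresponding to some cube $\sigma \supset v$. Each of $u_{n-2}, u_{n-1}, u_n$ lies in $\link(\tau, L)$ because $\{u_i\} \cup \{u_1,\ldots, u_{n-3}\}$ is a set of $n-2$ pairwise--adjacent vertices and hence spans a simplex by induction, and each pair $u_i, u_j$ (from the last three) is joined by an edge in $\link(\tau, L)$ because $\{u_i, u_j\} \cup \{u_1,\ldots, u_{n-3}\}$ is a set of $n-1$ pairwise--adjacent vertices and hence spans a simplex by induction. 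Thus $u_{n-2}, u_{n-1}, u_n$ form a $3$--cycle in $\link(\tau, L)^{(1)} \cong \link(\sigma)^{(1)}$; by hypothesis this cycle bounds a $2$--simplex in $\link(\sigma)$, so $\{u_1, \ldots, u_n\}$ spans an $(n-1)$--simplex of $L$, closing the induction.

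The main obstacle is purely bookkeeping: one must verify at each inductive stage that exactly the right numbers ($n-2$ and $n-1$) of pairwise--adjacent vertices are available to invoke the inductive hypothesis. This forces the split $n = (n - 3) + 3$ used above, with the base cases $n = 2$ and $n = 3$ (where $\tau$ would be degenerate or empty) handled separately; in the $n = 3$ base case one directly applies the hypothesis with $\sigma = v$. No further geometric input is required.
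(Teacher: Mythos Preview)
Your proof is correct and follows essentially the same approach as the paper: both arguments rest on the correspondence between cubes $\sigma \supset v$ and simplices of $L$, under which $\link(\sigma)$ becomes a link inside $L$. The paper establishes this correspondence (phrased one dimension at a time) and then cites \cite[Remark II.5.16.(4)]{bridhaef:book} for the characterization of flagness via $3$--cycles in iterated links, whereas you unwind that citation into a direct induction on the size of a set of pairwise-adjacent vertices; this makes your argument self-contained but is not a genuinely different route.
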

\begin{proof}
 If $\sigma$ is a cube and $\phi$ is a cube with 
$\sigma$ as a face, of dimension one higher, then $\phi$ corresponds to a vertex $f$ of the link $L$ of 
$\sigma$.  The link of $\phi$ is isomorphic to the link in $L$ of $f$.  The result now follows from 
\cite[Remark II.5.16.(4)]{bridhaef:book}.
\end{proof}

Therefore, in order to ensure $Z$ is nonpositively curved,  for each cell $\sigma$ in $Z$ we must rule out loops of length $1$ and $2$ in $\link(\sigma)$ and also ensure that any loop of length $3$ in $\link(\sigma)$ is filled by a $2$--cell.  We first explain how we translate between $1$--cells in links in $Z$ and $CG(\mc{Y})$--paths.  Then we develop the required conditions to rule out loops, finally dealing with loops of length $3$ which must be filled by $2$--cells.

\subsubsection{$CG(\mc{Y})$--paths associated to $1$--cells in $\link(\sigma)$}
Below we choose, for each cube $\sigma$ in $X$ and each oriented $1$--cell $\alpha$ in the link of $\sigma$, a $CG(\mc{Y})$--path $p_{\orbit{\alpha}}$ which is the label of an unscwolification of the idealization of $\alpha$.  As indicated by the notation, this label is the same for two such $1$--cells in the same $G$--orbit.  (In fact we will choose these paths for slightly more general objects than $1$--cells in links of cubes.)  If $\alpha$ is an oriented $1$--cell, we write $\backwards{\alpha}$ for the same $1$--cell with the opposite orientation.

We fix a cube $\sigma$ of $X$.  The second barycentric subdivision of the link of $\sigma$ embeds naturally in the geometric realization of $\mc{X}$.  The vertices of the image of $\link(\sigma)$ are precisely the length $\geq 2$ chains of cubes whose minimal element is $\sigma$.

We first consider an oriented $1$--cell $\beta$ of $\link(\sigma)$.
The $1$--cell $\beta$ corresponds to a triple of cubes $ {\epsilon}_1, {\epsilon}_2,  {\phi}$ in $X$ so that 
\[	\dim( {\sigma}) = \dim( {\epsilon}_i)-1 = \dim( {\phi}) - 2	,	\]
with ${\epsilon}_1,  {\epsilon}_2 \subset  {\phi}$ and $ {\epsilon}_1\cap  {\epsilon}_2 =  {\sigma}$.

In particular, $\beta$ has idealization an $\mc{X}$--path of length $4$, made up of arrows

\begin{equation}
	( \sigma \subset  \epsilon_1) {\longleftarrow} ( \sigma \subset  \epsilon_1 \subset  \phi) {\longrightarrow} ( \sigma \subset  \phi) {\longleftarrow} ( \sigma \subset  \epsilon_2 \subset  \phi) {\longrightarrow} ( \sigma \subset  \epsilon_2)	.		
\end{equation}

In order to formulate Definition~\ref{def:palpha} and Lemma~\ref{l:nice paths for edges} below we must consider a slightly more general situation:  we suppose 
$\epsilon_1, \epsilon_2,\phi$ are cubes in $X$ with $\epsilon_1, \epsilon_2$ codimension one sub-cubes of $\phi$, and $\gamma$ is a chain of cubes in $X$ so that each element of $\gamma$ is contained in each of $\epsilon_1,\epsilon_2,\phi$.  We can naturally extend $\gamma$ to chains which we denote $(\gamma \subset \epsilon_1), (\gamma \subset \epsilon_2)$, $(\gamma \subset \phi)$, and $(\gamma \subset \epsilon_i \subset \phi)$.
This sequence of chains corresponds to a $1$--cell $\alpha$ in an `iterated link' (a link of a cell in a link, etc.), which we fix from now through Definition~\ref{def:palpha}.
The $1$--cell $\alpha$ has idealization an $\mc{X}$--path of length $4$ as follows:
\begin{equation}\label{eq:arrowpath}
	( \gamma \subset  \epsilon_1) {\longleftarrow} ( \gamma \subset  \epsilon_1 \subset  \phi) {\longrightarrow} ( \gamma \subset  \phi) {\longleftarrow} ( \gamma \subset  \epsilon_2 \subset  \phi) {\longrightarrow} ( \gamma \subset  \epsilon_2)	.		
\end{equation}

The $\mc{X}$--path \eqref{eq:arrowpath} may not embed in $\mc{Y}$.
There are two ways this could happen.  The first is that there is an element of $\Stab(\gamma)$ which sends $\epsilon_1$ to $\epsilon_2$, but no such element fixes $\phi$.  In this case, the image in $\mc{Y}$ is a non-backtracking loop.  The second possibility is that there is an element $g\in G$ sending each of $\gamma$ and $\phi$ to itself, but exchanging $\epsilon_1$ and $\epsilon_2$.  If there is such a $g$, the idealization of the $1$--cell $\alpha$ backtracks in $\mc{Y}$, forming a `half-$1$--cell'.  Since we are assuming $Z$ is a cube complex, the second possibility does not arise for the image of~\eqref{eq:arrowpath} in $\mc{Z}$, though the first possibility may.

Let $y_{\orbit{\alpha}} = a_1^+ \cdot  a_2^- \cdot  a_3^+ \cdot  a_4^-$ be the $\mc{Y}$--path which is the image of the $\mc{X}$--path~\eqref{eq:arrowpath}.  For more compact notation, we define the following:
\[ \nu = \orbit{(\gamma\subset\phi)}\quad \mbox{and}\quad \mu_i = \orbit{( {\gamma} \subset  {\epsilon_i} \subset  {\phi})},\quad \xi_i = \orbit{( \gamma \subset {\epsilon_i})}\quad \mbox{for } i\in \{1,2\}.\]
Then we have the injective homomorphisms
\[	\psi_{a_2} \co G_{\mu_1} \to G_{\nu}	,	\]
and
\[	\psi_{a_3} \co G_{\mu_2} \to G_{\nu}	.	\]

The images of $\psi_{a_2}$ and $\psi_{a_3}$ are equal.  The projection to $\mc{Y}$ of the path~\eqref{eq:arrowpath} associated to $\alpha$ depends only on $\orbit{\alpha}$.  We
denote the common image of $\psi_{a_2}$ and $\psi_{a_3}$ in $G_\nu$ by $G_{\nu}^+$.  
Note that $G_{\nu}^+$ either has index $2$ in $G_\nu$ (in case there is a $g$ fixing $\gamma$ and $\phi$ and exchanging $\epsilon_1$ with $\epsilon_2$) or else $ G_{\nu}^+= G_\nu$ (if there is no such $g$).  
In case $G_\nu^+$ has index $2$ in $G_\nu$, we fix a choice of $g_\nu \in G_\nu \minus G_{\nu}^+$.  We make this choice once and for all for each orbit of $(\gamma,\epsilon_1,\epsilon_2,\phi)$, so that the choice depends only on the orbit and not on the representative.

In the sequel, we refer to the vertex groups by $G_{i(\orbit{\alpha})}$ (for $G_{\xi_1})$ and $G_{t(\orbit{\alpha})}$ (for $G_{\xi_2})$.  We further define ``edge-inclusions'' $\psi_{\orbit{\alpha}}\co G_{\nu}^+\to G_{t(\orbit{\alpha})}$ and
$\psi_{{\orbit{\backwards\alpha}}}\co G_{\nu}^+\to G_{i(\orbit{\alpha})}$ by

\[ \psi_{\orbit{\alpha}} = \psi_{a_4}\circ\psi_{a_3}^{-1}\mbox{, and }\psi_{\orbit{\backwards{\alpha}}} = \psi_{a_1}\circ \psi_{a_2}^{-1}.\]

Let $E_{\orbit{\alpha}}$ denote the image of $\psi_{\orbit{\alpha}}$ in $G_{t(\orbit{\alpha})}$, and $E_{\orbit{\backwards{\alpha}}}$ denote the image of $\psi_{\orbit{\backwards{\alpha}}}$ in $G_{i(\orbit{\alpha})}$.

\begin{definition}\label{def:palpha}
In case $G_{\nu}^+ \ne G_\nu$, associate to $\alpha$ the $CG(\mc{Y})$--path 
\[	p_\alpha = a_1^+ \cdot a_2^- \cdot g_\nu \cdot a_3^+ \cdot a_4^-	,	\]
where $g_\nu \in G_\nu \minus G_{\nu}^+$ is the element fixed above.
Note that in this case $a_2=a_3$ and $a_1=a_4$.  In case $G_\nu^+=G_\nu$ let 
\[ p_\alpha = a_1^+ \cdot a_2^- \cdot a_3^+ \cdot a_4^-.\]  
In either case some lift of $p_\alpha$ to $\cgyt$ is an unscwolification of the idealization of $\alpha$.
The $CG(\mc{Y})$--path $p_\alpha$ depends only on $\orbit{\alpha}$; if there is a $g$ with $g\sigma = \sigma$ and $g\alpha'=\alpha$, then $p_{\alpha'} = p_\alpha$.
Therefore, we have a well-defined $CG(\mc{Y})$--path $p_{\orbit{\alpha}}$.  
\end{definition}

Next we consider $CG(\mc{Y})$--paths whose scwolifications traverse $y_{\orbit{\alpha}}$ and whose lifts to $\cgyt$ have non-backtracking scwolifications.
(Since $Z = \leftQ{X}{K}$ is assumed to be a cube complex, these paths also have lifts in $\mc{C}_K$ whose scwolifications are non-backtracking in $\mc{Z}$.)
We observe in the following lemma that such paths can be converted by a sequence of elementary homotopies to paths which consist of a copy of $p_{\orbit{\alpha}}$ bookended by group arrows.

\begin{lemma}\label{l:nice paths for edges}
Suppose that $G_{\nu}^+ = G_\nu$.  Then
any $CG(\mc{Y})$--path $$g_0 \cdot a_1^+ \cdot g_1 \cdot a_2^- \cdot g_2 \cdot a_3^+ \cdot g_3 \cdot a_4^- \cdot g_4$$ is homotopic to a $CG(\mc{Y})$--path of the form
\[	g_0' \cdot p_{\orbit{\alpha}} \cdot g_1'	.	\]
Suppose that $G_{\nu}^+ \ne G_\nu$.  Then any $CG(\mc{Y})$--path
$$g_0 \cdot a_1^+ \cdot g_1 \cdot a_2^- \cdot g_2 \cdot a_3^+ \cdot g_3 \cdot a_4^- \cdot g_4$$ so that $g_2 \not\in E_{\orbit{\alpha}}$ is homotopic to a $CG(\mc{Y})$--path of the form
\[	g_0' \cdot p_{\orbit{\alpha}} \cdot g_1'	.	\]

  In both cases, the scwolification of the path is fixed during the homotopy.  Moreover any lift of the homotopy to a cover of $CG(\mc{Y})$ gives a sequence of paths with constant scwolification.
\end{lemma}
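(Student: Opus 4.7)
The plan is to use only elementary homotopies of types (2) and (3) from Definition \ref{def:elem homotopy}, since these are precisely the ones that leave the underlying list of scwol arrows of the path unchanged. The basic computation is that composition in $CG(\mc{Y})$ gives $(1,a)\circ(h,1_{i(a)})=(\psi_a(h),a)=(\psi_a(h),1_{t(a)})\circ(1,a)$, whose two factorizations are related by an elementary homotopy of type (2), producing the slide relation $a^+\cdot h\simeq \psi_a(h)\cdot a^+$; the symmetric slide relations for group arrows on the other side and for $a^-$ follow in the same way. So a group arrow can always be pushed across an adjacent scwol arrow, at the cost of replacing $h$ by its image $\psi_a(h)$ in the appropriate local group (which lies in $\mathrm{Im}(\psi_a)$).

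Next I would dispose of the case $G_\nu^+=G_\nu$. Push $g_1$ rightward through $a_2^-$ to $\psi_{a_2}(g_1)\in G_\nu$ and $g_3$ leftward through $a_3^+$ to $\psi_{a_3}(g_3)\in G_\nu$, and amalgamate the three consecutive group arrows at $\nu$ into a single element $h=\psi_{a_2}(g_1)\cdot g_2\cdot \psi_{a_3}(g_3)\in G_\nu=G_\nu^+=\mathrm{Im}(\psi_{a_3})$. Write $h=\psi_{a_3}(h')$ and slide $h'$ rightward through $a_3^+$ and then through $a_4^-$, absorbing it into $g_4$. The remaining word is $g_0\cdot a_1^+\cdot a_2^-\cdot a_3^+\cdot a_4^-\cdot g_1'=g_0\cdot p_{\orbit{\alpha}}\cdot g_1'$ with $g_1'=\psi_{a_4}(h')\cdot g_4$.

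For the half-edge case $G_\nu^+\neq G_\nu$ we have $a_1=a_4$, $a_2=a_3$, and $[G_\nu:G_\nu^+]=2$. The hypothesis on $g_2$ ensures $g_2$ lies in the nontrivial coset, so factor $g_2=g_\nu\cdot h_0$ with $h_0\in G_\nu^+=\mathrm{Im}(\psi_{a_2})$. After pushing $g_1$ through $a_2^-$ to an element of $G_\nu^+$, commute it past the central $g_\nu$ using normality of $G_\nu^+$ (which gives $\psi_{a_2}(g_1)\cdot g_\nu=g_\nu\cdot\psi_{a_2}(g_1')$ for some $g_1'\in G_{\mu_1}$), then push all remaining $G_\nu^+$-elements rightward through $a_2^+$, combine them with $g_3$ into a single element of $G_{\mu_2}=G_{\mu_1}$, and push that through $a_1^-$ to get the desired form $g_0\cdot p_{\orbit{\alpha}}\cdot g_1'$. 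The main obstacle is precisely this coset-juggling step: without the hypothesis on $g_2$, the ``inside'' of the half-edge $a_2^-\cdot g_2\cdot a_3^+$ would be equivalent (only via a free reduction of type (1)) to a plain group arrow, so we could never reach the target form while keeping the scwolification fixed. Finally, since only elementary homotopies of types (2) and (3) have been used, the scwolification is literally unchanged at every stage; the statement for covers then follows from Lemma \ref{l:lift paths and homotopies}, which lifts each such elementary homotopy uniquely.
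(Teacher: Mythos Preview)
Your argument is essentially correct and follows what appears to be the intended (unwritten) proof --- the paper states this lemma without proof.  There is, however, one inaccuracy in the justification that you should fix.

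You claim that types~(2) and~(3) are ``precisely the ones that leave the underlying list of scwol arrows of the path unchanged,'' and that you use only these.  Neither half of this is quite right.  First, a type~(2) move applied to two \emph{non-group} arrows (composing two nontrivial scwol arrows into one) does change the list of scwol arrows; so type~(2) is only scwolification-preserving when at least one of the two arrows involved is a group arrow.  Second, your slide relation for $a^-$ does not actually follow from types~(2) and~(3) alone under the paper's convention that a group letter $h$ denotes $(h,1_v)^+$.  The type~(2) move for minus-edges gives $(h,1_{i(a)})^- \cdot (1,a)^- \simeq (1,a)^- \cdot (\psi_a(h),1_{t(a)})^-$, but to get from $(h,1_{i(a)})^+$ to $(h^{-1},1_{i(a)})^-$ you must use a type~(1) move on the group arrow $(h^{-1},1_{i(a)})$ together with a type~(2) merge and a type~(3) deletion.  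This is harmless --- type~(1) moves on \emph{group} arrows preserve the scwolification, since group arrows lift to invertible arrows and $\Theta_K$ sends those to identities --- but it should be said.

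So the correct summary is: every elementary homotopy you use is either of type~(3), or of type~(2) with at least one group arrow, or of type~(1) on a group arrow; each of these leaves the scwolification (of any lift) literally unchanged, since under $\Theta_K$ group arrows become identities and the functoriality of $\Theta_K$ handles the type~(2) merges.  With that correction, your slide-and-absorb computations in both cases are fine, and the final appeal to Lemma~\ref{l:lift paths and homotopies} is exactly right.
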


\begin{notation}
We fix some notation in order to study paths in $Z$ and also $\mc{Y}$--paths and $CG(\mc{Y})$--paths.
As above, we use $\orbit{.}$ to denote a $G$-orbit in $\mc{Z}$, which corresponds to its image in $\mc{Y}$ under the projection $\pi \co \mc{Z} \to \mc{Y}$.

Let $p_{\orbit{\alpha}}$ be one of the $CG(\mc{Y})$--paths fixed in Definition \ref{def:palpha}, corresponding to a $1$--cell $\alpha$ in some link (or iterated link) of a cube of $Z$.
The $CG(\mc{Y})$--path $p_{\orbit{\alpha}}$ has an underlying $\mc{Y}$--path, which we denote by $y_{\orbit{\alpha}}$.  Define $t(\orbit{\alpha}) = t(y_{\orbit{\alpha}})$ and $i(\orbit{\alpha})=i(y_{\orbit{\alpha}})$.  This is so we can denote the corresponding local groups as $G_{i(\orbit{\alpha})}$ and $G_{t(\orbit{\alpha})}$.

We will also need to refer to the subgroups $E_{\orbit{\alpha}}<G_{t(\orbit{\alpha})}$ and $E_{\orbit{\backwards{\alpha}}}<G_{i(\orbit{\alpha})}$ defined just  before Definition \ref{def:palpha}.  Each of these subgroups can be thought of as the pointwise stabilizer of some translate of a lift of $\alpha$ to $X$.
\end{notation}

\subsubsection{Loops in $\link(\check\sigma)$}
We are now ready to formulate the conditions on $K$ which characterize whether or not $\leftQ{X}{K}$ is non-positively curved.  We use Lemmas \ref{l:L simplicial} and \ref{l:L flag} repeatedly.

Recall that we have fixed a $K\lhd G$ so that $Z = \leftQ{X}{K}$ is a cube complex.  We also fix a cube $\check\sigma$ of $Z$, and a lift $\sigma$ of $\check\sigma$ to $X$.  If $\alpha$ is a $1$--cell in $\link(\sigma)$, then as described above there is a corresponding $\mc{X}$--path of length $4$ (its idealization).  Similarly a $1$--cell in the link of $\check\sigma$ has a corresponding $\mc{Z}$--path of length $4$.  We sometimes conflate a concatenation of $1$--cells with a concatenation of these idealizations.

We next give an algebraic characterization of $1$--cells in $\link(\sigma)$ which project to loops of length $1$ in $\link(\check\sigma)$.  Recall $\mc{C}_K = \leftQ{\cgyt}{K}$.

\begin{lemma} \label{l:loop of length 1}
Let $\alpha$ be a $1$--cell in $\link(\sigma)$, and $\check\alpha$ the projection of $\alpha$ to $\link(\check\sigma)$.  The endpoints of $\check\alpha$ are equal if and only if there is a $CG(\mc{Y})$--loop of the form $p_{\orbit{\alpha}} \cdot g$ that represents a conjugacy class in $K$.
\end{lemma}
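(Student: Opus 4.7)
The plan is to use the scwolification functor $\Theta_K \co \mc{C}_K \to \mc{Z}$ from Lemma \ref{lem:scwolify} together with the covering-theoretic fact (Section \ref{ss:review}) that a $CG(\mc{Y})$--loop represents a conjugacy class in $K$ if and only if (for any basepoint) it lifts to a loop in $\mc{C}_K$. The forward direction produces such a loop from an element of $K$ identifying lifted endpoints of $\alpha$; the backward direction recovers the coincidence of endpoints by scwolifying and deleting trivial arrows.

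For the forward direction, suppose the endpoints of $\alpha$ in $Z$ coincide. Fix a lift $\tilde{\alpha}$ of the idealization of $\alpha$ to $\mc{X}$; its endpoints $\tilde{e}_1, \tilde{e}_2$ lie in the same $K$--orbit, so there exists $k \in K$ with $k\tilde{e}_1 = \tilde{e}_2$. The projection of $\tilde{\alpha}$ to $\mc{Z}$ is the idealization of $\alpha$, and I would choose an unscwolification $\hat{q}$ of it in $\mc{C}_K$. The deck-transformation action of $k$ then supplies a group arrow in $\mc{C}_K$ from $t(\hat{q})$ to $i(\hat{q})$, closing $\hat{q}$ to a loop. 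Project this loop down to $CG(\mc{Y})$: the result is a loop traversing the $\mc{Y}$--edges of $y_{\orbit{\alpha}}$ together with a single closing group arrow. By Lemma \ref{l:nice paths for edges}, applied rel scwolification, this is homotopic to a loop of the canonical form $p_{\orbit{\alpha}} \cdot g$, and Lemma \ref{l:lift paths and homotopies} lifts the homotopy to $\mc{C}_K$ while preserving the loop property, so $p_{\orbit{\alpha}} \cdot g$ also lifts to a $\mc{C}_K$--loop and hence represents a conjugacy class in $K$.

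For the backward direction, suppose $p_{\orbit{\alpha}} \cdot g$ is a $CG(\mc{Y})$--loop representing a conjugacy class in $K$. Fix a basepoint and lift to a $\mc{C}_K$--loop $\hat{\ell}$. Applying $\Theta_K$, the group arrow $g$ becomes an identity morphism and is deleted during scwolification, while the lifted portion corresponding to $p_{\orbit{\alpha}}$ is, by the defining property in Definition \ref{def:palpha}, an unscwolification of the idealization of some translate of $\alpha$. Hence $\Theta_K(\hat{\ell})$ equals this idealization, and being a loop in $\mc{Z}$ forces the endpoints $(\sigma \subset \epsilon_1)$ and $(\sigma \subset \epsilon_2)$ to agree in $\mc{Z}$, and therefore in $Z$.

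The main technical obstacle I expect is the canonicalization step in the forward direction: Lemma \ref{l:nice paths for edges} has distinct hypotheses in the cases $G_\nu^+ = G_\nu$ and $G_\nu^+ \neq G_\nu$, and in the latter case one must verify that the middle group arrow of the constructed unscwolification lies outside $E_{\orbit{\alpha}}$. This should follow from the assumption that $Z$ is a cube complex, which forces any unscwolification of $\alpha$ to $\mc{C}_K$ to be genuinely non-backtracking at the central vertex; bookkeeping this case split is the main wrinkle in an otherwise formal argument.
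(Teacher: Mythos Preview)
Your proposal is correct and follows essentially the same approach as the paper: unscwolify the idealization of $\alpha$ in $\mc{C}_K$, close up with a group arrow when the endpoints coincide, and invoke Lemma~\ref{l:nice paths for edges} to canonicalize to the form $p_{\orbit{\alpha}}\cdot g$; conversely, scwolify a lifted loop to see the endpoints of $\alpha$ agree in $\mc{Z}$.

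One small imprecision: in your forward direction you say ``the deck-transformation action of $k$ then supplies a group arrow in $\mc{C}_K$,'' but $k\in K$ acts trivially on $\mc{C}_K=\leftQ{\widetilde{CG(\mc{Y})}}{K}$, so this phrasing is off. The correct reason there is a closing group arrow is simply that the two endpoints of $\hat q$ have the same scwolification in $\mc{Z}$ (because the endpoints of $\alpha$ coincide), and objects of $\mc{C}_K$ with equal scwolification are joined by an invertible (group) arrow. The paper avoids your detour through $\mc{X}$ entirely and works directly in $\mc{C}_K$, which makes this step cleaner. Your worry about the $G_\nu^+\neq G_\nu$ case is legitimate but, as you suspect, is handled by the Remark following Lemma~\ref{l:nice paths for edges}: since $Z$ is a cube complex, any unscwolification of the (non-degenerate) $1$--cell $\alpha$ is $K$--non-backtracking at the central vertex, so the hypothesis of that lemma is satisfied.
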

\begin{proof}
  Thinking of $X$ as the geometric realization of $\mc{X}$, the $1$--cell $\alpha$ is the realization of a $\mc{X}$--path $q_\alpha$ of length $4$, which projects to a $\mc{Y}$--path $a_1^+ \cdot a_2^- \cdot a_3^+ \cdot a_4^-$.  Let $\widehat{q}_\alpha$ be an unscwolification of $q_\alpha$ in $\cgyt$, which we may choose to have label
  \begin{equation}\label{unscwol} a_1^+\cdot a_2^- \cdot g_1 \cdot a_3^+ \cdot a_4^-,
  \end{equation}
  for some group arrow $g_1$.

    Suppose first that the endpoints of $\check\alpha$ coincide.  Then the path \eqref{unscwol} projects in $\mc{C}_K$ to a path with endpoints separated by a group arrow, and so there is a $\mc{C}_K$--loop with label
    \[ a_1^+\cdot a_2^- \cdot g_1 \cdot a_3^+ \cdot a_4^-\cdot g_2. \]
  Lemma \ref{l:nice paths for edges} implies that this loop is homotopic to a loop with label $g_0 \cdot p_{\orbit{\alpha}}\cdot g_1$ for some $g_0, g_1$, and starting this loop at a different place gives the required $CG(\mc{Y})$--loop $p_{\orbit{\alpha}} \cdot g$ representing a conjugacy class of $K$.

  Conversely, suppose a conjugacy class in $K$ is represented by a $CG(\mc{Y})$--loop of the form $p_{\orbit{\alpha}} \cdot g$.  Then $p_{\orbit{\alpha}} \cdot g$ lifts to a loop in $\mc{C}_K$ whose scwolification is a projection of a translate of $q_\alpha$ by some element of $G$.  In particular, $q_\alpha$ must project to a loop, and so the endpoints of $\check\alpha$ coincide.
\end{proof}

\begin{definition}
  If $\o$ is an object of $\mc{Y}$, let $K_\o\lhd G_\o$ be $K\cap G_\o$.
\end{definition}

\begin{definition}
  A $CG(\mc{Y})$--path $p$ is \emph{$K$--non-backtracking} if for some (equivalently any) lift $\widehat{p}$ to $\mc{C}_K$, the scwolification $\Theta_K(\widehat{p})$ is non-backtracking.  A $CG(\mc{Y})$--loop can be thought of as a path starting at any of its vertices.  The loop is $K$--non-backtracking if all these paths are $K$--non-backtracking.
\end{definition}

\begin{lemma}\label{lem:Knonbacktrack}
A $CG(\mc{Y})$--path $g_0 \cdot p_{\orbit{\alpha_1}}\cdot g_1 \cdot p_{\orbit{\alpha_2}} \cdot \ldots \cdot g_{k-1} \cdot p_{\orbit{\alpha_k}} \cdot g_k$ is $K$--non-backtracking if and only if the first of the following two conditions holds.  A $CG(\mc{Y})$--loop with such a label is $K$--non-backtracking if and only if both conditions hold.
\begin{enumerate}
\item\label{pathcond} For $i \in \{ 1 , \ldots , k - 1\}$, if $\orbit{\alpha_{i+1}} = \orbit{\backwards{\alpha_i}}$ then $g_i \not\in E_{\orbit{\alpha_i}}K_{t(\orbit{\alpha_i})} \subset G_{t(\orbit{\alpha_i})}$; and
\item If $\orbit{\alpha_1} = \orbit{\backwards{\alpha_k}}$ then $g_kg_0 \not\in E_{\orbit{\alpha_k}}K_{t(\orbit{\alpha_k})} \subset G_{t(\orbit{\alpha_k})}$.
\end{enumerate}
\end{lemma}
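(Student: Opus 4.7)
The plan is to lift the given path $q = g_0 \cdot p_{\orbit{\alpha_1}} \cdot g_1 \cdots g_{k-1} \cdot p_{\orbit{\alpha_k}} \cdot g_k$ to the universal cover $\widetilde{CG(\mc{Y})}$, apply $\Theta$ to obtain an $\mc{X}$--path, then project to $\mc{Z} = \leftQ{\mc{X}}{K}$; the $K$--non-backtracking property is exactly non-backtracking of the resulting $\mc{Z}$--path. Since group arrows project to trivial arrows in $\mc{X}$ and are deleted during scwolification, the resulting $\mc{X}$--path is the concatenation of the scwolifications of the individual $p_{\orbit{\alpha_j}}$. Any backtracking in $\mc{Z}$ can therefore arise only (a) inside a single $p_{\orbit{\alpha_j}}$, (b) at a junction $p_{\orbit{\alpha_i}} \to p_{\orbit{\alpha_{i+1}}}$, or (c) at the wrap-around junction in the loop case.

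I would first rule out case (a). Each $p_{\orbit{\alpha_j}}$ scwolifies to a length-$4$ path through five chains forming the idealization of a $1$--cell $\alpha_j$ of $Z$. The only subtle sub-case is when $G_\nu^+ \neq G_\nu$ and the internal group arrow $g_\nu$ is present, since then the two middle scwol arrows cover the same $\mc{Y}$--arrow and could a priori collapse in $\mc{Z}$. Here Proposition~\ref{p:quotient cubical} is essential: because $Z$ is a cube complex, no element of $K$ stabilizes $\phi$ setwise while exchanging the codimension-$1$ faces $\epsilon_1, \epsilon_2$, so $(\gamma \subset \epsilon_1 \subset \phi)$ and $(\gamma \subset \epsilon_2 \subset \phi)$ remain in distinct $K$--orbits and the two arrows stay distinct in $\mc{Z}$.

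For (b), let $\tilde a_{i,4} \co \sigma_1 \to \sigma_2$ be the $\mc{X}$--lift of the last scwol arrow of $p_{\orbit{\alpha_i}}$ in the scwolified path. The group arrow $g_i \in G_{t(\orbit{\alpha_i})} = \Stab(\sigma_2)$ fixes $\sigma_2$ but in general moves $\sigma_1$ to $g_i\sigma_1$, so the $\mc{X}$--lift of the first scwol arrow of $p_{\orbit{\alpha_{i+1}}}$ is $g_i\sigma_1 \to \sigma_2$, traversed backwards. Backtracking in $\mc{Z}$ at this junction amounts to the two $\mc{X}$--arrows $\sigma_1\to\sigma_2$ and $g_i\sigma_1\to\sigma_2$ lying in the same $K$--orbit of pairs, i.e.\ the existence of $k \in K$ with $k\sigma_2 = \sigma_2$ and $k\sigma_1 = g_i\sigma_1$. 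A short calculation using $\Stab(\sigma_1) \subseteq \Stab(\sigma_2)$ and normality of $K$ in $G$ reduces this to $g_i \in \Stab(\sigma_1)\cdot K_{t(\orbit{\alpha_i})}$, which under the identification of $\Stab(\sigma_1)$ with $E_{\orbit{\alpha_i}}$ inside $G_{t(\orbit{\alpha_i})}$ coming from $\psi_{\orbit{\alpha_i}}$ becomes $g_i \in E_{\orbit{\alpha_i}} K_{t(\orbit{\alpha_i})}$. I would also need to verify that no backtracking can occur when $\orbit{\alpha_{i+1}} \neq \overline{\orbit{\alpha_i}}$: since any chain of length $\geq 2$ sits inside only two codimension-$1$ sub-cubes of any containing cube, in that case $a_{i,4}$ and $a_{i+1,1}$ are distinct $\mc{Y}$--arrows, and so their $\mc{X}$--lifts have distinct $K$--orbits. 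Taking the contrapositive yields~\eqref{pathcond}.

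Case (c) is the same analysis applied to the wrap-around junction from $p_{\orbit{\alpha_k}}$ back to $p_{\orbit{\alpha_1}}$: after $p_{\orbit{\alpha_k}}$ the path applies $g_k$ and then, as the loop closes, $g_0$, so the effective group element separating the final and initial scwol arrows is the product $g_k g_0 \in G_{t(\orbit{\alpha_k})}$, and repeating the argument above with $g_i$ replaced by $g_k g_0$ produces the second condition. The main obstacle I anticipate is the bookkeeping between the three categories $\widetilde{CG(\mc{Y})}$, $\mc{X}$, and $\mc{Z}$, and specifically invoking Proposition~\ref{p:quotient cubical} precisely at the two places where $K$ might otherwise identify arrows that it should not.
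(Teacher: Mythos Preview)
The paper states this lemma without proof, treating it as a direct unwinding of the definitions of $K$--non-backtracking, scwolification, and the subgroups $E_{\orbit{\alpha}}$. Your proposal supplies exactly that unwinding and is correct in outline and in substance; there is nothing to compare against.

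One small point: in case~(b), your justification for why no backtracking occurs when $\orbit{\alpha_{i+1}} \neq \overline{\orbit{\alpha_i}}$ is slightly garbled. The cleaner statement is that the last $\mc{Y}$--arrow of $p_{\orbit{\alpha_i}}$ is (the orbit of) $(\gamma \subset \epsilon_2 \subset \phi) \to (\gamma \subset \epsilon_2)$, and this arrow already determines the orbit $\orbit{\alpha_i}$, because $\phi$ has exactly two codimension-$1$ faces containing the maximal cube of $\gamma$, namely $\epsilon_1$ and $\epsilon_2$. Hence if the last $\mc{Y}$--arrow of $p_{\orbit{\alpha_i}}$ coincides with the first $\mc{Y}$--arrow of $p_{\orbit{\alpha_{i+1}}}$, then $\orbit{\alpha_{i+1}} = \overline{\orbit{\alpha_i}}$. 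Since equality of the $\mc{Z}$--arrows forces equality of their $\mc{Y}$--images, the contrapositive gives what you need.
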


The following result algebraically characterizes immersed loops of length $2$ in $\link(\sigma)$.

\begin{lemma} \label{l:no loop of length 2}
  Let $p$ be a path in $\link(\check{\sigma})$ which is a concatenation of two $1$--cells, $\check\alpha$ and $\check\beta$ which lift respectively to $1$--cells $\alpha$ and $\beta$ in $X$.  The following are equivalent:
  \begin{enumerate}
\item\label{cell2loop} There is a path $p' = \check\alpha'\cdot\check\beta'$ in $\link(\check\sigma)$ with $\orbit{\check\alpha'} = \orbit{\check\alpha}$ and $\orbit{\check\beta'} = \orbit{\check\beta}$ so that $p'$ is an immersed loop.
  \item There is a $K$--non-backtracking $CG(\mc{Y})$--loop $p_{\orbit{\alpha}} \cdot g_1 \cdot p_{\orbit{\beta}} \cdot g_2$ that represents a conjugacy class in $K$.
  \end{enumerate}
  Moreover, in case these conditions hold, the path $p'$ can be chosen to be the scwolification of a lift of $p_{\orbit{\alpha}} \cdot g_1 \cdot p_{\orbit{\beta}} \cdot g_2$ (and conversely $p_{\orbit{\alpha}} \cdot g_1 \cdot p_{\orbit{\beta}} \cdot g_2$ is the $CG(\mc{Y})$--path which labels the unscwolification of $p'$).
\end{lemma}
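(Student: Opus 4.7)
My proof plan follows the template of Lemma \ref{l:loop of length 1}, with the added subtlety that two consecutive $1$--cells $\alpha',\beta'$ in $\link(\sigma)$ can only fail to form an immersed loop in two ways: either by failing to be a loop (their concatenation doesn't close up), or by backtracking at the intermediate vertex ($\alpha' = \overline{\beta'}$).  This second possibility is precisely what the $K$--non-backtracking hypothesis is designed to rule out on the $CG(\mc{Y})$ side via Lemma \ref{lem:Knonbacktrack}.

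For the forward direction, suppose $p' = \alpha'\cdot \beta'$ is an immersed loop in $\link(\sigma)$ with $\orbit{\alpha'}=\orbit{\alpha}$, $\orbit{\beta'}=\orbit{\beta}$.  Its idealization is a $\mc{Z}$--loop of length $8$ which is non-backtracking (because $p'$ is immersed).  Choose an unscwolification $\widehat{p}'$ in $\mc{C}_K$; by construction, when we read off the labels in $CG(\mc{Y})$ we obtain a $CG(\mc{Y})$--loop of the form
\[ h_0\cdot a_1^+\cdot h_1\cdot a_2^-\cdot h_2\cdot a_3^+\cdot h_3\cdot a_4^-\cdot h_4\cdot b_1^+\cdot h_5\cdot b_2^-\cdot h_6\cdot b_3^+\cdot h_7\cdot b_4^-\cdot h_8, \]
where $a_1^+\cdot a_2^-\cdot a_3^+\cdot a_4^-$ and $b_1^+\cdot b_2^-\cdot b_3^+\cdot b_4^-$ are the underlying $\mc{Y}$--paths of $p_{\orbit{\alpha}}$ and $p_{\orbit{\beta}}$ respectively.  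Two applications of Lemma \ref{l:nice paths for edges} (whose hypotheses are ensured by the non-backtracking of the scwolification) produce a homotopy to a loop of the form $p_{\orbit{\alpha}}\cdot g_1\cdot p_{\orbit{\beta}}\cdot g_2$; by the last sentence of that lemma the scwolification is preserved throughout.  The $K$--non-backtracking conditions of Lemma \ref{lem:Knonbacktrack} follow from the fact that the scwolification of the lift equals the original non-backtracking $\mc{Z}$--loop.  Finally, because this lift is a loop in $\mc{C}_K$, the $CG(\mc{Y})$--loop represents an element of $K$, hence a conjugacy class in $K$ (as $K\lhd G$).

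For the reverse direction, suppose we are given a $K$--non-backtracking $CG(\mc{Y})$--loop $p_{\orbit{\alpha}}\cdot g_1\cdot p_{\orbit{\beta}}\cdot g_2$ representing a conjugacy class in $K$.  Since the loop is homotopic to an element of $K$, it lifts to a loop $\widehat{q}$ in $\mc{C}_K$.  The scwolification $p' = \Theta_K(\widehat{q})$ is a $\mc{Z}$--loop whose non-backtracking is guaranteed by the $K$--non-backtracking hypothesis together with Lemma \ref{lem:Knonbacktrack}.  By the construction of the paths $p_{\orbit{\alpha}}$ and $p_{\orbit{\beta}}$, the scwolification $p'$ is a concatenation of the idealizations of two $1$--cells $\alpha'$ and $\beta'$ of some link, with $\orbit{\alpha'}=\orbit{\alpha}$ and $\orbit{\beta'}=\orbit{\beta}$; since $p'$ is a loop, $\alpha',\beta'$ meet in a pair of vertices, and both vertices correspond to cubes of $Z$ containing a common cube, which (being in the orbit of $\sigma$) may be taken to be $\sigma$ itself after translating.

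The main obstacle I anticipate is bookkeeping around Lemma \ref{l:nice paths for edges} in the degenerate case $G_\nu^+\neq G_\nu$, where one must verify that the group arrows $h_2$ and $h_6$ appearing in the unscwolified label lie outside the corresponding subgroups $E_{\orbit{\alpha}}$ and $E_{\orbit{\beta}}$ so that the lemma applies (this is forced by the fact that $p'$ lifts and scwolifies to a non-backtracking path in $\mc{Z}$, using the remark following Lemma \ref{l:nice paths for edges}).  The rest of the argument is essentially a translation of the $\mc{Z}$--loop/$CG(\mc{Y})$--loop correspondence used in Lemma \ref{l:loop of length 1}, extended by one $1$--cell.
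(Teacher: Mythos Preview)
Your proof is correct and follows essentially the same approach as the paper's: in both directions you pass between an immersed length-$2$ loop in $\link(\sigma)$ and a $CG(\mc{Y})$--loop of the form $p_{\orbit{\alpha}}\cdot g_1\cdot p_{\orbit{\beta}}\cdot g_2$ via unscwolification/scwolification, using Lemma~\ref{l:nice paths for edges} to normalize the shape of the label and the $K$--non-backtracking condition to encode immersedness. Your write-up is somewhat more explicit than the paper's (spelling out the general unscwolified label with the $h_i$, invoking Lemma~\ref{lem:Knonbacktrack} directly, and flagging the $G_\nu^+\neq G_\nu$ case), but the underlying argument is the same.
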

\begin{proof}
Suppose that there is an immersed loop $p' = \check\alpha'\cdot\check\beta'$ as in~\eqref{cell2loop}.  The idealization of  $p'$ is a $\mc{Z}$--path $q_{p'}$ of length $8$, labeled by a $\mc{Y}$--path $a_1^+ \cdot a_2^- \cdot a_3^+ \cdot a_4^- \cdot b_1^+ \cdot b_2^- \cdot b_3^+ \cdot b_4^-$ as discussed above.  Using Lemma \ref{l:nice paths for edges}, we can choose an unscwolification
$\widehat{q}_{p'}$ of $q_{p'}$ in $\mc{C}_K$ with label
\[  p_{\orbit{\alpha}}\cdot g_1 \cdot  p_{\orbit{\beta}}	,	\]
where $g_1$ is a group arrow.  But the unscwolification $\widehat{q}_{p'}$ has endpoints separated by a group arrow $g_2$, so there is a loop labeled $p_{\orbit{\alpha}}\cdot g_1 \cdot  p_{\orbit{\beta}}\cdot g_2$ as desired.  It is $K$--non-backtracking since its scwolification is the path $q_{p'}$.



Conversely, suppose that there is a $K$--non-backtracking $CG(\mc{Y})$--loop
\[	p_{\orbit{\alpha}} \cdot g_1 \cdot p_{\orbit{\beta}} \cdot g_2	,	\]
which represents an element of $K$.  Then $p_{\orbit{\alpha}} \cdot g_1 \cdot p_{\orbit{\beta}} \cdot g_2$ lifts to a loop in $\mc{C}_K$.  The scwolification of this loop gives a path $p'$ as in condition \eqref{cell2loop}.
\end{proof}

The following is elementary.

\begin{lemma} \label{l:length 3 no backtrack}
Let $Q$ be a complex so that there are no loops of length $1$ or $2$ in its $1$--skeleton.  Any loop in $Q$ of length $3$ is non-backtracking.
\end{lemma}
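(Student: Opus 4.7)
The plan is to proceed by contraposition: assume $Q$ contains a length-$3$ edge-loop $e_1 \cdot e_2 \cdot e_3$ that backtracks, and extract from it either a self-loop (length-$1$ edge-loop) or a bigon (length-$2$ edge-loop), contradicting the hypothesis. Since this is stated as elementary, I expect only a brief case analysis rather than any conceptual machinery.

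First I would set up notation: the loop condition means $i(e_1)=t(e_3)$, $t(e_1)=i(e_2)$, $t(e_2)=i(e_3)$, and backtracking means one of the cyclically consecutive pairs $(e_1,e_2)$, $(e_2,e_3)$, $(e_3,e_1)$ has the form $(e,\bar{e})$. Then I would run through the three cases. For instance, if $e_2 = \bar{e_1}$, then $t(e_2)=i(e_1)$, so $e_3$ runs from $i(e_1)$ back to $i(e_1)$, giving an edge-loop of length $1$; the other two cases are symmetric and each forces one of the remaining edges to be a self-loop (or, depending on the precise notion of ``edge-loop of length $2$'' one uses, the cancellation pair $e_i\cdot\bar{e_i}$ itself may already be treated as a length-$2$ loop, which would also immediately contradict the hypothesis).

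The only potential subtlety, and the thing I would pin down at the start of the write-up, is exactly what convention ``edge-loop of length $1$ or $2$'' refers to in the ambient complex $Q$. The lemma is being applied to links of cells (via Lemmas \ref{l:L simplicial} and \ref{l:L flag}), where ``immersed loop of length $1$'' means a $1$-cell from a vertex to itself and ``immersed loop of length $2$'' means two distinct $1$-cells between the same pair of vertices; with that reading the case analysis above produces a genuine contradiction. I do not anticipate any serious obstacle; the whole argument is a two-line cyclic case check.
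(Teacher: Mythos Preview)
Your argument is correct; the paper itself does not give a proof, merely flagging the lemma as elementary, and your contraposition with the cyclic case check is exactly the intended two-line verification.
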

The utility of Lemma \ref{l:length 3 no backtrack} is that once we have found conditions to ensure that links in $\leftQ{X}{K}$ have no loops of length $1$ or $2$ then loops of length $3$ are automatically non-backtracking.

Given Lemma \ref{l:length 3 no backtrack}, the following is proved in the same way as Lemma \ref{l:no loop of length 2}.

\begin{lemma} \label{l:no loop of length 3 - dont fill}
Suppose that $\link(\check\sigma)$ is simplicial, and suppose that $p$ is a path in $\link(\check\sigma)$ which is a concatenation of three $1$--cells, $\check\alpha$, $\check\beta$ and $\check\gamma$ with lifts $\alpha$, $\beta$ and $\gamma$ to $X$.  The following are equivalent:
\begin{enumerate}
\item There is a path $p' = \check\alpha'\cdot\check\beta'\cdot\check\gamma'$ in $\link(\check\sigma)$ so that $\orbit{\check\alpha'} = \orbit{\check\alpha}$, $\orbit{\check\beta'} = \orbit{\check\beta}$ and $\orbit{\check\gamma'} = \orbit{\check\gamma}$, and $p'$ is an immersed loop.
\item There is a $CG(\mc{Y})$--loop of the form
 $p_{\orbit{\alpha}} \cdot g_1 \cdot p_{\orbit{\beta}} \cdot g_2 \cdot p_{\orbit{\gamma}} \cdot g_3$ that represents a conjugacy class in $K$.
\end{enumerate}
  Moreover, in case these conditions hold, the path $p'$ can be chosen to be the scwolification of a lift of  $p_{\orbit{\alpha}} \cdot g_1 \cdot p_{\orbit{\beta}} \cdot g_2 \cdot p_{\orbit{\gamma}} \cdot g_3$ (and conversely  $p_{\orbit{\alpha}} \cdot g_1 \cdot p_{\orbit{\beta}} \cdot g_2 \cdot p_{\orbit{\gamma}} \cdot g_3$ is the $CG(\mc{Y})$--path which labels the unscwolification of $p'$).
\end{lemma}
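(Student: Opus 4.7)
The plan is to mimic the proof of Lemma \ref{l:no loop of length 2} essentially verbatim, with Lemma \ref{l:length 3 no backtrack} serving to absorb the $K$--non-backtracking hypothesis that appeared there.

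For the direction (1) $\Longrightarrow$ (2), suppose $p' = \alpha' \cdot \beta' \cdot \gamma'$ is an immersed loop in $\link(\sigma)$.  Since $\link(\sigma)$ is simplicial, Lemma \ref{l:length 3 no backtrack} guarantees that $p'$ is non-backtracking.  I would then idealize $p'$ to obtain a $\mc{Z}$--loop $q_{p'}$ of length $12$ consisting of four-arrow blocks corresponding to $\alpha'$, $\beta'$, $\gamma'$.  Choose an unscwolification $\widehat{q}_{p'}$ in $\mc{C}_K$ and apply Lemma \ref{l:nice paths for edges} three times (once to each four-arrow block) to rearrange the labels into the form
\[ p_{\orbit{\alpha}} \cdot g_1 \cdot p_{\orbit{\beta}} \cdot g_2 \cdot p_{\orbit{\gamma}}, \]
with the scwolification left fixed at each step.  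Because $\widehat{q}_{p'}$ is a loop in $\mc{C}_K$, its two endpoints are joined by a group arrow $g_3$, and projection to $CG(\mc{Y})$ produces a loop $p_{\orbit{\alpha}} \cdot g_1 \cdot p_{\orbit{\beta}} \cdot g_2 \cdot p_{\orbit{\gamma}} \cdot g_3$ representing the class in $K$ corresponding to the deck transformation sending the start of $\widehat{q}_{p'}$ to its end.

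For the direction (2) $\Longrightarrow$ (1), let $p = p_{\orbit{\alpha}} \cdot g_1 \cdot p_{\orbit{\beta}} \cdot g_2 \cdot p_{\orbit{\gamma}} \cdot g_3$ represent a conjugacy class in $K$.  I would lift $p$ to a loop $\widehat{p}$ in $\mc{C}_K$ and set $p' = \Theta_K(\widehat{p})$, viewed in $\mc{Z}$.  This is an edge-loop whose combinatorial length in $\link(\sigma)$ is at most $3$.  By Lemma \ref{l:length 3 no backtrack} together with the hypothesis that $\link(\sigma)$ is simplicial (so that length $1$ and $2$ loops cannot exist), any shortening due to backtracking in the scwolification would produce a loop of length $1$ or $2$ in $\link(\sigma)$, which is impossible; hence $p'$ has length exactly $3$ and decomposes as $\alpha' \cdot \beta' \cdot \gamma'$ with $\orbit{\alpha'} = \orbit{\alpha}$, $\orbit{\beta'}=\orbit{\beta}$, $\orbit{\gamma'}=\orbit{\gamma}$, as required.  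The ``moreover'' clause follows from the construction.

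The only delicate point, and the place where Lemma \ref{l:length 3 no backtrack} really earns its keep, is ruling out the degeneration of the scwolification in the second direction: without the simpliciality hypothesis one would need to impose an explicit $K$--non-backtracking condition on the $CG(\mc{Y})$--loop (as in Lemma \ref{l:no loop of length 2}), but here the link condition bootstraps the non-backtracking for free.  Everything else is bookkeeping of the same sort as in the proof of Lemma \ref{l:no loop of length 2}, so no genuinely new ideas should be needed.
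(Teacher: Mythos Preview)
Your proposal is correct and matches the paper's approach exactly: the paper does not give a separate proof but simply states that, given Lemma~\ref{l:length 3 no backtrack}, the result is proved in the same way as Lemma~\ref{l:no loop of length 2}, which is precisely what you have spelled out. One small slip of phrasing: in the direction (1)$\Rightarrow$(2) you write that ``$\widehat{q}_{p'}$ is a loop in $\mc{C}_K$'', but it is only its scwolification $q_{p'}$ in $\mc{Z}$ that is a loop; the unscwolification $\widehat{q}_{p'}$ has endpoints with the same image under $\Theta_K$ and hence is closed up by a group arrow $g_3$ --- which is of course exactly what you go on to use.
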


If $X$ has dimension greater than $2$, there are certainly some $\sigma$ so that there are loops of length $3$ in $\link(\sigma)$.  This introduces some subtleties, which we discuss in the next subsection.

\subsubsection{Loops of length $3$ filled by $2$--cells}
We assume for the rest of the section that cubes of $Z$ are embedded, so that objects of $\mc{Z}$ can be unambiguously described by chains of cubes of $Z$.
The phenomenon we are concerned with in this section is illustrated by the following example.
\begin{example}
  Let $\mc{Y}$ be a single $2$--simplex, and consider the complex of groups $G(\mc{Y})$ so that $G_v\cong \bZ$ for each vertex $v$, and all the other local groups are trivial.  Let $x,y,z\in \pi_1(G(\mc{Y}))$ generate the three vertex groups.
  The universal cover $X$ of $G(\mc{Y})$ is an infinite valence ``tree of triangles''.  Let $K = \llangle x^3,y^3,z^3,xyz\rrangle$.  Then $\leftQ{X}{K}$ can be realized as a subset of the Euclidean plane, consisting of every other triangle of a tessellation by equilateral triangles.  Moreover, if $\alpha \beta\gamma$ is the path in the $1$--skeleton of $\mc{Y}$ labeling the boundary of $\mc{Y}$, there are paths in $\leftQ{X}{K}$ projecting to $\alpha \beta\gamma$, but which are not filled by a $2$--cell in $\leftQ{X}{K}$.  The issue here, as we will see, is that $xyz\in K$ is not an element of $K_xK_yK_z$, where $K_x = K\cap \langle x \rangle$, and so on.

  Of course $X$ is not a cube complex, but it can be realized as the link of a vertex of a cube complex, covering a complex of groups in which $G(\mc{Y})$ is embedded.
\end{example}

\begin{definition} \label{def:tau,alpha corner}
  Let $\check\sigma$ be a cube of $Z$, and let $\check\tau$ be a $2$--cell in $\link(\check\sigma)$.  Then $\partial\check\tau$ is a loop composed of three oriented $1$--cells $\check\alpha\cdot\check\beta\cdot\check\gamma$, which we may lift to $1$--cells $\alpha$, $\beta$, and $\gamma$, forming the boundary of a lift $\tau$ of $\check\tau$ in $X$.  These $1$--cells are associated to $CG(\mc{Y})$--paths $p_{\orbit{\alpha}},p_{\orbit{\beta}},p_{\orbit{\gamma}}$ as in Definition \ref{def:palpha}.  Consider a $CG(\mc{Y})$--path of the form $q = p_{\orbit{\alpha}}\cdot g\cdot p_{\orbit{\beta}}$.  Let $\widehat{q}$ be a lift to $\mc{C}_K$.  The realization of $\Theta_K(\widehat{q})$ is a concatenation of two $1$--cells $\check\alpha'\cdot\check\beta'$.
  We say that $q$ \emph{$K$--bounds a $(\tau,\alpha)$--corner} if there is a cube $\check\sigma'$, a $2$--cell $\check\tau'$ in $\link(\check\sigma')$, and an $h\in G$ so that $\check\sigma' = h \check\sigma$, $\check\tau' = h \check\tau$, $\check\alpha' = h \check\alpha$ and $\check\beta' = h \check\beta$.  If there is some $(\tau,\alpha)$ for which the path $q$ $K$--bounds a $(\tau,\alpha)$--corner, we may just say $q$ \emph{$K$--bounds a corner}.
\end{definition}

In case there exists a path $q$ as above which $K$--bounds a $(\tau,\alpha)$--corner, there are $X$--cubes
$\epsilon, \phi_\alpha, \phi_\beta$ and $\psi$, all containing $\sigma$, so that $\epsilon \subset \phi_\alpha, \phi_\beta \subset \psi$, and $\dim(\psi) = \dim(\phi_\alpha) + 1 = \dim(\phi_\beta) + 1 = \dim(\epsilon) + 2 = \dim (\sigma) + 3$.  There is a copy of the link of $\epsilon$ contained in the link of $\sigma$.  The cubes $\epsilon, \phi_\alpha, \phi_\beta$ and $\psi$ determine an oriented $1$--cell $\zeta$ in this copy of the link of $\epsilon$.  Its idealization is shown in Figure \ref{fig:zeta}.  
\begin{figure}[htbp]
  \centering
  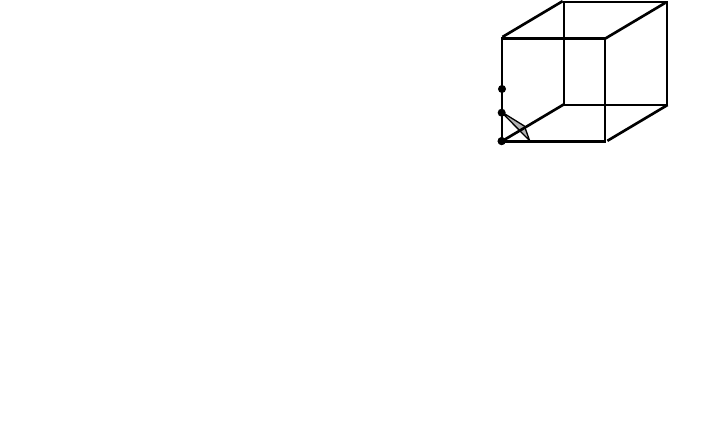
  \caption{A part of $\mc{X}$ representing part of the link of $\sigma$, containing the idealization of the $1$--cell $\zeta$ in green.  Directions of most arrows have been omitted.}
  \label{fig:zeta}
\end{figure}
The idealization of $\zeta$ begins at the object $(\sigma\subset \epsilon \subset \phi_\alpha)$ and ends at $(\sigma\subset \epsilon \subset \phi_\alpha)$.  Let $a_\alpha$ be the arrow pointing from $(\sigma\subset \epsilon \subset \phi_\alpha)$ to $(\sigma\subset \epsilon)$, and let $a_\beta$ be the arrow pointing from $(\sigma\subset \epsilon \subset \phi_\beta)$ to $(\sigma\subset \epsilon)$.  These arrows project to arrows $\orbit{a_\alpha}$ and $\orbit{a_\beta}$ in $\mc{Y}$, and the path $p_{\orbit{\zeta}}$ (defined as in Definition \ref{def:palpha}) travels from $i(\orbit{a_\alpha})$ to $i(\orbit{a_\beta})$.

\begin{lemma} \label{lem:g tau,alpha}
 The $CG(\mc{Y})$--loop
 \[	\orbit{a_{\alpha}}^+ \cdot p_{\orbit{\zeta}} \cdot \orbit{a_{\beta}}^-	,	\]
  (which is based at $\orbit{\sigma\subset \epsilon}$) represents an element of $G_{\orbit{\sigma\subset \epsilon}}$.
\end{lemma}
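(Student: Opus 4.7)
The plan is to show that the element of $G$ represented by $\ell := \orbit{a_\alpha}^+ \cdot p_{\orbit{\zeta}} \cdot \orbit{a_\beta}^-$ fixes a chosen lift $(\tilde{\sigma}\subset\tilde{\epsilon}) \in \mc{X}$ of $\orbit{\sigma\subset\epsilon}$. Since $G_{\orbit{\sigma\subset\epsilon}}$ is identified with $\Stab(\tilde{\sigma})\cap\Stab(\tilde{\epsilon})$, the lemma will follow.

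I would first reduce to a tracing problem: using $G$--equivariance of the scwolification $\Theta\co\widetilde{CG(\mc{Y})}\to\mc{X}$, the loop $\ell$ represents an element of $G_{\orbit{\sigma\subset\epsilon}}$ exactly when the scwolification of a lift $\widetilde{\ell}$ of $\ell$ starting at $(\tilde{\sigma}\subset\tilde{\epsilon})$ is a loop in $\mc{X}$. Next, I would observe that the underlying $\mc{Y}$--path of $\ell$ visits the objects
\[
  (\sigma\subset\epsilon) \to (\sigma\subset\epsilon\subset\phi_\alpha) \to (\sigma\subset\epsilon\subset\phi_\alpha\subset\psi) \to (\sigma\subset\epsilon\subset\psi) \to (\sigma\subset\epsilon\subset\phi_\beta\subset\psi) \to (\sigma\subset\epsilon\subset\phi_\beta) \to (\sigma\subset\epsilon),
\]
each of which contains $(\sigma\subset\epsilon)$ as a sub-chain, and every scwol arrow appearing either adds or drops one of $\phi_\alpha$, $\psi$, or $\phi_\beta$, never $\sigma$ or $\epsilon$.

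By induction on position along $\widetilde{\ell}$, the scwolified lift then stays in the star of $(\tilde{\sigma}\subset\tilde{\epsilon})$ in $\mc{X}$: forward arrows $a^-$ drop a uniquely determined lifted cube (necessarily not $\tilde{\sigma}$ or $\tilde{\epsilon}$), and reverse arrows $a^+$ lift uniquely via $\widetilde{CG(\mc{Y})}$ into superchains of the current object. The final object of $\Theta(\widetilde{\ell})$ therefore projects to $(\sigma\subset\epsilon)$ in $\mc{Y}$ and contains $(\tilde{\sigma}\subset\tilde{\epsilon})$ as a sub-chain, which forces it to equal $(\tilde{\sigma}\subset\tilde{\epsilon})$. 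Thus $\Theta(\widetilde{\ell})$ is a loop, as required.

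The main obstacle is the case $G_\nu^+\neq G_\nu$, where $p_{\orbit{\zeta}}$ contains the group arrow $g_\nu$ and $a_1=a_4$, $a_2=a_3$ as arrows of $\mc{Y}$. Although $g_\nu$ collapses to an identity under $\Theta$ (invertible arrows of $\widetilde{CG(\mc{Y})}$ become identities in $\mc{X}$), it shifts the lift in $\widetilde{CG(\mc{Y})}$ and thereby determines which $\mc{X}$--arrow the subsequent $a_3^+$ lifts to. One must verify that $g_\nu$, chosen as a representative of the coset $G_\nu\smallsetminus G_\nu^+$ which swaps the two codimension-$1$ faces $\phi_\alpha,\phi_\beta$ of $\psi$, indeed forces $a_3^+$ to lift to an arrow entering $(\tilde{\sigma}\subset\tilde{\epsilon}\subset\tilde{\phi}_\beta\subset\tilde{\psi})$ rather than backtracking into $(\tilde{\sigma}\subset\tilde{\epsilon}\subset\tilde{\phi}_\alpha\subset\tilde{\psi})$. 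Unpacking this from the definition of $g_\nu$ and the universal-cover description of $\widetilde{CG(\mc{Y})}$ is where the bulk of the technical bookkeeping will occur.
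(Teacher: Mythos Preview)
Your approach is correct and takes a genuinely different route from the paper's proof. The paper performs an explicit chain of elementary homotopies, using the identities $a_\alpha a_1 = b_1 = b_2 a_2$ and $a_\beta a_4 = b_3 = b_2 a_3$ in the scwol together with the composition law $(g,a)\circ(h,b) = (g\,\psi_a(h)\,z(a,b),\,ab)$, to reduce $\orbit{a_\alpha}^+\cdot p_{\orbit{\zeta}}\cdot\orbit{a_\beta}^-$ to an explicit element $h_3 \in G_{\orbit{\sigma\subset\epsilon}}$ written in terms of the twisting elements $z(\cdot,\cdot)$. Your argument instead lifts to $\widetilde{CG(\mc{Y})}$ and uses non-degeneracy of $\mc{X}\to\mc{Y}$ to show the scwolified lift stays in the star of $(\tilde\sigma\subset\tilde\epsilon)$; the equivalence you invoke (``scwolified lift is a loop'' $\Leftrightarrow$ ``loop is homotopic to a group arrow'') is exactly Lemma~\ref{lem:scwolify} plus simple-connectedness of the universal cover. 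Your route is conceptually cleaner and avoids the cocycle bookkeeping; the paper's route yields an explicit formula for $g_{\tau,\alpha}$, though subsequent uses (Proposition~\ref{p:bound a corner}, Theorem~\ref{t:NPC 5}) only need its existence.

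One remark: the ``main obstacle'' you flag is not actually an obstacle for your own argument. Your inductive hypothesis is only that the scwolified lift remains in chains containing $(\tilde\sigma\subset\tilde\epsilon)$; for this you never need to know \emph{which} super-chain the edge $a_3^+$ enters. Because an element of $G$ carrying one chain to another must respect the ordering, the $G$--orbit of an arrow in $\mc{X}$ records the \emph{positions} of the elements dropped; every $a^-$ edge in $\ell$ therefore drops an element in position $3$ or $4$, independently of whether $a_3=a_2$ in $\mc{Y}$ and of what $g_\nu$ does. The group arrow $g_\nu$ scwolifies to an identity in $\mc{X}$ and has no effect on the inductive hypothesis. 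You may delete your final paragraph: the proof is already complete without it.
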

\begin{proof}
  All the chains which occur in this proof have the same minimal element $\sigma$, so we omit the prefix `$\sigma\subset$' from all chains until the end of the proof of the Lemma.  
  We therefore have a diagram in $\lk(\sigma)$ in the scwol $\mc{X}$ as follows:

\cd {
& & (\epsilon) & &\\
(\epsilon \subset \phi_\alpha) \ar[urr]^{a_\alpha} & & & & (\epsilon \subset \phi_\beta) \ar[ull]^{a_\beta} \\
& (\epsilon \subset \phi_\alpha \subset \psi) \ar[ul]^{a_1} \ar[dr]^{a_2} \ar[uur]^{b_1} & & (\epsilon \subset \phi_\beta \subset \psi) \ar[dl]^{a_3} \ar[ur]^{a_4} \ar[uul]^{b_3} \\
& & (\epsilon \subset \psi) \ar[uuu]^{b_2} \\
}

We have the following identities in the category $\mc{X}$:  $a_\alpha a_1 = b_1 = b_2a_2$ and $a_\beta a_4 = b_3 = b_2a_3$.  The path in the statement of the lemma is equal to:
\[	\orbit{a_{\alpha}}^+ \cdot \orbit{a_1}^+ \cdot \orbit{a_2}^- \cdot g_{(\epsilon \subset \psi)} \cdot \orbit{a_3}^+ \cdot \orbit{a_4}^- \cdot \orbit{a_{\beta}}^-	,\]
where $g_{(\epsilon \subset \psi)}$ is the element of $G_{\orbit{(\epsilon \subset \psi)}}$ chosen for the path $p_\zeta$ as in Definition \ref{def:palpha}.

Define the following elements of $G_{\orbit{\epsilon}}$:
\begin{eqnarray*}
 h_1 &=& z\left(\orbit{a_\alpha},\orbit{a_1}\right) z\left(\orbit{b_2},\orbit{a_2}\right)^{-1},\\
 h_2 &=& h_1 \psi_{\orbit{b_2}}(g_{(\epsilon \subset \psi)}),\\
 h_3 &=& h_2 z\left(\orbit{b_2},\orbit{a_3}\right) z\left(\orbit{a_\beta}, \orbit{a_4}\right)^{-1},
 \end{eqnarray*}
 where the $z(\orbit{a},\orbit{b})$ are the twisting elements determined by the complex of groups structure on $G(\mc{Y})$.
 
We now have the following sequence of elementary homotopies of $CG(\mc{Y})$--paths (all of which consist of applying the moves in Definition \ref{def:elem homotopy}, and the rule of arrow composition in $CG(\mc{Y})$ from Definition \ref{def:CG(Y)}).

\begin{eqnarray*}
\orbit{a_{\alpha}}^+ \cdot p_{\orbit{\zeta}} \cdot \orbit{a_{\beta}}^- &\simeq& \orbit{a_{\alpha}}^+ \cdot \orbit{a_1}^+ \cdot \orbit{a_2}^- \cdot g_{(\epsilon \subset \psi)} \cdot \orbit{a_3}^+ \cdot \orbit{a_4}^- \cdot \orbit{a_{\beta}}^- \\
&\simeq& z(\orbit{a_{\alpha}} , \orbit{a_1}) \cdot  \orbit{b_1}^+ \cdot \orbit{a_2}^- \cdot g_{(\epsilon \subset \psi)} \cdot \orbit{a_3}^+ \cdot \orbit{a_4}^- \cdot \orbit{a_{\beta}}^- \\
&\simeq& {\scriptstyle z(\orbit{a_{\alpha}} , \orbit{a_1})z(\orbit{b_2},\orbit{a_2})^{-1} \cdot \orbit{b_2}^+ \cdot \orbit{a_2}^+ \cdot \orbit{a_2}^- \cdot g_{(\epsilon \subset \psi)} \cdot \orbit{a_3}^+ \cdot \orbit{a_4}^- \cdot \orbit{a_{\beta}}^-}\\
&\simeq& h_1 \cdot \orbit{b_2}^+ \cdot g_{(\epsilon \subset \psi)} \cdot \orbit{a_3}^+ \cdot \orbit{a_4}^- \cdot \orbit{a_{\beta}}^-\\
&\simeq& h_1 \cdot \psi_{\orbit{b_2}}(g_{(\epsilon \subset \psi)}) \cdot  \orbit{b_2}^+ \cdot \orbit{a_3}^+ \cdot \orbit{a_4}^- \cdot \orbit{a_{\beta}}^- \\
&\simeq& h_2z(\orbit{b_2},\orbit{a_3}) \cdot \orbit{b_3}^+ \cdot \orbit{a_4}^- \cdot \orbit{a_{\beta}}^- \\
&\simeq& h_2z(\orbit{b_2},\orbit{a_3})z(\orbit{a_{\beta}},\orbit{a_4})^{-1} \cdot \orbit{a_{\beta}}^+ \cdot \orbit{a_4}^+ \cdot \orbit{a_4}^- \cdot \orbit{a_{\beta}}^- \\
&\simeq& h_3 \cdot \orbit{a_{\beta}}^+ \cdot \orbit{a_{\beta}}^- \\
&\simeq& h_3 
\end{eqnarray*} 
This proves the result.
\end{proof}

\begin{notation} \label{not:g_tau,alpha}
The element of $G_{\orbit{\epsilon}}$ represented by $\orbit{a_{\alpha}}^+  \cdot  p_{\orbit{\zeta}}  \cdot  \orbit{a_{\beta}}^-$ is denoted by $g_{\tau,\alpha}$.
\end{notation}

\begin{lemma} \label{l:tau-loop}
 A path $p_{\orbit{\alpha}}  \cdot  g  \cdot  p_{\orbit{\beta}}$ $K$--bounds a $(\tau,\alpha)$--corner if and only if there exists a $CG(\mc{Y})$--loop
 \begin{equation}\label{tau-loop}
   \orbit{a_{\alpha}}^+ \cdot g_1  \cdot  p_{\orbit{\zeta}}  \cdot  g_2  \cdot  \orbit{a_{\beta}}^-  \cdot  g^{-1}
 \end{equation}
 which represents an element of $K$.
\end{lemma}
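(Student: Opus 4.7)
The approach is to work at the level of lifts to the covering category $\mc{C}_K$ and their scwolifications in $\mc{Z}$. The fundamental dictionary is: a $CG(\mc{Y})$--loop represents an element of $K$ precisely when it lifts to a loop in $\mc{C}_K$, and every such lift scwolifies to a loop in $\mc{Z}$ whose realization is a combinatorial loop in the second barycentric subdivision of $Z$.

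For the direction $(\Leftarrow)$, suppose there exist group arrows $g_1, g_2$ making the loop in~(\ref{tau-loop}) represent an element of $K$. Lift it to a $\mc{C}_K$--loop $\widehat{\ell}$ and scwolify to a $\mc{Z}$--loop $\Theta_K(\widehat{\ell})$ based at some $(\sigma' \subset \epsilon')$. By the construction of $p_{\orbit{\zeta}}$, this loop traverses a translate of the realization of the edge $\zeta$, producing an edge $\zeta'$ in $\link_Z(\epsilon')$. Since $Z$ is a cube complex, $\zeta'$ is dual to a $3$--cube $\psi' \subset Z$ containing $\epsilon'$ (of dimension $\dim\epsilon'+2$), and $\psi'$ in turn yields a $2$--cell $\tau'$ in $\link_Z(\sigma')$ which is a $G/K$--translate $h\tau$ of $\tau$. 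Taking the lift of $p_{\orbit{\alpha}} \cdot g \cdot p_{\orbit{\beta}}$ whose initial segments coincide with the corresponding scwol arrows of $\widehat{\ell}$, its scwolification is then forced to be the corner $h\alpha \cdot h\beta$ of $\tau'$, establishing the corner--bounding property.

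For the direction $(\Rightarrow)$, assume $p_{\orbit{\alpha}} \cdot g \cdot p_{\orbit{\beta}}$ $K$--bounds a $(\tau,\alpha)$--corner via some $h \in G$. The $3$--cube $h\psi \subset X$ projects to a $3$--cube $\psi' \subset Z$ which realizes a translate $\zeta'$ of $\zeta$ as an edge in $\link_Z(h\epsilon)$. Build the loop~(\ref{tau-loop}) by lifting $\orbit{a_\alpha}^+ \cdot p_{\orbit{\zeta}} \cdot \orbit{a_\beta}^-$ in $\mc{C}_K$ starting at the vertex where the lift of $p_{\orbit{\alpha}}$ ends. The group arrows $g_1 \in G_{\orbit{(\sigma \subset \epsilon \subset \phi_\alpha)}}$ and $g_2 \in G_{\orbit{(\sigma \subset \epsilon \subset \phi_\beta)}}$ are chosen to align the scwol arrows $\orbit{a_\alpha}^+$, $\orbit{a_\beta}^-$ with the endpoints of the lifted $p_{\orbit{\zeta}}$, i.e., to select the correct preimages in $\mc{C}_K$. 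The final $g^{-1}$ then closes the lift to a loop in $\mc{C}_K$: because both the $\alpha \cdot \beta$--side and the $\zeta$--diagonal--side bound the same $2$--cell $\tau'$ in $Z$, their two endpoints at $(\sigma' \subset \epsilon')$ differ by exactly $g$. Lemma~\ref{lem:g tau,alpha}, which identifies the abstract loop $\orbit{a_\alpha}^+ \cdot p_{\orbit{\zeta}} \cdot \orbit{a_\beta}^-$ with the element $g_{\tau,\alpha} \in G_{\orbit{(\sigma \subset \epsilon)}}$, supplies the precise algebraic mechanism for this comparison.

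The main obstacle is the careful bookkeeping of lifts to $\mc{C}_K$: vertices of $\mc{C}_K$ above a given $\mc{Z}$--vertex $v$ are distinguished by the $G_v/K_v$--action, and the existentially quantified group arrows $g_1, g_2$ are precisely the tool needed to track this extra data. Thus the content of the lemma is that the two boundary--sides of $\tau$ differ by an element of $K$ under appropriate lifts, and Lemma~\ref{lem:g tau,alpha} is the key algebraic link that makes this comparison quantitative.
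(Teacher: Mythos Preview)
Your approach is the same as the paper's: both directions work by comparing lifts in $\mc{C}_K$ of the ``outer'' path $p_{\orbit{\alpha}}\cdot g\cdot p_{\orbit{\beta}}$ and of the ``diagonal'' loop $\orbit{a_\alpha}^+\cdot p_{\orbit{\zeta}}\cdot\orbit{a_\beta}^-$, and observing that they share the scwol arrows $a_\alpha$ and $a_\beta$. However, your write-up leaves some real verification undone.

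In $(\Leftarrow)$, the phrase ``whose initial segments coincide with the corresponding scwol arrows of $\widehat{\ell}$'' is not right: $\widehat{\ell}$ begins with $\orbit{a_\alpha}^+$, which is the \emph{last} edge of $p_{\orbit{\alpha}}$ reversed, not an initial segment. What you must do (and what the paper does) is align at the midpoint vertex $t(\orbit{\alpha})$: take the lift of $p_{\orbit{\alpha}}\cdot g\cdot p_{\orbit{\beta}}$ whose $p_{\orbit{\alpha}}$--part ends at the basepoint of $\widehat{\ell}$. Then the last arrow of that lift and the first arrow of $\widehat{\ell}$ are literally the same $\mc{C}_K$--arrow (one traversed as $\orbit{a_\alpha}^-$, the other as $\orbit{a_\alpha}^+$). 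The crucial step you omit is on the $\beta$ side: one must check that the \emph{first} arrow of the lift of $p_{\orbit{\beta}}$ coincides with the $\orbit{a_\beta}$--arrow appearing in $\widehat{\ell}$. This uses that $\widehat{\ell}$ ends $\ldots\cdot\orbit{a_\beta}^-\cdot g^{-1}$, so the target of its $\orbit{a_\beta}$--arrow is the image of the basepoint under $g$ --- exactly where the lift of $p_{\orbit{\beta}}$ begins. Only then can you conclude the scwolification is $h\alpha\cdot h\beta$.

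In $(\Rightarrow)$, your assertion that ``their two endpoints at $(\sigma'\subset\epsilon')$ differ by exactly $g$'' needs the same bookkeeping made explicit. The paper's argument is: lift both $q=p_{\orbit{\alpha}}\cdot g\cdot p_{\orbit{\beta}}$ and $q_0=\orbit{a_\alpha}^+\cdot p_{\orbit{\zeta}}\cdot\orbit{a_\beta}^-$ so that their scwolifications trace the same corner of $\tau'$; then the $a_\alpha$--arrows in the two lifts have the same $\Theta_K$--image, hence their \emph{sources} (at the $(\sigma\subset\epsilon\subset\phi_\alpha)$ level) are joined by a group arrow $g_1$, and similarly the $a_\beta$--sources by $g_2$. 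Splicing these group arrows into $q_0$ and appending $g^{-1}$ produces a genuine $\mc{C}_K$--loop of shape~\eqref{tau-loop}. You gesture at this but do not carry it out.

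Finally, Lemma~\ref{lem:g tau,alpha} is not used here; it identifies the \emph{value} $g_{\tau,\alpha}$ of the loop $\orbit{a_\alpha}^+\cdot p_{\orbit{\zeta}}\cdot\orbit{a_\beta}^-$ and is only needed for Proposition~\ref{p:bound a corner}, the algebraic characterization that follows.
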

\begin{proof}
 First suppose that there is a $CG(\mc{Y})$--loop of the form \eqref{tau-loop} representing an element of $K$.

Then the following two $CG(\mc{Y})$--paths differ by an element of $K$:
\[	p_{\orbit{\alpha}}  \cdot  g  \cdot  p_{\orbit{\beta}}	,	\]
and
\[	p_{\orbit{\alpha}}  \cdot  \orbit{a_{\alpha}}^+  \cdot g_1  \cdot  p_{\orbit{\zeta}}  \cdot  g_2  \cdot  \orbit{a_{\beta}}^-  \cdot  g^{-1}  \cdot  g  \cdot  p_{\orbit{\beta}}	.	\]
Thus they together form a loop which lifts to $\mc{C}_K$.

This second path is homotopic to a $CG(\mc{Y})$--path whose scwolification avoids the vertex $t(\orbit{\alpha})$ after $p_{\orbit{\alpha}}$ but instead travels across the first three edges of $p_{\orbit{\alpha}}$, traverses $p_{\orbit{\zeta}}$, and then travels across the final three edges of $p_{\orbit{\beta}}$.  The homotopy lifts to $\mc{C}_K$, and the image in $\mc{Z}$ of this homotopy under the scwolification $\Theta_K$ shows that there is a $2$--cell $\check\tau'$ between the $1$--cells $\check\alpha'$ and $\check\beta'$ whose idealizations are the scwolifications of the lifts of $p_{\orbit{\alpha}}$ and $p_{\orbit{\beta}}$ respectively.  This shows that the path $p_{\orbit{\alpha}}  \cdot  g  \cdot  p_{\orbit{\beta}}$ $K$--bounds a $(\tau,\alpha)$--corner.

Conversely, suppose that the $CG(\mc{Y})$--path $q = p_{\orbit{\alpha}}  \cdot  g  \cdot  p_{\orbit{\beta}}$ $K$--bounds a $(\tau,\alpha)$--corner.  Lift to a $\mc{C}_K$--path $\widehat{q}$ and consider the scwolification $\Theta_K(\widehat{q})$ in $\mc{Z}$.  As in Definition \ref{def:tau,alpha corner}, the realization of $\widehat{q}$ is the concatenation of two $1$--cells $\check\alpha',\check\beta'$ in $\link(\check\sigma')$ for some cube $\check\sigma'$ in the orbit of $\check\sigma$.  
Moreover, there is a $2$--cell $\check\tau'$ with $\check\alpha',\check\beta'$ in the boundary of $\check\tau'$ and an element $h$ of $G$ so that $\check\sigma' = h \check\sigma$, $\check\tau' = h \check\tau$, $\check\alpha' = h \check\alpha$ and $\check\beta' = h \check\beta$.  Let $v'$ be the vertex of $\link(\check\sigma')$ where $\check\alpha'$ and $\check\beta'$ meet.

Consider the loop $q_0 = \orbit{a_{\alpha}}^+  \cdot  p_{\orbit{\zeta}}  \cdot  \orbit{a_{\beta}}^-$ as in Lemma \ref{lem:g tau,alpha}.  This represents an element of $G_{t(\orbit{\alpha})}$, and there is a lift $\widehat{q}_0$ of $q_0$ to $\mc{C}_K$ so that $\Theta_K(\widehat{q}_0)$ is a loop based at $v'$ and traveling across the corner of $\check\tau'$ from $\check\alpha'$ to $\check\beta'$.  The paths $q_0$ and $q$ have lifts to $\mc{C}_K$ forming a sub-diagram:
\[
  \xy
  (0,0)*{\cdot} = "a";
  (10,10)*{\cdot} = "b";
  (20,20)*{\odot} = "c";
  (30,30)*{\cdot} = "d";
  (40,40)*{\cdot} = "e";
   (43,37)*{\cdot} = "k";
   (50,40)*{\cdot} = "f";
   (47,37)*{\cdot} = "l";
  (60,30)*{\cdot} = "g";
  (70,20)*{\odot} = "h";
  (80,10)*{\cdot} = "i";
  (90,0)*{\cdot} = "j";
  {\ar@/^/_g "e";"f"};
  {\ar "b";"a"};{\ar "b";"c"};{\ar "d";"c"};{\ar^{a_\alpha} "d";"e"};
  {\ar_{a_\beta} "g";"f"};{\ar "g";"h"};{\ar "i";"h"};{\ar "i";"j"};
  (33,27)*{\cdot} = "m";
  (38,24)*{\cdot} = "n";
  (45,23)*{\odot} = "o";
  (52,24)*{\cdot} = "p";
  (57,27)*{\cdot} = "q";
  (45,30)*{\widehat{q}_0};
  {\ar@{.>}@/_1pc/ (40,30)*{};(50,30)*{}};
  (10,25)*{\widehat{q}};
  {\ar@{.>} (10,18)*{};(17,25)*{}};
  {\ar "n";"m"};{\ar "n";"o"};{\ar "p";"o"};{\ar "p";"q"};{\ar "m";"k"};{\ar "q";"l"};
    \endxy
  \]
  The circled dots represent either single objects or pairs of objects separated by a group arrow, depending on whether the paths $p_{\orbit{x}}$ have length four or five for $x\in\{\alpha,\beta,\zeta\}$.  The scwolification of this diagram in $\mc{Z}$ looks like this:
\[
  \xy
  (0,0)*{\cdot} = "a2";
  (10,10)*{\cdot} = "b2";
  (20,20)*{\cdot} = "c2";
  (30,30)*{\cdot} = "d2";
  (40,40)*{\cdot} = "e2";
  (40,40)*{\cdot} = "f2";
  (50,30)*{\cdot} = "g2";
  (60,20)*{\cdot} = "h2";
  (70,10)*{\cdot} = "i2";
  (80,0)*{\cdot} = "j2";
  {\ar "b2";"a2"};{\ar "b2";"c2"};{\ar "d2";"c2"};{\ar^{a_\alpha} "d2";"e2"};
  {\ar_{a_\beta} "g2";"f2"};{\ar "g2";"h2"};{\ar "i2";"h2"};{\ar "i2";"j2"};
  (35,25)*{\cdot} = "n2";
  (40,23)*{\cdot} = "o2";
  (45,25)*{\cdot} = "p2";
  {\ar "n2";"d2"};{\ar "n2";"o2"};{\ar "p2";"o2"};{\ar "p2";"g2"};
 \endxy
\]

The edges which scwolify to $a_\alpha$ in $\widehat{q}$ and $\widehat{q}_0$ have sources connected by a group arrow labeled by some $g_1$.  Similarly the edges which scwolify to $a_\beta$ have sources connected by group arrow with some label $g_2$.  We thus obtain a loop in $\mc{C}_K$ of the form \eqref{tau-loop}.
\end{proof}

Given the criterion from Lemma \ref{l:tau-loop}, the following result is straightforward.   Recall the definition of the element $g_{\tau,\alpha}$ from Notation \ref{not:g_tau,alpha}.

\begin{proposition} \label{p:bound a corner}
Suppose that $\tau$ is a $2$--cell in $\link(\sigma)$ and that the boundary of $\tau$ is $\alpha.\beta.\gamma$.  For any $g \in G_{t\orbit{\alpha}}$ the $CG(\mc{Y})$--path $p_{\orbit{\alpha}}  \cdot  g  \cdot  p_{\orbit{\beta'}}$ $K$--bounds a $(\tau,\alpha)$--corner if and only if 
\begin{enumerate}
\item $\orbit{\beta'} = \orbit{\beta}$; and
\item $g \in E_{\orbit{\alpha}} g_{\tau,\alpha} E_{\orbit{\backwards{\beta}}} . K_{t(\orbit{\alpha})}$
\end{enumerate}
\end{proposition}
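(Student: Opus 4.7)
The plan is to translate the geometric corner-bounding condition into an algebraic one via Lemma \ref{l:tau-loop}, and then simplify the resulting loop to isolate $g$. First, condition (1) follows directly from Definition \ref{def:tau,alpha corner}: if $p_{\orbit{\alpha}} \cdot g \cdot p_{\orbit{\beta'}}$ $K$--bounds a $(\tau,\alpha)$--corner, then $\beta' = h\beta$ for some $h \in G$, whence $\orbit{\beta'} = \orbit{\beta}$. So throughout the rest of the argument I may assume $\orbit{\beta'} = \orbit{\beta}$.

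Under this assumption, Lemma \ref{l:tau-loop} says the path $K$--bounds a $(\tau,\alpha)$--corner if and only if there exist group arrows $g_1 \in G_{i(\orbit{a_\alpha})}$ and $g_2 \in G_{i(\orbit{a_\beta})}$ such that the $CG(\mc{Y})$--loop
\[
\orbit{a_\alpha}^+ \cdot g_1 \cdot p_{\orbit{\zeta}} \cdot g_2 \cdot \orbit{a_\beta}^- \cdot g^{-1}
\]
represents an element of $K$. Using the arrow composition rule from Definition \ref{def:CG(Y)}, each group arrow can be slid across its adjacent scwol arrow by an elementary homotopy, at the cost of applying the relevant edge-inclusion $\psi$. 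Sliding $g_1$ through $\orbit{a_\alpha}^+$ and $g_2$ through $\orbit{a_\beta}^-$ yields a homotopic loop based at $t(\orbit{\alpha})$:
\[
\psi_{\orbit{a_\alpha}}(g_1) \cdot \bigl(\orbit{a_\alpha}^+ \cdot p_{\orbit{\zeta}} \cdot \orbit{a_\beta}^-\bigr) \cdot \psi_{\orbit{a_\beta}}(g_2) \cdot g^{-1}.
\]
By Lemma \ref{lem:g tau,alpha} and Notation \ref{not:g_tau,alpha}, the bracketed sub-loop represents $g_{\tau,\alpha}$, so the entire loop represents the element $\psi_{\orbit{a_\alpha}}(g_1) \cdot g_{\tau,\alpha} \cdot \psi_{\orbit{a_\beta}}(g_2) \cdot g^{-1}$ of $G_{t(\orbit{\alpha})}$.

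Next I must identify $\psi_{\orbit{a_\alpha}}(G_{i(\orbit{a_\alpha})}) = E_{\orbit{\alpha}}$ and $\psi_{\orbit{a_\beta}}(G_{i(\orbit{a_\beta})}) = E_{\overline{\orbit{\beta}}}$. Comparing Figure \ref{fig:zeta} with Definition \ref{def:palpha}, the arrow $a_\alpha$ coincides with the arrow $a_4$ in the idealization of $\alpha$, so $\psi_{\orbit{a_\alpha}} = \psi_{a_4^{(\alpha)}}$. Since $\psi_{a_3^{(\alpha)}}$ is a bijection from $G_{i(\orbit{a_\alpha})}$ onto the subgroup $G_\nu^+$ (by its very definition), the composition $\psi_{\orbit{\alpha}} = \psi_{a_4^{(\alpha)}} \circ \psi_{a_3^{(\alpha)}}^{-1}$ has image exactly $\psi_{\orbit{a_\alpha}}(G_{i(\orbit{a_\alpha})})$, which is $E_{\orbit{\alpha}}$ by definition. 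A symmetric matching using $a_\beta = a_1^{(\beta)}$ and $\psi_{\overline{\orbit{\beta}}} = \psi_{a_1^{(\beta)}} \circ \psi_{a_2^{(\beta)}}^{-1}$ handles the other side.

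Combining these identifications, as $g_1$ and $g_2$ range over their local groups the element represented by the loop ranges over $E_{\orbit{\alpha}} \cdot g_{\tau,\alpha} \cdot E_{\overline{\orbit{\beta}}} \cdot g^{-1}$, which meets $K$ (necessarily in $K_{t(\orbit{\alpha})}$) if and only if $g \in E_{\orbit{\alpha}} \, g_{\tau,\alpha} \, E_{\overline{\orbit{\beta}}} \cdot K_{t(\orbit{\alpha})}$, as required. The main obstacle in this plan is the bookkeeping in the third paragraph: one must carefully track arrow directions in Figure \ref{fig:zeta} against those in Definition \ref{def:palpha} (for both $\alpha$ and $\beta$) to confirm that $\psi_{\orbit{a_\alpha}}$ and $\psi_{\orbit{a_\beta}}$ really are the edge-inclusions landing in $G_{t(\orbit{\alpha})}$, rather than the partner inclusions $\psi_{a_3^{(\alpha)}}$ and $\psi_{a_2^{(\beta)}}$ into $G_\nu$.
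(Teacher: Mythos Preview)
Your proof is correct and follows essentially the same approach as the paper: apply Lemma~\ref{l:tau-loop}, slide the group arrows $g_1,g_2$ across the scwol arrows $\orbit{a_\alpha}^+$ and $\orbit{a_\beta}^-$, identify the middle subloop with $g_{\tau,\alpha}$ via Lemma~\ref{lem:g tau,alpha}, and read off the double-coset condition. You are in fact more careful than the paper in two places: you explicitly dispatch condition~(1) from Definition~\ref{def:tau,alpha corner}, and you justify in detail why the images of $\psi_{\orbit{a_\alpha}}$ and $\psi_{\orbit{a_\beta}}$ are $E_{\orbit{\alpha}}$ and $E_{\overline{\orbit{\beta}}}$ rather than merely asserting it.
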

\begin{proof}
Recall from Notation \ref{not:g_tau,alpha} that $g_{\tau,\alpha}$ is the element of $G_{t\orbit{\alpha}}$ represented by the $CG(\mc{Y})$--loop
\[	\orbit{a_{\alpha}}^+  \cdot  p_{\orbit{\gamma}}  \cdot  \orbit{a_{\beta}}^- .	\]

Suppose that $p_{\orbit{\alpha}}  \cdot  g  \cdot  p_{\orbit{\beta}}$ $K$--bounds a $(\tau,\alpha)$--corner.  Then consider the path
 \[	\orbit{a_{\alpha}}^+  \cdot g_1  \cdot  p_{\orbit{\gamma}}  \cdot  g_2  \cdot  \orbit{a_{\beta}}^-  \cdot  g^{-1}	\]	
from Lemma \ref{l:tau-loop} which represents an element of $K$.

We have homotopies
\begin{eqnarray*}
 \orbit{a_{\alpha}}^+  \cdot g_1  \cdot  p_{\orbit{\gamma}}  \cdot   g_2  \cdot  \orbit{a_{\beta}}^-  \cdot  g^{-1} &\simeq& \psi_{\orbit{a_{\alpha}}}(g_1)  \cdot \orbit{a_{\alpha}}^+  \cdot  p_{\orbit{\gamma}}  \cdot  \orbit{a_{\beta}}^-  \cdot  \left( \psi_{\orbit{a_{\beta}}}(g_2)g^{-1}\right) \\
 &\simeq& \psi_{\orbit{a_{\alpha}}}(g_1)  \cdot  g_{\tau,\alpha}  \cdot  \left( \psi_{\orbit{a_{\beta}}}(g_2)g^{-1} \right)	\\
 &\simeq& \psi_{\orbit{a_{\alpha}}}(g_1)g_{\tau,\alpha}\psi_{\orbit{a_{\beta}}}(g_2)g^{-1} .
\end{eqnarray*}
Since $\psi_{\orbit{a_{\alpha}}}(g_1) \in E_{\orbit{\alpha}}$, $\psi_{\orbit{a_{\beta}}}(g_2) \in E_{\orbit{\beta}}$ and the whole expression above is an element of $K \cap G_{\orbit{v}} = K_{t(\orbit{\alpha})}$, we have
\[	g \in E_{\orbit{\alpha}} g_{\tau,\alpha} E_{\orbit{\beta}} K_{t(\orbit{\alpha})}	,	\]
as required.

In order to prove the other direction, this computation may be performed in reverse.
\end{proof}

\begin{lemma} \label{lem:bounds a 2-cell}
  Suppose that $\link(\check\sigma)$ is simplicial and contains $1$--cells $\check\alpha$, $\check\beta$, and $\check\gamma$ which lift respectively to $1$--cells $\alpha,\beta$ and $\gamma$ in $\link(\sigma)$ in $X$.  Let $$q = p_{\orbit{\alpha}}  \cdot  g_1  \cdot  p_{\orbit{\beta}}  \cdot  g_2  \cdot  p_{\orbit{\gamma}}  \cdot  g_3$$ be a $CG(\mc{Y})$--loop which represents an element of $K$.  Suppose $\check\eta\subset \link(\check\sigma)$ is the realization of the scwolification of some lift of $q$ to $\mc{C}_K$.

  If any one of $p_{\orbit{\alpha}}  \cdot  g_1  \cdot  p_{\orbit{\beta}}$, $p_{\orbit{\beta}}  \cdot  g_2  \cdot  p_{\orbit{\gamma}}$ or $p_{\orbit{\gamma}}  \cdot  g_3  \cdot  p_{\orbit{\alpha}}$ $K$--bounds a corner, then $\check\eta$ bounds a $2$--cell in $\link(\check\sigma)$.
\end{lemma}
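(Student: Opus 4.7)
The plan is to invoke cyclic symmetry to reduce to the case where it is $p_{\orbit{\alpha}} \cdot g_1 \cdot p_{\orbit{\beta}}$ that $K$-bounds a corner, then use the specific lift of $q$ giving $\overline{q} \subset \link(\sigma)$ to locate the promised $2$-cell inside $\link(\sigma)$ itself (rather than in some translate), and finally invoke simpliciality of $\link(\sigma)$ to identify the remaining edge.

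First, observe that the three hypothesized cases are interchanged by cyclically rotating $q$; such a rotation preserves both membership in $K$ (because $K$ is normal) and the underlying loop $\overline{q}$ in $\link(\sigma)$ (up to starting vertex), so it suffices to assume that $p_{\orbit{\alpha}} \cdot g_1 \cdot p_{\orbit{\beta}}$ $K$-bounds a $(\tau,\alpha)$-corner. Let $\widehat{q}$ be the lift of $q$ to $\mc{C}_K$ realizing $\overline{q}$; since $q$ represents an element of $K$, this lift is a loop, and we may write $\overline{q} = \alpha'.\beta'.\gamma'$, where $\alpha'$, $\beta'$, $\gamma'$ are the $1$-cells of $\link(\sigma)$ arising as the scwolifications along $\widehat{q}$ of $p_{\orbit{\alpha}}$, $p_{\orbit{\beta}}$, $p_{\orbit{\gamma}}$ respectively.

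Applying Definition \ref{def:tau,alpha corner} to the sublift of $p_{\orbit{\alpha}} \cdot g_1 \cdot p_{\orbit{\beta}}$ inside $\widehat{q}$, we obtain an element $h \in G$, a cube $\sigma''$, and a $2$-cell $\tau'' \in \link(\sigma'')$ with $\sigma'' = h\sigma$, $\tau'' = h\tau$, $\alpha' = h\alpha$, and $\beta' = h\beta$. Because $\alpha$ and $\alpha' = h\alpha$ both lie in $\link(\sigma)$, the element $h$ must carry the base cube $\sigma$ of the chain underlying $\alpha$ to itself, so $h \in \Stab(\sigma)$. Consequently $\sigma'' = \sigma$, and $\tau''$ is a $2$-cell of $\link(\sigma)$ whose boundary $h(\partial\tau) = \alpha'.\beta'.\gamma''$ has third edge $\gamma'' := h\gamma$ running from the terminal vertex of $\beta'$ to the initial vertex of $\alpha'$.

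To conclude, note that $\gamma'$ joins the same pair of vertices because $\overline{q}$ is a loop; since $\link(\sigma)$ is simplicial by hypothesis, a $1$-cell is determined by its pair of endpoints, forcing $\gamma' = \gamma''$ and hence $\overline{q} = \partial\tau''$. The main obstacle, handled via the $\Stab(\sigma)$-invariance of $\link(\sigma)$, is promoting the $2$-cell supplied by Definition \ref{def:tau,alpha corner} from some $\link(\sigma')$ into $\link(\sigma)$ itself; once this is done the simpliciality hypothesis takes care of the identification of the third edge.
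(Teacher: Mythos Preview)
Your proof is correct and follows essentially the same route as the paper's: cyclically reduce, extract a $2$--cell from the corner hypothesis whose first two boundary edges match $\alpha'.\beta'$, then invoke simpliciality to force the third edge to agree. You are somewhat more explicit than the paper about why the $2$--cell $\tau''$ actually lives in $\link(\sigma)$ rather than in the link of some translate, arguing via the base cube of the chain underlying $\alpha'$; the paper simply asserts ``some translate $\tau'$ of $\tau$ in $\link(\sigma)$'' without spelling this out. One small point: when you write ``$K$--bounds a $(\tau,\alpha)$--corner'' you are tacitly choosing the $2$--cell $\tau$ so that its first boundary edge is the $\alpha$ from the lemma hypothesis (which is possible since $\orbit{\alpha_0}=\orbit{\alpha}$ for any witnessing pair $(\tau_0,\alpha_0)$); the paper makes the same implicit choice, and it is harmless, but strictly speaking your sentence ``$\alpha$ and $\alpha'=h\alpha$ both lie in $\link(\sigma)$'' relies on it.
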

\begin{proof}
  Note that since $q$ represents an element of $K$, any lift to $\mc{C}_K$ is a loop, and so the realization $\check\eta$ is also a loop.  Since $\link(\check\sigma)$ is simplicial, this loop is embedded of length $3$ in $\link(\check\sigma)$ by Lemma \ref{l:length 3 no backtrack}.

  Think of $q$ as given by a cyclic word in the arrows of $CG(\mc{Y})$, and suppose that one of the three given subpaths of $q$ $K$--bounds a corner.  By relabeling and cyclically rotating we can assume it is the subpath $p = p_{\orbit{\alpha}}\cdot g_1 \cdot p_{\orbit{\beta}}$, so there is some $2$--cell $\check\tau$ in $\link(\check\sigma)$ and lift $\tau$ to $\link(\sigma)$ and $p$ $K$--bounds a $(\tau,\alpha)$--corner.  It follows that some translate $\check\tau'$ of $\check\tau$ in $\link(\check\sigma)$ has boundary given by a path $\check\alpha'\cdot\check\beta'\cdot\check\gamma'$, where $\alpha'\cdot\beta'$ are the first two $1$--cells of the path $\check\eta$.  If the third $1$--cell of $\partial\check\tau'$ is not the third $1$--cell of $\check\eta$, we obtain $1$--cells in $\link(\check\sigma)$ with the same endpoints, contradicting the assumption that $\link(\check\sigma)$ is simplicial.  So $\eta$ bounds the $2$--cell $\check\tau'$.
\end{proof}

Since there are finitely many $\Stab(\check\sigma)$--orbits of $2$--cell in $\link(\check\sigma)$, we obtain the following.

\begin{proposition} \label{prop:no loop of length 3 - do fill}
  Suppose that $\link(\check\sigma)$ is simplicial.  There are finitely many $2$--cells $\check\tau_i$ in $\link(\check\sigma)$ (with boundary $\check\alpha_i\cdot\check\beta_i\cdot\check\gamma_i$, and lifts $\alpha_i,\beta_i, \gamma_i$ to $\link(\sigma)$) so that the following holds:
  
Every loop of length $3$ in $\link(\check\sigma)$ is filled by a $2$--cell if and only if 
for every $CG(\mc{Y})$-path
\begin{equation}\label{eq:form of path}
\tag{$\ast$}		p_{\orbit{\alpha}}  \cdot  g_1  \cdot  p_{\orbit{\beta}}  \cdot  g_2  \cdot  p_{\orbit{\gamma}}  \cdot  g_3 	
\end{equation}
which represents an element of $K$, there exists an $i$ so that
\begin{enumerate}
\item $\orbit{\alpha} = \orbit{\alpha_i}, \orbit{\beta} = \orbit{\beta_i}$ and $\orbit{\gamma} = \orbit{\gamma_i}$;
\item $g_1 \in E_{\orbit{\alpha_i}} g_{\tau_i,\alpha_i} E_{\orbit{\backwards{\beta_i}}} K_{t(\orbit{\alpha_i})}$;
\item $g_2 \in E_{\orbit{\beta_i}} g_{\tau_i,\beta_i} E_{\orbit{\backwards{\gamma_i}}} K_{t(\orbit{\beta_i})}$; and
\item $g_3 \in E_{\orbit{\gamma_i}} g_{\tau_i,\gamma_i} E_{\orbit{\backwards{\alpha_i}}} K_{t(\orbit{\gamma_i})}$.
\end{enumerate}
\end{proposition}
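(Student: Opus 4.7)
The plan is to combine the preceding lemmas essentially mechanically; the main content has already been set up. First, since $\Stab(\sigma)$ acts on $\link(\sigma)$ with finitely many orbits of $2$--cells (because $G$ acts cocompactly on $X$), choose a finite set of orbit representatives $\tau_1,\ldots,\tau_N$ and fix a cyclic ordering $\alpha_i.\beta_i.\gamma_i$ of the oriented $1$--cells on the boundary of each $\tau_i$. By Lemma~\ref{l:L flag}, $\link(\sigma)$ is flag if and only if every edge-loop of length $3$ in its $1$--skeleton bounds a $2$--cell. By Lemma~\ref{l:no loop of length 3 - dont fill}, length--$3$ loops in $\link(\sigma)$ are in bijective correspondence with $CG(\mc{Y})$--loops of the form $(\ast)$ (up to the equivalences described there) which represent elements of $K$, via scwolification of a lift to $\mc{C}_K$. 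Thus the proposition will follow once we show that such a loop bounds a $2$--cell if and only if there exists $i$ satisfying conditions (1)--(4).

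For the \emph{forward} direction, suppose $\link(\sigma)$ is flag and let $q = p_{\orbit{\alpha}} \cdot g_1 \cdot p_{\orbit{\beta}} \cdot g_2 \cdot p_{\orbit{\gamma}} \cdot g_3$ represent an element of $K$. Lift $q$ to $\mc{C}_K$ and let $\overline{q}$ be the realization of its scwolification in $\link(\sigma)$, a length--$3$ loop. Flagness gives a $2$--cell $\tau'$ in $\link(\sigma)$ with $\partial\tau' = \overline{q}$. Translating $\tau'$ by some $h\in\Stab(\sigma)$ places it in the $G$--orbit of a unique $\tau_i$, and the identification of oriented boundary $1$--cells forces $\orbit{\alpha}=\orbit{\alpha_i}$, $\orbit{\beta}=\orbit{\beta_i}$, and $\orbit{\gamma}=\orbit{\gamma_i}$, giving condition (1). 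Since each of the three subpaths of $q$ now $K$--bounds the corresponding corner of $\tau_i$, Proposition~\ref{p:bound a corner} applied to each of $(\tau_i,\alpha_i)$, $(\tau_i,\beta_i)$, and $(\tau_i,\gamma_i)$ yields conditions (2), (3), and (4).

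For the \emph{backward} direction, let $\overline{q}$ be an arbitrary length--$3$ loop in the $1$--skeleton of $\link(\sigma)$. By Lemma~\ref{l:no loop of length 3 - dont fill} it is represented by a $CG(\mc{Y})$--loop $q$ of the form $(\ast)$ representing an element of $K$. The hypothesis produces an $i$ and conditions (1)--(4); in particular conditions (1) and (2) together with Proposition~\ref{p:bound a corner} imply that the subpath $p_{\orbit{\alpha}}\cdot g_1 \cdot p_{\orbit{\beta}}$ $K$--bounds a $(\tau_i,\alpha_i)$--corner. Since $\link(\sigma)$ is simplicial by hypothesis, Lemma~\ref{lem:bounds a 2-cell} then gives a $2$--cell in $\link(\sigma)$ filling $\overline{q}$. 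As $\overline{q}$ was arbitrary, every length--$3$ loop is filled, and Lemma~\ref{l:L flag} shows that $\link(\sigma)$ is flag.

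The only delicate point I anticipate is bookkeeping: one must verify that the condition "$\orbit{\alpha}=\orbit{\alpha_i}$, $\orbit{\beta}=\orbit{\beta_i}$, $\orbit{\gamma}=\orbit{\gamma_i}$" for a \emph{single} $i$ can always be arranged on the forward direction, rather than getting three different indices from the three corners. This is forced once we observe that the three corners are corners of the \emph{same} $2$--cell $\tau'$, so translating $\tau'$ to an orbit representative $\tau_i$ identifies all three oriented boundary $1$--cells with those of $\tau_i$ simultaneously. Everything else is a direct appeal to the lemmas already proved.
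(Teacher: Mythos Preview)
Your proof is correct and follows essentially the same route as the paper's: set up the finite list of $\tau_i$, then use Lemma~\ref{l:no loop of length 3 - dont fill} to pass between length--$3$ loops and $CG(\mc{Y})$--paths of form~($\ast$), Proposition~\ref{p:bound a corner} to translate corner-bounding into the double-coset conditions (2)--(4), and Lemma~\ref{lem:bounds a 2-cell} to recover the filling $2$--cell from a single bounded corner.

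One small bookkeeping point the paper makes explicit and you should too: when listing the $\tau_i$, include one copy for each choice of \emph{starting vertex} on the boundary, so that a single $\Stab(\sigma)$--orbit of $2$--cells may contribute up to three entries. Your phrase ``fix a cyclic ordering $\alpha_i.\beta_i.\gamma_i$'' selects only one such starting point per orbit; in the forward direction the loop $\overline{q}$ may bound a translate of $\tau_i$ whose first edge matches $\beta_i$ rather than $\alpha_i$, and then condition (1) would fail as stated. The fix is exactly what the paper does, and your final paragraph's concern about ``getting three different indices'' is resolved by this same device.
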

\begin{proof}
Choose  the $2$--cells $\check\tau_i$ to be representatives of the $\Stab(\check\sigma)$--orbits of $2$--cells (together with a fixed vertex to label the boundary -- so that a single orbit may appear up to three times in the list).

Suppose first that the condition about paths of form \eqref{eq:form of path} representing elements of $K$ is satisfied, and suppose that $p$ is a loop of length $3$ in $\link(\check\sigma)$ which is labeled by $1$--cells $\check\alpha', \check\beta', \check\gamma'$, in order.  By Lemma \ref{l:no loop of length 3 - dont fill} there exists a $CG(\mc{Y})$--path $\lambda$ of the form \eqref{eq:form of path} which is the label of an unscwolification of $p$.  Because of our hypothesis, there exists an $i$ so that conditions (1)--(4) are satisfied.  By Proposition \ref{p:bound a corner} the $CG(\mc{Y})$--path $\lambda$ $K$--bounds a corner at each of its three corners, and so by Lemma \ref{lem:bounds a 2-cell} the path $p$ bounds a $2$--cell, as required.

Conversely, suppose that every loop of length $3$ in $\link(\check\sigma)$ bounds a $2$--cell, and consider a $CG(\mc{Y})$--path $\lambda$ of the form \eqref{eq:form of path} which represents an element of $K$.  By Lemma \ref{l:no loop of length 3 - dont fill} the scwolification of a lift of $\lambda$ is (the idealization of) an immersed path of length $3$.  This immersed path must then bound a $2$--cell $\check\tau$.  Suppose that $\check\tau_i$ is the representative in the $\Stab(\check\sigma)$--orbit of the $2$--cell $\check\tau$, so condition (1) is satisfied.  According to Lemma \ref{l:tau-loop}, applied to all three corners of this $2$--cell, the path $\lambda$ satisfies conditions (2)--(4).  This finishes the proof.
\end{proof}

To summarize, given Lemma~\ref{l:L flag}, Lemmas~\ref{l:loop of length 1},~\ref{l:no loop of length 2},~\ref{l:no loop of length 3 - dont fill}, and Proposition~\ref{prop:no loop of length 3 - do fill} give descriptions of various types of $CG(\mc{Y})$--paths so that the cube complex $Z=\leftQ{X}{K}$ is nonpositively curved if and only if no such path lifts to $\mc{C}_K$.

\section{Algebraic translation} \label{s:algebra}

In this section, we continue to work in the context of a group $G$ acting cocompactly on a CAT$(0)$ cube complex $X$.  The induced action on the associated scwol $\mc{X}$ has quotient scwol $\mc{Y}$, the underlying scwol for a complex of groups structure $G(\mc{Y})$ on $G$.  We let $\mc{Q}(G)$ be the set of cube stabilizers for $G\acts X$; equivalently $\mc{Q}(G)$ is the set of conjugates of the local groups for the complex of groups $G(\mc{Y})$.

We translate the conditions from the previous section into algebraic statements about elements of $G$ and of $\mc{Q}(G)$, with an eye toward finding conditions on $K\lhd G$ so that $\leftQ{X}{K}$ is non-positively curved.  In Section~\ref{s:Dehn filling} we use hyperbolic Dehn filling to find $K$ which satisfy the conditions, under certain hyperbolicity assumptions on $G$ and $\mc{Q}(G)$.

We fix a basepoint $v_0$ for $\mc{Y}$ and an isomorphism $\pi_1(CG(\mc{Y}),v_0) \cong G$ as in Section~\ref{s:complex of groups}.  
The scwolification functor
\[ \Theta \co \cgyt\to \mc{X} \]
is $G$--equivariant.  Recall also that the objects of $\cgyt$ are homotopy classes of paths starting at $v_0$.

Fix also a maximal (undirected) tree $T$ in $\mc{Y}$.  For each object $v$ of $\mc{Y}$ which represents an orbit of cubes in $X$, let $c_v$ be the unique $\mc{Y}$--path in $T$ from $v_0$ to $v$.  By using scwol arrows, we also consider $c_v$ to be a $CG(\mc{Y})$--path in the natural way.  For an object $v$ of $\mc{Y}$ which represents a chain of cubes of length longer than $1$, we define a $\mc{Y}$--path $c_{v}$ from $v_0$ to $v$ as follows:  If $v$ is represented by $( \sigma_1 \subset \sigma_2 \subset \cdots \subset \sigma_k)$ (a nested chain of cubes in $X$) then define $c_{v}$ to be the concatenation of $c_{\orbit{\sigma_1}}$ with the path consisting of the arrows $(\sigma_1 \subset \cdots \subset \sigma_i) \to (\sigma_1 \subset \cdots \subset \sigma_{i+1})$, for $i = 1, 2, \ldots , k-1$.

We use the paths $c_v$ to define a map from (homotopy classes rel endpoints of) $CG(\mc{Y})$--paths to (homotopy classes of) $CG(\mc{Y})$--loops based at $v_0$ by
\[	p \mapsto c_{i(p)} \cdot p \cdot \backwards{c_{t(p)}}	.	\]
(Here and below we use the notation $\backwards{c}$ to denote the reverse of the $CG(\mc{Y})$--path $c$.)

Given a path $p$, let $\ell_p = \left[ c_{i(p)} \cdot p \cdot \backwards{c_{t(p)}} \right]\in \pi_1(CG(\mc{Y}),v_0)$.

The following results are all straightforward.

\begin{lemma}
For any $CG(\mc{Y})$--paths $p, p'$ so that $t(p) = i(p')$, we have
\begin{align*}
\ell_{\backwards{p}} &= \ell_p^{-1}\\
\ell_{p \cdot p'} &= \ell_p \ell_{p'}.
\end{align*}
\end{lemma}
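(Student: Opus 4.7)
The plan is to verify both identities directly from the definition $\ell_p = [c_{i(p)} \cdot p \cdot \overline{c_{t(p)}}]$, using two basic facts about $CG(\mc{Y})$--paths and their homotopies (as in Definition \ref{def:elem homotopy}): first, for any $CG(\mc{Y})$--path $q$, both $\overline{q} \cdot q$ and $q \cdot \overline{q}$ are homotopic to the trivial paths at $t(q)$ and $i(q)$ respectively, by iterated application of the first elementary homotopy; second, the inverse in $\pi_1(CG(\mc{Y}),v_0)$ of the class of a loop is represented by the reverse path (which is a consequence of the first fact). I will also repeatedly use the fact that for any object $v$ of $\mc{Y}$ there is a single choice of path $c_v$, so $t(p) = i(p')$ implies $c_{t(p)} = c_{i(p')}$.

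For the first identity, note that $i(\overline{p}) = t(p)$ and $t(\overline{p}) = i(p)$, so by definition
\[ \ell_{\overline{p}} = [c_{t(p)} \cdot \overline{p} \cdot \overline{c_{i(p)}}]. \]
On the other hand, reversing the loop that defines $\ell_p$ and using the second fact above gives
\[ \ell_p^{-1} = [\overline{c_{i(p)} \cdot p \cdot \overline{c_{t(p)}}}] = [c_{t(p)} \cdot \overline{p} \cdot \overline{c_{i(p)}}], \]
which agrees with the expression for $\ell_{\overline{p}}$.

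For the second identity, since $t(p) = i(p')$ we have $c_{t(p)} = c_{i(p')}$, so
\[ \ell_p \cdot \ell_{p'} = [c_{i(p)} \cdot p \cdot \overline{c_{t(p)}} \cdot c_{t(p)} \cdot p' \cdot \overline{c_{t(p')}}]. \]
The first fact above lets us cancel $\overline{c_{t(p)}} \cdot c_{t(p)}$ up to homotopy, reducing the right-hand side to $[c_{i(p)} \cdot p \cdot p' \cdot \overline{c_{t(p')}}]$, which is precisely $\ell_{p \cdot p'}$ since $i(p \cdot p') = i(p)$ and $t(p \cdot p') = t(p')$.

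I do not anticipate any real obstacle; both identities are purely formal consequences of the definitions together with the standard groupoid-type properties of path concatenation that are built into the elementary homotopies of Definition \ref{def:elem homotopy}. The only mild care needed is to ensure that the conventions about which end of $c_v$ is attached to a given path agree on the two sides of each equation, which is handled by tracking $i(\cdot)$ and $t(\cdot)$.
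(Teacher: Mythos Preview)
Your proof is correct and is precisely the routine verification one expects here. The paper itself gives no proof, simply remarking that the result is ``straightforward''; your argument is the natural unpacking of that remark.
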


\begin{lemma}
  Suppose that $p$ is a $CG(\mc{Y})$--path starting at $v_0$.  Let $[p]$ be the equivalence class of $p$ in $\cgyt$, and let $x = \Theta([p])$.  Then
\[	\Stab_G(x) = \left\{ [ p \cdot g \cdot \backwards{p} ] \mid g \in G_{\orbit{x}} \right\}	.	\]
\end{lemma}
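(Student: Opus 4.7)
My plan is to unpack the definition of the universal cover and the scwolification functor, then combine them with Lemma \ref{lem:scwolify} applied with $K=\{1\}$. Recall that objects of $\widetilde{CG(\mc{Y})}$ are homotopy classes of $CG(\mc{Y})$-paths starting at $v_0$, and the deck action of $G\cong \pi_1(CG(\mc{Y}),v_0)$ is given by concatenation: $[\ell]\cdot[p] = [\ell\cdot p]$. Since $\Theta$ is $G$-equivariant, an element $[\ell]$ lies in $\Stab_G(x)$ precisely when $\Theta([\ell\cdot p]) = \Theta([p]) = x$, so the task reduces to identifying which objects of $\widetilde{CG(\mc{Y})}$ have the same $\Theta$-image as $[p]$.

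For this I will invoke Lemma \ref{lem:scwolify} with $K=\{1\}$ (so $\mc{C}_K = \widetilde{CG(\mc{Y})}$ and $\mc{Z} = \mc{X}$): two objects share a $\Theta$-image if and only if they are connected by a chain of invertible arrows. The invertible arrows of $CG(\mc{Y})$ are exactly the group arrows $(g,1_v)$, and these lift to invertible arrows of the cover. Starting at $[p]$ with $v = t(p)$, the lifted arrow labeled $(g^{-1},1_v)$ has target $[p\cdot (g^{-1},1_v)^-]$, and the elementary homotopy collapsing $(g,1_v)\circ(g^{-1},1_v) = 1_v$ identifies $(g^{-1},1_v)^-$ with $(g,1_v)^+$, which is our abbreviated edge $g$. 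Iterating and using the composition rule $(g_1,1_v)\circ(g_2,1_v) = (g_1g_2,1_v)$ shows that the equivalence class of $[p]$ under invertible arrows is exactly
\[ \{\,[p\cdot g] \mid g \in G_v\,\}. \]

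Combining these observations: $[\ell]\in\Stab_G(x)$ iff there exists $g\in G_v$ with $[\ell\cdot p] = [p\cdot g]$; post-concatenating this equality of homotopy classes with $[\overline{p}]$ on the right and cancelling $p\cdot \overline{p}\simeq 1_{v_0}$ yields $[\ell] = [p\cdot g\cdot \overline{p}]$ in $\pi_1(CG(\mc{Y}),v_0) = G$, and conversely every such class clearly stabilizes $x$ by the first step. Finally, the image of $[p]$ under the forgetful composition $\widetilde{CG(\mc{Y})}\to CG(\mc{Y})\to \mc{Y}$ is $v$, and this agrees with the orbit $\orbit{x}$ under the identification $\mc{Y} = \leftQ{\mc{X}}{G}$, so $G_{\orbit{x}} = G_v$ and the two descriptions match. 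The main thing to watch is really just careful bookkeeping of the $\pm$ edge conventions and the composition law for group arrows; there is no substantial obstacle.
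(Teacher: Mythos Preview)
Your argument is correct. The paper does not actually give a proof of this lemma; it is one of the results introduced by the sentence ``The following results are all straightforward'' and is left to the reader. Your write-up is a careful unpacking of exactly the intended mechanism: use Lemma~\ref{lem:scwolify} (in the case $K=\{1\}$, so that $\Theta_K=\Theta$) to identify the $\Theta$--fibre through $[p]$ with the equivalence class under invertible arrows, observe that the invertible arrows of $CG(\mc{Y})$ are precisely the group arrows, and then translate the condition $\Theta([\ell\cdot p])=\Theta([p])$ into $[\ell]=[p\cdot g\cdot\overline{p}]$ by right-concatenating $\overline{p}$. This is the natural proof and agrees with what the paper has in mind.
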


\begin{definition} \label{def:Q_v}
Given an object $v$ of $\mc{Y}$, define
\[	Q_v = \left\{ [c_v \cdot g \cdot \backwards{c_v}] \mid g \in G_v \right\}	.	\]
\end{definition}
Definition \ref{def:Q_v} gives an explicit identification of the local groups of the complex of groups $G(\mc{Y})$ with finitely many elements of $\mc{Q}(G)$.

\subsection{Algebraic formulation of the link conditions}
Suppose that $K \unlhd G$.
In order for $Z = \leftQ{X}{K}$ to be non-positively curved,
there are five conditions that need to be ensured on links in $Z$.  Roughly speaking, they are:
\begin{enumerate}
\item No loop of length $1$,
\item No loop of length $2$ consisting of $1$--cells in different $G$--orbits,
\item No loop of length $2$ consisting of $1$--cells in the same $G$--orbit,
\item No loop of length $3$ whose image in $\mc{Y}$ does not bound a $2$--cell, and
\item No loop of length $3$ which does not bound a $2$--cell but whose image in $\mc{Y}$ does bound a $2$--cell.
\end{enumerate}
More precisely, the ``image in $\mc{Y}$'' means the image in $\mc{Y}$ of the idealization.  And we say this image $p$ ``bounds a $2$--cell'' if there is an unscwolification $\widehat{p}$ and a lift $\widetilde{p}$ of $\widehat{p}$ to $\cgyt$ so that the realization of the scwolification of $\widetilde{p}$ bounds a $2$--cell in some link of a cube in $X$.
  
If $\leftQ{X}{K}$ is a simply-connected cube complex and we ensure each of these conditions, then Lemmas \ref{l:L simplicial}, \ref{l:L flag} and \ref{l:length 3 no backtrack} imply that $\leftQ{X}{K}$ is CAT$(0)$.

In this subsection, we formulate five results which give algebraic conditions to enforce each of these five conditions in turn.  These results follow quickly from the results in Section \ref{s:X mod K CAT(0)} using the translation from the beginning of this section.  In each case, since $G$ acts cocompactly on a CAT$(0)$ cube complex, there are finitely many $G$--orbits of links and in each link finitely many $G$--orbits of each of the five kinds of paths in the above list, and we can rule out each orbit behaving badly in $\leftQ{X}{K}$ in turn.



 \begin{assumption} \label{SA}
   The group $G$ acts cocompactly on the CAT$(0)$ cube complex $X$, and $\mc{Q}(G)$ is the collection of cell stabilizers of the action.
 \end{assumption}

\begin{terminology} \label{term:tame}
  Under Assumption \ref{SA}, a normal subgroup $K \unlhd G$ is {\em co-cubical} if $\leftQ{X}{K}$ is a cube complex.
\end{terminology}

The following is a straightforward translation of Lemma \ref{l:loop of length 1}.  We spell out the proof since we use similar techniques for other more complicated results later in the section.

\begin{theorem} \label{t:NPC 1}
Under Assumption \ref{SA} there exists a finite set $F_1 \subset \mc{Q}(G) \times G$ so that for each $(Q,p) \in F_1$ we have $p \not\in Q$ and so that if (i) $K \unlhd G$ is co-cubical; and (ii) for each $(Q,p) \in F_1$ we have $p \not\in Q.K$, then no link in $\leftQ{X}{K}$ contains a loop of length $1$.
\end{theorem}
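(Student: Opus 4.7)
The plan is to read off $F_1$ from Lemma \ref{l:loop of length 1} orbit-by-orbit. By cocompactness of $G\acts X$ there are finitely many $G$--orbits of pairs $(\sigma,\alpha)$ with $\sigma$ a cube of $X$ and $\alpha$ an oriented $1$--cell of $\link(\sigma)$; I would choose a representative $\alpha_j$ for each orbit and form the $CG(\mc{Y})$--path $p_{\orbit{\alpha_j}}$ of Definition \ref{def:palpha}. Call such an orbit \emph{loop-type} if $p_{\orbit{\alpha_j}}$ is already a loop in $CG(\mc{Y})$, based at some object $v_j$ of $\mc{Y}$. Non-loop-type orbits contribute nothing: if $i(p_{\orbit{\alpha_j}})\ne t(p_{\orbit{\alpha_j}})$, then $p_{\orbit{\alpha_j}}\cdot g$ fails to be a $CG(\mc{Y})$--loop for any group arrow $g$, so by Lemma \ref{l:loop of length 1} the endpoints of any $1$--cell in that orbit remain distinct in every quotient.

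For each loop-type orbit, I would add the pair $(Q_{v_j},\, \ell_{p_{\orbit{\alpha_j}}})$ to $F_1$, where $Q_{v_j}\in\mc{Q}(G)$ is the subgroup of Definition \ref{def:Q_v} and the map $\ell_{(-)}$ is as described at the start of Section \ref{s:algebra}. The resulting $F_1$ is finite. To verify $\ell_{p_{\orbit{\alpha_j}}}\notin Q_{v_j}$, I would apply Lemma \ref{l:loop of length 1} with $K=\{1\}$: an equality $\ell_{p_{\orbit{\alpha_j}}}=\ell_g$ for some $g\in G_{v_j}$ would exhibit $p_{\orbit{\alpha_j}}\cdot g^{-1}$ as a $CG(\mc{Y})$--loop representing the trivial conjugacy class, forcing a length-$1$ loop in some link of $X$ itself, which is impossible since $X$ is CAT$(0)$.

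For the main implication, I would assume $K\unlhd G$ is co-cubical and $p\notin QK$ for every $(Q,p)\in F_1$, and suppose for contradiction that some $\link(\bar\sigma)$ in $\leftQ{X}{K}$ contains a length-$1$ loop. Lifting to a $1$--cell $\alpha$ in the link of a cube $\sigma$ of $X$ and applying Lemma \ref{l:loop of length 1}, there is a $CG(\mc{Y})$--loop $p_{\orbit{\alpha}}\cdot g$ whose class represents a conjugacy class in $K$; normality of $K$ promotes this to $\ell_{p_{\orbit{\alpha}}\cdot g}\in K$, and the existence of such a loop forces $\orbit{\alpha}$ to be loop-type, so $\orbit{\alpha}=\orbit{\alpha_j}$ for some $j$. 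Then $\ell_{p_{\orbit{\alpha_j}}}\,\ell_g\in K$ with $\ell_g\in Q_{v_j}$, yielding $\ell_{p_{\orbit{\alpha_j}}}\in K\,Q_{v_j}=Q_{v_j}K$, contradicting the hypothesis on $F_1$.

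The only real obstacle is the bookkeeping that converts the ``conjugacy class in $K$'' phrasing of Lemma \ref{l:loop of length 1} into membership in the double coset $Q_{v_j}K$; this is precisely what the basepointing via $c_v$, the definition of $Q_v$, and the normality of $K$ arrange. No further hyperbolic or cubical geometry is needed beyond what is already packaged into the lemma.
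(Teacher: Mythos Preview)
Your proposal is correct and follows essentially the same approach as the paper: enumerate the finitely many $G$--orbits of $1$--cells $\alpha$ in links, associate to each the pair $(Q_{t(\orbit{\alpha})},\ell_{p_{\orbit{\alpha}}})$, use Lemma~\ref{l:loop of length 1} with $K=\{1\}$ to verify $p\notin Q$, and then use the same lemma for general $K$ to translate the condition $p\notin QK$ into the absence of length-$1$ loops. Your explicit isolation of ``loop-type'' orbits (those with $i(\orbit{\alpha})=t(\orbit{\alpha})$) is exactly the paper's restriction to pairs $(\widetilde\sigma,\widetilde\alpha)$ whose endpoints are identified by some element of $G$; the two formulations coincide.
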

\begin{proof}
  Up to the action of $G$, there are finitely many pairs $(\widetilde\sigma,\widetilde\alpha)$, where $\widetilde\sigma$ is a cube of $X$ and $\widetilde\alpha$ is a $1$--cell in $\lk(\widetilde\sigma)$ whose endpoints are identified by some element of $G$.  For each such pair we will give a pair $(Q,p)$ as in the statement of the theorem.

  For such a pair, let $(\sigma,\alpha)$ be the image in $\leftQ{X}{K}$.  Since $K$ is assumed to act co-cubically, $\alpha$ is embedded in $\lk(\sigma)$, except that its endpoints may have been identified, making it a loop.  According to Lemma \ref{l:loop of length 1}, $\alpha$ is a loop if and only if there is a $CG(\mc{Y})$--loop of the form $p_{\orbit{\alpha}}.g$ that represents a conjugacy class in $K$.  In particular, this condition only depends on the orbit $\orbit{\alpha}$ and not on $\alpha$ itself.  We associate to $\alpha$ the element $p = \ell_{p_{\orbit{\alpha}}}$  and the subgroup $Q = Q_{t(\orbit{\alpha})}$, as described in the preamble to this section.

  Since $X$ itself is a CAT$(0)$ cube complex, the $1$--cell $\widetilde{\alpha}$ is not a loop.  Applying Lemma \ref{l:loop of length 1} in case $K = \{ 1 \}$ we see that $p \not\in Q$.  On the other hand, to say that $p \not\in Q.K$ is the same as saying there is no $CG(\mc{Y})$--loop of the form $p_{\orbit{\alpha}}.g$ which represents an element of $K$ (since in such a $CG(\mc{Y})$--loop the element $g$ must be in the local group $G_{t(\orbit{\alpha})}$).  This proves the result.
\end{proof}

The next result is an application of Lemma \ref{l:no loop of length 2} in case of paths of length $2$ consisting of $1$--cells in different $G$--orbits (since then the $K$--non-backtracking condition is vacuous).

\begin{theorem} \label{t:NPC 2}
Under Assumption \ref{SA} there exists a finite set $F_2 \subset \mc{Q}(G)^2 \times G^2$ so that for each
$(Q_1,Q_2,p_1,p_2) \in F_2$ we have
\[	1  \not\in p_1Q_1p_2Q_2	,	\]
and so that if (i) $K \unlhd G$ is co-cubical; and (ii) for each $(Q_1,Q_2,p_1,p_2) \in F_2$ we have 
\[	K \cap p_1Q_2p_2Q_2 = \emptyset	\]
then every loop of length $2$ in a link in $\leftQ{X}{K}$ consists of $1$--cells in the same $G$--orbit.
\end{theorem}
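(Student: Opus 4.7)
The plan is to mirror the proof of Theorem \ref{t:NPC 1}, using Lemma \ref{l:no loop of length 2} in place of Lemma \ref{l:loop of length 1}. First, enumerate $G$--orbits of triples $(\sigma, \alpha, \beta)$, where $\sigma$ is a cube of $X$ and $\alpha, \beta$ are consecutive oriented $1$--cells in $\lk(\sigma)$ with $\orbit{\alpha} \ne \orbit{\beta}$ as $G$--orbits, such that the associated scwol chain can \emph{a priori} close into a length-$2$ loop in a link of $\leftQ{X}{K}$ (equivalently $t(\orbit{\alpha}) = i(\orbit{\beta})$ and $t(\orbit{\beta}) = i(\orbit{\alpha})$ in $\mc{Y}$). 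Cocompactness of $G \acts X$ yields finitely many such orbits; choose one representative per orbit.

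For each representative, set
\[ Q_1 = Q_{t(\orbit{\alpha})}, \quad Q_2 = Q_{t(\orbit{\beta})}, \quad p_1 = \ell_{p_{\orbit{\alpha}}}, \quad p_2 = \ell_{p_{\orbit{\beta}}}, \]
and collect these tuples into $F_2$. The translation set up at the beginning of Section \ref{s:algebra} shows that every $CG(\mc{Y})$--loop of the form $p_{\orbit{\alpha}} \cdot g_1 \cdot p_{\orbit{\beta}} \cdot g_2$, with $g_i$ in the appropriate local group, represents an element of $p_1 Q_1 p_2 Q_2 \subset \pi_1(CG(\mc{Y}), v_0) \cong G$; conversely every element of $p_1 Q_1 p_2 Q_2$ is represented by such a loop.

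Next, apply Lemma \ref{l:no loop of length 2}. Since $\orbit{\alpha}$ and $\orbit{\beta}$ are distinct $G$--orbits of oriented $1$--cells, $\orbit{\beta} \ne \overline{\orbit{\alpha}}$, and the $K$--non-backtracking hypothesis of Lemma \ref{lem:Knonbacktrack} is vacuous for any choice of $g_1, g_2$. Thus a representative triple $(\sigma, \alpha, \beta)$ gives rise to a length-$2$ edge-loop in some link of $\leftQ{X}{K}$ if and only if $p_1 Q_1 p_2 Q_2 \cap K \ne \emptyset$. Applying this with $K = \{1\}$ and using that $X$ is itself CAT$(0)$, so that its links are flag simplicial and contain no edge-loops of length $2$, yields the required nontriviality $1 \not\in p_1 Q_1 p_2 Q_2$.

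The only delicate point is to verify that the data $(Q_1, Q_2, p_1, p_2)$ depend only on the $G$--orbit of $(\sigma, \alpha, \beta)$, so that forbidding intersection with $K$ on the finite list $F_2$ rules out length-$2$ loops with $1$--cells in distinct $G$--orbits in \emph{every} link of $\leftQ{X}{K}$. This is immediate from the facts that $p_{\orbit{\alpha}}$ depends only on $\orbit{\alpha}$ (Definition \ref{def:palpha}) and that $Q_v$ depends only on the object $v$ of $\mc{Y}$ (Definition \ref{def:Q_v}).
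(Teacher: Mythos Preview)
Your approach matches the paper's, but there is a small gap in the enumeration step.  You restrict to pairs with $\orbit{\alpha}\ne\orbit{\beta}$ as \emph{oriented} $G$--orbits and then assert $\orbit{\beta}\ne\overline{\orbit{\alpha}}$.  That inference is false in general: if $\alpha$ and $\overline{\alpha}$ lie in different oriented orbits, any translate of $\overline{\alpha}$ gives a $\beta$ with $\orbit{\alpha}\ne\orbit{\beta}$ but $\orbit{\beta}=\overline{\orbit{\alpha}}$.  For such a pair the $K$--non-backtracking hypothesis of Lemma~\ref{lem:Knonbacktrack} is \emph{not} vacuous, and indeed one typically has $1\in p_1Q_1p_2Q_2$, coming from the (backtracking) $CG(\mc{Y})$--loop $p_{\orbit{\alpha}}\cdot 1\cdot p_{\overline{\orbit{\alpha}}}\cdot 1$ which represents the trivial element.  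So the tuple you produce would violate the first requirement of the theorem.

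The statement of Theorem~\ref{t:NPC 2} concerns $1$--cells in distinct \emph{unoriented} $G$--orbits; the ``same orbit'' case is precisely what Theorem~\ref{t:NPC 3} covers.  The correct enumeration for $F_2$ is therefore over pairs with $\orbit{\beta}\notin\{\orbit{\alpha},\overline{\orbit{\alpha}}\}$, i.e.\ with underlying unoriented $1$--cells in different $G$--orbits.  With that adjustment the $K$--non-backtracking condition really is vacuous, your appeal to Lemma~\ref{l:no loop of length 2} (with $K=\{1\}$ and then with general $K$) goes through, and the argument coincides with the paper's.
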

\begin{proof}
The proof is similar to the proof of Theorem \ref{t:NPC 1} above.   Lemma \ref{l:no loop of length 2} implies that it is enough to verify that no link in a cube of $\leftQ{X}{K}$ contains a pair of $1$--cells $\alpha$ and $\beta$ in
distinct $G$--orbits $\orbit{\alpha}, \orbit{\beta}$ so that there is a $CG(\mc{Y})$--loop $p_{\orbit{\alpha}} \cdot g_1 \cdot p_{\orbit{\beta}} \cdot g_2$ representing an element of $K$.

There are finitely many pairs of such orbits, and to each such pair we can associate the elements $p_1 = \ell_{p_{\orbit{\alpha}}}, p_2 = \ell_{p_{\orbit{\beta}}}, Q_1 = Q_{t(\orbit{\alpha})}, Q_2 = Q_{t(\orbit{\beta})}$.

Since $X$ is a CAT$(0)$ cube complex, there are no non-backtracking loops of length $2$ in any links in $X$, so applying Lemma \ref{l:no loop of length 2} with $K = \{ 1 \}$ we see that $1 \not\in p_1Q_1p_2Q_2$.  The result now follows from Lemma \ref{l:no loop of length 2} with our choice of $K$.
\end{proof}

For paths of length $2$ consisting of $1$--cells in the same $G$--orbit, the condition is slightly more complicated, as $K$--backtracking paths are possible.  
\begin{theorem} \label{t:NPC 3}
Under Assumption \ref{SA} there exists a finite set $F_3 \subset \mc{Q}(G)^2 \times G^2$ so that for each
$(Q_1,Q_2,p_1,p_2) \in F_3$ we have
\begin{equation}\label{eq:length2X}
	1 \not\in p_1 \left( Q_1 \minus Q_2^{p_2} \right) p_2 \left( Q_2 \minus Q_1^{p_1} \right)	,	
\end{equation}
and so that if (i) $K \unlhd G$ is co-cubical; (ii) no link in $\leftQ{X}{K}$ contains a loop of length $1$; and (iii) for every $(Q_1,Q_2,p_1,p_2) \in F_3$ we have
\begin{equation}\label{eq:length2} 
K \cap
  p_1 
  \left( Q_1 \minus
    \left( Q_2^{p_2} \left( K \cap Q_1 \right)\right)
  \right)  p_2	
  \left( Q_2 \minus
    \left( Q_1^{p_1} \left( K \cap Q_2 \right)\right)
  \right)
= \emptyset 
\end{equation}
then no link in $\leftQ{X}{K}$ contains an immersed loop of length $2$ consisting of $1$--cells in the same orbit.
\end{theorem}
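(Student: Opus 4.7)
The proof will extend the template of Theorem \ref{t:NPC 2} by incorporating the $K$--non-backtracking condition of Lemma \ref{lem:Knonbacktrack}, which is now nontrivial because the two $1$--cells of a length-$2$ loop in a link lie in the same $G$--orbit, forcing the associated $CG(\mc{Y})$--loop $p_{\orbit{\alpha}} \cdot g_1 \cdot p_{\orbit{\beta}} \cdot g_2$ to have $\orbit{\beta} \in \{\orbit{\alpha}, \overline{\orbit{\alpha}}\}$.

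First I would use cocompactness of $G \acts X$ to enumerate the finitely many $G$--orbits of immersed length-$2$ paths $\widetilde{\alpha}.\widetilde{\beta}$ in links of cubes of $X$ whose $1$--cells lie in the same $G$--orbit. To each representative I associate $(Q_1, Q_2, p_1, p_2)$ with $Q_1 = Q_{t(\orbit{\alpha})}$, $Q_2 = Q_{t(\orbit{\beta})}$, $p_1 = \ell_{p_{\orbit{\alpha}}}$, and $p_2 = \ell_{p_{\orbit{\beta}}}$; the collection of such tuples is the set $F_3$. Under the identification $G_{t(\orbit{\cdot})} \cong Q_\cdot$ given by $g \mapsto [c_{t(\orbit{\cdot})} \cdot g \cdot \overline{c_{t(\orbit{\cdot})}}]$, the $CG(\mc{Y})$--loop above represents the element $p_1 q_1 p_2 q_2 \in G$, where $q_i$ is the image of $g_i$ in $Q_i$.

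The crucial technical step is to identify, under this correspondence, the edge-subgroup $E_{\orbit{\alpha}} \subset G_{t(\orbit{\alpha})}$ with $Q_1 \cap Q_2^{p_2} \subset Q_1$, and likewise $E_{\overline{\orbit{\alpha}}} \subset G_{t(\orbit{\beta})}$ with $Q_2 \cap Q_1^{p_1} \subset Q_2$. Geometrically, $E_{\orbit{\alpha}}$ is the pointwise stabilizer of a representative of the larger cube spanning the $2$--cell of $\link(\widetilde{\sigma})$ that contains the corner at $t(\widetilde{\alpha})$; this is exactly the intersection of the stabilizers of the two endpoints of $\widetilde{\alpha}$. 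Transporting the second endpoint's stabilizer from the representative cube at $i(\orbit{\beta}) = t(\orbit{\alpha})$ to the representative cube at $t(\orbit{\beta})$ along the holonomy of $p_{\orbit{\beta}}$ yields $Q_2^{p_2}$, from which the asserted equality $E_{\orbit{\alpha}} = Q_1 \cap Q_2^{p_2}$ follows by unwinding the path-based identifications.

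Granting these identifications, Lemma \ref{lem:Knonbacktrack} translates the $K$--non-backtracking condition into $q_1 \in Q_1 \setminus Q_2^{p_2}(K \cap Q_1)$ and $q_2 \in Q_2 \setminus Q_1^{p_1}(K \cap Q_2)$. Combined with Lemma \ref{l:no loop of length 2}, this yields condition \eqref{eq:length2}. To verify \eqref{eq:length2X}, I would apply the same analysis with $K = \{1\}$: since $X$ itself is a CAT$(0)$ cube complex, no link in $X$ contains an immersed edge-loop of length $2$, so the identity cannot lie in $p_1(Q_1 \setminus Q_2^{p_2})p_2(Q_2 \setminus Q_1^{p_1})$. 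The main obstacle will be establishing the identification of $E_{\orbit{\alpha}}$ with $Q_1 \cap Q_2^{p_2}$ carefully, particularly in the sub-case $\orbit{\beta} = \orbit{\alpha}$ (a loop in $\mc{Y}$), where the non-backtracking conditions of Lemma \ref{lem:Knonbacktrack} become vacuous but the formulation must still correctly exclude bad loops.
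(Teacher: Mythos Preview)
Your proposal is correct and follows essentially the same approach as the paper: enumerate the relevant orbits, set $p_i = \ell_{p_{\orbit{\cdot}}}$ and $Q_i = Q_{t(\orbit{\cdot})}$, translate the $K$--non-backtracking condition of Lemma~\ref{lem:Knonbacktrack} into the set-difference form by identifying $E_{\orbit{\alpha}}$ with $Q_1 \cap Q_2^{p_2}$ and $E_{\overline{\orbit{\alpha}}}$ with $Q_2 \cap Q_1^{p_1}$, and then specialize to $K=\{1\}$ to obtain~\eqref{eq:length2X}. The only difference is one of bookkeeping: the paper parameterizes over single $1$--cells $\tilde{\alpha}$ and always takes the second edge to lie in $\overline{\orbit{\alpha}}$, so your flagged sub-case $\orbit{\beta}=\orbit{\alpha}$ is not treated separately there.
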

\begin{proof}
  Because of assumptions (i) and (ii) we only need to be concerned with the following situation: There is some cube $\tilde{\sigma}$ of $X$ and some $1$--cell $\tilde{\alpha}$ in its link so that the following hold.
  \begin{enumerate}
  \item There is some $g\in G$ so that $g$ fixes $t(\tilde{\alpha})$ but not $\tilde{\alpha}$.
  \item There is some $h\in G$ so that $h(i(\tilde{\alpha}))=i(g\tilde{\alpha})$ but $h^{-1}g\tilde\alpha \neq \tilde\alpha$.
  \end{enumerate}
  There are finitely many orbits of pairs $(\tilde{\sigma},\tilde{\alpha})$  of this type.
  For each orbit we pick a representative, and describe an element of $\mc{Q}(G)^2\times G^2$ as in the theorem.  If Equation \eqref{eq:length2} is satisfied for this element, then $\leftQ{X}{K}$ will contain no immersed loop of length $2$ consisting of $1$--cells in the orbit of $\tilde{\alpha}$.

 We apply Lemma \ref{l:no loop of length 2} to a path of length $2$ of the form $\alpha.\alpha'$ where $\alpha$ is the image of $\tilde{\alpha}$ in $\leftQ{X}{K}$ and $\alpha'$ is the (oppositely oriented) image of a translate of $\tilde\alpha$ by an element of the stabilizer of $\tilde{\sigma}$.  Any immersed loop of the type we are trying to rule out gives rise to a $K$--non-backtracking $CG(\mc{Y})$--loop $p_{\orbit{\alpha}}\cdot g_1\cdot p_{\orbit{\backwards{\alpha}}}\cdot g_2$ representing a conjugacy class in $K$.  We let $p_1 = l_{p_{\orbit{\alpha}}}$, $p_2 = l_{ p_{\orbit{\backwards{\alpha}}}}$, $Q_1 = Q_{t(\orbit{\alpha})}$ and $Q_2 = Q_{i(\orbit{\alpha})}$.  
Using Lemma \ref{lem:Knonbacktrack}, the loop $p_{\orbit{\alpha}}\cdot g_1\cdot p_{\orbit{\backwards{\alpha}}}\cdot g_2$ is $K$--non-backtracking if and only if $g_1\notin E_{\orbit{\alpha}}K_{t(\orbit{\alpha})}$ and $g_2 \notin E_{\orbit{\backwards{\alpha}}}K_{i(\orbit{\alpha})}$.  The subgroup of $Q_1$ corresponding to $E_{\orbit{\backwards{\alpha}}}$ is equal to $Q_1\cap Q_2^{p_2}$, 
and the subgroup of $Q_2$ corresponding to $E_{\orbit{\backwards{\alpha}}}$ is $Q_2\cap Q_1^{p_1}$.  Thus
an element $p_1q_1p_2q_2$ of $p_1Q_1 p_2 Q_2$ comes from a $K$--non-backtracking $CG(\mc{Y})$--loop if and only if $q_1 \notin Q_2^{p_2}(K\cap Q_1)$ and $q_2 \notin Q_1^{p_1}(K\cap Q_2)$.  Applying Lemmas \ref{lem:Knonbacktrack} and \ref{l:no loop of length 2} in case $K = \{1\}$ and $\leftQ{X}{K} = X$ is CAT$(0)$, we see that our tuple satisfies Equation \eqref{eq:length2X}.  For an arbitrary $K$ we see that when Equation \eqref{eq:length2} is satisfied, there is no immersed loop of length $2$ in a link in $\leftQ{X}{K}$ consisting of images of translates of $\tilde{\alpha}$.
\end{proof}
In order to apply Lemma \ref{l:length 3 no backtrack}, in each of the following two results we make the extra assumption that $K$ is so that no link in $\leftQ{X}{K}$ contains a loop of length $1$ or $2$.  The following result is a translation of Lemma \ref{l:no loop of length 3 - dont fill}.

\begin{theorem} \label{t:NPC 4}
Under Assumption \ref{SA} there exists a finite set $F_4 \subset \mc{Q}(G)^3 \times G^3$ so that for each $(Q_1,Q_2,Q_3,p_1,p_2,p_3) \in F_4$ we have
\[	1 \not\in p_1Q_1p_2Q_2p_3Q_3	\]
and so that if (i) $K \unlhd G$ is co-cubical; (ii) no link in $\leftQ{X}{K}$ contains a loop of length $1$ or $2$; and (iii) for all $(Q_1,Q_2,Q_3,p_1,p_2,p_3) \in F_4$ we have
\[	K \cap p_1Q_1p_2Q_2p_3Q_3 = \emptyset	\]
then every loop of length $3$ in a link of $\leftQ{X}{K}$ has image in $\mc{Y}$ which bounds a $2$--cell.
 \end{theorem}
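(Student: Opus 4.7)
The plan is to mimic the templates of Theorems \ref{t:NPC 1}, \ref{t:NPC 2}, and \ref{t:NPC 3}, translating Lemma \ref{l:no loop of length 3 - dont fill} into algebraic data. By cocompactness of $G \acts X$, there are only finitely many $G$--orbits of oriented triples $(\alpha,\beta,\gamma)$ of $1$--cells in links of cubes of $X$ which concatenate to form an edge-loop of length $3$ in some link. I restrict to the sub-collection $\mc{T}$ of such orbits whose $\mc{Y}$--projection (obtained by projecting the idealization) does not bound a $2$--cell in $\mc{Y}$; the complementary sub-collection will be the concern of the next theorem.

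For each representative $(\alpha,\beta,\gamma)$ of a class in $\mc{T}$, I attach the tuple
\[
  \bigl(Q_{t(\orbit{\alpha})},\; Q_{t(\orbit{\beta})},\; Q_{t(\orbit{\gamma})},\; \ell_{p_{\orbit{\alpha}}},\; \ell_{p_{\orbit{\beta}}},\; \ell_{p_{\orbit{\gamma}}}\bigr),
\]
with $p_{\orbit{\cdot}}$ as in Definition \ref{def:palpha} and $Q_v,\ell_{(\cdot)}$ as set up at the start of this section. Let $F_4$ be the resulting finite set of tuples. To check that each satisfies $1 \not\in p_1 Q_1 p_2 Q_2 p_3 Q_3$, observe that if $1$ lay in this product, then unwinding the $\ell$--identification produces a $CG(\mc{Y})$--loop $p_{\orbit{\alpha}} \cdot g_1 \cdot p_{\orbit{\beta}} \cdot g_2 \cdot p_{\orbit{\gamma}} \cdot g_3$ representing the identity of $G$. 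Lemma \ref{l:no loop of length 3 - dont fill} applied with $K = \{1\}$ would then yield an immersed edge-loop of length $3$ in a link of some cube of $X$ of the chosen orbit; since links in the CAT$(0)$ cube complex $X$ are flag, this loop bounds a $2$--cell whose $\mc{Y}$--image is a $2$--cell bounded by the orbit projection, contradicting membership of the orbit in $\mc{T}$.

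Conversely, assume (i)--(iii) and suppose for contradiction that some link in $\leftQ{X}{K}$ contains an edge-loop $p$ of length $3$ whose $\mc{Y}$--image does not bound a $2$--cell. Hypothesis (ii) together with Lemma \ref{l:L simplicial} makes the relevant link simplicial, Lemma \ref{l:length 3 no backtrack} shows $p$ is non-backtracking, and the orbit of the edge triple realizing $p$ lies in $\mc{T}$ by choice. Lemma \ref{l:no loop of length 3 - dont fill} then produces a $CG(\mc{Y})$--loop $p_{\orbit{\alpha}} \cdot g_1 \cdot p_{\orbit{\beta}} \cdot g_2 \cdot p_{\orbit{\gamma}} \cdot g_3$ representing a (necessarily nontrivial) element of $K$; under the $\ell$--identification and normality of $K$, this element lies in $K \cap p_1 Q_1 p_2 Q_2 p_3 Q_3$, contradicting (iii). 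I anticipate no conceptual obstacle, the principal bookkeeping being the partition of orbits according to whether the $\mc{Y}$--image bounds a $2$--cell; this is a well-defined function on $G$--orbits and is verifiable directly from the scwol structure of $\mc{Y}$.
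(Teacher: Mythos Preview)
There is a genuine gap in how you define the finite set of triples. You take $\mc{T}$ to consist of $G$--orbits of triples $(\alpha,\beta,\gamma)$ that \emph{actually concatenate to form an edge-loop of length $3$ in some link of $X$}. But $X$ is CAT$(0)$, so every link in $X$ is flag; hence every such triple bounds a $2$--cell in $X$, and therefore its $\mc{Y}$--image bounds a $2$--cell in the sense defined at the start of the subsection. Your $\mc{T}$ is therefore empty, $F_4=\emptyset$, condition (iii) is vacuous, and the conclusion of the theorem would have to follow from (i) and (ii) alone---which it does not.

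The correct indexing set, as in the paper, is the set of triples of $G$--orbits $\orbit{\alpha},\orbit{\beta},\orbit{\gamma}$ of $1$--cells in links of cubes of $X$ whose \emph{images in $\mc{Y}$} concatenate to form a loop (i.e.\ $t(\orbit{\alpha})=i(\orbit{\beta})$, etc.), restricted to those for which this $\mc{Y}$--loop does not bound a $2$--cell. Such a triple need not lift to an actual edge-loop in any link of $X$: the endpoints of $\alpha$ and $\gamma$ may lie in the same $G$--orbit without being equal. These are precisely the triples that can become new length-$3$ loops in $\leftQ{X}{K}$ after quotienting. Your converse argument breaks exactly here: when you say ``the orbit of the edge triple realizing $p$ lies in $\mc{T}$ by choice,'' the loop $p$ lives in $\leftQ{X}{K}$, not in $X$, and its orbit triple need not come from any loop in $X$. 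Once you enlarge the indexing set in this way, the rest of your argument (including the verification that $1\notin p_1Q_1p_2Q_2p_3Q_3$ via Lemma~\ref{l:no loop of length 3 - dont fill} with $K=\{1\}$ and the flagness of links in $X$) goes through and matches the paper's proof.
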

 \begin{proof}
 Condition (ii) and Lemma \ref{l:length 3 no backtrack} imply that it suffices to consider immersed loops of length $3$ in links in $\leftQ{X}{K}$.  For each choice of triple of $G$--orbits $\orbit{\alpha}, \orbit{\beta}, \orbit{\gamma}$ of $1$--cells in links in $X$ whose image in $\mc{Y}$ forms a loop, but whose image does not bound a $2$--cell in $\mc{Y}$ (in the sense described at the beginning of this subsection), we proceed as follows.  We associate the elements $p_1 = l_{p_{\orbit{\alpha}}}, p_2 = l_{p_{\orbit{\beta}}}$, $p_3 = l_{p_{\orbit{\gamma}}}$, $Q_1 = Q_{t(\orbit{\alpha})}$, $Q_2 = Q_{t(\orbit{\beta})}$, and $Q_3 = Q_{t(\orbit{\gamma})}$.
 
 Since $X$ is a CAT$(0)$ cube complex, we can apply Lemma \ref{l:no loop of length 3 - dont fill} to see that
 \[	1 \not\in p_1Q_1p_2Q_2p_3Q_3	.	\]
Now let $K \unlhd G$ be co-cubical, and satisfy conditions (i)--(iii) from the statement.  Condition (iii)  implies
that condition (2) from Lemma \ref{l:no loop of length 3 - dont fill} does not hold, and by that lemma there is no 
immersed loop of length $3$ in a link in $\leftQ{X}{K}$ whose image in $\mc{Y}$ is $\orbit{\alpha}, \orbit{\beta}, \orbit{\gamma}$.  
 
 Since there are finitely many such triples $\orbit{\alpha}, \orbit{\beta}, \orbit{\gamma}$, the theorem follows.
 \end{proof}

 Finally, we deal with loops of length $3$ in links in $\leftQ{X}{K}$ whose image in $\mc{Y}$ does bound a $2$-cell.

\begin{terminology}
Suppose that $A = (Q_1,Q_2,Q_3,p_1,p_2,p_3,h_1,h_2,h_3) \in \mc{Q}(G)^3 \times G^6$.  With indices read $\mathrm{mod}\ 3$, let
\[	A_i^- = Q_{i-1}^{p_{i-1}} \cap Q_i	,	\]
and let
\[	A_i^+ = Q_i \cap Q_{i+1}^{p_{i+1}}	.	\]
Furthermore, let
\[	B_i = A_i^- h_iA_i^+	.	\]
\end{terminology}
Using this terminology, we have the following translation of Proposition \ref{prop:no loop of length 3 - do fill}.
\begin{theorem} \label{t:NPC 5}
Under Assumption \ref{SA} there exists a finite set $F_5 \subseteq \mc{Q}(G)^3 \times G^6$ so that for each
$A = (Q_1,Q_2,Q_3,p_1,p_2,p_3,h_1,h_2,h_3)$ we have
\[	1 \not\in p_1 \left( Q_1 \minus B_1 \right) p_2
\left( Q_2 \minus B_2 \right)
p_3 \left( Q_3 \minus B_3 \right)	\]
and so that if (i) $K \unlhd G$ is co-cubical; (ii) no link in $\leftQ{X}{K}$ contains a loop of length $1$ or $2$; and (iii) for all $(Q_1,Q_2,Q_3,p_1,p_2,p_3,h_1,h_2,h_3) \in F_5$ we have
\[	K \cap  p_1 \left( Q_1 \minus B_1 \left( K \cap Q_1 \right) \right)
p_2 \left( Q_2 \minus B_2 \left( K \cap Q_2 \right) \right)
p_3 \left( Q_3 \minus B_3 \left( K \cap Q_3 \right) \right) = \emptyset	\]
 then no link in $\leftQ{X}{K}$ contains a loop of length $3$ which does not bound a $2$-cell but whose image in $\mc{Y}$ bounds a $2$--cell.
 \end{theorem}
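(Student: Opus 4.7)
The plan is to translate Proposition \ref{prop:no loop of length 3 - do fill} into algebraic language, following the pattern established by the proofs of Theorems \ref{t:NPC 1}--\ref{t:NPC 4}. First, using Assumption \ref{SA}, I observe that $G$ acts cocompactly on $X$ so there are finitely many $G$--orbits of triples $(\sigma,\tau,v)$ where $\sigma$ is a cube of $X$, $\tau$ is a $2$--cell in $\link(\sigma)$, and $v$ is a distinguished vertex on $\partial\tau$ (equivalently, a distinguished first oriented $1$--cell $\alpha$ in $\partial\tau = \alpha.\beta.\gamma$). For each orbit, I pick a representative $(\sigma,\tau,\alpha)$ and produce a single element of $F_5$; the resulting finite set is my candidate for $F_5$.

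For a representative $(\sigma,\tau,\alpha.\beta.\gamma)$, I set $Q_i$ to be $Q_{t(\orbit{\alpha})}$, $Q_{t(\orbit{\beta})}$, $Q_{t(\orbit{\gamma})}$ in the sense of Definition \ref{def:Q_v}, and set $p_i$ to be $\ell_{p_{\orbit{\alpha}}}$, $\ell_{p_{\orbit{\beta}}}$, $\ell_{p_{\orbit{\gamma}}}$. For $h_i$ I take the translation, via $c_{t(\orbit{\alpha})}$ (resp.\ $c_{t(\orbit{\beta})}$, $c_{t(\orbit{\gamma})}$), of the element $g_{\tau,\alpha}$ (resp.\ $g_{\tau,\beta}$, $g_{\tau,\gamma}$) from Notation \ref{not:g_tau,alpha}, viewed as a specific element of $Q_i$. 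The key bookkeeping step is to verify that, under these identifications, $A_i^+ = Q_i \cap Q_{i+1}^{p_{i+1}}$ is exactly the image in $Q_i$ of the subgroup $E_{\orbit{\cdot}}$ that stabilizes the $1$--cell outgoing from the $i$th vertex of $\partial\tau$, and $A_i^- = Q_{i-1}^{p_{i-1}}\cap Q_i$ is the image of the $E$-subgroup stabilizing the incoming $1$--cell. Granting this, the coset $B_i = A_i^- h_i A_i^+$ of $Q_i$ corresponds exactly to the set $E_{\orbit{\cdot}} g_{\tau_i,\cdot} E_{\overline{\orbit{\cdot}}}$ appearing in Proposition \ref{prop:no loop of length 3 - do fill}.

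Once this dictionary is set up, verifying the two assertions of the theorem is a routine unwinding. For the non-triviality $1\notin p_1(Q_1\smallsetminus B_1)p_2(Q_2\smallsetminus B_2)p_3(Q_3\smallsetminus B_3)$, I apply Proposition \ref{prop:no loop of length 3 - do fill} with $K=\{1\}$: since $X$ itself is CAT$(0)$, every length-$3$ edge-loop in a link of $X$ bounds a $2$--cell, and the proposition forces the corresponding $g_i$ to lie in $E g_{\tau_i,\cdot} E$, contradicting membership in $Q_i\smallsetminus B_i$. For the main assertion, assume (i), (ii), and (iii), and suppose for contradiction that some link in $\leftQ{X}{K}$ contains an edge-loop of length $3$ which does not bound a $2$--cell but whose $\mc{Y}$--image does. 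Lemma \ref{l:length 3 no backtrack} together with (ii) ensures this loop is non-backtracking, so Lemma \ref{l:no loop of length 3 - dont fill} provides a $CG(\mc{Y})$--loop $p_{\orbit{\alpha}}\cdot g_1 \cdot p_{\orbit{\beta}}\cdot g_2\cdot p_{\orbit{\gamma}}\cdot g_3$ in $K$. The $\mc{Y}$--image bounds a $2$--cell in some representative $\tau_i$, but the loop in $\leftQ{X}{K}$ does not, so by Proposition \ref{prop:no loop of length 3 - do fill} at least one of conditions (2)--(4) there fails. Translating the failing condition across the dictionary yields an element of $K$ in one of the forbidden intersections of (iii), which is a contradiction.

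The main obstacle is Step 2, namely the precise identification of the algebraic subgroups $A_i^\pm \subseteq Q_i$ with the $E$-subgroups arising from Definition \ref{def:palpha}. This requires careful tracking of how conjugation by the path elements $\ell_{p_{\orbit{\cdot}}}$ acts on the local groups $G_{t(\orbit{\cdot})}$ (and their realization as $Q_v$), and a clear geometric picture that the stabilizer of a lifted oriented $1$--cell $\alpha$ is simultaneously a subgroup of the stabilizer of its initial vertex and of its terminal vertex, related by the transport along $p_{\orbit{\alpha}}$. All the other steps are direct analogues of moves already carried out in the proofs of Theorems \ref{t:NPC 3} and \ref{t:NPC 4}.
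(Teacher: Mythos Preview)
Your setup of $F_5$ and the dictionary between the $E$--subgroups and the $A_i^\pm$ are exactly what the paper does, and your application of Proposition \ref{prop:no loop of length 3 - do fill} with $K=\{1\}$ to establish the non-triviality condition is correct.

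There is, however, a genuine gap in your contradiction argument for the main assertion. You write that Proposition \ref{prop:no loop of length 3 - do fill} gives ``at least one of conditions (2)--(4) there fails'', and then claim this single failure produces an element of $K$ in the forbidden set of (iii). But the forbidden set in (iii) consists of elements $p_1q_1p_2q_2p_3q_3$ with \emph{each} $q_j\in Q_j\smallsetminus B_j(K\cap Q_j)$; a failure of only one of (2)--(4) gives only one $g_j$ outside $B_j(K\cap Q_j)$, while the other two may well lie inside, and then the product is not in the forbidden set at all.

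What you actually need is that \emph{all three} of (2)--(4) fail, and this follows from Lemma \ref{lem:bounds a 2-cell} (or equivalently from Proposition \ref{p:bound a corner} applied at each corner): if the realized length-$3$ loop $\overline{q}$ does not bound a $2$--cell, then by the contrapositive of Lemma \ref{lem:bounds a 2-cell} \emph{none} of the three subpaths $p_{\orbit{\alpha}}\cdot g_1\cdot p_{\orbit{\beta}}$, $p_{\orbit{\beta}}\cdot g_2\cdot p_{\orbit{\gamma}}$, $p_{\orbit{\gamma}}\cdot g_3\cdot p_{\orbit{\alpha}}$ $K$--bounds a corner. Proposition \ref{p:bound a corner} then gives $g_j\notin B_j(K\cap Q_j)$ for every $j$, and now the element lands in the forbidden intersection and you get your contradiction. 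Once you make this correction, your argument coincides with the paper's.
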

\begin{proof}
  For each choice of triple of orbits $\orbit{\alpha}, \orbit{\beta}, \orbit{\gamma}$ whose image in $\mc{Y}$ bounds a $2$-cell (in the sense described at the beginning of this subsection), we proceed as follows.
Without loss of generality we choose representatives $\alpha, \beta, \gamma$ of these orbits so that there is a $2$--cell $\tau$ with boundary $\alpha \cdot \beta \cdot \gamma$.
We associate the elements $p_1 = \ell_{p_{\orbit{\alpha}}}, p_2 = \ell_{p_{\orbit{\beta}}}, p_3 = \ell_{p_{\orbit{\gamma}}}$, $Q_1 = Q_{t(\orbit{\alpha})}, Q_2 = Q_{t(\orbit{\beta})}$, $Q_3 = Q_{t(\orbit{\gamma})}$, and
$h_1 = [ c_{t(\orbit{\alpha})} \cdot g_{\tau,\alpha} \cdot \backwards{c_{t(\orbit{\alpha})}} ]$,
$h_2 = [ c_{t(\orbit{\beta})} \cdot g_{\tau,\beta} \cdot \backwards{c_{t(\orbit{\beta})}} ]$,
$h_3 = [ c_{t(\orbit{\gamma})} \cdot g_{\tau,\gamma} \cdot \backwards{c_{t(\orbit{\gamma})}} ]$.

Once again, since $X$ is a CAT$(0)$ cube complex, we can apply Proposition \ref{prop:no loop of length 3 - do fill} to see that 
$$1 \not\in p_1 \left( Q_1 \minus B_1 \right) p_2 \left( Q_2 \minus B_2 \right) p_3 \left( Q_3 \minus B_3 \right) . 	$$

When the conditions $g_1 \in E_{\orbit{\alpha_i}}g_{\tau_i,\alpha_i}E_{\orbit{\backwards{\beta_i}}} K_{t(\orbit{\alpha_i})}$, etc. from the statement of Proposition \ref{prop:no loop of length 3 - do fill} are translated into statements about the
group $G$ we get exactly $g_1 \in B_1 \left( K \cap Q_1 \right)$, etc., which gives the statement in the conclusion of the result.

Since there are finitely many such triples $\orbit{\alpha}, \orbit{\beta}, \orbit{\gamma}$, the theorem follows.
\end{proof}

\section{Dehn filling} \label{s:Dehn filling}
In this section we prove some results about group-theoretic Dehn filling.  Theorem \ref{t:meta} gives a `weak separability' of certain multi-cosets,
and generalizations of multi-cosets, and is used to find subgroups $K$ which satisfy the conditions from Theorems \ref{t:NPC 1}--\ref{t:NPC 5}.  Theorem \ref{t:meta} may be of independent interest, and we expect it to have applications beyond the scope of this paper.  The second main result of this section is Theorem \ref{t:fill to get CAT(0)}, from which Theorem \ref{t:fill to get proper action} from the introduction follows quickly by induction.

\subsection{Dehn fillings}
Let $(G,\mc{P})$ be a group pair, and let $\mc{N} = \{N_P\lhd P\mid P\in \mc{P}\}$ be a choice of normal subgroups of the peripheral groups.  The collection $\mc{N}$ determines a \emph{(Dehn) filling $(\overline{G},\overline{\mc{P}})$ of $(G,\mc{P})$}, where $\overline{G} = G/K$ for $K$ the normal closure of $\bigcup \mc{N}$, and $\overline{\mc{P}}$ equal to the collection of images of elements of $\mc{P}$ in $\overline{G}$.  The elements of $\mc{N}$ are called \emph{filling kernels}.  We sometimes write such a filling using the notation
\[ \pi\co (G,\mc{P})\to(\overline{G},\overline{\mc{P}}) ,\]
omitting mention of the particular filling kernels.

If $N_P \dotsub P$ (i.e. $N_P$ is finite index in $P$) for all $P\in \mc{P}$, we say that the filling is \emph{peripherally finite}.  If $H<G$ and for all $g \in G$,  $|H\cap P^g| = \infty$ implies $N_P^g\subseteq H$, then the filling is an \emph{$H$--filling}.  If $\mc{H}$ is a family of subgroups, the filling is an \emph{$\mc{H}$--filling} whenever it is an $H$--filling for every $H\in \mc{H}$.

A property $\mathsf{P}$ holds \emph{for all sufficiently long fillings} of $(G,\mc{P})$ if there is a finite set $S\subseteq \bigcup \mc{P}\minus\{1\}$ so that $\mathsf{P}$ holds whenever $(\bigcup\mc{N})\cap S = \emptyset$.  It is frequently useful to restrict attention to specific types of fillings (peripherally finite, $H$--fillings, etc.).  If $\mathsf{A}$ is a property of fillings we say that $\mathsf{P}$ holds for \emph{all sufficiently long $\mathsf{A}$--fillings} if, for all sufficiently long fillings, either $\mathsf{P}$ holds or $\mathsf{A}$ does not hold.

\subsection{Relatively hyperbolic group pairs}
We refer the reader to \cite{rhds} for a background on relatively hyperbolic groups.  In that paper, given a group pair $(G,\mc{P})$ (consisting of finitely generated groups) a space called the {\em cusped space} is built, which is $\delta$--hyperbolic (for some $\delta$) if and only if $(G,\mc{P})$ is relatively hyperbolic.
The cusped space is built by attaching \emph{combinatorial horoballs} to a Cayley graph for $G$.  Each combinatorial horoball $H$ has vertex set $tP\times\bZ_{\ge 0}$ for some coset $tP$ of some $P\in \mc{P}$, and is hyperbolic.  A vertex $(g,n)$ of such a horoball is said to have \emph{depth $n$}.  The depth $0$ vertices of the cusped space are exactly the vertices of the Cayley graph; if two vertices are connected by an edge, then their depths differ by at most one.
See \cite[Section 3]{rhds} for more information about the construction and geometry of the cusped space.
The following result is essentially contained in \cite[Theorem 7.11]{bowditch:relhyp}.

\begin{theorem}
 Suppose that $G$ is a hyperbolic group and that $\mc{P}$ is a finite collection of subgroups of $G$.  Then $(G,\mc{P})$ is relatively hyperbolic if and only if $\mc{P}$ is an almost malnormal family of quasi-convex subgroups.
\end{theorem}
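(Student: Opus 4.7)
The plan is to prove the two implications separately, with the backward direction (almost malnormal quasi-convex $\Rightarrow$ relatively hyperbolic) being the harder one.

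For the forward direction, assume $(G,\mc{P})$ is relatively hyperbolic with $G$ hyperbolic. Almost malnormality is a general feature of parabolic families in any relatively hyperbolic pair: each $P\in\mc{P}$ is the maximal parabolic subgroup fixing a unique parabolic point in the Bowditch boundary, and two distinct maximal parabolic subgroups meet in a finite subgroup; translating this into the group gives that $|P \cap gP'g^{-1}| = \infty$ forces $P=P'$ and $g\in P$. For quasi-convexity, I would work in the cusped space $X$ of $(G,\mc{P})$ from \cite{rhds}, which is $\delta$-hyperbolic by assumption. Each $P$ acts cocompactly on a combinatorial horoball $H_P \subset X$ and the orbit map $P \to H_P$ is a quasi-isometric embedding. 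The Cayley graph $\Gamma$ of $G$ embeds in $X$ as the depth-$0$ slice, and the usual fact that geodesics through a combinatorial horoball save at most logarithmic distance means that $P$ is undistorted in $\Gamma$. Since $G$ itself is hyperbolic, undistortion is equivalent to quasi-convexity.

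For the backward direction, assume $G$ is hyperbolic with Cayley graph $\Gamma$ and $\mc{P}$ is an almost malnormal family of quasi-convex subgroups. I would build the cusped space $X$ by gluing combinatorial horoballs onto the cosets $gP$ for $P\in\mc{P}$, and show directly that $X$ is $\delta'$-hyperbolic. Quasi-convexity implies that each coset $gP$ is uniformly quasi-convex in $\Gamma$, and almost malnormality implies that distinct cosets of this combined family have uniformly bounded coarse intersection. I would then run a thin-triangle argument in $X$: decompose the sides of a geodesic triangle into sub-arcs that lie in horoballs (where thinness follows from the well-understood geometry of combinatorial horoballs) and sub-arcs that stay near a single coset $gP$ (where thinness is inherited from $\Gamma$ via quasi-convexity). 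The almost malnormality hypothesis is what prevents transition segments between two horoballs from creating a degenerate thin-triangle picture, since two distinct horoballs can only be close to one another over a uniformly bounded coarse intersection.

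The main obstacle lies in this last point of the backward direction: ensuring that the horoball-to-horoball transitions behave well enough that a triangle in $X$ can be subdivided in a controlled manner. Equivalently, one can replace the cusped-space analysis with a proof that the coned-off Cayley graph $\widehat{\Gamma}$ is hyperbolic and fine, using a Farb-style electrification argument to obtain hyperbolicity and using almost malnormality plus quasi-convexity to control multiplicities of arcs between cone points (fineness). Either route is the content of Bowditch's analysis in \cite[Theorem 7.11]{bowditch:relhyp}, which I would invoke to handle the detailed thin-triangle or fineness verification.
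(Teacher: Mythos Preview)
The paper does not actually prove this theorem: it simply states that the result ``is essentially contained in \cite[Theorem 7.11]{bowditch:relhyp}'' and gives no argument. Your proposal ends up in the same place, invoking Bowditch for the hard direction, so in that sense you match the paper exactly; the expository sketch you give beforehand is extra content, and the outline (almost malnormality from uniqueness of parabolic fixed points, quasi-convexity via undistortion in the cusped space, and for the converse either verifying hyperbolicity of the cusped space or hyperbolicity-plus-fineness of the coned-off graph) is a correct summary of what Bowditch's argument actually does.
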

Recall that $\mc{P} = \{ P_1 , \ldots , P_n \}$ is {\em almost malnormal} if whenever $P_i \cap P_j^g$ is infinite, we have $i = j$ and $g \in P_i$.

We can use the notion of height (see Definition \ref{d:height}) to measure how far away a family of subgroups is from being almost malnormal.

We now define the {\em induced peripheral structure} on $G$ associated to a finite collection of quasi-convex subgroups of a hyperbolic group, in analogy with the construction from \cite[Section 3.1]{agm}.

\begin{definition} \label{d:induced peripheral}
 Suppose that $G$ is a hyperbolic group and $\mc{H}$ is a finite collection of quasi-convex subgroups of $G$.  The {\em peripheral structure on $G$ induced by $\mc{H}$} is obtained as follows:
 
 Start by taking the collection of minimal infinite subgroups of the form \[H_1 \cap H_2^{g_2} \cap \ldots \cap H_k^{g_k}\] where the $H_i$ are in $\mc{H}$ and the cosets $\{ H_1, g_2H_2, \ldots , g_kH_k \}$ are all distinct.  Replace each element in this collection by its commensurator in $G$, and then choose one from each $G$--conjugacy class.  The resulting collection $\mc{P}$ is the induced peripheral structure.

If $H \in \mc{H}$ then the {\em induced peripheral structure on $H$ with respect to $\mc{H}$} is a choice of $H$-conjugacy representatives of intersections with $H$ of $G$--conjugates of elements of $\mc{P}$.
\end{definition}
We remark that the fact that there is a bound on the number $k$ of $g_iH_i$ as above follows from Proposition \ref{p:finite height}.

To state the next lemma we need a definition from \cite{agm}:
\begin{definition}
   Let $H<G$ and suppose $(H,\mc{D})$ and $(G,\mc{P})$ are relatively hyperbolic.  Suppose furthermore that every $D\in \mc{D}$ is conjugate into some $P\in\mc{P}$.  Then (see \cite[Lemma 3.1]{agm}) there is an induced $H$--equivariant map from the cusped space of $(H,\mc{D})$ to the cusped space of $(G,\mc{P})$.  The subgroup $(H,\mc{D})$ is \emph{relatively quasi-convex} if this induced map has quasi-convex image.
\end{definition}
 In \cite[Appendix A]{MM-P} it is proved that this is the same notion as the various notions of relative quasi-convexity discussed by Hruska in \cite{HruskaQC}.

The following can be proved in the same way as \cite[Proposition 3.12]{agm}.
\begin{lemma} \label{l:rh from induced}
 Suppose that $G$ is hyperbolic and $\mc{H}$ is a finite collection of quasi-convex subgroups of $G$.  
\begin{enumerate}
 \item The induced peripheral structure $\mc{P}$ is a finite collection of groups.   The pair $(G,\mc{P})$ is relatively hyperbolic.
\item If $H \in \mc{H}$ then the induced peripheral structure $\mc{D}$ of $H$ with respect to $\mc{H}$ is finite.  The pair $(H,\mc{D})$ is relatively hyperbolic.
\item For any $H \in \mc{H}$, the pair $(H,\mc{D})$ is full relatively quasi-convex in $(G,\mc{P})$.
\end{enumerate}
 \end{lemma}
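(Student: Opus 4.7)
The approach is to follow the scheme of \cite[Proposition 3.12]{agm}, separating the argument into three parts: finiteness of the collections, recognition of the relatively hyperbolic structures via Bowditch's characterization, and verification of fullness.

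For (1), I would first use Proposition \ref{p:finite height} to uniformly bound $k$ in any essential intersection $H_{i_1}\cap H_{i_2}^{g_2}\cap\cdots\cap H_{i_k}^{g_k}$ with distinct cosets: such $k$ is at most the height of $\mc{H}$ plus one. For each of the finitely many tuples $(H_{i_1},\ldots,H_{i_k})$, quasi-convexity together with $\delta$-hyperbolicity ensure only finitely many $G$-conjugacy classes of infinite intersections arise (an infinite intersection is witnessed near the coarse intersection of the relevant translates of quasi-convex hulls, and hyperbolicity forces only finitely many double-coset configurations to produce one). Taking commensurators and picking $G$-conjugacy representatives then yields the finite collection $\mc{P}$. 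I would then verify that $\mc{P}$ is quasi-convex and almost malnormal: each $P\in\mc{P}$ is the commensurator of an infinite intersection of quasi-convex subgroups, and commensurators of infinite quasi-convex subgroups in hyperbolic groups are quasi-convex. Almost malnormality is built into the construction, since if $P_i\cap P_j^g$ were infinite then $P_i$ and $P_j^g$ would be commensurable, hence have the same commensurator, and since each $P\in\mc{P}$ equals its own commensurator this forces $P_j^g=P_i$. The Bowditch characterization cited just before Definition \ref{d:induced peripheral} then gives relative hyperbolicity of $(G,\mc{P})$.

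For (2), the same argument applies inside the hyperbolic group $H$ (hyperbolicity of $H$ follows from its quasi-convexity in $G$), producing $\mc{D}$; alternatively, one notes directly that elements of $\mc{D}$ are intersections with $H$ of quasi-convex subgroups of $G$, so they are quasi-convex in $H$, and almost malnormality in $H$ is inherited from the almost malnormality of $\mc{P}$ in $G$. For (3), full relative quasi-convexity of $(H,\mc{D})$ in $(G,\mc{P})$ decomposes into two checks. Relative quasi-convexity of $H$ in $(G,\mc{P})$ holds because quasi-convex subgroups of a hyperbolic group are automatically relatively quasi-convex with respect to any relatively hyperbolic structure. Fullness --- that every infinite intersection $H\cap P^g$ is $H$-conjugate to an element of $\mc{D}$ --- is immediate from the definition of $\mc{D}$ as representatives of exactly these intersections.

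The main obstacle is the finiteness assertion in (1): the collection of relevant intersections $\bigcap H_{i_j}^{g_j}$ is a priori parametrized by an infinite family of cosets, and collapsing to finitely many $G$-conjugacy classes requires combining the height bound from Proposition \ref{p:finite height} with the standard structural results on intersections of quasi-convex subgroups in hyperbolic groups. Once finiteness is in hand, the other assertions follow from Bowditch's criterion and the construction of $\mc{D}$.
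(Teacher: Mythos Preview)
Your approach matches the paper's: the paper gives no proof of this lemma beyond the sentence ``The following can be proved in the same way as \cite[Proposition 3.12]{agm},'' and your plan is precisely to follow that scheme, so the strategies coincide.

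One correction to flag: your description of fullness in part (3) is not the definition used in the paper. Fullness means that whenever $H\cap P^g$ is infinite, it has \emph{finite index} in $P^g$ --- not merely that it is $H$--conjugate to some element of $\mc{D}$ (the latter is automatic from the definition of $\mc{D}$ and says nothing about the index in $P^g$). The actual argument for fullness uses the \emph{minimality} built into the construction of $\mc{P}$: each $P\in\mc{P}$ is the commensurator of a minimal infinite intersection $I=\bigcap H_{i_j}^{g_j}$, and $I$ has finite index in $P$; if $H\cap P^g$ is infinite then $H\cap I^g$ is infinite, and minimality forces $H\cap I^g$ to have finite index in $I^g$, hence $H\cap P^g\dotsub P^g$. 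Your plan is otherwise sound.
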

A subgroup $H$ is {\em full} if whenever $P$ is a parabolic subgroup so that $H \cap P$ is infinite we have $H \cap P \dotsub P$.

\subsection{The appropriate meta-condition}
The goal of this subsection is to prove Theorem \ref{t:meta} below.
The special case that $n=1$ and $S_1 = \emptyset$ is \cite[Proposition 4.5]{agm}, which is about keeping elements out of full quasi-convex subgroups when performing long Dehn fillings.  Here we generalize to multi-cosets of full quasi-convex subgroups, possibly with some elements deleted.  Although the present result is more general, our proof is simpler, using the more appealing ``Greendlinger Lemma''--type Theorem \ref{t:greendlinger} below in place of the somewhat technical \cite[Lemmas 4.1 and 4.2]{agm}.\footnote{Using such a Greendlinger Lemma in place of the results of \cite{agm} was suggested to us by Alessandro Sisto while we were collaborating on \cite{GMS}.}
\begin{theorem}\label{t:meta}
  Let $(G,\mc{P})$ be relatively hyperbolic, and let $\mc{Q}$ be a collection of full relatively quasi-convex subgroups.  For $1\leq i\leq n$, let $p_i\in G$, $Q_i\in \mc{Q}$ and $S_i\subseteq Q_i$ be chosen to satisfy: 
  \begin{equation}
    \label{nontrivial}
 1\notin p_1(Q_1\minus S_1)\cdots p_n(Q_n \minus S_n)
  \end{equation}

  Then for sufficiently long $\mc{Q}$--fillings $G\to G/K$, the kernel $K$ contains no element of the form 
  \begin{equation}
    \label{forbidden}
 p_1 t_1 \cdots p_n t_n     
  \end{equation}
  where $t_i\in Q_i\minus \left( (K\cap Q_i) S_i \right)$.
\end{theorem}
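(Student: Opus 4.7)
The plan is to argue by contradiction, using the Greendlinger-type result (Theorem \ref{t:greendlinger}) to extract a shortening move from a supposed minimal-length witness. Suppose that there are arbitrarily long $\mc{Q}$--fillings $\pi \co (G, \mc{P}) \to (\overline{G}, \overline{\mc{P}})$ with kernel $K$ admitting elements $w = p_1 t_1 \cdots p_n t_n \in K$ with each $t_i \in Q_i \setminus (K \cap Q_i) S_i$. By Lemma \ref{l:rh from induced}, each $Q_i$ carries an induced peripheral structure $\mc{D}_i$ making $(Q_i, \mc{D}_i)$ full relatively quasi-convex in $(G, \mc{P})$. Among all such witnesses $(t_1, \ldots, t_n)$ for $K$, I would choose one minimizing the total relative length $\sum_{i=1}^{n} \ell_{(Q_i, \mc{D}_i)}(t_i)$.

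Next, I would represent $w$ as a concatenation of paths in the cusped space $X = X(G, \mc{P})$: bounded-length words for each $p_j$ and relative geodesics in $(Q_i, \mc{D}_i)$ for each $t_i$. Full relative quasi-convexity of $(Q_i, \mc{D}_i)$ in $(G, \mc{P})$ ensures each $t_i$-segment is a uniform quasi-geodesic in $X$. Since $w \in K$, the concatenation descends to a closed loop $\gamma$ in the cusped space of $(\overline{G}, \overline{\mc{P}})$.

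Theorem \ref{t:greendlinger} then applies: for all sufficiently long $\mc{Q}$--fillings, any nontrivial kernel element represented in this way must yield a sub-path of $\gamma$ of arbitrarily large length fellow-travelling a coset of some $N_P^g$, for $P \in \mc{P}$ and $g \in G$. By making the filling long enough, this sub-path exceeds the combined length of the bounded $p_j$ segments, and hence lies within a single $t_i$-segment, traversing a cusp of $(Q_i, \mc{D}_i)$ at a coset of some $Q_i \cap P^g \in \mc{D}_i$ with $|Q_i \cap P^g| = \infty$. Fullness of $Q_i$ gives $[P^g : Q_i \cap P^g] < \infty$, and the $\mc{Q}$--filling hypothesis applied to $Q_i$ then yields $N_P^g \subseteq Q_i$.

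The peripheral element $m \in N_P^g$ captured by the long sub-path therefore lies in $K \cap Q_i$. Decomposing the relative geodesic for $t_i$ as $t_i = u \cdot m \cdot v$ with $u, v \in Q_i$, I would set $t_i' = u v \in Q_i$. Then $t_i = (u m u^{-1}) \cdot t_i'$ with $u m u^{-1} \in K \cap Q_i$ by normality of $K$, so $t_i'$ lies in the same left $(K \cap Q_i)$--coset as $t_i$; in particular $t_i' \notin (K \cap Q_i) S_i$, since this set is a union of such cosets. Substituting $t_i'$ for $t_i$ in the product yields $w' \in K$ of the same forbidden form with strictly smaller total relative length, contradicting minimality. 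The main technical difficulty lies in calibrating the Greendlinger constants and quasi-geodesic parameters so that the long peripheral sub-arc is genuinely housed within a single $t_i$-segment and its excision indeed realises a shortening in the induced relative length on $Q_i$.
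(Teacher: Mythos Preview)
Your overall strategy---minimal counterexample plus Theorem~\ref{t:greendlinger}---is the same as the paper's, but the execution has two real gaps that the paper's argument sidesteps.

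First, Theorem~\ref{t:greendlinger} applies to a \emph{geodesic} in $X$ from $1$ to $g\in K$, and its output is a horoball $A$ penetrated to depth $\ge C_1$ together with a $k\in K\cap\Stab(A)$ satisfying $d(1,kg)<d(1,g)-C_2$. It does not apply to your concatenated quasi-geodesic in the quotient cusped space, and it does not directly hand you a decomposition $t_i=u\cdot m\cdot v$ with $m\in N_P^g$. Second, your minimality criterion $\sum_i\ell_{(Q_i,\mc D_i)}(t_i)$ is not the quantity Greendlinger shortens; you acknowledge this as ``the main technical difficulty,'' but bridging that mismatch would essentially require a new Greendlinger-type statement tailored to the induced relative metric. (Also, Lemma~\ref{l:rh from induced} is about the special situation where $G$ is hyperbolic and $\mc P$ is induced by $\mc Q$; it is not what you want here.)

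The paper avoids both problems by minimizing $d_X(1,g)$ directly over all forbidden witnesses. Apply Theorem~\ref{t:greendlinger} to the geodesic $[1,g]$; by $\delta$--thinness of the $(2n{+}1)$--gon with sides $[1,g],\rho_1,\tau_1,\ldots,\rho_n,\tau_n$ (geodesics for $g,p_1,t_1,\ldots$), the deep horoball point lies within $(2n{-}1)\delta$ of some $\tau_i$, since the $\rho_j$ are too short to reach that depth. Lemma~\ref{l:intersect parabolic} then gives $|P^a\cap Q_i^{p_1t_1\cdots p_i}|=\infty$, hence finite index by fullness, so the $\mc Q$--filling hypothesis forces $k\in Q_i^{p_1t_1\cdots p_i}$. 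Writing $k'=(p_1t_1\cdots p_i)^{-1}k(p_1t_1\cdots p_i)\in K\cap Q_i$, one has $kg=p_1t_1\cdots p_i(k't_i)p_{i+1}\cdots p_nt_n$, and $k't_i\notin(K\cap Q_i)S_i$ since this set is a union of left $(K\cap Q_i)$--cosets. Thus $kg$ is again of the forbidden form with $d_X(1,kg)<d_X(1,g)$, contradicting minimality. No relative metric on $Q_i$ and no decomposition of $t_i$ is needed.
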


The five conditions in the conclusions of Theorems \ref{t:NPC 1} -- \ref{t:NPC 5} each fall into the scheme of the conditions in Theorem \ref{t:meta}.  Therefore, we may apply Theorem \ref{t:meta} to obtain the following result.  We remark that the following result is stated in the generality of relatively hyperbolic groups acting cocompactly on cube complexes with full relatively quasi-convex subgroups.  This is greater generality than is strictly required for the proof of Theorem \ref{t:fill to get proper action}.  However, we believe that this extra generality will be of use in future work, and should be of independent interest.

\begin{corollary} \label{t:quotient CAT(0)}
Suppose that $(G,\mc{P})$ is relatively hyperbolic and that $G$ acts cocompactly on the CAT$(0)$ cube complex $X$.  Suppose every parabolic element of $G$ fixes some point of $X$, and that cell stabilizers are full relatively quasi-convex.  Let $\sigma_1,\ldots,\sigma_k$ be representatives of the $G$--orbits of cubes of $X$.  For each $i$ let $Q_i$ be the finite index subgroup of $\Stab(\sigma_i)$ consisting of elements which fix $\sigma_i$ pointwise.  Let $\mc{Q}=\{Q_1,\ldots,Q_k\}$.

For sufficiently long $\mc{Q}$--fillings 
\[	G \to \overline{G} = G(N_1, \ldots , N_m)	\]
of $(G,\mc{P})$, with kernel $K$, the quotient $\leftQ{X}{K}$ is a CAT$(0)$ cube complex.
\end{corollary}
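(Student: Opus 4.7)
The plan is to verify the three conditions from Section \ref{s:X mod K CAT(0)} for $\leftQ{X}{K}$---that it is simply connected, an actual cube complex, and satisfies Gromov's link condition---each of which reduces to a finite list of ``avoidance'' conditions on $K$ that can be arranged by Theorem \ref{t:meta}.

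Simple connectivity follows from Theorem \ref{thm:armstrong}: $X$ has finitely many isometry types of cells, and by hypothesis every parabolic element of $G$ fixes a point of $X$, so every element of every $N_P$ does, and so does every conjugate. Thus $K$ is generated by elements with fixed points, and Armstrong's theorem applies regardless of how long the filling is.

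That $\leftQ{X}{K}$ is a cube complex is Proposition \ref{p:quotient cubical}, which demands $G_{\sigma_i} \cap K \subseteq Q_i$ for each orbit representative $\sigma_i$. Since $[G_{\sigma_i}:Q_i]<\infty$, this splits into finitely many conditions ``$K \cap pQ_i = \emptyset$'' as $p$ ranges over coset representatives in $G_{\sigma_i}\smallsetminus Q_i$, each of which is an instance of Theorem \ref{t:meta} with $n=1$ and $S_1 = \emptyset$ (the nontriviality $1\notin pQ_i$ being equivalent to $p\notin Q_i$). For non-positive curvature, Gromov's link condition (Theorem \ref{t:link condition}) combined with Lemmas \ref{l:L simplicial}, \ref{l:L flag} and \ref{l:length 3 no backtrack} reduces, via Theorems \ref{t:NPC 1}, \ref{t:NPC 2}, \ref{t:NPC 3}, \ref{t:NPC 4}, \ref{t:NPC 5}, to a finite list of requirements each of the form ``$K$ avoids a specified multi-coset $p_1(Q_1 \smallsetminus S_1(K\cap Q_1))p_2\cdots$''. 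The nontriviality hypothesis in Theorem \ref{t:meta} is supplied in each case by the fact that $X$ itself is already a CAT$(0)$ cube complex (so the multi-coset avoids $1$ when $K$ is trivial), while normality of $K$ in $G$ makes $(K\cap Q_i)S_i = S_i(K\cap Q_i)$ inside $Q_i$, matching the two forms.

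The one point requiring care is that Theorems \ref{t:NPC 1}--\ref{t:NPC 5} invoke subgroups from $\mc{Q}(G)$ (stabilizers of chains of cubes), while the corollary only promises a $\mathcal{Q}$-filling for $\mathcal{Q}$ the collection of pointwise stabilizers. Every chain stabilizer is an intersection of setwise cell stabilizers $G_\sigma$, hence full relatively quasi-convex by hypothesis and the usual intersection property; and because $Q_i$ is finite index in $G_{\sigma_i}$, the intersections $Q_i\cap P^g$ and $G_{\sigma_i}\cap P^g$ are simultaneously infinite, so a $\mathcal{Q}$-filling is automatically a filling with respect to every cell stabilizer and every intersection thereof. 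Thus Theorem \ref{t:meta} applies with any such subgroup in place of a $Q_i$ in its statement. Taking the filling sufficiently long to simultaneously satisfy the finitely many conditions arising from the cube and link conditions then produces the desired CAT$(0)$ quotient.
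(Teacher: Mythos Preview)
Your proof is correct and follows essentially the same three-step approach as the paper's: simple connectedness via Armstrong, the cube-complex condition via Proposition~\ref{p:quotient cubical}, and non-positive curvature via Theorems~\ref{t:NPC 1}--\ref{t:NPC 5} combined with Theorem~\ref{t:meta}. Two minor differences are worth noting. First, for the condition $G_{\sigma_i}\cap K\le Q_i$ the paper cites \cite[Proposition~4.4]{agm} directly, whereas you derive it as the $n=1$, $S_1=\emptyset$ instance of Theorem~\ref{t:meta}; these are equivalent and your route is more self-contained. Second, and more substantively, you explicitly address a point the paper leaves implicit: the subgroups appearing in Theorems~\ref{t:NPC 1}--\ref{t:NPC 5} lie in $\mc{Q}(G)$ (stabilizers of chains, hence finite intersections of conjugates of setwise cell stabilizers), not in the $\mc{Q}$ of the corollary. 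Your observation that such intersections remain full relatively quasi-convex, and that a $\mc{Q}$--filling is automatically an $H$--filling for every such $H$ (because $Q_i\dotsub\Stab(\sigma_i)$ forces the relevant parabolic intersections to be simultaneously infinite), is exactly what is needed to justify invoking Theorem~\ref{t:meta}; this is a genuine clarification of the paper's argument.
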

\begin{proof}
  The kernels of Dehn fillings are always generated by parabolic elements, and the parabolic elements act elliptically by assumption.  Thus the kernel of {\em any} Dehn filling is generated by elliptic elements, so $\leftQ{X}{K}$ is simply-connected by Theorem \ref{thm:armstrong}.  For sufficiently long $\mc{Q}$--fillings the fact that $G_{\sigma_i} \cap K \le Q_i$ follows from \cite[Proposition 4.4]{agm}, so by Proposition \ref{p:quotient cubical} for such fillings $\leftQ{X}{K}$ is a cube complex.
Therefore, we may assume that the subgroup $K$ is co-cubical (in the sense of Terminology \ref{term:tame}).

It remains to show that for sufficiently long $\mc{Q}$--fillings $\leftQ{X}{K}$ is non-positively curved.  It follows from Theorems \ref{t:NPC 1}--\ref{t:NPC 3} and \ref{t:meta} that for sufficiently long $\mc{Q}$--fillings each link of each cell in $\leftQ{X}{K}$ is simplicial.  Thus it follows from Theorem \ref{t:NPC 4}, \ref{t:NPC 5} and \ref{t:meta} that for sufficiently long $\mc{Q}$--fillings, each link of each cell in $\leftQ{X}{K}$ is also flag, which means that $\leftQ{X}{K}$ is non-positively curved by Theorem \ref{t:link condition}.
\end{proof}

To prove Theorem \ref{t:meta}, we use the following ``Greendlinger Lemma'' (cf. \cite[Lemma 2.26]{GMS}):
\begin{theorem}\label{t:greendlinger}
  Let $C_1,C_2>0$.  Suppose that $(G,\mc{P})$ is relatively hyperbolic, with cusped space $X$.  For all sufficiently long fillings $G\to G/K$, and any geodesic $\gamma$ in $X$ joining $1$ to $g\in K\minus\{1\}$, there is a horoball $A$ so that 
  \begin{enumerate}
  \item $\gamma$ contains a depth $C_1$ vertex of $A$, and
  \item there is an element $k$ of $K$ stabilizing $A$, so that, for two points $a,b$ in $A$ and lying on $\gamma$ at depth at least $C_1$, $d(a,kb)<d(a,b)-C_2$ (in particular $d(1,kg)<d(1,g) - C_2$).
  \end{enumerate}
\end{theorem}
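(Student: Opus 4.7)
The plan is to argue via van Kampen diagrams in an augmented cusped space, following the strategy used in the relatively hyperbolic Dehn filling literature (Osin, Groves--Manning, Dahmani--Guirardel--Osin). Let $X$ denote the cusped space of $(G,\mc{P})$, which is $\delta$--hyperbolic. For each conjugate $g N_P g^{-1}$ of each filling kernel we adjoin to $X$ a $2$--cell whose boundary is a loop in the horoball at $gP$ spelling a generator of $gN_Pg^{-1}$. Call the resulting complex $X^+$. Then $\pi_1(X^+) = G/K$, and by the standard Dehn-filling small cancellation machinery, for sufficiently long fillings $X^+$ satisfies a uniform linear isoperimetric inequality with constants depending only on $(G,\mc{P})$.

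Given $g \in K\setminus\{1\}$ and an $X$--geodesic $\gamma$ from $1$ to $g$, close $\gamma$ up to a loop $\gamma \cdot \gamma'$ in $X^+$, where $\gamma'$ is chosen to be as short as possible. Take a minimal area disc diagram $\Delta$ in $X^+$ with boundary this loop. Because filling $2$--cells have boundaries much longer than their pairwise overlaps for long fillings (this is the small cancellation condition), a standard combinatorial Greendlinger argument in a $\delta$--hyperbolic complex with a linear isoperimetric inequality produces a filling $2$--cell $R\subset\Delta$ whose boundary meets $\partial\Delta$ in a connected arc $\alpha$ of length at least a definite fraction of $|\partial R|$.

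Next I would locate $\alpha$ on $\gamma$ itself rather than on $\gamma'$. By hyperbolicity, any long enough boundary arc must meet the $X$--portion of $\partial\Delta$, because $|\gamma'|$ is bounded by the relator length in $G/K$, which we can make negligible compared to $|\partial R|$ by lengthening the filling. Thus $\alpha\subset\gamma$. The cell $R$ lies in a horoball $A$ corresponding to some coset $hP$, and $\alpha$ is an arc in $A$ whose horoball-intrinsic length grows without bound as the filling is lengthened. By the geometry of combinatorial horoballs, an arc joining two vertices in a horoball of large intrinsic length must either penetrate to depth at least $C_1$, or be much longer than an $X$--geodesic between its endpoints. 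Since $\gamma$ is an $X$--geodesic, the first alternative occurs once the filling is long enough.

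The stabilizing element is produced from $R$: the conjugate $k = h r h^{-1} \in K$, where $r \in N_P$ is the filling kernel element read off $\partial R$, fixes $A$. Translating the deep subarc $\alpha$ of $\gamma$ by $k$ produces a strictly shorter path through $A$, because at depth $\geq C_1$ the horoball metric contracts translates exponentially; choosing the filling long enough ensures the saving exceeds any prescribed $C_2$. The main obstacle is guaranteeing that the Greendlinger face lies on $\gamma$ and not on the closing arc $\gamma'$, and that the length savings from one translation really beat the fixed threshold $C_2$; both are quantitative and are handled by choosing the filling to avoid all peripheral elements of translation length below a constant determined by $C_1$, $C_2$, $\delta$, and the small cancellation constants of $X^+$.
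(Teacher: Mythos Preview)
Your proposal has a foundational confusion that undermines the argument as written. The cusped space $X$ is already simply connected, so attaching $2$--cells along loops in horoballs does not change its fundamental group; in particular $\pi_1(X^+)$ is trivial, not $G/K$. Relatedly, there are no nontrivial loops in a horoball for a ``generator of $gN_Pg^{-1}$'' to spell, and your closing arc $\gamma'$ from $g$ back to $1$ has length at least $d_X(1,g)$, not something bounded by a relator length. The van Kampen/small cancellation machinery you invoke (Osin, DGO) is genuine and does lead to Greendlinger-type statements, but it is set up over the coned-off Cayley graph or a relative presentation complex, not over the simply connected cusped space; transplanting it requires rethinking what the diagram is filling and where its boundary lives.

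The paper takes a much shorter and different route that avoids diagrams entirely. One uses that for sufficiently long fillings the quotient cusped space $X/K$ is $\delta$--hyperbolic for a uniform $\delta$, and that large balls centered on the Cayley graph embed under $X\to X/K$. Projecting $\gamma$ gives a loop $\overline\gamma$ in $X/K$. By the embedding of large balls, $\overline\gamma$ is an $L$--local geodesic wherever it stays in a bounded neighborhood of the Cayley graph; but a loop of large diameter cannot be an $L$--local $(1,C_2)$--quasi-geodesic everywhere (local-to-global for quasi-geodesics in hyperbolic spaces). The failure therefore occurs on some subsegment $\sigma$ of length $\le L$ lying entirely in the image of a single horoball, at depth at least $C_1$. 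Lifting the endpoints of $\sigma$ back to $a,b\in\gamma$ and the shortcut in $X/K$ back to $X$ produces the required $k\in K$ stabilizing that horoball with $d(a,kb)<d(a,b)-C_2$. No isoperimetric inequality, no diagrams, and no separate analysis of a closing arc are needed.
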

\begin{proof}
Let $\delta>0$ be such that $X$ is $\delta$--hyperbolic, and so are the cusped spaces for sufficiently long fillings (that there exists such a $\delta$ is \cite[Proposition 2.3]{agm}).  We only consider such fillings, without further mention of this assumption.

  Now choose $L,\epsilon$ so that every $L$--local $(1,C_2)$--quasi-geodesic lies within an $\epsilon$--neighborhood of any geodesic with the same endpoints.  (Such $L$, $\epsilon$ only depend on $\delta$ and $C_2$. See \cite[Ch. 3]{cdp}.)

  Now choose a filling long enough so that every $(2L+C_1+2\epsilon)$--ball centered on the Cayley graph embeds in the quotient cusped space.  Let $K$ be the kernel of the filling, and choose $g\in K\minus \{1\}$.  Let $\gamma$ be a geodesic from $1$ to $g$; let $\overline{\gamma}$ be the projection to the cusped space $\leftQ{X}{K}$ for $G/K$.  Within an $(L+C_1+2\epsilon)$--neighborhood of the Cayley graph, $\overline{\gamma}$ is an $L$--local geodesic.  But $\overline{\gamma}$ cannot be an $L$--local $(1,C_2)$--quasi-geodesic everywhere, since it is a loop with diameter larger than $\epsilon$.  

In particular, there is a subsegment $\sigma$ of $\overline{\gamma}$ of length $l\leq L$ so that the endpoints $\overline{a}$ and $\overline{b}$ of $\sigma$ are less than $l-C_2$ apart.  This subsegment $\sigma$ must moreover lie in the image of a single horoball.

The corresponding points $a$ and $b$ on $\gamma$ lie at depth at least $C_1$ in a horoball $A$ of $X$.  Since $d(\overline{a},\overline{b})<l-C_2$, there is some element $k\in K$ stabilizing $A$ so that $d(a,kb)<l-C_2$, as desired.
\end{proof}

The following result follows immediately from \cite[A.6]{MM-P}.

\begin{lemma} \label{l:intersect parabolic}
Suppose that $(G,\mc{P})$ is relatively hyperbolic with cusped space $X$ and that $(H,\mc{D}) \le (G,\mc{P})$ is a full relatively quasi-convex subgroup.  There exists a constant $\kappa$ satisfying the following:

Suppose that $g \in G$ and that $x_1, x_2 \in gH$.  Suppose that $\gamma$ is a geodesic in $X$ between $x_1$ and $x_2$.  Further, suppose that $aP$ (for $a \in G$ and $P \in \mc{P}$) is a coset so that $\gamma$ intersects the horoball corresponding to $aP$ to depth at least $\kappa$.  Then $P$ is infinite and $P^a \cap H^g$ has finite-index in $P^a$.
\end{lemma}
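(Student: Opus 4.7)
The plan is to reduce to the case $g=1$ and then use the standard geometric characterization of relative quasi-convexity in the cusped space. First, by left-translating everything by $g^{-1}$, the points $g^{-1}x_i$ lie in $H$, the curve $g^{-1}\gamma$ is still a geodesic in $X$ joining them, and the hypothesis becomes that $g^{-1}\gamma$ penetrates the horoball over the coset $g^{-1}aP$ to depth at least $\kappa$. The desired conclusion $P^{g^{-1}a}\cap H$ being infinite and of finite index conjugates back to $P^a\cap H^g$ having these properties, so I assume $g=1$ and $x_1,x_2\in H$.

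Next I would invoke the equivalence (proved, for instance, in \cite{HruskaQC} and \cite{MM-P}) between relative quasi-convexity of $(H,\mc{D})$ and the following geometric condition: there is a constant $\epsilon=\epsilon(H)$ such that every geodesic in $X$ with endpoints in $H$ lies in the $\epsilon$-neighborhood of
\[ Z_H \;:=\; H \;\cup\; \bigcup_{\substack{b\in G,\;Q\in\mc{P} \\ Q^b\cap H \text{ infinite}}} \mc{H}(bQ). \]
Geometrically, $Z_H$ is (quasi-isometric to) the image of the cusped space of $(H,\mc{D})$ under a quasi-isometric embedding into $X$; the horoballs appearing in the union are precisely those whose stabilizers meet $H$ in an infinite subgroup, which (by fullness) will turn out to be a finite-index subgroup of a peripheral subgroup of $H$.

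Now set $\kappa := \epsilon+1$. Since $\gamma$ starts and ends at depth $0$, by continuity there is a point $p\in\gamma$ at depth exactly $\kappa$ in $\mc{H}(aP)$. Let $q\in Z_H$ be a vertex with $d(p,q)\le\epsilon$. I would argue that $q$ must lie in $\mc{H}(aP)$: the key geometric fact about the Groves--Manning combinatorial cusped space is that horoballs over distinct cosets meet only in their common depth-$0$ horospheres lying in $G$, and the distance from a depth-$\kappa$ vertex of $\mc{H}(aP)$ to any vertex of $G$ (hence to any vertex of any other horoball) is at least $\kappa$. Since $\kappa>\epsilon$, the only way to have $d(p,q)\le\epsilon$ is for $q$ itself to lie in $\mc{H}(aP)$. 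Thus $\mc{H}(aP)$ appears in the union defining $Z_H$, which is exactly the statement that $P^a\cap H$ is infinite; in particular, $P$ is infinite. Fullness of $(H,\mc{D})$ in $(G,\mc{P})$ then upgrades ``$P^a\cap H$ is infinite'' to ``$P^a\cap H$ has finite index in $P^a$'', giving the desired conclusion.

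The main obstacle is pinning down the precise form of the quasi-convexity condition. The equivalence between the algebraic definition of relative quasi-convexity and the geometric ``geodesics stay near $Z_H$'' condition is standard but requires care to match conventions across \cite{HruskaQC} and \cite{MM-P}, and one must verify that the constant $\epsilon$ depends only on $H$ (not on the endpoints), so that a single uniform $\kappa$ works for all $\gamma$; this is why the paper is content to cite \cite[A.6]{MM-P} directly.
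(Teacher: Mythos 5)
Your proposal is correct and takes essentially the paper's route: the paper's entire proof is the citation ``follows immediately from \cite[A.6]{MM-P}'', and your argument --- translate to reduce to $g=1$, invoke the standard cusped-space characterization of relative quasi-convexity from \cite{HruskaQC,MM-P} (only the forward implication of your stated equivalence is needed), use the depth estimate to force the nearby point of $Z_H$ into the positive-depth part of the horoball over $aP$, and finish with fullness --- is precisely an unpacking of that citation. The only point worth noting is that from $q\in\mc{H}(aP)$ alone one cannot conclude that this horoball occurs in the union defining $Z_H$ (a depth-$0$ vertex could lie in $H$ or in another horoball's horosphere), but your own bound $d(p,q)\le\epsilon<\kappa$ forces $q$ to have positive depth, which closes this.
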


\begin{proof}[Proof of Theorem \ref{t:meta}]
Let $X$ be the cusped space associated to $(G,\mc{P})$ and suppose that $X$ is $\delta$--hyperbolic.  Let $C_2$ be any positive number, and let $C_1 = \max \{ |p_i|, \kappa \} + 2 (n+100)\delta$, where $\kappa$ is the constant from Lemma \ref{l:intersect parabolic} above.  Suppose that $K$ is the kernel of a filling which is long enough to satisfy the conclusion of Theorem \ref{t:greendlinger} with these constants.  

In order to obtain a contradiction, suppose that there is an element $g \in K$ which is of the form
\[	g = p_1 t_1 \cdots p_n t_n ,\]
where $t_i \in Q_i \minus \left( (K \cap Q_i ) S_i \right)$, and suppose that $g$ is chosen so that $d_X(1,g)$ is minimal amongst all such choices.

Since for each $i$ we have $Q_i \minus \left( (K \cap Q_i ) S_i \right) \subseteq Q_i \minus S_i$, the assumption of the theorem implies that $g \ne 1$.  We can represent the equation $g = p_1t_1 \cdots t_np_n$ by a geodesic $(2n+1)$--gon in $X$, joining the appropriate elements of the Cayley graph in turn by $X$--geodesics.  Let $\gamma$ be the geodesic for $g$, $\rho_i$ the geodesic for $p_i$ and $\tau_i$ the geodesic for $t_i$.

Since $g \in K \minus \{ 1 \}$, by Theorem \ref{t:greendlinger} there exist a horoball $A$ in $X$, an element $k \in K$ stabilizing $A$, and points $a, b$  on $\gamma$ at depth at least $C_1$ so that $k$ stabilizes $A$ and $d(a,kb) < d(a,b) - C_2$.  In particular, we have $d(x,kgx) < d(x,gx) - C_2$.  The geodesic $(2n+1)$--gon is $(2n-1)\delta$--thin, so $b$ lies within distance $(2n-1)\delta$ of some side other than $\gamma$.
The paths $\rho_i$ do not go deeply enough into any horoballs to be this close to $b$, so $b$ lies within $(2n-1)\delta$ of some point $b'$ on some $\tau_i$.  By the choice of $C_1$, $b'$ lies at depth at least $\kappa$ in $A$.  

Write $A = aP$ for some $P \in \mc{P}$.  Note that $\tau_i$ is a geodesic between two points in 
the coset ${p_1t_1 \cdots p_{i}} Q_i$.
By Lemma \ref{l:intersect parabolic}, $P^a \cap Q_i^{p_1t_1 \cdots p_i}$ has finite-index in $P^a$.  Since the filling is a $\mc{Q}$--filling, we have that $k \in Q_i^{p_1t_1 \cdots p_i}$.

Let $k' = k^{(p_1t_1 \cdots p_i)^{-1}}$, and let $t_i' = k't_i$.  Then $k' \in K \cap Q_i$.

Note that $kg = p_1t_1 \cdots p_i (k't_i) p_{i+1} \cdots p_nt_n$.
Since $t_i \not\in (K \cap Q_i)S_i$, we have that $t_i' \not\in (K \cap Q_i)S_i$.  Therefore, the element $kg$ is another element of the required form, contradicting the choice of $g$ as the shortest such.  This completes the proof of Theorem \ref{t:meta}.
\end{proof}

\subsection{Dehn fillings which induce CAT$(0)$ quotient cube complexes}

\begin{theorem} \label{t:fill to get CAT(0)}
Suppose that the hyperbolic group $G$ acts cocompactly on the CAT(0) cube complex $X$, and that cell stabilizers are virtually special and quasi-convex.  Let $\sigma_1, \ldots , \sigma_k$ be representatives of the $G$--orbits of cubes of $X$, and for each $i$ let $Q_i$ be the finite-index subgroup of $\Stab(\sigma_i)$ consisting of elements which fix $\sigma_i$ pointwise.  Let $\mc{Q} = \{ Q_1, \ldots,  Q_k\}$, and let $\mc{P}$ be the peripheral structure on $G$ induced by $\mc{Q}$, as in Definition \ref{d:induced peripheral}.

If some element of $\mc{Q}$ is infinite, then there exists a Dehn filling
\[	G \onto \overline{G} = G(N_1, \ldots , N_m)	\]
of $(G,\mc{P})$, with kernel $K_{\mc{P}}$ so that
\begin{enumerate}
\item\label{Gbar hyp} $\overline{G}$ is hyperbolic;
\item\label{Qbar VS QC} $\overline{\mc{Q}}$ consists of virtually special quasi-convex subgroups of $\overline{G}$.
\item\label{K_P gen ell} $K_{\mc{P}}$ is generated by elements in cell stabilizers.
\item\label{K_P in Q_i} For each $i$, we have $K_{\mc{P}} \cap \Stab(\sigma_i) \le Q_i$;
\item\label{height} $\height(\overline{\mc{Q}}) < \height(\mc{Q})$.
\item\label{quotient CAT(0)} $\leftQ{X}{K_{\mc{P}}}$ is a CAT(0) cube complex;
\end{enumerate}
\end{theorem}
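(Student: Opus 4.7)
The plan is to exhibit a sufficiently long peripherally finite $\mc{Q}$--filling of $(G,\mc{P})$ so that all six conclusions hold simultaneously. By Lemma \ref{l:rh from induced}, $(G,\mc{P})$ is relatively hyperbolic and each $Q\in\mc{Q}$ is full relatively quasi-convex. The key preliminary observation is that every parabolic element of $G$ fixes a cell of $X$: each $P\in \mc{P}$ is the commensurator of some infinite intersection of conjugates of members of $\mc{Q}$, which (being contained in the stabilizer of some cell of $X$) has, by Corollary \ref{cor:cube complex hyp}.\eqref{cond:fixed}, a finite and nonempty fixed cell-set; the commensurator permutes this finite set and therefore has a finite-index subgroup fixing a cell. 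Hence each $P\in\mc{P}$ is generated by elements fixing cells, and the same is true of any normal closure of subgroups of parabolics.

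I would then verify that each conclusion holds for all sufficiently long peripherally finite $\mc{Q}$--fillings. Conclusion \eqref{Gbar hyp} is the relatively hyperbolic Dehn filling theorem of Osin and Groves--Manning. Conclusion \eqref{quotient CAT(0)} is exactly Corollary \ref{t:quotient CAT(0)}, whose hypotheses we have just verified. Conclusion \eqref{K_P gen ell} holds because $K_{\mc{P}}$ is normally generated by subgroups of parabolics, each of which is generated by cell-fixing elements. Conclusion \eqref{K_P in Q_i} follows from \cite[Proposition 4.4]{agm}: for long enough fillings, the only elements of $K_{\mc{P}}$ lying in $\Stab(\sigma_i)$ are the ones already in the pointwise stabilizer $Q_i$. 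For the quasi-convexity half of conclusion \eqref{Qbar VS QC}, long fillings send full relatively quasi-convex subgroups to quasi-convex subgroups of $\overline{G}$ (again by results of \cite{agm}). For the virtually special half, the filling of $(G,\mc{P})$ restricts to a filling of $(Q,\mc{D})$ for each $Q\in\mc{Q}$, where $\mc{D}$ is the induced peripheral structure; since $Q$ is hyperbolic and virtually special, Wise's Malnormal Special Quotient Theorem (cf.\ \cite[Theorem 10.2]{AGM_msqt}) gives that, for long enough fillings, $\overline{Q}$ is virtually special.

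The most conceptually important conclusion is \eqref{height}, the strict decrease of height. This is where the use of the \emph{induced} peripheral structure $\mc{P}$ pays off: by construction, every infinite intersection $Q_1\cap Q_2^{g_2}\cap\cdots\cap Q_n^{g_n}$ realizing the height of $\mc{Q}$ is commensurable with some $P\in \mc{P}$. In a peripherally finite filling, every $\overline{P}$ is finite, hence the image in $\overline{G}$ of any such deep intersection is finite. The longest deep intersection of conjugates of $\overline{Q}$'s is therefore strictly shorter than the longest for $\mc{Q}$.

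Existence of a filling meeting all of these finitely many ``sufficiently long'' conditions simultaneously is guaranteed because each $P\in\mc{P}$ is a quasi-convex subgroup of a hyperbolic group, hence residually finite, so $P$ admits finite-index normal subgroups avoiding any prescribed finite subset of $P\setminus\{1\}$. The subtlest step, and the one I expect to require the most care, is the verification of virtual specialness of $\overline{Q}$ in \eqref{Qbar VS QC}: one must check that the filling of $G$ restricts to a genuinely ``long'' filling of each $Q$ relative to its induced peripheral structure, so that the hypotheses of the Malnormal Special Quotient Theorem are in force. Once that is in place, the other conditions are delicate but fairly direct applications of the cited machinery together with Corollary \ref{t:quotient CAT(0)}.
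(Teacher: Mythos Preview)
Your approach is essentially the same as the paper's: verify each of the six conclusions holds for sufficiently long peripherally finite $\mc{Q}$--fillings, then use residual finiteness of the parabolics to pick one filling satisfying all conditions. The paper proceeds in exactly this way, citing \cite{osin:peripheral} for \eqref{Gbar hyp}, the Malnormal Special Quotient Theorem together with \cite{Hwide} (rather than \cite{agm}) for \eqref{Qbar VS QC}, \cite[A.43.4]{VH} for \eqref{K_P in Q_i}, \cite[A.47]{VH} for \eqref{height}, and Corollary \ref{t:quotient CAT(0)} for \eqref{quotient CAT(0)}.

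Two places deserve more care. First, your argument for \eqref{height} is incomplete: you show that the \emph{image} of a height-realizing intersection from $G$ becomes finite in $\overline{G}$, but what you need is that no collection of $\height(\mc{Q})+1$ distinct cosets of the $\overline{Q}_i$ in $\overline{G}$ has infinite intersection. An intersection $\bigcap \overline{Q}_i^{\,\overline{g}_i}$ in $\overline{G}$ can in principle be strictly larger than the image of $\bigcap Q_i^{g_i}$, so one must argue (as in \cite[A.47]{VH}) that for long fillings such intersections in $\overline{G}$ are controlled by intersections in $G$. Second, your claim that the commensurator $P$ permutes the finite fixed-cell set of the deep intersection is not quite justified; the cleaner route (implicit in the paper) is that each $P$ has a finite-index subgroup contained in some $Q_j^g$, hence in a cell stabilizer, and then any $p\in P$ has a power fixing a point, so $p$ itself fixes a point by the CAT$(0)$ fixed-point theorem.
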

\begin{proof}
Let $G, X, \mc{Q}$ and $\mc{P}$ be as in the statement of the theorem.  By Lemma \ref{l:rh from induced}, $(G,\mc{P})$ is relatively hyperbolic.  Moreover, for each $Q \in \mc{Q}$, the induced structure $\mc{D}_Q$ on $Q$ makes $(Q,\mc{D}_Q)$ relatively hyperbolic, and $Q$ is full relatively quasi-convex in $(G,\mc{P})$.  Note that the assumption that some element of $\mc{Q}$ is infinite implies (by the definition of $\mc{P}$) that some element of $\mc{P}$ is infinite.

Property  \eqref{Gbar hyp} holds for sufficiently long peripherally finite fillings of $(G,\mc{P})$ by the basic result of relatively hyperbolic Dehn fillings \cite[Theorem 1.1]{osin:peripheral}.  We always assume that we have taken a filling so that $\overline{G}$ is hyperbolic.

We remark that, because each element of $\mc{Q}$ is finite-index in a cell stabilizer, each element of $\mc{Q}$ is hyperbolic and virtually special.  Moreover, since each element of $\mc{P}$ has a finite-index subgroup which is a quasi-convex subgroup of some element of $\mc{Q}$ by construction, each element of $\mc{P}$ is also hyperbolic and virtually special.  In particular, each element of $\mc{P}$ is residually finite.  We choose particular fillings with $N_i \dotsub P_i$, and residual finiteness guarantees the existence of the fillings that we seek.

We now explain how to ensure the properties of the conclusion of the result.

Suppose that $Q \in \mc{Q}$.  Since $\mc{P}$ is the peripheral structure induced by $\mc{Q}$, we can choose finite-index subgroups of elements of $\mc{P}$ which induce $\mc{Q}$--fillings, and any such filling $\overline{G}$ of $G$ naturally induces a filling $\overline{Q}$ of $Q$.  By the Malnormal Special Quotient Theorem \cite[Theorem 12.3]{WisePUP} (see also \cite[Corollary 2.8]{AGM_msqt}) for each $P_i \in \mc{P}$ there is a subgroup $\dot{P}_i(Q) \dotnorm P_i$ so that if each filling kernel $N_i$ satisfies $N_i \le \dot{P}_i(Q)$ then the induced filling $\overline{Q}$ is virtually special (and hyperbolic).  Let $\dot{P}_i$ be the intersection of the $\dot{P}_i(Q)$ for all $Q \in \mc{Q}$.  Thus, if we choose filling kernels $N_i \le \dot{P}_i$ then each of the induced fillings of each element of $\mc{Q}$ is virtually special.
By \cite[Proposition 4.6]{Hwide}, the natural map from $\overline{Q}$ to $\overline{G}$ is injective for all sufficiently long fillings.\footnote{\cite[Lemma 3.7]{Hwide} ensures that sufficiently long $\mc{Q}$--fillings are sufficiently wide, in the terminology of that paper.}  If we choose a sufficiently long peripherally finite filling of $(G,\mc{P})$ with $N_i \le \dot{P}_i$ then \cite[Proposition 4.5]{Hwide} implies that each $\overline{Q}$ is quasi-convex in $\overline{G}$.
This ensures Property \eqref{Qbar VS QC}.

For the remaining properties, we show that they hold for sufficiently long peripherally finite $\mc{Q}$--fillings of $(G,\mc{P})$.  Therefore, to ensure that all of the properties hold, it suffices to take a sufficiently long $\mc{Q}$--filling with each $N_i \dotsub \dot{P}_i$.

Property \eqref{K_P gen ell} holds automatically for any $\mc{Q}$--filling, since $K_\mc{P}$ is generated by conjugates of elements in $\mc{Q}$, and each such conjugate lies in a cell stabilizer.

We now explain how to ensure each of the remaining properties in turn for sufficiently long $\mc{Q}$--fillings.

For property \eqref{K_P in Q_i}, suppose that $\mc{F}_i \sqcup \{ 1 \}$ is a set of coset representatives for $Q_i$ in $\Stab(\sigma_i)$.  To ensure that \eqref{K_P in Q_i} holds, it suffices to keep (the image of) each element of $\mc{F}_i$ out of the image of $\Stab(\sigma_i)$ in $\overline{G}$.  This is true for sufficiently long $\mc{Q}$--fillings by \cite[Theorem A.43.4]{VH}, because $Q_i$ has finite index in $\Stab(\sigma_i)$.

Property \eqref{height} holds for sufficiently long peripherally finite $\mc{Q}$--fillings of $(G,\mc{P})$ by an entirely analogous argument to that of \cite[Theorem A.47]{VH}.

Finally, Property \eqref{quotient CAT(0)} holds for sufficiently long $\mc{Q}$--fillings by Corollary \ref{t:quotient CAT(0)}.
\end{proof}

The group $\overline{G}$ as above acts isometrically on $\overline{X} = \leftQ{X}{K_{\mc{P}}}$ with quotient naturally isomorphic (as a topological space, but not as a complex of groups) to $\leftQ{X}{G}$.  Therefore, if the action of $\overline{G}$ on $\overline{X}$ is not proper, we can apply Theorem \ref{t:fill to get CAT(0)} to this action, to obtain a further quotient.  By induction on height, we obtain the following result from the introduction.

\fillproper*

\appendix
\section{A quasi-convexity criterion} \label{app:QC}

In this appendix, we give a criterion (Theorem \ref{t:globally QC}) for a possibly infinite union of quasi-convex sets in a hyperbolic space to be quasi-convex.  This criterion is used in the forward direction of  Theorem \ref{t:cell qc iff hyp}:  quasi-convex cell stabilizers imply quasi-convex hyperplane stabilizers.  This criterion may be of independent interest.

Since any subset is a union of points, clearly some assumptions are needed.

We begin with a basic lemma about finite unions of quasi-convex subsets.

\begin{lemma} \label{l:QC constant is log}
Suppose that $Y$ is $\delta$--hyperbolic, and $P \subset Y$ is a union of $k$ $\epsilon$--quasi-convex subsets $P_1 , \ldots , P_k$ so that $P_i \cap P_{i+1} \ne \emptyset$ for each $i$.  Then $P$ is $\rho$--quasi-convex where
\[	\rho = \delta ( \log_2(k) + 1) + \epsilon	.	\]
\end{lemma}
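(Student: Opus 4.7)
My plan is to prove this by divide-and-conquer induction, with the key being to strengthen the inductive hypothesis to allow arbitrary sub-chains. Specifically, let $T(n)$ be a quasi-convexity constant for any union $P_{i} \cup P_{i+1} \cup \cdots \cup P_{i+n-1}$ of $n$ consecutive members of our chain, and I will prove that $T(1) = \epsilon$ and $T(n) \leq T(\lceil n/2 \rceil) + \delta$.

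The base case $n=1$ is just the hypothesis that each $P_i$ is $\epsilon$-quasi-convex. For the inductive step, fix $x \in P_i$ and $y \in P_j$ with $i \leq j$, and set $n = j-i+1$. Choose a middle index $m$ with $i \leq m < j$ so that both $\{P_i,\ldots,P_m\}$ and $\{P_{m+1},\ldots,P_j\}$ are chains of length at most $\lceil n/2 \rceil$; taking $m = i + \lfloor n/2 \rfloor - 1$ works. Pick a waypoint $z \in P_m \cap P_{m+1}$, which is nonempty by hypothesis. The geodesic triangle with vertices $x,y,z$ is $\delta$-slim in $Y$, so every point on the geodesic $[x,y]$ lies within $\delta$ of $[x,z] \cup [z,y]$. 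By the inductive hypothesis applied to the two sub-chains of length at most $\lceil n/2 \rceil$, the geodesics $[x,z]$ and $[z,y]$ lie within $T(\lceil n/2 \rceil)$ of $P_i \cup \cdots \cup P_m$ and $P_{m+1} \cup \cdots \cup P_j$ respectively. Combining gives the recurrence.

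Unfolding the recurrence with $T(1) = \epsilon$ yields $T(n) \leq \epsilon + \delta \lceil \log_2 n \rceil$, which is bounded by $\epsilon + \delta(\log_2 n + 1)$. Applying this to the full chain gives the stated bound $\rho = \delta(\log_2(k)+1) + \epsilon$.

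The only real subtlety is recognizing that the natural inductive statement must be phrased about consecutive sub-chains rather than about the whole union $P$, so that after splitting at the waypoint $z$, the two halves are themselves chains of the required form (consecutive with nonempty successive intersections) and the inductive hypothesis applies. Beyond this framing, the argument is a direct application of $\delta$-slimness of triangles together with the halving recursion.
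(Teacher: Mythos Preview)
Your proof is correct and follows essentially the same approach as the paper: both use binary subdivision via $\delta$--slim triangles, accumulating a $\delta$ at each halving step. The paper picks all intermediate points $p_i \in P_i \cap P_{i+1}$ up front and then explicitly iterates the bisection on a fixed point $u \in [x,y]$, whereas you package the same argument as a formal induction with the recurrence $T(n) \le T(\lceil n/2\rceil) + \delta$; the content is identical.
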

\begin{proof}
Consider a pair of points $x \in P_r$, $y \in P_s$.  Without loss of generality, assume that $r < s$ (the case $r = s$ being straightforward).  

Now choose a sequence of points $p_i \in P_i \cap P_{i+1}$ for $r \le i < s$, let $\sigma$ be a geodesic between $x$ and $y$ and let $u$ be a point on $\sigma$.  Our task is to bound the distance from $u$ to $P$.

Consider the broken geodesic
$\gamma = [x,p_r,p_{r+1}, \ldots , p_{s-1},y]$.  Since the $P_i$ are $\epsilon$--quasi-convex, $\gamma$ is contained in an $\epsilon$--neighborhood of $P_r \cup \ldots \cup P_s \subset P$.

Consider the geodesic polygon with one side the geodesic $\sigma = [x,y]$ and the other sides the geodesics forming $\gamma$.  Let $r_0 = \lfloor \frac{r+s}{2} \rfloor$, and consider the geodesic triangle $\sigma$, $[x,p_{r_0}], [p_{r_0},y]$.  By $\delta$--hyperbolicity, $u$ lies within $\delta$ of one of $[x,p_{r_0}]$ and $[p_{r_0},y]$.
Suppose it is $[x,p_{r_0}]$ (the other case being entirely similar), and suppose that $u_1 \in [x,p_{r_0}]$ is within $\delta$ of $u$.

Now let $r_1 = \lfloor \frac{r+r_0}{2} \rfloor $ and consider the geodesic triangle $[x,p_{r_1}],[p_{r_1},p_{r_0}], [p_{r_0},x]$.
By $\delta$--hyperbolicity, $u_1$ is within $\delta$ of one of $[x,p_{r_1}],[p_{r_1},p_{r_0}]$, so there is $u_2$ on one of these sides within $\delta$ of $u_1$ and within $2\delta$ of $u$.

We proceed in this manner, in each case making the interval of indices half as long.  After $t$ steps of this argument we find a point $u_t$ which is within within $t\delta$ of $u$.

After at most $d = \log_2(k) + 1$ steps, we have a geodesic triangle where two sides are $[p_l, p_{l+1}], [p_{l+1},p_{l+2}]$ (or maybe one endpoint $x$ or $y$), and we have $u_d$ within $d\delta$ of $u$, but also within $\epsilon$ of $P$.  This proves the lemma.
\end{proof}

The following straightforward instance of ``linear-beats-log'' is tailored for use in the proof of Theorem \ref{t:globally QC}.

\begin{lemma} \label{l:linear beats log}
Fix $\delta, \epsilon>0$, and let $g(x) = \delta\left( \log_2 (x + 1)+1\right) + \epsilon$.
For any $m > 0$ and $c \geq 0$ there exists a natural number $R_{m,\epsilon,\delta}$ so that for all $R_0 > R_{m,\epsilon, \delta}$, we have
\[	g(R_0) < \frac{1}{200} m \left( \frac{1}{4}R_0 - \frac{2g(R_0)+1}{m}-3c \right)	.	\]
\end{lemma}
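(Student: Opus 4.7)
The plan is to recognize this as a routine elementary-analysis statement: the left-hand side grows logarithmically in $R_0$ while the right-hand side (after moving a $g(R_0)$ term across) grows linearly, so the claim is nothing more than ``linear beats log.''

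First I would expand the right-hand side of the target inequality as
\[ \frac{m}{800}R_0 \;-\; \frac{g(R_0)}{100} \;-\; \frac{1}{200} \;-\; \frac{3mc}{200}, \]
and then move the $\frac{g(R_0)}{100}$ term to the left. This reduces the desired inequality to the equivalent form
\[ \frac{101}{100}\,g(R_0) \;+\; \frac{1}{200} \;+\; \frac{3mc}{200} \;<\; \frac{m}{800}R_0. \]
Substituting $g(R_0)=\delta(\log_2(R_0+1)+1)+\epsilon$, the left-hand side becomes $\frac{101\delta}{100}\log_2(R_0+1)$ plus a constant depending only on $\delta,\epsilon,m,c$, while the right-hand side is a fixed positive multiple of $R_0$.

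Thus I would choose $R_{m,\epsilon,\delta}$ (which, despite the notation, must also be allowed to depend on $c$, since the statement quantifies over both $m$ and $c$) large enough that both
\[ \frac{101\delta}{100}\log_2(R_0+1) \;\le\; \frac{m}{1600}R_0 \quad\text{and}\quad \frac{101(\delta+\epsilon)}{100} + \frac{1}{200} + \frac{3mc}{200} \;\le\; \frac{m}{1600}R_0 \]
hold for all $R_0 > R_{m,\epsilon,\delta}$. Since $m>0$ is strictly positive, both thresholds exist (the first by the standard fact that $\log_2(x+1)/x\to 0$, the second since the left side is a constant). Adding the two inequalities gives the desired bound.

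There is no serious obstacle; this is a bookkeeping exercise. The only point worth flagging is the very mild notational issue that the threshold must depend on $c$ as well as on $m,\epsilon,\delta$, which the subscript in ``$R_{m,\epsilon,\delta}$'' elides but which is clearly intended from the statement's ``for any $m>0$ and $c\geq 0$.''
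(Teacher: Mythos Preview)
Your proof is correct. The paper itself does not give a proof of this lemma at all; it simply calls it a ``straightforward instance of `linear-beats-log'\,'' and moves on, so your argument is exactly the kind of routine verification the authors intend the reader to supply. Your observation that the threshold must also depend on $c$ (despite the subscript notation) is accurate and worth noting.
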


The next result states that under appropriate hypotheses, the union of an arbitrary number of quasi-convex subsets is itself quasi-convex, with constant not depending on the number of such subsets.

\begin{theorem} \label{t:globally QC}
Suppose that $\Upsilon$ is a $\delta$--hyperbolic space and that $m, \epsilon > 0$ and $c \ge 0$ are real numbers.  There exists a constant $\epsilon'$ so that for any (finite or countably infinite) collection of subsets $\{ X_i \}_{i=1}^\Lambda$ of $\Upsilon$ for which
\begin{enumerate}
\item Each $X_i$ is $\epsilon$--quasi-convex;
\item For each $i$ we have $X_i \cap X_{i+1} \ne \emptyset$; and
\item \label{i:reg progress} For any $i,j,$ if $x \in X_i$ and $y \in X_j$ we have $d(x,y) \ge m \left(| i - j |-c \right)$,
\end{enumerate}
the set $\mathbf{X}=\cup X_i$ is $\epsilon'$--quasi-convex.
\end{theorem}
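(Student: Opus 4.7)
My plan is to exploit the tension between the logarithmic quasi-convexity constant for finite unions furnished by Lemma \ref{l:QC constant is log} and the linear spread of the $X_i$ guaranteed by condition (\ref{i:reg progress}). Choose $R_0$ larger than $R_{m,\epsilon,\delta}$ from Lemma \ref{l:linear beats log} (and larger than, say, $16\delta$), and set $\epsilon'=R_0$.

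Fix $x\in X_r$, $y\in X_s$ with $r\le s$, a geodesic $[x,y]$, and let $u^\ast\in[x,y]$ be a point maximizing $d(\,\cdot\,,\mathbf{X})$; write $D=d(u^\ast,\mathbf{X})$. Suppose for contradiction that $D>R_0$. Because $x,y\in\mathbf{X}$, we have $d(u^\ast,x),d(u^\ast,y)\ge D>R_0/4$, so there are points $u^-,u^+$ on $[x,y]$ with $d(u^\pm,u^\ast)=R_0/4$. Pick $a\in X_p$ and $b\in X_q$ (approximately) realizing $d(u^-,\mathbf{X})$ and $d(u^+,\mathbf{X})$; by maximality of $D$ we have $d(u^-,a),d(u^+,b)\le D$. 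The triangle inequality gives $d(a,b)\le 2D+R_0/2$, so condition (\ref{i:reg progress}) yields $|q-p|\le(2D+R_0/2)/m+c$.

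Now consider the geodesic quadrilateral on $u^-,a,b,u^+$. The point $u^\ast\in[u^-,u^+]$ must lie within $2\delta$ of one of the other three sides. If it lies within $2\delta$ of the ``short'' side $[u^-,a]$, a short calculation based on $d(u^\ast,u^-)=R_0/4$ forces $d(u^-,\mathbf{X})\ge D+R_0/4-4\delta>D$ (using $R_0>16\delta$), contradicting maximality of $D$; the symmetric case $[u^+,b]$ is ruled out in the same way. Hence $u^\ast$ is within $2\delta$ of the long side $[a,b]$. Applying Lemma \ref{l:QC constant is log} to the finite subcollection $\{X_p,\ldots,X_q\}$, every point of $[a,b]$ lies within $g(|q-p|+1)$ of $\mathbf{X}$, so
\[ D \le 2\delta + g\bigl((2D+R_0/2)/m + c + 1\bigr). \]
The choice of $R_0$ via Lemma \ref{l:linear beats log} is calibrated so that this inequality is incompatible with $D>R_0$, yielding the desired contradiction.

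The technical heart of the argument is the three-way case split in the quadrilateral: ruling out the two short-side cases is precisely where one uses that $u^\ast$ is a \emph{global} maximum of $d(\,\cdot\,,\mathbf{X})$ on $[x,y]$ rather than an arbitrary point, while disposing of the remaining long-side case is a direct application of Lemma \ref{l:linear beats log}. The main obstacle I expect is verifying that the quantitative form of Lemma \ref{l:linear beats log} really does force $D\le R_0$ from the displayed inequality for all choices of the parameters $m,c,\delta,\epsilon$; the specific coefficients in the statement of that lemma are shaped precisely to serve this final step.
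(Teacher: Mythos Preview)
Your argument is essentially correct and takes a genuinely different route from the paper's proof.  The paper builds an explicit broken geodesic through the chain: it partitions the index interval into blocks of fixed size $\approx 100R$, picks a point $u_r$ in each block, and shows that the concatenation of geodesics $[u_r,u_{r+1}]$ is a uniform quasi-geodesic by bounding the Gromov products at the corners (each segment is long by condition~\eqref{i:reg progress}, and the corner estimates come from a pair of pentagon arguments).  Your approach instead maximizes $d(\cdot,\mathbf{X})$ along $[x,y]$ and traps the maximizer in a single thin quadrilateral; it is shorter and avoids the local-to-global machinery for quasi-geodesics entirely, at the cost of being non-constructive (you never exhibit a path in a neighborhood of $\mathbf{X}$).

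Two small repairs are needed.  First, your short-side case is stated backwards: what the computation actually yields is
\[
d(u^\ast,a)\ \le\ d(u^\ast,w)+d(w,a)\ \le\ 2\delta + \bigl(d(u^-,a)-d(u^-,w)\bigr)\ \le\ 2\delta + D - (R_0/4-2\delta)\ =\ D - R_0/4 + 4\delta\ <\ D,
\]
which directly contradicts $d(u^\ast,\mathbf{X})=D$ since $a\in\mathbf{X}$.  Your displayed inequality about $d(u^-,\mathbf{X})$ does not follow and is not what you want.  Second, as you suspected, Lemma~\ref{l:linear beats log} as stated is tuned to the paper's constants (the $\tfrac{1}{200}$, the $\tfrac14$, and the $3c$ come from the block size and the pentagon estimates there), not to yours.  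What you actually need is the elementary fact that for $R_0$ large enough in terms of $m,c,\delta,\epsilon$, the inequality
\[
D\ \le\ 2\delta + g\!\left(\frac{2D+R_0/2}{m}+c+1\right)
\]
has no solution with $D>R_0$; since for $D>R_0$ the right-hand side is at most $3\delta+\epsilon+\delta\log_2\!\bigl(\tfrac{5D}{2m}+c+2\bigr)$, this is an immediate linear-versus-logarithm estimate.  You should state and prove this one-line variant yourself rather than appeal to Lemma~\ref{l:linear beats log}.  (A further cosmetic point: the nearest points $a,b$ need not exist if the $X_i$ are not closed, but choosing $a,b$ with $d(u^\pm,\cdot)\le D+1$ and absorbing the slack into $R_0$ handles this.)
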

\begin{proof}
 Let $g(x) = \delta\left( \log_2 (x + 1)+1\right) + \epsilon$, and let
$R = R_{m,\epsilon,\delta}$ be the number from Lemma \ref{l:linear beats log}.  Without loss of generality we may assume that $R \ge 1$.  
 
 If $\Lambda \le 100R$ then Lemma \ref{l:QC constant is log} implies $\mathbf{X}$ is $\rho$-quasi-convex with $$\rho = \delta\left( \log_2(100R)+1 \right) +\epsilon = g(100R-1).$$

On the other hand, suppose that $\Lambda > 100R$ and fix $u,v\in \mathbf{X}$.  Let $j,k$ be so that $u \in X_j, v \in X_k$ and without loss of generality suppose that $j \le k$.  It suffices to show that any geodesic $[u,v]$ stays uniformly close to $X_j \cup \cdots \cup X_k$.  If $|k-j| \le 100 R$ then this follows from Lemma \ref{l:QC constant is log}, so suppose that $|k-j| > 100R$.  Let $Y = X_j \cup \cdots \cup X_k$.

Our strategy is to build a path between $u$ and $v$ which is (i) uniformly quasi-geodesic; and (ii) stays uniformly close to $Y$.  The theorem then follows by quasi-geodesic stability.  Choose a sequence of indices $t_0 = j, t_1 , \ldots , t_{s-1}, t_s = k$ so that for each $0 \le r \le s-2$ we have
\[	 t_{r+1} - t_r = 100R	,	\]
and
\[	t_s - t_{s-1} \in \bZ\cap \left[ 100R, \ldots , 200R	\right]	.	\]
Moreover, for each $0 \le r \le s$ choose some $u_r \in X_{t_r}$.  We require $u_0 = u$ and $u_s = v$.

For $r\in \{0,\ldots,s-1\}$, let $\gamma_r$ be a geodesic between $u_r$ and $u_{r+1}$.  Let $$K = g(200R)=\delta (\log_2(200R+1)+1) + \epsilon.$$  Since we assume $R \ge 1$ we know that $K > \delta$.

Since we know that for each $r\in \{0,\ldots,s-1\}$ we have $t_{r+1} - t_r \le 200R$ we know that the set 
\[	Y_r = \bigcup_{k = t_r}^{t_{r+1}} X_k	,	\]
is $K$--quasi-convex, by Lemma \ref{l:QC constant is log}.  In particular the geodesic $\gamma_r$ lies in a $K$--neighborhood of $Y_r$.

For each $r\in \{0,\ldots,s-1\}$ and each $x \in \gamma_r$, let $\pi_r(x)$ denote the set of closest points on $Y_r$ to $x$.  Furthermore, let $I_r(x)$ be the set of indices $l$ so that $\pi_r(x) \cap X_l \ne \emptyset$.

\begin{claim} \label{claim:progress}
For any $v \in \{ t_r, \ldots , t_{r+1} \}$ there exists $x_v \in \gamma_r$ so that
\[	d_\N \left( v , I_r(x_v) \right) \le \frac{1}{2}\left(\frac{2K+1}{m}+c \right)	.	\]
\end{claim}
\begin{proof}[Proof of Claim \ref{claim:progress}]
For any $y \in \pi_r(x)$ we have $d(x,y) \le K$.  Now, if $x$ and $x'$ are adjacent vertices and $y \in \pi_r(x)$ with $y \in X_k$ and $z \in \pi_r(x')$ with $z \in X_l$ then
$m( \left| k-l \right| - c ) \le d(y,z) \le d(y,x) + d(x,x') + d(x',z) \le 2K+1$, so $| k -l| \le \frac{2K+1}{m} + c$.

The claim now follows immediately from the fact that $t_r \in I_r(u_r)$ and $t_{r+1} \in I_r(u_{r+1})$, letting $x$ and $x'$ run over adjacent pairs of vertices in $\gamma_r$.  This finishes the proof of Claim \ref{claim:progress}.
\end{proof}

Suppose $0\leq r\leq s-1$.
Using Claim \ref{claim:progress}, we can choose a point $x_r \in \gamma_r$ and a point $y_r \in \pi_r(x_r)$ so that $y_r \in X_{k_r}$ and
\begin{equation} \label{kr in middle}
\left| k_r - \frac{t_r + t_{r+1}}{2} \right| \le \frac{2K+1}{2m}+c	. \tag{$\dagger$}
\end{equation}

Now, for each $r\in \{1,\ldots,s-1\}$, let $\sigma_r$ be a geodesic between $y_{r-1}$ and $y_r$.  Further, let $\sigma_0$ be a geodesic from $u$ to $y_0$ and let $\sigma_s$ be a geodesic
from $y_{s-1}$ to $v$ (note that there is no point $y_s$).  See Figure \ref{fig:sigmas}.
\begin{figure}[htbp]
  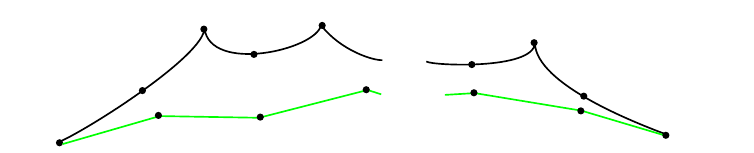
  \caption{The $\sigma_i$ forming a broken geodesic.}
  \label{fig:sigmas}
\end{figure}
We bound the Gromov product between $\sigma_t$ and $\sigma_{t+1}$ for each $t$.  
(There is no reason to expect such a bound on the Gromov product between $\gamma_r$ and $\gamma_{r+1}$.)

Though we have no control on the lengths of the segments $\sigma_0$ and $\sigma_s$, the lengths of the other segments can be bounded below:
\begin{claim} \label{sigma long}
Suppose $0< r < s$.  The length of $\sigma_r$ is at least $200K$. 
\end{claim}
\begin{proof}
 By the choice of the index $k_r$ in Equation \eqref{kr in middle} we have
\begin{eqnarray*}
 k_r - k_{r-1} &\ge& \frac{ t_{r+1} - t_{r-1}}{2} - \frac{ 2K + 1}{m}-2c \\
 &=& 100R -  \frac{ 2K + 1}{m}-2c\\
 & > & 50R - \frac{ 2K + 1}{m}-2c
\end{eqnarray*}
(the equality follows from the choice of $t_r$).

Below, we apply Lemma \ref{l:linear beats log} with $R_0 = 200R$, noting that $K = g(200R)$, where $g$ is the function from that lemma.  We have
 
\begin{eqnarray*}
 |\sigma_r| &=& d_G(y_{r-1},y_r) \\
 & \ge & m(k_r - k_{r-1}-c) \\
 & > & m \left( 50R -   \frac{ 2K + 1}{m}-3c \right)\\
 & \ge & 200K
\end{eqnarray*}
The second inequality above follows from the fact that $y_{i} \in X_{k_{i}}$ so such points are at least distance $m(k_r - k_{r-1}-c)$ apart.  The final inequality follows from the promised use of Lemma \ref{l:linear beats log}.  This completes the proof of Claim \ref{sigma long}.
\end{proof}

\begin{claim}
  Let $0\leq r\leq s-1$.
  The Gromov product of $\sigma_r$ and $\sigma_{r+1}$ is at most $8K$.
\end{claim}
\begin{proof}
We first handle the case that $0<r<s-1$.

For $i\in \{r,r+1\}$, the path $\sigma_i$ is one side of a pentagon.  The other sides are (A) two sides of length at most $K$ at either end of $\sigma_i$, and (B) two `halves' of adjacent geodesics: the second `half' of $\gamma_{i-1}$ and the first `half' of $\gamma_i$, joined at $u_i$.  See Figure \ref{fig:sigmapents}.
\begin{figure}[htbp]
  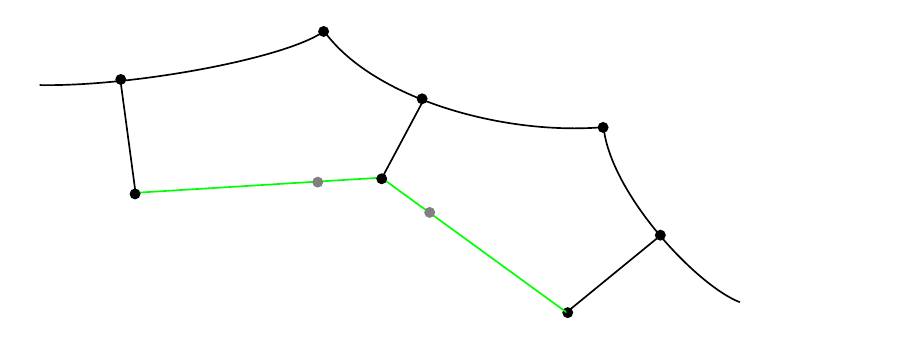
  \caption{Computing the Gromov product of $\sigma_r$ and $\sigma_{r+1}$.}
  \label{fig:sigmapents}
\end{figure}

By Claim \ref{sigma long} the geodesics $\sigma_r$ and $\sigma_{r+1}$ have length at least $200K$.
Let $z$ be the point on $\sigma_r$ at distance exactly $8K$ from $y_r$.  

Since geodesic pentagons are $3\delta$--slim, we know that $z$ must be distance at most $3\delta$ from some point on one of the other four sides.  However, it cannot be within distance $3\delta$ of the geodesic between $x_r$ and $y_r$ since that geodesic has length at most $K$.  Similarly, since $|\sigma_r| \ge 200K$, $z$ cannot be within $3\delta$ of the geodesic between $x_{r-1}$ and $y_{r-1}$.  We claim that $z$ also cannot be within $3\delta$ of the part of $\gamma_{r-1}$ contained in the pentagon.  

Indeed, suppose $w\in \gamma_{r-1}$, and choose $i_w\in I_{r-1}(w)\subset [t_{r-1},t_r]$.  There is a point $w'$ of $\pi_{r-1}(w)$ in $X_{i_w}$; thus $d(w,w')\leq K$.  The point $x_r$ is likewise within $K$ of some $X_{k_r}$ where $k_r$ satisfies the inequality \eqref{kr in middle}.  This implies that 
\[ |k_r - i_w| \geq \frac{t_{r+1}-t_r}{2} - \frac{2K+1}{2m}-c, \]
and so
\begin{eqnarray*} 
d(x_r,w)& \geq &  m\left(\frac{t_{r+1}-t_r}{2} - \frac{2K+1}{2m}-2 c\right)-2K\\
 & \geq & m\left(50 R - \frac{2K+1}{m} - 3c \right)-2K\\
 & \geq & 198K,
\end{eqnarray*}
using Lemma \ref{l:linear beats log} again.
But this contradicts $d(x_r,w)\leq d(x_r,z)+d(z,w)\leq 9K + 3\delta \leq 12K$.

We have shown that there is some point $w$ on $\gamma_r$ between $u_r$ and $x_r$ within $3\delta$ of $z$.  Note that $d(x_r,w)\geq d(y_r,z) - K-3\delta \geq 4K$, since $K\geq \delta$.

Now consider the pentagon formed with $\sigma_{r+1}$ on one side, and the point $z'$ on $\sigma_{r+1}$ which is distance exactly $8K$ from $y_r$.  An entirely analogous argument to the above shows that there is some $w'$ between $x_r$ and $u_{r+1}$ on $\gamma_r$ so that $d(z',w')\leq 3\delta$, and $d(x_r,w')\geq 4K$.  Since $\gamma_r$ is geodesic, we have
\[ d(w,w') = d(w,x_r)+d(x_r,w) \geq 8K. \]
It follows that $d(z,z')\geq 8K-6\delta \geq 2K>\delta$.  It follows that the Gromov product $(y_{r-1},y_{r+1})_{y_r}$ is strictly less than $d(z,y_r)=d(z',y_r)=8K$, whenever $0<r<s$.

The cases $r=0$ and $r=s-1$ are symmetric, so it suffices to handle the case $r=0$.  See Figure \ref{fig:endcase}.
\begin{figure}[htbp]
  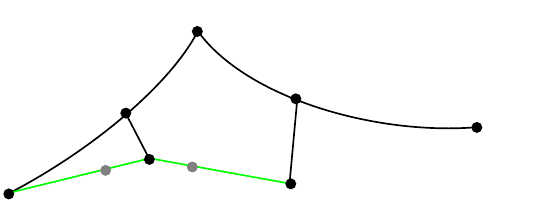
  \caption{Computing the Gromov product of $\sigma_0$ and $\sigma_1$.}
  \label{fig:endcase}
\end{figure}
We are trying to show that $(u,y_1)_{y_0}\leq 8K$, so we may suppose without loss of generality that $d(y_0,u)>8K$.  Thus there is a point $z$ on $\sigma_0$ at distance exactly $8K$ from $y_0$.  Since $d(x_0,y_0)\leq K$, this point is within $\delta$ of a point $w$ on $\gamma_0$ between $u$ and $x_0$.  

For the point $z'$ on $\sigma_1$ at distance $8K$ from $y_0$, we argue as before.  We are again able to deduce that $d(z,z')>\delta$, and so $(u,y_1)_{y_0}\leq 8K$.
\end{proof}

Thus, we have a collection of arcs $\sigma_i$ which form a broken geodesic between $u$ and $v$ with segments of length at least $200K$ (except possibly the first and last) and all Gromov product at most $8K$ at the corners.  Thus the union of the $\sigma_i$ forms a global quasi-geodesic with uniformly bounded parameters.  However, each $\sigma_i$ lies within a $(3\delta + K)$--neighborhood of the union of the $\gamma_i$, which in turn lie in a $K$--neighborhood of the union of the $X_i$.
As explained above, this suffices to prove that the union of the $X_i$ is $\epsilon'$--quasi-convex with the constant $\epsilon'$ depending on the quantities $\delta$, $m$, and $\epsilon$, but not on the number of the $X_i$, as required.  This completes the proof of Theorem \ref{t:globally QC}.
\end{proof}

\bibliographystyle{abbrv}

\begin{thebibliography}{10}

\bibitem{VH}
I.~Agol.
\newblock The {V}irtual {H}aken {C}onjecture.
\newblock {\em Doc. Math.}, 18:1045--1087, 2013.
\newblock With an appendix by Agol, Daniel Groves, and Jason Manning.

\bibitem{agm}
I.~Agol, D.~Groves, and J.~F. Manning.
\newblock Residual finiteness, {QCERF} and fillings of hyperbolic groups.
\newblock {\em Geom. Topol.}, 13(2):1043--1073, 2009.

\bibitem{AGM_msqt}
I.~Agol, D.~Groves, and J.~F. Manning.
\newblock An alternate proof of {W}ise's malnormal special quotient theorem.
\newblock {\em Forum Math. Pi}, 4:e1, 54, 2016.

\bibitem{Armstrong65}
M.~A. Armstrong.
\newblock On the fundamental group of an orbit space.
\newblock {\em Proc. Cambridge Philos. Soc.}, 61:639--646, 1965.

\bibitem{bergeronwise}
N.~Bergeron and D.~T. Wise.
\newblock A boundary criterion for cubulation.
\newblock {\em Amer. J. Math.}, 134(3):843--859, 2012.

\bibitem{bowditch:tight}
B.~H. Bowditch.
\newblock Tight geodesics in the curve complex.
\newblock {\em Invent. Math.}, 171(2):281--300, 2008.

\bibitem{bowditch:relhyp}
B.~H. Bowditch.
\newblock Relatively hyperbolic groups.
\newblock {\em Internat. J. Algebra Comput.}, 22(3):1250016, 66, 2012.

\bibitem{bridhaef:book}
M.~R. Bridson and A.~Haefliger.
\newblock {\em Metric Spaces of Non--Positive Curvature}, volume 319 of {\em
  {G}rundlehren der mathematischen {W}issenschaften}.
\newblock Springer--Verlag, Berlin, 1999.

\bibitem{CharneyCrisp}
R.~Charney and J.~Crisp.
\newblock Relative hyperbolicity and {A}rtin groups.
\newblock {\em Geom. Dedicata}, 129:1--13, 2007.

\bibitem{CooperFuter}
D.~Cooper and D.~Futer.
\newblock Ubiquitous quasi-{F}uchsian surfaces in cusped hyperbolic
  3--manifolds.
\newblock {\em Geom. Topol.}, 23(1):241--298, 2019.

\bibitem{cdp}
M.~Coornaert, T.~Delzant, and A.~Papadopoulos.
\newblock {\em G\'eom\'etrie et th\'eorie des groupes: Les groupes
  hyperboliques de Gromov}, volume 1441 of {\em Lecture Notes in Mathematics}.
\newblock Springer-Verlag, Berlin, 1990.

\bibitem{Yen_thesis}
Y.~Duong.
\newblock {\em On Random Groups: the Square Model at Density $d < 1/3$ and as
  Quotients of Free Nilpotent Groups}.
\newblock PhD thesis, University of Illinois at Chicago, 2017.

\bibitem{EinsteinG}
E.~Einstein and D.~Groves.
\newblock Relative cubulations and groups with a 2-sphere boundary.
\newblock {\em Compos. Math.}, 156:862--867, 2020.

\bibitem{genevois:hyperbolicities}
A.~Genevois.
\newblock Hyperbolicities in {${\rm CAT}(0)$} cube complexes.
\newblock {\em Enseign. Math.}, 65(1-2):33--100, 2019.

\bibitem{genevois:coning}
A.~Genevois.
\newblock Coning-off {$\rm CAT(0)$} cube complexes.
\newblock {\em Ann. Inst. Fourier (Grenoble)}, 71(4):1535--1599, 2021.

\bibitem{gmrs}
R.~Gitik, M.~Mitra, E.~Rips, and M.~Sageev.
\newblock Widths of subgroups.
\newblock {\em Trans. Amer. Math. Soc.}, 350(1):321--329, 1998.

\bibitem{rhds}
D.~Groves and J.~F. Manning.
\newblock {Dehn filling in relatively hyperbolic groups}.
\newblock {\em Israel Journal of Mathematics}, 168:317--429, 2008.

\bibitem{Hwide}
D.~Groves and J.~F. Manning.
\newblock Quasiconvexity and {D}ehn filling.
\newblock {\em Amer. J. Math.}, 143(1):95--124, 2021.

\bibitem{GMS}
D.~Groves, J.~F. Manning, and A.~Sisto.
\newblock Boundaries of {D}ehn fillings.
\newblock {\em Geom. Topol.}, 23:2929--3002, 2019.

\bibitem{HW08}
F.~Haglund and D.~T. Wise.
\newblock Special cube complexes.
\newblock {\em Geom. Funct. Anal.}, 17(5):1551--1620, 2008.

\bibitem{hatcher:book}
A.~Hatcher.
\newblock {\em Algebraic topology}.
\newblock Cambridge University Press, Cambridge, 2002.

\bibitem{Hempel}
J.~Hempel.
\newblock {\em {$3$}-{M}anifolds}.
\newblock Princeton University Press, Princeton, N. J.; University of Tokyo
  Press, Tokyo, 1976.
\newblock Ann. of Math. Studies, No. 86.

\bibitem{HruskaQC}
G.~C. Hruska.
\newblock Relative hyperbolicity and relative quasiconvexity for countable
  groups.
\newblock {\em Algebr. Geom. Topol.}, 10(3):1807--1856, 2010.

\bibitem{hruskawise:finiteness}
G.~C. Hruska and D.~T. Wise.
\newblock Finiteness properties of cubulated groups.
\newblock {\em Compos. Math.}, 150(3):453--506, 2014.

\bibitem{jaco}
W.~Jaco.
\newblock {\em Lectures on three-manifold topology}, volume~43 of {\em CBMS
  Regional Conference Series in Mathematics}.
\newblock American Mathematical Society, Providence, R.I., 1980.

\bibitem{KM12}
J.~Kahn and V.~Markovic.
\newblock Immersing almost geodesic surfaces in a closed hyperbolic three
  manifold.
\newblock {\em Ann. of Math. (2)}, 175(3):1127--1190, 2012.

\bibitem{Kapovich-Short}
I.~Kapovich and H.~Short.
\newblock Greenberg's theorem for quasiconvex subgroups of word hyperbolic
  groups.
\newblock {\em Canad. J. Math.}, 48(6):1224--1244, 1996.

\bibitem{MM-P}
J.~F. Manning and E.~Mart{\'{\i}}nez-Pedroza.
\newblock Separation of relatively quasiconvex subgroups.
\newblock {\em Pacific J. Math.}, 244(2):309--334, 2010.

\bibitem{mt:hmkg}
K.~Matsuzaki and M.~Taniguchi.
\newblock {\em Hyperbolic Manifolds and {K}leinian Groups}.
\newblock Oxford University Press, Oxford, 1998.

\bibitem{MorganUniformization}
J.~W. Morgan.
\newblock On {T}hurston's uniformization theorem for three-dimensional
  manifolds.
\newblock In {\em The {S}mith conjecture ({N}ew {Y}ork, 1979)}, volume 112 of
  {\em Pure Appl. Math.}, pages 37--125. Academic Press, Orlando, FL, 1984.

\bibitem{osin:peripheral}
D.~V. Osin.
\newblock Peripheral fillings of relatively hyperbolic groups.
\newblock {\em Invent. Math.}, 167(2):295--326, 2007.

\bibitem{Sageev}
M.~Sageev.
\newblock Ends of group pairs and non-positively curved cube complexes.
\newblock {\em Proc. London Math. Soc. (3)}, 71(3):585--617, 1995.

\bibitem{Sageev-Codim1}
M.~Sageev.
\newblock Codimension-{$1$} subgroups and splittings of groups.
\newblock {\em J. Algebra}, 189(2):377--389, 1997.

\bibitem{Swarup}
G.~A. Swarup.
\newblock Geometric finiteness and rationality.
\newblock {\em J. Pure Appl. Algebra}, 86(3):327--333, 1993.

\bibitem{Tran}
H.~C. Tran.
\newblock On strongly quasiconvex subgroups.
\newblock {\em Geom. Topol.}, 23(3):1173--1235, 2019.

\bibitem{WisePUP}
D.~T. Wise.
\newblock {\em The structure of groups with a quasiconvex hierarchy}, volume
  209 of {\em Annals of Mathematics Studies}.
\newblock Princeton University Press, Princeton, NJ, [2021] \copyright 2021.

\end{thebibliography}

\end{document}